\numberwithin{equation}{section}
\newtheorem{theorem}[equation]{Theorem}
\newtheorem{lemma}[equation]{Lemma}
\newtheorem{claim}[equation]{Claim}
\newtheorem{proposition}[equation]{Proposition}
\newtheorem{conjecture}[equation]{Conjecture}
\theoremstyle{definition}
\newtheorem{remark}[equation]{Remark}
\newtheorem{example}[equation]{Example}
\newcommand{\BB}{\mathbb{B}}
\newcommand{\CC}{\mathbb{C}}
\newcommand{\DD}{\mathbb{D}}
\newcommand{\EE}{\mathbb{E}}
\newcommand{\GG}{\mathbb{G}}
\newcommand{\II}{\mathbb{I}}
\newcommand{\JJ}{\mathbb{J}}
\newcommand{\NN}{\mathbb{N}}
\newcommand{\PP}{\mathbb{P}}
\newcommand{\QQ}{\mathbb{Q}}
\newcommand{\RR}{\mathbb{R}}
\renewcommand{\SS}{\mathbb{S}}
\newcommand{\TT}{\mathbb{T}}
\newcommand{\ZZ}{\mathbb{Z}}
\newcommand{\calF}{\mathcal{F}}
\newcommand{\calB}{\mathcal{B}}
\newcommand{\ex}{\bm{e}}
\newcommand{\proj}{\mathrm{p}}
\newcommand{\ind}[1]{\mathds{1}_{{#1}}}
\newcommand\reallywidehat[1]{%
\savestack{\tmpbox}{\stretchto{%
  \scaleto{%
    \scalerel*[\widthof{\ensuremath{#1}}]{\kern-.7pt\bigwedge\kern-.7pt}%
    {\rule[-\textheight/2]{1.5ex}{\textheight}}
  }{\textheight}%
}{0.9ex}}%
\stackon[2.25pt]{#1}{\tmpbox}%
}
\newcommand*{\DMO}[1]{\expandafter\DeclareMathOperator\csname #1\endcsname {#1}}
\DeclarePairedDelimiter\abs{\lvert}{\rvert}
\DeclarePairedDelimiter\norm{\lVert}{\rVert}
\DeclarePairedDelimiterX\spr[2]{\langle}{\rangle}{#1,#2}
\newcommand{\ipr}[2]{#1\cdot#2}
\DeclarePairedDelimiterX\Set[2]{\{}{\}}{#1\colon #2}
\DeclarePairedDelimiterX\Seq[1]{(}{)}{#1}
\begin{document}
\title[On a multi-parameter variant of the Bellow--Furstenberg problem]{On a multi-parameter variant of\\ the Bellow--Furstenberg problem}

\author{Jean Bourgain}
\address{Jean Bourgain \\
  School of Mathematics\\
  Institute for Advanced Study\\
  Princeton, NJ 08540\\
  USA}
\email{bourgain@math.ias.edu}

\author{Mariusz Mirek }
\address[Mariusz Mirek]{
Department of Mathematics,
Rutgers University,
Piscataway, NJ 08854-8019, USA \\
\&  School of Mathematics,
  Institute for Advanced Study,
  Princeton, NJ 08540,
  USA
\&
Instytut Matematyczny,
Uniwersytet Wroc{\l}awski,
Plac Grunwaldzki 2/4,
50-384 Wroc{\l}aw
Poland}
\email{mariusz.mirek@rutgers.edu}

\author{Elias M.\ Stein}
\address[Elias M.\ Stein]{
Department of Mathematics,
Princeton University,
Princeton,
NJ 08544-100 USA}
\email{stein@math.princeton.edu}

\author{James Wright}
\address[James Wright]{
James Clerk Maxwell Building,
The King's Buildings,
Peter Guthrie Tait Road,
City Edinburgh,
EH9 3FD}
\email{J.R.Wright@ed.ac.uk}

\begin{abstract}
We prove convergence in norm and pointwise almost everywhere on $L^p$,
 $p\in (1,\infty)$, for certain multi-parameter polynomial ergodic
averages by establishing the
corresponding multi-parameter maximal and oscillation
inequalities. Our result, in particular, gives an affirmative answer
to a multi-parameter variant of the Bellow--Furstenberg problem.  This
paper is also the first systematic treatment of multi-parameter
oscillation semi-norms which allows an efficient handling of
multi-parameter pointwise convergence problems with arithmetic
features. The methods of proof of our main result develop estimates
for multi-parameter exponential sums, as well as introduce new ideas
from the so-called multi-parameter circle method in the context of the
geometry of backwards Newton diagrams that are dictated by the shape
of the polynomials defining our ergodic averages.
\end{abstract}


\thanks{ Jean Bourgain was supported by NSF grant DMS-1800640.
Mariusz Mirek was partially supported by NSF grant DMS-2154712, and by the National Science Centre
in Poland, grant Opus 2018/31/B/ST1/00204. Elias M. Stein was partially
supported by NSF grant DMS-1265524.}
\maketitle

\setcounter{tocdepth}{1}

\tableofcontents

\section{Introduction}\label{section:1}

\subsection{A brief history} In 1933 Khintchin \cite{Khin} had the
great insight to see how to generalize the classical equidistribution
result of Bohl \cite{Bohl}, Sierpi\'nski \cite{S} and Weyl
\cite{Weyl-10} from 1910 to a pointwise ergodic theorem, observing
that as a consequence of Birkhoff's famous ergodic theorem \cite{BI},
the following equidistribution result holds: namely, for any
irrational $\theta \in {\mathbb R}$, for any Lebesgue measurable set
$E\subseteq [0,1)$, and for almost every $x\in {\mathbb R}$,
\begin{align*}
\lim_{M\to\infty} \frac{\# \{ m\in[M]: \{x + m \theta\} \in E \}}{M}  =  |E|,
\end{align*}
where $\{x\}$ denotes the fractional part of $x\in\RR$, and
$[N]:=(0, N]\cap\ZZ$ for any real number $N\ge1$. In 1916 Weyl
\cite{Weyl} extended the classical equidistribution theorem to general
polynomial sequences $(\{P(n)\})_{n\in\NN}$ having at least one
irrational coefficient, and so it was natural to ask whether a
pointwise ergodic extension of Weyl's equidistribution theorem
holds. This question was posed by Bellow \cite{Bel} and Furstenberg
\cite{F} in the early 1980's; precisely, they asked if for any
polynomial $P \in {\mathbb Z}[{\rm m}]$ with integer coefficients and
$P(0)=0$ and for any invertible measure-preserving transformation
$T: X \to X$ on a probability space $(X,\mathcal B(X), \mu)$, does the
limit
\begin{align*}
\lim_{M\to \infty} \EE_{m\in[M]} f(T^{P(m)} x) 
\end{align*}
exist for almost every $x\in X$ and for every $f \in L^\infty(X)$?
Here and throughout the paper we use the notation
$\EE_{y\in Y}f(y):=\frac{1}{\#Y}\sum_{y\in Y}f(y)$ for any finite set
$Y\neq\emptyset$ and any function $f:Y\to\CC$.  In the mid 1980's, the
first author \cite{B1,B2,B3} established that this is indeed the case
whenever $f \in L^p(X)$ and $p\in (1,\infty)$, leaving open the
question of what happens on $L^1(X)$.  Interestingly it was shown much
later by Buczolich and Mauldin \cite{BM} that the above pointwise
convergence result fails for general $L^1$ functions when
$P(m) = m^2$, see also \cite{LaV1} for further refinements. In any
case, the papers \cite{B1,B2,B3} represent a far-reaching common
generalization of Birkhoff's pointwise ergodic theorem and Weyl's
equidistribution theorem.

Both Birkhoff and Weyl's results have natural multi-parameter extensions.
In 1951, Dunford \cite{D} and Zygmund \cite{Z} independently extended Birkhoff's theorem to multiple measure-preserving transformations
$T_1, \ldots, T_k : X \to X$. They showed that the limit
\begin{align}
\label{eq:11}
\lim_{M_1,\ldots,M_k \to \infty} \EE_{(m_1,\ldots, m_k)\in\prod_{j=1}^k[M_j]}
f(T_1^{m_1}  \cdots  T_k^{m_k} x) 
\end{align}
exists for almost every $x \in X$ and for any $f\in L^p(X)$ with
$p\in(1, \infty)$, where $\prod_{j=1}^k[M_j]:=[M_1]\times\ldots\times[M_k]$. The limit is taken in the unrestricted sense; that
is, when $\min\{M_1,\ldots,M_k\} \to \infty$. Here, when $k\ge 2$, the
pointwise convergence result is manifestly false for general
$f\in L^1(X)$.

In 1979, Arkhipov, Chubarikov and Karatsuba \cite{ACK-fractional}
extended Weyl's equidistribution result to polynomials (even multiple polynomials) of several
variables. In its simplest form, their result asserts that for any $k$-variate
polynomial $P\in\ZZ[\rm m_1,\ldots, \rm m_k]$, any irrational $\theta\in\RR$, and any interval $[a, b)\subseteq [0, 1)$ one
has
\begin{align}
\label{eq:22}
\lim_{\min\{M_1,\ldots,M_k\} \to \infty} \frac{\# \{(m_1, \ldots, m_k)\in\prod_{j=1}^k[M_j]: \{\theta P(m_1,\ldots, m_k)\} \in [a, b) \}}{M_1 \cdots M_k}  =  b-a.
\end{align}

In the late 1980's, after \cite{B1, B2, B3} and in light of
these results, it was natural to seek a common generalization of the
results of Dunford and Zygmund on the one hand (which generalize
Birkhoff's original theorem) and Arkhipov, Chubarikov and Karatsuba on
the other hand (which generalize Weyl's theorem), which can be
subsumed under the following conjecture, a multi-parameter
variant of the Bellow--Furstenberg problem:

\begin{conjecture}
\label{con:0}
Let $k\in\ZZ_+$ with $k\ge2$ be given and let $(X, \mathcal B(X), \mu)$ be a probability measure space with an invertible  measure-preserving transformation $T:X\to X$. Assume that $P\in\ZZ[{\rm m}_1, \ldots, {\rm m}_k]$ with $P(0)=0$. Then for any $f\in L^{\infty}(X)$ the limit
\begin{align}
\label{eq:21}
\lim_{\min\{M_1,\ldots,M_k\} \to \infty}\EE_{(m_1,\ldots, m_k)\in\prod_{j=1}^k[M_j]} 
f(T^{P(m_1, \ldots, m_k)}x) \quad \text{exists for $\mu$-almost every $x\in X$}.
\end{align}
\end{conjecture}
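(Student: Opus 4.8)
We now outline the strategy we would use to establish Conjecture~\ref{con:0}. The plan is to deduce it from two quantitative inequalities for the associated averaging operators on the integers, and then to transfer back to the general system. By the Calder\'on transference principle it suffices to treat the model system $\ZZ$ with counting measure and the shift $Tx=x-1$; since $P$ is integer valued and $P(0)=0$, the ergodic averages become the convolution operators
\[
A_{\vec N}f(x)\ :=\ \EE_{\vec m\in\prod_{j=1}^{k}[N_j]}f\bigl(x-P(\vec m)\bigr),\qquad x\in\ZZ,\ \ \vec N=(N_1,\dots,N_k)\in\ZZ_+^{k},
\]
acting on $\ell^p(\ZZ)$. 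We would establish: (i) the \emph{multi-parameter maximal inequality} $\bigl\|\sup_{\vec N}\abs{A_{\vec N}f}\bigr\|_{\ell^p(\ZZ)}\lesssim_p\norm{f}_{\ell^p(\ZZ)}$ for every $p\in(1,\infty)$; and (ii) a \emph{uniform multi-parameter oscillation inequality}
\[
\sup_{J\in\NN}\ \sup_{\substack{I_1<\dots<I_{J+1}\\ I_i\in\DD}}\ \bigl\|O^2_{I,J}\bigl(A_{\vec N}f:\vec N\in\DD\bigr)\bigr\|_{\ell^p(\ZZ)}\ \lesssim_p\ \norm{f}_{\ell^p(\ZZ)},
\]
where $\DD\subseteq\ZZ_+^k$ is a fixed lacunary grid of scales and $O^2_{I,J}$ is the multi-parameter oscillation seminorm attached to the increasing tuple $I=(I_1,\dots,I_{J+1})$. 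Granting (i) and (ii) one obtains pointwise convergence of $A_{\vec N}f$ along $\DD$ for a.e.\ $x$; a routine comparison upgrades this to the unrestricted limit over all $\vec N$ with $\min_j N_j\to\infty$, and the maximal inequality transfers the conclusion to an arbitrary $(X,\calB(X),\mu)$, yielding \eqref{eq:21}. We work with oscillation rather than $r$-variation because genuine multi-parameter $r$-variational estimates fail already for the Dunford--Zygmund averages, whereas the oscillation seminorm tensorizes gracefully across the $k$ directions; this is the systematic point advertised in the abstract.

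On the Fourier side $A_{\vec N}$ has symbol $\frakm_{\vec N}(\xi)=\EE_{\vec m\in\prod[N_j]}e^{2\pi i\,\xi\,P(\vec m)}$, a $k$-dimensional Weyl sum, and its analysis is carried out by a \emph{multi-parameter circle method}. Writing $P(\vec m)=\sum_{\gamma}c_\gamma\,\vec m^{\gamma}$, the set of exponents $\gamma$ occurring in $P$ fixes its Newton diagram; the key point is that \emph{which} monomials of $P$ dominate, and hence which Diophantine approximations are relevant, depends on the relative sizes of $N_1,\dots,N_k$, and organizing the decomposition along the \emph{backwards Newton diagram} of $P$ is what makes the bookkeeping uniform in $\vec N$. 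For each $\vec N$ one partitions the torus into major arcs centred at rationals $a/q$ with $q$ small --- the admissible range of $q$ and the radius of the arc being dictated by the face of the Newton diagram active at scale $\vec N$ --- and a complementary minor-arc set. On a major arc $\xi=a/q+\theta$ one has an approximation
\[
\frakm_{\vec N}(\xi)\ =\ G_q(a)\,\Phi_{\vec N}(\theta)\ +\ \text{error},\qquad
G_q(a)=q^{-k}\!\!\sum_{\vec r\in(\ZZ/q\ZZ)^k}\!\! e^{2\pi i\,(a/q)\,P(\vec r)},\quad
\Phi_{\vec N}(\theta)=\int_{[0,1]^k}\!\! e^{2\pi i\,\theta\,P(N_1t_1,\dots,N_kt_k)}\,\dif t,
\]
with $G_q$ a normalized complete (Gauss) sum over $(\ZZ/q\ZZ)^k$; on the minor arcs one has power-saving decay of $\frakm_{\vec N}(\xi)$ in the scale parameters, coming from the multi-dimensional Weyl inequality of Arkhipov--Chubarikov--Karatsuba and its Vinogradov-type refinements.

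It then remains to prove (i) and (ii) for the two resulting pieces and to sum the errors. The \emph{continuous part}, coming from the symbols $\Phi_{\vec N}(\xi-a/q)$ summed over major-arc centres, is, after the standard Gaussian/semigroup comparison, controlled by smooth multi-parameter averaging operators, for which the maximal and oscillation $\ell^p$-bounds follow from a multi-parameter version of the Dunford--Zygmund circle of ideas --- iterated one-parameter estimates for smooth truncations together with Bourgain's logarithmic lifting trick to decouple the scales. The \emph{arithmetic part}, built from the Gauss-sum multipliers $G_q$ grouped dyadically in $q$, is handled by the (multi-parameter) Ionescu--Wainger multiplier theorem, which yields $\ell^p$ bounds with only a logarithmic loss in $q$; this loss is absorbed by the power savings. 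Finally, for the \emph{errors and minor arcs}, the crux is that the number of relevant dyadic scales $\vec N$ grows like a $k$-th power of a logarithm, rather than like a single logarithm as in the one-parameter theory, so the minor-arc decay and the major-arc error terms must beat a $k$-fold logarithmic factor --- which forces the exponential-sum inputs to be pushed close to their conjectured strength, and makes the precise face-by-face choice of the major/minor cutoff dictated by the backwards Newton diagram essential.

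The main obstacle is exactly this interplay. In one parameter the major/minor-arc dichotomy is classical and a single logarithm of scales is harmless; but with $k\ge2$ independently varying $N_j$ there is no single rational approximation valid across all scale regimes --- different faces of the Newton polytope force different approximations, which must then be glued consistently --- and the $k$-fold logarithmic loss in the number of scales leaves essentially no slack. Designing the circle-method decomposition so that its geometry is driven by the backwards Newton diagram while keeping all error terms summable, and supplying the near-optimal multi-dimensional Weyl estimates that make this quantitatively work, is where the real work lies.
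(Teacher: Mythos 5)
Your outline matches the paper's strategy essentially point for point: Calder\'on transference to the shift system on $\ZZ$, multi-parameter oscillation seminorms (rather than variation) as the vehicle for pointwise convergence, a decomposition of the scale space into sectors driven by the backwards Newton diagram with a dominating monomial in each sector, major-arc approximation by complete Gauss sums times continuous multipliers controlled through a multi-parameter Ionescu--Wainger theory, and minor arcs requiring exponential-sum decay already for denominators of merely logarithmic size in the scales. The one caution is that the Arkhipov--Chubarikov--Karatsuba bounds you invoke for the minor arcs are explicitly insufficient here --- they give decay only once $q$ is polynomially large in the scales --- so the ``near-optimal multi-dimensional Weyl estimates'' you rightly identify as the crux must be proved anew (the paper does this via a double Weyl inequality obtained by iterating Vinogradov's mean value theorem in each variable), and of course the proposal remains a roadmap: the maximal/oscillation inequalities, the sector-by-sector gluing of oscillations, and the error control are deferred rather than executed.
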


Our main theorem resolves this conjecture.
\begin{theorem}
\label{thm:3}
Conjecture \ref{con:0} is true for all $k\in\ZZ_+$.
\end{theorem}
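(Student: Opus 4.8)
The plan is to derive Theorem \ref{thm:3} from uniform \emph{multi-parameter maximal and oscillation inequalities} for the underlying averaging operators on $\ell^p(\ZZ)$, which are in turn obtained by a multi-parameter circle method tied to the Newton-diagram geometry of $P$. Since $(X,\mathcal B(X),\mu)$ is a probability space, $L^\infty(X)\subseteq L^p(X)$ for every $p\in(1,\infty)$, so it is enough to prove a.e.\ convergence in \eqref{eq:21} for $f\in L^p(X)$, $p\in(1,\infty)$. By the Calder\'on transference principle this reduces to the corresponding assertion for the model averages
\begin{align*}
\calA_{\vec{N}}f(x):=\EE_{\vec{m}\in\prod_{j=1}^{k}[N_j]}f\bigl(x-P(\vec{m})\bigr),\qquad x\in\ZZ,\ \ \vec{N}=(N_1,\ldots,N_k),
\end{align*}
acting on $f\colon\ZZ\to\CC$, and in fact it suffices to establish the $\ell^p(\ZZ)$ maximal inequality $\bigl\|\sup_{\vec{N}}\abs{\calA_{\vec{N}}f}\bigr\|_{\ell^p}\lesssim\norm{f}_{\ell^p}$ together with a bound, uniform in the length of the sequence, for the $k$-parameter oscillation semi-norms of the family $(\calA_{\vec{N}})_{\vec{N}\in\ZZ_+^k}$. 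Because the limit in \eqref{eq:21} is unrestricted ($\min_j M_j\to\infty$), the uniform oscillation bound is precisely the quantitative a.e.\ Cauchy criterion that forces convergence, and it lets us dispense with a dense subclass on which convergence is already known.

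The next step is to build the calculus of multi-parameter oscillation semi-norms and to split the oscillation of $(\calA_{\vec{N}})$ into a \emph{long} part carried by the lacunary family $\vec{N}=2^{\vec{n}}$, $\vec{n}\in\NN^{k}$, and a \emph{short} part measuring the fluctuations of $\calA_{\vec{N}}$ as the $N_j$ range over a dyadic block. The short part will be controlled by a multi-parameter Rademacher--Menshov inequality combined with $\ell^p$ smoothing estimates for the averaging kernels, at the cost of an admissible logarithmic factor; the arithmetic content is then concentrated in proving that the oscillation semi-norms of $(\calA_{2^{\vec{n}}})$ are bounded on $\ell^p(\ZZ)$ uniformly in the length of the increasing sequence of multi-indices.

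To treat the lacunary piece I would pass to the Fourier side, writing $\calA_{\vec{N}}f=\bigl(\mathfrak{m}_{\vec{N}}\widehat{f}\bigr)^{\vee}$ with $\mathfrak{m}_{\vec{N}}(\xi)=\EE_{\vec{m}\in\prod_{j}[N_j]}\ex\bigl(-\xi P(\vec{m})\bigr)$, $\xi\in\TT$, and perform a Hardy--Littlewood--Vinogradov dissection of $\mathfrak{m}_{\vec{N}}$ adapted to the \emph{backwards Newton diagram} of $P$: as the relative sizes of the $N_j$ vary, a distinguished sub-sum of monomials of $P$ governs the oscillation, and to each vertex/face of the diagram there corresponds a regime of shapes $\vec{N}$ in which a tailored major/minor arc decomposition is appropriate. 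On the minor arcs one applies multi-parameter Weyl--Vinogradov exponential sum estimates, which the Newton-diagram geometry guarantees hold \emph{uniformly} over all admissible shapes, producing a gain of the form $(\min_j N_j)^{-\delta}$ (or a product of such gains) that is summable over the dyadic block $\vec{n}$. On the major arcs one factors $\mathfrak{m}_{\vec{N}}$, up to acceptable errors, into a product of a Gauss-sum (arithmetic) multiplier and a ``continuous'' multiplier; the arithmetic multipliers together with their maximal and oscillation variants are handled by Ionescu--Wainger multiplier theory, which supplies $\ell^p$ bounds for projections onto frequencies near rationals with denominators in a sufficiently sparse set, while the continuous multipliers are controlled by stationary phase and van der Corput estimates and feed into the Euclidean multi-parameter polynomial averages, whose unrestricted a.e.\ convergence on $L^p$, $p>1$, is classical.

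The hard part will be to make the entire scheme genuinely uniform in the shape of the box $\prod_j[N_j]$. As the relative sizes of the $N_j$ change, the dominant monomials of $P$ change, which rationals count as ``major'' changes, and the relevant Gauss sums and oscillatory integrals change form; bookkeeping all of this through the combinatorics of backwards Newton diagrams, and then checking that the error terms are summable over dyadic blocks \emph{while simultaneously propagating control of the oscillation semi-norm---not merely the maximal function---through every piece of the decomposition}, is the technical crux of the argument and is exactly where the new multi-parameter circle-method ideas announced in the abstract are needed.
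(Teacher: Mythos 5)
Your plan follows the paper's strategy quite closely: reduction to $L^p$ with $p\in(1,\infty)$, Calder\'on transference to the shift on $\ZZ$, control of multi-parameter oscillation semi-norms as the quantitative Cauchy criterion (so no dense subclass is needed), a sector decomposition of the parameter space driven by the backwards Newton diagram of $P$, minor arcs via multi-parameter Weyl--Vinogradov estimates, and major arcs via Gauss-sum factorization plus Ionescu--Wainger theory upgraded to oscillation semi-norms. Two specific points in your write-up, however, would not survive as stated. First, a minor-arc gain of the form $(\min_j N_j)^{-\delta}$ is \emph{not} summable over a $k$-parameter dyadic family: in a sector where $N_1\ll N_2$ one has $\sum_{n_1}\sum_{n_2\gtrsim n_1}2^{-\delta n_1}=\infty$. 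What the sector decomposition actually buys is a gain in the \emph{dominant} parameter of each sector (in the paper, a factor $(\log M_{r,j}^*)^{-\alpha_p}$ with $M_{r,j}^*$ comparable to the larger of $M_1,M_2$ there), and only then does the double sum over the sector converge; moreover the gain is merely logarithmic, because the major arcs must reach denominators $q\lesssim(\log M)^{\beta}$ for the Ionescu--Wainger machinery, so your error terms must be organized around logarithmic, sector-adapted decay rather than power decay in the minimum.

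Second, you commit to proving the oscillation inequality for the \emph{full} family $\vec N\in\ZZ_+^k$ via a long/short (lacunary plus Rademacher--Menshov) splitting. The paper deliberately avoids this: it proves the oscillation bound only along $\DD_\tau^k=\{\tau^n\}^k$ for \emph{every} $\tau>1$, and then recovers unrestricted a.e.\ convergence by sandwiching $A_{M_1,M_2}f$ between $\tau^{\pm 2}$ multiples of the lacunary limits (for $f\ge0$, taking $\tau=2^{1/s}$ and $s\to\infty$). The authors explicitly remark that the full $\ZZ_+^k$ oscillation inequality holds only ``at the expense of great complexity''; your short-part step, asserted to cost an ``admissible logarithmic factor,'' is exactly that complexity and is not justified in your sketch (a $k$-dimensional Rademacher--Menshov over a dyadic block produces factors that must be beaten by genuinely multi-parameter smoothing estimates for the kernels). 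If you adopt the $\tau\to1^+$ sandwiching instead, this entire difficulty disappears and your outline aligns with a workable proof, with the remaining (and admittedly substantial) work concentrated where you say it is: making the sector-by-sector circle method propagate oscillation—not just maximal—control.
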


The case $k=1$ corresponds to the classical one-parameter question
of Bellow \cite{Bel} and Furstenberg \cite{F} and was 
resolved in \cite{B1, B2, B3}. In this paper we will establish the
cases $k\ge2$.  In fact, we will prove stronger quantitative results
including corresponding multi-parameter maximal and oscillation
estimates, see Theorem \ref{thm:main} below, which will imply
Conjecture \ref{con:0}. This paper also represents a first systematic
treatment of multi-parameter oscillation semi-norms which allows an
efficient handling of multi-parameter pointwise convergence problems
for ergodic averaging operators with polynomial orbits. Before we
formulate our main quantitative results, we briefly describe the interesting history of
Conjecture \ref{con:0}.

The theorems of Dunford \cite{D} and Zygmund \cite{Z} have 
simple proofs, which can be deduced by iterative applications of the
classical Birkhoff ergodic theorem. For this purpose, it suffices to
note that the Dunford--Zygmund averages from \eqref{eq:11} can be written as a
composition of $k$ classical Birkhoff averages as follows
\begin{align}
\label{eq:4}
\EE_{(m_1,\ldots, m_k)\in\prod_{j=1}^k[M_j]}
f(T_1^{m_1}  \cdots  T_k^{m_k} x) =
\EE_{m_k\in[M_k]}\big[\cdots \EE_{m_1\in[M_1]}f(T_1^{m_1}(   \cdots  T_k^{m_k}) x)\big].
\end{align}
The order in this composition is important since the transformations $T_1,\ldots, T_k$ do not need to commute. The first author, in view of \cite{B1, B2, B3}, extended the observation from \eqref{eq:4} to polynomial orbits and showed that for every $f\in L^p(X)$ with $p\in(1, \infty)$ the limit 
\begin{align}
\label{eq:26}
\lim_{\min\{M_1,\ldots,M_k\} \to \infty}\EE_{(m_1,\ldots, m_k)\in\prod_{j=1}^k[M_j]} f(T_1^{P_1(m_1)}  \cdots  T_k^{P_k(m_k)} x) 
\end{align}
exists for $\mu$-almost every $x\in X$, whenever
$P_1,\ldots, P_k\in\ZZ[\rm m]$ with $P_1(0)=\ldots=P_k(0)=0$ and
$T_1,\ldots, T_d:X\to X$ is a family of commuting and invertible
measure-preserving transformations. The result from \eqref{eq:26} was
never published, nonetheless it can be thought of as a
polynomial extension of the theorem of Dunford \cite{D} and Zygmund
\cite{Z} (the arguments in Section \ref{deg} can be used to derive a quantitative
version of \eqref{eq:26}). Interestingly, as observed by Benjamin Weiss (privately communicated to the first author), 
any ergodic theorem
for these averages fail in general for $k\ge 2$ when the $T_1,\ldots, T_k$ are general
non-commuting transformations. It may even fail in the one-parameter situation for the averages of the form 
$\EE_{m\in[M]} f(T_1^{P_1(m)}  \cdots  T_k^{P_k(m)} x)$, see also \cite{BL} for interesting counterexamples.

This was a turning point, illustrating that the multi-parameter
theory for averages with orbits along polynomials with separated
variables as in \eqref{eq:26} is well understood and can be readily
deduced  from the one-parameter theory \cite{B1, B2, B3} by simple iteration as in \eqref{eq:4}.
On the other hand, the equidistribution result \eqref{eq:22} of Arkhipov,
Chubarikov and Karatsuba \cite{ACK-fractional}, based on 
the so-called multi-parameter circle method (deep and intricate tools in analytic number theory which go beyond the classical circle method)  showed that the situation
may be dramatically different when orbits are defined along genuinely
$k$-variate polynomials $P\in\ZZ[{\rm m}_1, \ldots, {\rm m}_k]$ and
led to Conjecture \ref{con:0}.  Even for $k=2$ with
$P(m_1, m_2)=m_1^2m_2^3$ in \eqref{eq:21}, the problem becomes very
challenging. Surprisingly it seems that there is no simple way (like
changing variables or interpreting the average from \eqref{eq:21} as a
composition of simpler one-parameter averages as in \eqref{eq:4}) that would help us to reduce the matter to the setup
where pointwise convergence is known.

The multi-parameter case $k\ge2$ in Conjecture \ref{con:0} lies in sharp
contrast to the one-parameter situation $k=1$, causing serious
difficulties that were not apparent in \cite{B1, B2, B3}.  The most
notable differences are multi-parameter estimates of corresponding
exponential sums and a delicate control of error terms that arise in
implementing the circle method. These difficulties arise from the
lack of nestedness when the parameters $M_1,\ldots,M_k$ are independent,
see Figure \ref{fig:1} and Figure \ref{fig:2} below.  We now turn to a more
detailed discussion and precise formulation of the results in this
paper.

\subsection{Statement of the main results}\label{main results} 

Throughout this paper the triple $(X, \mathcal B(X), \mu)$ denotes a
$\sigma$-finite measure space, and $\ZZ[{\rm m}_1, \ldots, {\rm m}_k]$
denotes the space of all formal $k$-variate polynomials
$P({\rm m}_1, \ldots, {\rm m}_k)$ with $k\in\ZZ_+$ indeterminates
${\rm m}_1, \ldots, {\rm m}_k$ and integer coefficients. Each
polynomial $P\in\ZZ[{\rm m}_1, \ldots, {\rm m}_k]$ will always be
identified with a map
$\ZZ^k\ni(m_1,\ldots, m_k)\mapsto P(m_1,\ldots, m_k)\in\ZZ$.

Let $d, k \in\ZZ_+$, and given a family
${\mathcal T} = \{T_1,\ldots, T_d\}$ of invertible commuting
measure-preserving transformations on $X$, a measurable function $f$
on $X$, polynomials
${\mathcal P} = \{P_1,\ldots, P_d\} \subset \ZZ[\mathrm m_1, \ldots, \rm m_k]$, and a
vector of real numbers $M = (M_1,\ldots, M_k)$ whose entries are
greater than $1$, we define the multi-parameter polynomial ergodic
average by
\begin{align}
\label{eq:229}
A_{{M}; X, {\mathcal T}}^{\mathcal P}f(x):= \EE_{m\in Q_{M}}f(T_1^{P_1(m)}\cdots T_d^{P_d(m)}x), \qquad x\in X,
\end{align}
where $Q_{M}:=[M_1]\times\ldots\times[M_k]$ is a rectangle in $\ZZ^k$.
We will often abbreviate $A_{M; X, {\mathcal T}}^{{\mathcal P}}$ to  $A_{M; X}^{{\mathcal P}}$ when the tranformations are understood. In some instances we will write out the averages
$$
A_{M;X}^{\mathcal P}f(x) \ = \  A_{M_1,.\ldots, M_k;X}^{P_1,\ldots, P_d} f(x) \ \ \ {\rm or} \ \ \ 
A_{{M}; X, {\mathcal T}}^{\mathcal P}f(x) \ = \
A_{M_1,\ldots, M_k;X,T_1,\ldots, T_d}^{P_1,\ldots, P_d} f(x),
$$
depending on how explicit we want to be.

\begin{example}\label{ex:1}
From the point of view of pointwise convergence problems, due to the
Calder{\'o}n transference principle \cite{Cald}, the most important
dynamical system is the integer shift system. Consider the
$d$-dimensional lattice $(\ZZ^d, \mathcal B(\ZZ^d), \mu_{\ZZ^d})$
equipped with a family of shifts $S_1,\ldots, S_d:\ZZ^d\to\ZZ^d$,
where $\mathcal B(\ZZ^d)$ denotes the $\sigma$-algebra of all subsets
of $\ZZ^d$, $\mu_{\ZZ^d}$ denotes counting measure on $\ZZ^d$, and
$S_j(x)=x-e_j$ for every $x\in\ZZ^d$ (here $e_j$ is $j$-th basis
vector from the standard basis in $\ZZ^d$ for each $j\in[d]$). The
average $A_{M; X, {\mathcal T}}^{{\mathcal P}}$ with
${\mathcal T} = (T_1,\ldots, T_d)=(S_1,\ldots, S_d)$ can be rewritten
for any $x=(x_1,\ldots, x_d)\in\ZZ^d$ and any finitely supported
function $f:\ZZ^d\to\CC$ as
\begin{align}
\label{eq:69}
A_{M; \ZZ^d}^{{\mathcal P}}f(x)=\EE_{m\in Q_{M}}f(x_1-P_1(m),\ldots, x_d-P_d(m)).
\end{align}
\end{example}

The main result of this paper, which implies Conjecture \ref{con:0},  is the following ergodic theorem.

\begin{theorem}
\label{thm:main}
Let $(X, \mathcal B(X), \mu)$ be a $\sigma$-finite measure space with an  invertible  measure-preserving transformation $T:X\to X$. Let $k\in\ZZ_+$ with $k\ge2$ be given, and  $P\in\ZZ[{\rm m}_1,\ldots,  {\rm m}_k]$ be a polynomial such that
$P(0)=0$. Let $f\in L^p(X)$ for some $1\le  p\le \infty$, and let $A_{M_1,\ldots,  M_k; X, T}^P f$ be the  average defined  in \eqref{eq:229} with $d=1$ and arbitrary $k\in\ZZ_+$.
\begin{itemize}
\item[(i)] \textit{(Mean ergodic theorem)} If $1<p<\infty$, then the averages
$A_{M_1,\ldots,  M_k; X, T}^{P}f$ converge in $L^p(X)$ norm.

\item[(ii)] \textit{(Pointwise ergodic theorem)} If $1<p<\infty$, then the averages
$A_{M_1,\ldots,  M_k; X, T}^{P}f$ converge pointwise almost everywhere.

\item[(iii)] \textit{(Maximal ergodic theorem)}
If $1<p\le\infty$, then one has
\begin{align}
\label{eq:230}
\big\|\sup_{M_1,\ldots, M_k\in\ZZ_+}|A_{M_1,\ldots,  M_k; X, T}^{P}f|\big\|_{L^p(X)}\lesssim_{p, P}\|f\|_{L^p(X)}.
\end{align}

\item[(iv)] \textit{(Oscillation ergodic theorem)}
If $1<p<\infty$ and $\tau>1$, then one has
\begin{align}
\label{eq:284}
\qquad \qquad\sup_{J\in\ZZ_+}\sup_{I\in\mathfrak S_J(\DD_{\tau}^k) }\big\|O_{I, J}(A_{M_1,\ldots,  M_k; X, T}^{P}f: M_1, \ldots, M_k\in\DD_{\tau})\|_{L^p(X)}\lesssim_{p, \tau, P}\|f\|_{L^p(X)},
\end{align}
where $\DD_{\tau}:=\{\tau^n:n\in\NN\}$, see Section \ref{section:2} for a definition of the oscillation semi-norm $O_{I, J}$. The implicit constant in \eqref{eq:230} and \eqref{eq:284} may depend on $p, \tau, P$.
\end{itemize}
\end{theorem}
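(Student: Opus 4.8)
\emph{Reductions.} The plan is to first use the Calder\'on transference principle to reduce all four parts to the model dynamical system $X=\ZZ$ with $T$ the unit shift, that is, to the operators
\[
A_{M;\ZZ}^{P}f(x)=\EE_{m\in Q_M}f(x-P(m)),\qquad x\in\ZZ,
\]
acting on $\ell^p(\ZZ)$. A uniform bound on the oscillation seminorms in \eqref{eq:284} forces both $L^p$-norm convergence and pointwise almost everywhere convergence of the net $(A_{M;X,T}^{P}f)$ as $\min_j M_j\to\infty$, by standard arguments; this yields (i) and (ii), so the entire theorem reduces to the maximal inequality (iii) and the oscillation inequality (iv) on $\ell^p(\ZZ)$. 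A routine comparison between consecutive $\tau$-adic scales further lets us restrict throughout to $M=(M_1,\dots,M_k)\in\DD_\tau^k$.

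\emph{The multi-parameter circle method.} On the frequency side $A_{M;\ZZ}^{P}$ is Fourier multiplication by $\frakm_M(\xi)=\EE_{m\in Q_M}\ex(\xi P(m))$, $\xi\in\TT$. I would analyze $\frakm_M$ by the Hardy--Littlewood circle method in the multi-parameter form of Arkhipov--Chubarikov--Karatsuba: the admissible denominators $q$ and the widths of the major arcs around the rationals $a/q$ are dictated by the shape of the polynomial $P$, organized through the geometry of its backwards Newton diagram together with the scales $M_1,\dots,M_k$. Concretely, one partitions the region of scales $\{(M_1,\dots,M_k)\}$ into finitely many sub-regions on each of which a fixed sub-collection of the monomials of $P$ dominates the behaviour of $m\mapsto P(m)$ on $Q_M$; this is the geometric heart of the argument and replaces the one-parameter device of merely tracking the leading term. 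On the \emph{minor arcs} one invokes multi-parameter Weyl-type estimates for the exponential sum $\EE_{m\in Q_M}\ex(\xi P(m))$ to gain a power of $\min_j M_j$ (or of the relevant denominators); this controls a high-frequency part that is then summed over all scales by a Rademacher--Menshov / square-function argument exploiting the gain. On the \emph{major arcs} one approximates $\frakm_M$ by
\[
\sum_{q}\ \sum_{\substack{a\bmod q\\ (a,q)=1}} G_q(a)\,\Phi_M(\xi-a/q)\,\eta_M(\xi-a/q),
\]
where $G_q(a)$ is a normalized complete exponential sum, $\Phi_M$ is the ``continuous'' multiplier obtained by replacing the average over $Q_M$ with the corresponding integral over the box $[0,1]^k$, and $\eta_M$ is a smooth cutoff to the arc; the delicate point is that the error in this approximation must be summable against the oscillation seminorm.

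\emph{Arithmetic and continuous parts.} For the arithmetic part --- the sum over $q$ and $a$ of the sampled multipliers --- I would apply the Ionescu--Wainger multiplier theorem to pass from $\ell^p(\ZZ)$ estimates for $\Phi_M$-type multipliers concentrated near a single rational to estimates for the full sum over a dense set of rationals at the cost of an $\veps$-loss, which, combined with the decay of $G_q(a)$ in $q$, yields the arithmetic maximal and oscillation bounds. For the continuous part one must establish genuinely $k$-parameter maximal and oscillation inequalities for the multipliers $\Phi_M$: here the lack of nestedness --- $M_1,\dots,M_k$ being mutually independent, so that the operators $A_M$ do not form a monotone net --- obstructs any simple iteration of the one-parameter theory and instead calls for multi-parameter Littlewood--Paley and vector-valued square-function techniques, again refined through the Newton-diagram partition of the scale region. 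Finally, recombining the two parts over all $\tau$-adic scales --- using the subadditivity of the oscillation seminorm together with the $\ell^2$-summability furnished by the minor-arc gain and the decay of $G_q(a)$ --- yields \eqref{eq:230} and \eqref{eq:284}.

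\emph{Main obstacle.} The crux is the multi-parameter circle method itself: securing Weyl-type minor-arc estimates that are simultaneously strong and uniform under the independent multi-parameter scaling, and --- more delicately --- controlling the major-arc error terms so that they survive passage to the oscillation seminorm despite the absence of nestedness in $M_1,\dots,M_k$. The organizing principle that makes both feasible is the backwards Newton diagram of $P$, which reduces matters to finitely many regions of scales, each governed by a dominant monomial; this is the genuinely new ingredient relative to the one-parameter theory of \cite{B1,B2,B3}.
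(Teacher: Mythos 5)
Your proposal follows essentially the same route as the paper: reduce (i)--(iii) to the oscillation inequality (iv), transfer to the integer shift system via Calder\'on, restrict to $\tau$-adic scales, partition the scale region into sectors via the backwards Newton diagram each governed by a dominant monomial, and run a sectorwise circle method with multi-parameter Weyl estimates on minor arcs and an Ionescu--Wainger-type treatment of the arithmetic part plus multi-parameter square-function estimates for the continuous part on major arcs. The only points your sketch glosses over that the paper treats explicitly are the separate (easy, iterative) handling of degenerate polynomials and the comparison with truncated averages over $R_M$, but the overall architecture is identical.
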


For ease of exposition, we only prove Theorem \ref{thm:main} in the
two-parameter setting $k=2$, though there are some places in the paper
where some arguments are formulated and proved in the multi-parameter setting to
convince the reader that our arguments are adaptable to the general
multi-parameter setup.  However, the patient reader will readily see
that all two-parameter arguments are adaptable (at the expense of
introducing cumbersome notation, which would make the exposition 
unreadable) to the general multi-parameter setting for arbitrary $k\ge2$,
by multiple iterations of the arguments presented in the paper.

We now give some remarks about Theorem \ref{thm:main}.
\begin{enumerate}[label*={\arabic*}.]

\item Theorem \ref{thm:main} establishes Conjecture \ref{con:0} for
the averages
$A_{M; X, T}^{P}f$. This is
the first nontrivial result in the literature establishing pointwise almost
everywhere convergence for polynomial ergodic averages in the
multi-parameter setting. See \cite{MSW-survey} for other pointwise convergence results
in the multi-parameter setting. 

\item The proof of Theorem \ref{thm:main} is relatively simple if
$P\in\ZZ[\rm m_1,\ldots, \rm m_k]$ is degenerate, see inequality
\eqref{eq:64} in Section \ref{section:3}. We will say that
$P\in\ZZ[\rm m_1,\ldots, \rm m_k]$ is degenerate if it can be written
as
\begin{align}
\label{eq:66}
P({\rm m}_1,\ldots, {\rm m}_k)=P_1({\rm m}_1)+\ldots+ P_k({\rm m}_k),
\end{align}
 where $P_1\in\ZZ[{\rm m}_1],\ldots, P_k\in\ZZ[{\rm m}_k]$ with $P_1(0)=\ldots= P_k(0)=0$. 
Otherwise we say that  $P\in\ZZ[\rm m_1, \ldots, \rm m_k]$ is non-degenerate. The method of proof of Theorem \ref{thm:main} in the degenerate case can be also used to derive quantitative oscillation bounds for the polynomial  Dunford and Zygmund theorem establishing \eqref{eq:26}.

\item At the
expense of great complexity, one can also prove that  inequality \eqref{eq:284} holds with $\ZZ_+$  in place of $\DD_{\tau}$. However, we do not address this question here, since \eqref{eq:284} is 
sufficient for our purposes, and will allow us to establish
Theorem \ref{thm:main}(ii).
\item If $(X, \mathcal B(X), \mu)$ is a probability space and the
measure preserving transformation $T$ in Theorem \ref{thm:main} is
totally ergodic, then Theorem \ref{thm:main}(ii) implies
\begin{align}
\label{eq:45}
\lim_{\min\{M_1,\ldots, M_k\}\to\infty}A_{M_1,\ldots, M_k; X, T}^{P}f(x)=\int_Xf(y)d\mu(y)
\end{align}
$\mu$-almost everywhere on $X$. We recall  that a measure preserving transformation $T$ is called \emph{ergodic} on $X$ if $T^{-1}[B]=B$ implies $\mu(B)=0$ or $\mu (B)=1$, and \emph{totally ergodic} if $T^n$ is ergodic for every $n\in\ZZ_+$.

\item This paper is the first systematic treatment of multi-parameter
oscillation semi-norms; see \eqref{eq:102}, Proposition \ref{prop:5}
and Proposition \ref{prop:4}.  Moreover, it seems that the oscillation
semi-norm is the only available tool that allows us to handle
efficiently multi-parameter pointwise convergence problems with
arithmetic features.  This contrasts sharply with the one-parameter
setting, where we have a variety of tools including oscillations,
variations or jumps to handle pointwise convergence problems; see
\cite{jsw, MST2} and the references therein. Multi-parameter
oscillations \eqref{eq:102} were considered for the first time in
\cite{JRW} in the context of the Dunford--Zygmund averages
\eqref{eq:11} for commuting measure-preserving transformations.

\end{enumerate}

We close this subsection by emphasizing that the methods developed in this paper allow us to handle averages \eqref{eq:229} with multiple polynomials. At the expense of some additional work one can prove the following ergodic theorem.  

\begin{theorem}
\label{thm:main0}
Let $(X, \mathcal B(X), \mu)$ be a $\sigma$-finite measure space equipped with a family of commuting  invertible and  measure-preserving transformations $T_1,T_2, T_{3}:X\to X$. Let $P\in\ZZ[{\rm m}_1, {\rm m}_2]$ be a polynomial such that
$P(0, 0)=\partial_1P(0, 0)=\partial_2P(0, 0)=0$, which  additionally
has partial degrees  (as a polynomial of the variable ${\rm m}_1$ and a polynomial of the variable ${\rm m}_2$) at least two. Let $f\in L^p(X)$ for some $1\le  p\le \infty$, and let $ A_{M_1, M_2; X}^{{\rm m}_1,{\rm m}_2 , P({\rm m}_1, {\rm m}_2)}f$ be the  average defined  in \eqref{eq:229} with $d=3$, $k=2$,  and $P_1({\rm m}_1, {\rm m}_2)={\rm m}_1$, $P_2({\rm m}_1, {\rm m}_2)={\rm m}_2$ and $P_3({\rm m}_1, {\rm m}_2)=P({\rm m}_1, {\rm m}_2)$.
\begin{itemize}
\item[(i)] \textit{(Mean ergodic theorem)} If $1<p<\infty$, then the averages
$A_{M_1, M_2; X}^{{\rm m}_1,{\rm m}_2 , P({\rm m}_1, \rm m_2)}f$ converge in $L^p(X)$ norm.

\item[(ii)] \textit{(Pointwise ergodic theorem)} If $1<p<\infty$, then the averages
$A_{M_1, M_2; X}^{{\rm m}_1,{\rm m}_2 , P({\rm m}_1, \rm m_2)}f$ converge pointwise almost everywhere.

\item[(iii)] \textit{(Maximal ergodic theorem)}
If $1<p\le\infty$, then one has
\begin{align}
\label{eq:35}
\big\|\sup_{M_1, M_2\in\ZZ_+}|A_{M_1, M_2; X}^{{\rm m}_1,{\rm m}_2 , P({\rm m}_1, \rm m_2)}f|\big\|_{L^p(X)}\lesssim_{p, P}\|f\|_{L^p(X)}.
\end{align}

\item[(iv)] \textit{(Oscillation ergodic theorem)}
If $1<p<\infty$ and $\tau>1$, then one has
\begin{align}
\label{eq:36}
\qquad \qquad\sup_{J\in\ZZ_+}\sup_{I\in\mathfrak S_J(\DD_{\tau}^2) }\big\|O_{I, J}(A_{M_1, M_2; X}^{{\rm m}_1,{\rm m}_2 , P({\rm m}_1, \rm m_2)}f: M_1, M_2\in\DD_{\tau})\|_{L^p(X)}\lesssim_{p, \tau, P}\|f\|_{L^p(X)},
\end{align}
where $\DD_{\tau}:=\{\tau^n:n\in\NN\}$. 
The implicit constant in \eqref{eq:35} and \eqref{eq:36} may depend on $p, \tau, P$.
\end{itemize}
\end{theorem}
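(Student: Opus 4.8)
The plan is to run the multi-parameter circle method developed in the body of the paper for Theorem~\ref{thm:main}, now applied to the $\ZZ^3$-valued polynomial map $\Psi(m_1,m_2)=(m_1,m_2,P(m_1,m_2))$ in place of a scalar polynomial orbit. First I would invoke the Calderón transference principle: since $T_1,T_2,T_3$ commute, the family $(m_1,m_2)\mapsto T_1^{m_1}T_2^{m_2}T_3^{P(m_1,m_2)}$ is generated by a genuine measure-preserving $\ZZ^3$-action, so it suffices to prove the maximal inequality \eqref{eq:35} and the oscillation inequality \eqref{eq:36} for the model operator on the integer shift system $(\ZZ^3,\mathcal B(\ZZ^3),\mu_{\ZZ^3})$,
\[
A_{M_1,M_2;\ZZ^3}^{{\rm m}_1,{\rm m}_2,P}f(x)=\EE_{(m_1,m_2)\in Q_{M}}f\big(x_1-m_1,\,x_2-m_2,\,x_3-P(m_1,m_2)\big),\qquad x\in\ZZ^3.
\]
Given \eqref{eq:35} and \eqref{eq:36}, parts (i)--(ii) follow exactly as in Theorem~\ref{thm:main}: the maximal inequality provides the uniform $L^p$ control underlying the mean ergodic theorem, the oscillation inequality yields pointwise convergence along $\DD_\tau^2$, and the two together extend convergence to the full unrestricted limit as $\min\{M_1,M_2\}\to\infty$ — this is the same mechanism by which Theorem~\ref{thm:main}(iii)--(iv) implies Theorem~\ref{thm:main}(ii).

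On the Fourier transform side the model operator is convolution on $\ZZ^3$ with the kernel whose symbol is
\[
\Phi_{M_1,M_2}(\xi_1,\xi_2,\xi_3)=\EE_{(m_1,m_2)\in[M_1]\times[M_2]} e\big(m_1\xi_1+m_2\xi_2+P(m_1,m_2)\xi_3\big),\qquad \xi\in\TT^3,
\]
where $e(t):=e^{2\pi i t}$. I would then carry out the major/minor arc decomposition exactly as in the paper, with the backwards Newton diagram now built from the \emph{combined} family of orbit polynomials $\{{\rm m}_1,{\rm m}_2,P({\rm m}_1,{\rm m}_2)\}$: the coordinate polynomials ${\rm m}_1,{\rm m}_2$ supply the two linear edges and $P$ supplies the higher-order part. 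Here the hypotheses on $P$ are used precisely to make this diagram non-degenerate: $P(0,0)=\partial_1P(0,0)=\partial_2P(0,0)=0$ forces $P$ to have no constant and no linear monomial, so the linear directions ${\rm m}_1,{\rm m}_2$ are not duplicated inside $P$, while $\deg_{{\rm m}_1}P\ge 2$ and $\deg_{{\rm m}_2}P\ge 2$ prevent $P$ from being linear in either variable (which would let the corresponding $\xi_j$-direction be absorbed into the linear orbit polynomials and collapse the arithmetic structure). Consequently the exponent set $\{(1,0),(0,1)\}\cup\supp(P)$ generates a backwards Newton diagram to which the geometric constructions of the paper apply verbatim, and the $\xi_3$-direction carries genuine arithmetic content not already accounted for by the linear directions.

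The two core inputs, both dictated by this Newton diagram, are then the standard circle-method estimates. For the minor arcs one needs a multi-parameter Weyl-type inequality giving power-saving cancellation in $\Phi_{M_1,M_2}$, uniformly in $(M_1,M_2)$, whenever $\xi$ avoids the major boxes; the presence of the two linear phases both helps (exact Fejér-kernel decay in $\xi_1,\xi_2$ away from rationals of small denominator) and requires care, since the arithmetic in $\xi_3$ remains coupled to $m_1,m_2$ through $P$. For the major arcs one approximates $\Phi_{M_1,M_2}$ by the product of a Gauss-sum type arithmetic multiplier and a smooth oscillatory integral in the dual variables, with an error absorbed by the minor-arc bound; the arithmetic piece is then summed over denominators by Ionescu--Wainger multiplier theory, and the analytic piece is handled by multi-parameter Rademacher--Menshov and Hardy--Littlewood arguments, exactly as in the proof of Theorem~\ref{thm:main}. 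Reassembling these pieces yields \eqref{eq:35} and \eqref{eq:36}.

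I expect the main obstacle to be the minor-arc estimate in the mixed regime where one of $\xi_1,\xi_2$ lies near a rational of small denominator while $\xi_3$ does not, or conversely: the exponential sum $\Phi_{M_1,M_2}$ then does not split as a product, so one must extract cancellation directly from the coupled phase $m_1\xi_1+m_2\xi_2+P(m_1,m_2)\xi_3$ while keeping all bounds uniform as $M_1$ and $M_2$ range independently — the same lack-of-nestedness that is the source of difficulty throughout the paper. Once the correct mixed Weyl inequality is available, with the decay rate prescribed by the backwards Newton diagram of $\{{\rm m}_1,{\rm m}_2,P\}$, the remaining steps are faithful multi-parameter analogues of what is already done for Theorem~\ref{thm:main}, which is the sense in which Theorem~\ref{thm:main0} costs only ``some additional work.''
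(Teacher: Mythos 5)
Your overall architecture --- transference to the $\ZZ^3$ shift system, reduction of (i)--(ii) to the maximal and oscillation bounds, and an iterated circle method organized by a backwards Newton diagram with Ionescu--Wainger theory on the major arcs --- is consistent with what the authors indicate; note that the paper offers no proof of Theorem \ref{thm:main0} beyond the assertion that the methods of Theorem \ref{thm:main} extend ``at the expense of some additional work.'' The comparison therefore has to be made against the one place where the paper is explicit about what that additional work is, and there your proposal has a genuine gap.

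The gap: you locate the role of the hypothesis that $P$ has partial degree at least two in each variable in the non-degeneracy of the combined Newton diagram and in preventing the $\xi_3$-direction from being ``absorbed'' into the linear directions. That is not where the hypothesis actually bites, and your mechanism would not detect the failure for, e.g., $P(m_1,m_2)=m_1^2m_2$ --- a polynomial with $P(0,0)=\partial_1P(0,0)=\partial_2P(0,0)=0$, no linear part, and a perfectly non-degenerate Newton diagram, for which the method nevertheless breaks down. As the authors explain in the discussion around \eqref{eq:75}, the crux is the partially complete exponential sum bound
\[
\frac{1}{M_1q}\sum_{m_1=1}^{M_1}\Big|\sum_{m_2=1}^q\ex\big(a_2m_2/q+a_3P(m_1,m_2)/q\big)\Big|\lesssim q^{-\delta},
\qquad (a_2,a_3,q)=1,
\]
required \emph{uniformly in $M_1$, including the range $M_1<q$}. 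This is the analogue of \eqref{eq:105}--\eqref{eq:106} in Proposition \ref{prop:32}, and it controls the Gauss-sum factors $G^1_{m_1}$ at the stage where the circle method has already been run in $m_2$ and is being run in $m_1$ --- a major-arc issue, not the mixed minor-arc regime you single out as the main obstacle. For $P=m_1^2m_2$ the inner sum is a complete linear sum, equal to $q$ whenever $a_2+a_3m_1^2\equiv 0 \pmod q$ and to $0$ otherwise; choosing $a_2,a_3$ so that $m_1=1$ is a solution, the left-hand side is at least $1/M_1$, which exceeds $q^{-\delta}$ once $M_1<q^{\delta}$, so no power saving is possible. The partial-degree-$\ge 2$ hypothesis is exactly what rescues this estimate (via Weyl--Vinogradov in $m_2$ for each fixed $m_1$, as in Step 2 of the proof of Proposition \ref{prop:32}), and any proof of Theorem \ref{thm:main0} must isolate and establish this uniform bound. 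Your sketch never produces it, and the justification you give for the hypothesis would equally ``apply'' to polynomials for which the bound is false.
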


For simplicity of notation, we have only formulated Theorem \ref{thm:main0} in the two-parameter setting but it can be extended to a multi-parameter setting as well. Namely, 
let $d\ge2$ and let $(X, \mathcal B(X), \mu)$ be a $\sigma$-finite measure space equipped with a family of commuting  invertible and  measure-preserving transformations $T_1,\ldots, T_{d}:X\to X$. Suppose that $P\in\ZZ[{\rm m}_1,\ldots,  {\rm m}_{d-1}]$ is a polynomial such that
\begin{align*}
P(0,\ldots, 0)=\partial_1P(0,\ldots, 0)=\ldots=\partial_{d-1}P(0,\ldots, 0)=0,
\end{align*}
which  
has partial degrees  (as a polynomial of the variable ${\rm m}_i$ for any $i\in[d-1]$) at least two. Then the conclusions of Theorem \ref{thm:main0} remain true for the averages 
\begin{equation}
\label{eq:44}
A_{M_1,\ldots, M_{d-1}; X, T_1,\ldots, T_d}^{{\mathrm m}_1,\ldots,{\mathrm m}_{d-1}, P({\mathrm m}_1,\ldots, {\rm m}_{d-1})}f
\ \ \ {\rm in \ place \ of} \ \ \  A_{M_1, M_2; X}^{{\rm m}_1,{\rm m}_2 , P({\rm m}_1, \rm m_2)}f.
\end{equation}
All remarks from items 1--4 after Theorem \ref{thm:main} remain true
for ergodic averages from \eqref{eq:44}. Finally, we emphasize that
Theorem \ref{thm:main} and Theorem \ref{thm:main0} make a contribution
to the famous Furstenberg--Bergelson--Leibman conjecture, which we now discuss.

\subsection{Contributions to the Furstenberg--Bergelson--Leibman conjecture} 
Furstenberg's ergodic proof \cite{Fur0} of Szemer{\'e}di's theorem
\cite{Sem1} (on the existence arbitrarily long arithmetic progressions
in subsets of integers with positive density) was a
departure point for modern ergodic Ramsey theory.
We refer to the survey articles \cite{Ber1}, \cite{Ber2}, and
\cite{Fra}, where details (including comprehensive historical
background) and an extensive literature are given about this
fascinating subject. Ergodic Ramsey theory is a very
rich body of research, consisting of many natural generalizations of
Szemer{\'e}di's theorem, including the celebrated polynomial Szemer{\'e}di theorem
of Bergelson and Leibman \cite{BL1} that motivates the following far-reaching conjecture:

\begin{conjecture}[Furstenberg--Bergelson--Leibman conjecture {\cite[Section 5.5, p. 468]{BL}}]
\label{con:3}
For given parameters $d, k, n\in\NN$, let $T_1,\ldots, T_d:X\to X$ be a
family of invertible measure-preserving transformations of a
probability measure space $(X, \mathcal B(X), \mu)$ that generates a
nilpotent group of step $l\in\ZZ_{+}$, and assume that
$P_{1, 1},\ldots,P_{i, j},\ldots, P_{d, n}\in \ZZ[\mathrm m_1,\ldots, \mathrm m_k]$. Then for any
$f_1, \ldots, f_n\in L^{\infty}(X)$, the non-conventional multiple
polynomial averages
\begin{align}
\label{eq:43}
A_{M; X, T_1,\ldots, T_d}^{P_{1, 1}, \ldots, P_{d, n}}(f_1,\ldots, f_n)(x)
=\EE_{m\in\prod_{j=1}^k[M_j]}\prod_{j=1}^nf_j(T_1^{P_{1, j}(m)}\cdots T_d^{P_{d, j}(m)} x)
\end{align}
converge for $\mu$-almost every $x\in X$ as $\min\{M_1,\ldots, M_k\}\to\infty$.
\end{conjecture}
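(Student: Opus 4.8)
The plan is to proceed through three layers: a transference-and-lifting reduction, a multi-parameter structure theory that replaces the product of $n$ functions by a tractable ``characteristic'' model, and a quantitative oscillation argument on that model that upgrades norm convergence to pointwise convergence. First I would invoke the Calder\'on transference principle to reduce Conjecture \ref{con:3} to the integer shift system on $\ZZ^d$, exactly as in Example \ref{ex:1}, so that every averaging operator in \eqref{eq:43} becomes a (multilinear) convolution-type operator on $\ZZ^d$ amenable to Fourier analysis and to the multi-parameter circle method developed in the body of the paper. The nilpotency hypothesis is then handled by a standard lifting: since $T_1,\dots,T_d$ generate a nilpotent group of step $l$, the orbit $(T_1^{P_{1,j}(m)}\cdots T_d^{P_{d,j}(m)})_{m}$ is the reduction of a polynomial sequence in a connected, simply connected nilpotent Lie group $G$ acting on a nilmanifold $G/\Gamma$, and by Leibman's equidistribution theorem each such sequence equidistributes along a subnilmanifold; combined with approximating each $f_j$ by its conditional expectation onto a Host--Kra--Ziegler factor of bounded complexity, this reduces the problem to averages over $Q_M$ of products of \emph{nilsequences} in $m$ twisted by a single-variable polynomial orbit.

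Next I would run a multi-parameter PET (polynomial exhaustion technique) induction combined with a van der Corput / $U^s$-Gowers-norm estimate, bounding \eqref{eq:43} by a finite combination of lower-complexity averages plus an error controlled by a box-norm in the $m$-variables. The arithmetic input at this stage is precisely the multi-parameter exponential sum estimates and the backwards Newton diagram geometry described in the introduction, now applied on the $k$-dimensional rectangle $Q_M$ rather than on an interval. Crucially, the van der Corput step must be carried out \emph{in the oscillation metric}: rather than concluding merely qualitative convergence, I would propagate the multi-parameter oscillation semi-norm $O_{I,J}$ of Section \ref{section:2} through each step of the induction, using the subadditivity and splitting properties recorded in Proposition \ref{prop:5} and Proposition \ref{prop:4}, so that the base case --- a single linear orbit tested against a bounded-complexity nilsequence weight --- is exactly a weighted form of Theorem \ref{thm:main}(iv), provable by the major-arc/minor-arc analysis of this paper together with a polynomial Wiener--Wintner equidistribution estimate on nilmanifolds. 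The maximal bounds \eqref{eq:230} and \eqref{eq:35}, in the general form \eqref{eq:44}, supply the a priori $L^p$ control needed to pass from a dense class of functions to all of $L^p$, and then (via norm convergence, which holds in this generality by Walsh's theorem) to $L^\infty$.

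The hard part --- and the place where this plan is genuinely speculative --- is the middle layer: producing a \emph{quantitative, oscillation-compatible} structure theorem for the product $\prod_{j} f_j(T_1^{P_{1,j}(m)}\cdots T_d^{P_{d,j}(m)}x)$. The Host--Kra--Ziegler theory furnishes characteristic factors and hence norm convergence, but it is purely qualitative, and the known routes from norm to pointwise convergence for genuinely multilinear averages are available only in very restricted configurations (essentially two terms with affine iterates). Bridging this gap --- obtaining an inverse Gowers-norm theorem with effective bounds in the oscillation semi-norm, uniformly over the multi-parameter scales $M_1,\dots,M_k$ and robust to the lack of nestedness highlighted in Figure \ref{fig:1} and Figure \ref{fig:2} --- is the central obstacle. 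An honest execution of this plan would therefore have to develop such a quantitative structure theory essentially from scratch, most plausibly by marrying a multi-parameter ergodic-theoretic maximal/sunrise decomposition with the arithmetic regularity method in a form that interacts cleanly with the circle-method error terms of this paper; until that is in place, the argument above yields Conjecture \ref{con:3} only in the one-term case $n=1$ treated by Theorem \ref{thm:main} and Theorem \ref{thm:main0}, and the general case remains open.
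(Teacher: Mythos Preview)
The statement you are attempting to prove is a \emph{conjecture}, not a theorem, and the paper does not prove it. The paper explicitly describes Conjecture \ref{con:3} as ``one of the major open problems in pointwise ergodic theory, which seems to be very difficult due to its multilinear nature,'' and the results of the paper (Theorem \ref{thm:main} and Theorem \ref{thm:main0}) are presented only as \emph{contributions} to the linear case $n=1$ with commuting transformations. There is therefore no ``paper's own proof'' to compare against.

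Your proposal is not a proof but a research programme, and to your credit you say so in the final paragraph. The honest conclusion you reach --- that the argument yields the conjecture only in the one-term case $n=1$ and that the general case remains open --- is correct and matches the state of affairs in the paper. However, several of the earlier steps are more problematic than you indicate. First, Calder\'on transference in the form of Example \ref{ex:1} is a linear device; transferring genuinely multilinear averages to an integer model is already delicate and is not what the paper does. Second, the claim that one can ``propagate the multi-parameter oscillation semi-norm $O_{I,J}$ through each step'' of a PET induction is exactly the missing technology: van der Corput differencing controls a Gowers-type norm of the average, not an oscillation semi-norm of a family of averages, and there is at present no mechanism for passing oscillation bounds through a Cauchy--Schwarz step. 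Third, the appeal to Host--Kra--Ziegler factors and Leibman equidistribution yields only qualitative norm convergence (this is Walsh's theorem in the diagonal case), and the paper stresses that going from norm to pointwise in the multilinear setting is known only for $n=2$ in very restricted polynomial configurations. So the ``hard part'' you flag is not merely a gap in your execution; it is the entire content of the open problem.
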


Variants of this conjecture were promoted in person by Furstenberg,
(we refer to Austin's article \cite[pp. 6662]{A1}), before it was
published by Bergelson and Leibman \cite[Section 5.5, pp. 468]{BL}
for $k=1$.  The nilpotent and multi-parameter setting is the
appropriate setting for Conjecture \ref{con:3} as convergence may
fail if the transformations $T_1,\ldots, T_d$ generate a solvable
group, as shown by Bergelson and Leibman \cite{BL}.  The $L^2(X)$
norm convergence of \eqref{eq:43} has been studied since Furstenberg's
ergodic proof \cite{Fur0} of Szemer{\'e}di's theorem \cite{Sem1}, and
is fairly well understood (even in the setting of nilpotent groups)
due to the groundbreaking work of Walsh \cite{W} with
$M_1=\ldots=M_k$. Prior to Walsh's paper, extensive efforts had been
made towards understanding $L^2(X)$ norm convergence, including
breakthrough works of Host--Kra \cite{HK}, Ziegler \cite{Z1},
Bergelson \cite{Ber0}, and Leibman \cite{Leibman}. For more details
and references we also refer to \cite{A2, CFH, FraKra, HK1, Tao} and
the survey articles \cite{Ber1,Ber2,Fra}.

The situation is dramatically different for the pointwise convergence
problem \eqref{eq:43},  but recently, significant progress has been made towards
establishing the Furstenberg--Bergelson--Leibman conjecture. Now let us make a few
remarks about this conjecture, its history, and the current state of
the art.

\begin{enumerate}[label*={\arabic*}.]
\item The case $d=k=n=1$ of Conjecture \ref{con:3} with $P_{1, 1}(m)=m$ follows from Birkhoff's ergodic theorem \cite{BI}. In fact, the almost everywhere limit (as well as the norm limit, see also \cite{vN}) of \eqref{eq:43} exists also for all functions $f\in L^p(X)$, with $1\le p<\infty$, defined on any $\sigma$-finite measure space $(X, \mathcal B(X), \mu)$.

\item The case $d=k=n=1$  of Conjecture \ref{con:3} with arbitrary polynomials $P_{1, 1}\in\ZZ[\rm m]$ (as we have seen above)
was the famous open problem of Bellow \cite{Bel} and Furstenberg \cite{F}, which was solved by the first author \cite{B1, B2, B3} in the mid 1980's. In fact, in \cite{B1, B2, B3} it was shown that the almost everywhere limit (as well as the norm limit, see also \cite{Furbook}) of \eqref{eq:43} exists also for all functions $f\in L^p(X)$, with $1< p<\infty$, defined on any $\sigma$-finite measure space $(X, \mathcal B(X), \mu)$. In contrast to  the Birkhoff theorem, if $P_{1,1}\in\ZZ[\rm n]$ is a polynomial of degree at least two, the pointwise convergence at the endpoint for $p=1$ may fail as was shown by Buczolich and  Mauldin \cite{BM} for $P_{1,1}(m)=m^2$ and by LaVictoire \cite{LaV1} for $P_{1,1}(m)=m^k$ for any $k\ge2$.

\item In the commutative case (step $\ell = 1$) where $d,k\in\ZZ_+$ and $n=1$ of Conjecture \ref{con:3} with arbitrary polynomials $P_{1,1},\ldots, P_{d,1}\in\ZZ[{\rm m}_1, \ldots, {\rm m}_k]$ in the diagonal setting $M_1=\ldots=M_k$, that is, the multi-dimensional one-parameter setting, was solved by the second author with Trojan in \cite{MT1}. As before it was shown that the almost everywhere limit (as well as the norm limit) of \eqref{eq:43} exists also for all functions $f\in L^p(X)$, with $1< p<\infty$, defined on any $\sigma$-finite measure space $(X, \mathcal B(X), \mu)$.

\item The question to what extent one can relax the commutation relations
between $T_1,\ldots, T_d$ in \eqref{eq:43}, even in the one-parameter
case $M_1=\ldots=M_k$, is very intriguing. Some particular examples of
averages \eqref{eq:43} with $d,k\in\ZZ_+$ and $n=1$ and polynomial
mappings with degree at most two in the step two nilpotent setting
were studied in \cite{IMSW, MSW1}.  Recently, the second author with
Ionescu, Magyar and Szarek \cite{IMMS} established Conjecture
\ref{con:3} with $d\in\ZZ_+$ and $k=n=1$ and arbitrary polynomials
$P_{1,1},\ldots, P_{d,1}\in\ZZ[{\rm m}]$ in the nilpotent setting, i.e. when
$T_1,\ldots, T_{d}:X\to X$ is a family of invertible
measure-preserving transformations of a $\sigma$-finite measure space
$(X, \mathcal B(X), \mu)$ that generates a nilpotent group of step
two.

\item In contrast to the commutative linear theory, the multilinear
theory is wide open.  Only a few results are known in the bilinear $n=2$ and
commutative $d=k=1$ setting. The first author \cite{B0}  established
pointwise convergence when $P_{1,1}(m)=am$ and $P_{1,2}(m)=bm$, with
$a, b\in\ZZ$. Recently, the third author with Krause and Tao
\cite{KMT} proved pointwise convergence for the polynomial
Furstenberg--Weiss averages \cite{Fur1,FurWei} corresponding to
$P_{1,1}(m)=m$ and $P_{1, 2}(m)=P(m)$ with $P\in\ZZ[\rm m]$ and  ${\rm deg }\,P\ge2$.

\item A genuinely multi-parameter case $d=k\ge2$ with $n=1$ of Conjecture
\ref{con:3} for averages \eqref{eq:43} with linear orbits,
i.e. $P_{j,1}(m_1, \ldots, m_d)=m_j$ for $j\in[d]$ was established
independently by Dunford \cite{D} and Zygmund \cite{Z} in the early
1950's. Moreover, it follows from \cite{D, Z} that the
almost everywhere convergence (as well as the norm convergence) of
\eqref{eq:43} holds for all functions $f\in L^p(X)$, with
$1< p<\infty$, defined on any $\sigma$-finite measure space
$(X, \mathcal B(X), \mu)$ equipped with a family of measure-preserving
transformations $T_1,\ldots, T_d:X\to X$, which does not need to be
commutative. One also knows that pointwise convergence fails if $p=1$.
A polynomial variant of the Dunford and Zygmund theorem was discussed above, see  \eqref{eq:26}.

\end{enumerate}

We close this discussion by emphasizing that Theorem \ref{thm:main}
and Theorem \ref{thm:main0} also contribute to the 
Furstenberg--Bergelson--Leibman conjecture and together with all the
results listed above support the evidence that Conjecture \ref{con:3}
may be true in full generality though a complete solution seems very difficult.

\subsection{Overview of the paper}
The paper is organized as follows. In Section \ref{section:2} we fix
necessary notation and terminology. We also introduce the definition
of multi-parameter oscillations \eqref{eq:102} and collect their
useful properties, see Proposition \ref{prop:5} and Proposition
\ref{prop:4}.  In Section \ref{section:3} we give a detailed proof of
Theorem \ref{thm:main} by reducing the matter to oscillation estimates
for truncated variants of averages
$A_{M_1, M_2; X}^{ P}f$, see definition
\eqref{eq:229'}, and Theorem \ref{thm:main'}, which in turn is reduced
to the integer shift system, see Theorem \ref{thm:main''}. A result
that may be of independent interest is Proposition \ref{prop:7}, which
shows that oscillations for $A_{M_1, M_2; X}^{P}f$
and their truncated variants are in fact comparable. In Section
\ref{section:3}, see inequality \eqref{eq:64}, we also illustrate how
to prove Theorem \ref{thm:main} in the degenerate case in the sense of
definition \eqref{eq:66} stated after Theorem \ref{thm:main}. These
arguments can be also used to prove oscillation bounds for the
polynomial Dunford and Zygmund theorem, which in turn imply
\eqref{eq:26}.

We start with a brief overview of the proof of Theorem
\ref{thm:main''} which implies Theorem \ref{thm:3} when $k=2$ and takes up the bulk of this paper.  The proof requires substantial new ideas to overcome a
series of new difficulties arising in the multi-parameter
setting. These complications do not arise in the one-parameter setup
\cite{B1, B2, B3}.  The most notable obstacle is the lack of
nestedness in the definition of averaging operators \eqref{eq:229}
when the parameters $M_1,\ldots, M_k$ are allowed to run
independently. The lack of nestedness complicates every argument in
the circle method, which is the main tool in these kinds of
problems. In order to understand how the lack of nestedness may affect the
underlying arguments it will be convenient to illustrate this
phenomenon by comparing Figure \ref{fig:1} and Figure \ref{fig:2}
below. The first picture (Figure \ref{fig:1}) represents the family of
nested cubes, which is increasing when the time parameter increases.
The diagonal relation between parameters $M_1=\ldots=M_k$ is critical.

\begin{figure}[h]
\begin{tikzpicture}
\draw [<->] (0,3.5) -- (0,0) -- (3.5,0);
\draw (0,0.125) -- (0.125,0.125) -- (0.125, 0);
\draw (0,0.25) -- (0.25,0.25) -- (0.25, 0);
\draw (0,0.375) -- (0.375,0.375) -- (0.375, 0);
\draw (0,0.5) -- (0.5,0.5) -- (0.5, 0);
\draw (0,0.75) -- (0.75,0.75) -- (0.75, 0);
\draw (0,1) -- (1,1) -- (1, 0);
\node[below] at (1.0,1.5) {\tiny{$Q_{M, M}$}};
\draw (0,1.5) -- (1.5,1.5) -- (1.5, 0);
\node[below] at (1.5,2) {\tiny{$Q_{N, N}$}};
\draw (0,2) -- (2,2) -- (2, 0);
\draw (0,2.75) -- (2.75,2.75) -- (2.75, 0);
\node[below] at (3.23,3.65) {\reflectbox{$\ddots$}};
\node[below] at (1.5,-0.05) {\tiny{$M$}};
\node[below] at (2,-0.03) {\tiny{$N$}};
\node[below] at (-0.175,1.675) {\tiny{$M$}};
\node[below] at (-0.175,2.175) {\tiny{$N$}};
\end{tikzpicture}
\caption{Family of nested rectangles (cubes)
$Q_{M,M}\subset Q_{N,N}$ with $M<N$, for $k=2$.}\label{fig:1}
\end{figure}
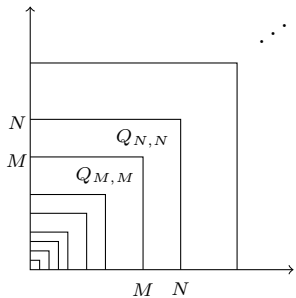

The second picture (Figure \ref{fig:2}) represents the family
which is genuinely multi-parameter and there is no nestedness as the parameters $M_1,\ldots, M_k$ vary independently.

\begin{figure}[h]
\begin{tikzpicture}
\draw [<->] (0,3.5) -- (0,0) -- (3.5,0);
\draw (0,0.125) -- (3.325,0.125) -- (3.325, 0);
\draw (0.125,0) -- (0.125,0.3) -- (0, 0.3);
\draw (0,0.20) -- (0.25,0.20) -- (0.25, 0);
\draw (0,0.375) -- (0.375,0.375) -- (0.375, 0);
\draw (0,3.25) -- (0.45,3.25) -- (0.45, 0);
\draw (0,0.75) -- (1,0.75) -- (1, 0);
\draw (0,1.75) -- (0.75,1.75) -- (0.75, 0);
\node[below] at (0.975,2.9) {\tiny{$Q_{M_1, M_2}$}};
\draw (0,2.9) -- (1.5,2.9) -- (1.5, 0);
\node[below] at (2.05,1.45) {\tiny{$Q_{N_1, N_2}$}};
\draw (0,1.5) -- (2.6,1.5) -- (2.6, 0);
\draw (0,2.35) -- (2.95,2.35) -- (2.95, 0);
\node[below] at (3.23,3.65) {\reflectbox{$\ddots$}};
\node[below] at (1.55,-0.05) {\tiny{$M_1$}};
\node[below] at (2.65,-0.05) {\tiny{$N_1$}};
\node[below] at (-0.175,1.675) {\tiny{$N_2$}};
\node[below] at (-0.175,3.075) {\tiny{$M_2$}};
\end{tikzpicture}
\caption{Family of un-nested rectangles
$Q_{M_1, M_2}\not\subseteq Q_{N_1, N_2}$ with $M_1<N_1$ and $M_2>N_2$,
for $k=2$.}\label{fig:2}
\end{figure}
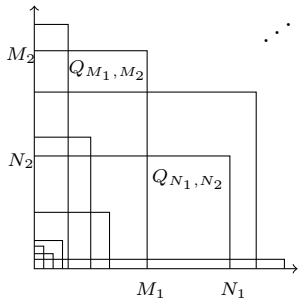

Our remedy to overcome the lack of nestedness will be to develop the
so-called multi-parameter circle method, which will be based on an
iterative implementation of the classical circle method.  Although
this idea sounds very simple it is fairly challenging to formalize it
in the context of Conjecture \ref{con:0}.
We remark that the  multi-parameter circle
method has been developed for many years in the context of various
problems arising in number theory, (see \cite{ACK} for more details and
references, including a comprehensive historical background), though it is not applicable directly in the ergodic context.
 We now highlight the key ingredients that we develop in this paper
 and that will lead us to develop the multi-parameter circle method in the context of Theorem \ref{thm:main''}:
\begin{itemize}
\item[(i)] ``Backwards'' Newton diagram is the key tool
allowing us to  overcome the problem with the lack of nestedness. In particular, it permits us to  understand geometric properties of the underlying polynomials in Theorem \ref{thm:main''} by extracting dominating monomials. The latter  are critical in making a distinction between minor and major arcs in the multi-parameter circle method.
As far as we know this is the first time when the concept of Newton diagrams is exploited in  problems concerning pointwise ergodic theory. We refer to Section \ref{section:4} for details. 
\item[(ii)] We derive new estimates for multi-parameter exponential sums arising in the analysis of Fourier multipliers corresponding to averages \eqref{eq:69}. In Section \ref{section:5} we build a theory of double exponential sums, which is dictated by the geometry of the corresponding ``backwards'' Newton diagrams. Although the theory of multi-parameter exponential sums is rich (see for example, \cite{ACK}) our results seem to be new and the idea of exploiting ``backwards'' Newton diagrams and iterative applications of the Vinogradov mean value theorem \cite{BDG} in estimates of exponential sums is quite efficient. 
\item[(iii)] A multi-parameter Ionescu--Wainger multiplier theory is developed in Section \ref{section:6}. The Ionescu--Wainger multiplier theorem \cite{IW} was originally proved for linear operators, see also \cite{M10, MSZ3, Pierce, TaoIW}. In this paper we prove a  semi-norm variant of the Ionescu--Wainger theory in the one-parameter setting, which is consequently upgraded to the multi-parameter setup.  ``Backwards'' Newton diagrams play an essential role in our considerations here as well.
\item[(iv)] Finally, we arrive at the stage where the multi-parameter circle method is feasible by a delicate iterative application of the classical circle method.
In this part of the argument the lack of nestedness is particularly unpleasant causing serious difficulties in controlling error terms that arise in estimating contributions of the corresponding Fourier multiplirs on minor and major arcs, which are genuinely multi-parameter. In Section  \ref{section:7} we illustrate how one can use all the tools developed in the previous sections to give a rigorous proof of Theorem \ref{thm:main''}.
\end{itemize}

We now take a closer look at the tools highlighted above. In Section \ref{section:4} we introduce the concept of ``backwards'' Newton diagram, which is the key to circumvent the difficulties caused by the lack of nestedness. The ``backwards'' Newton diagram splits the parameter space  into a finite number of sectors, where 
certain relations between parameters are given. In each of these sectors there is a dominating monomial which in turn gives rise to an implementation of the circle method to each of the sectors separately. The distinctions between minor and major arcs are then dictated by the degree of the associated dominating monomial. At this stage we eliminate minor arcs by invoking estimates of double exponential sums from Proposition \ref{prop:23}. This proposition is essential in our argument, its proof is given in Section \ref{section:5}. The key ingredients are Proposition \ref{prop:22}, which may be thought of as a two parameter counterpart of the classical Weyl's inequality, and the properties of the ``backwards'' Newton diagram. Although the theory of multi-parameter exponential sums has been developed over the years (see
\cite{ACK} for a comprehensive treatment of the subject), we require more delicate estimates than those available in the existing literature. In this paper we give an ad-hoc proof of Proposition \ref{prop:22}, which follows from an iterative application of Vinogradov's mean value theorem, and may be interesting in its own right. 
In Section \ref{section:5} we also develop estimates for complete exponential sums.
In Section \ref{section:6} we develop the Ionescu--Wainger multiplier theory for various semi-norms in one-parameter as well as in multi-parameter settings. Our result in the one-parameter setting, Theorem \ref{thm:IW1}, is formulated   for oscillations and maximal functions, but the proofs also work for $\rho$-variations or jumps. In fact, Theorem \ref{thm:IW1} is the starting point for establishing the corresponding multi-parameter Ionescu--Wainger theory for oscillations. The latter theorem  will be directly applicable in the analysis of multipliers associated with the averages $A_{M_1, M_2; X}^{P}f$. The results of Section \ref{section:6} are critical in our multi-parameter circle method that is presented in Section \ref{section:7}, as it allows us to efficiently control the error terms that arise on major arcs as well as the contribution coming from the main part. In contrast to the one-parameter theory \cite{B1, B2, B3}, the challenge here is to control, for instance, maximal functions corresponding to error terms. For this purpose all error terms have to be provided with asymptotic precision, which usually requires careful arguments. The details of the multi-parameter circle method are presented in Section \ref{section:7} in the context of the proof of Theorem \ref{thm:main''}.
\subsection{More about Conjecture \ref{con:3}}
Conjecture \ref{con:3} is one of the major open problems in
 pointwise ergodic theory, which seems to be very difficult due to
its multilinear nature.  Here, in light of the Arkhipov, Chubarikov and
Karatsuba \cite{ACK-fractional} equidistribution theory which works
also for multiple polynomials, it seems reasonable to propose a
slightly more modest problem (implied by Conjecture \ref{con:3}) though still very interesting and
challenging that can be subsumed under the following conjecture:

\begin{conjecture}
\label{con:1}
Let $d, k\in\ZZ_+$ be given and let $(X, \mathcal B(X), \mu)$ be a probability measure space endowed with a family  of invertible commuting  measure-preserving transformations $T_1,\ldots, T_d:X\to X$. Assume that $P_1,\ldots, P_d \in\ZZ[{\rm m}_1, \ldots, {\rm m}_k]$. Then for any $f\in L^{\infty}(X)$ the multi-parameter linear polynomial averages
\begin{align*}
A_{M_1,\ldots, M_k; X, T_1,\ldots, T_d}^{P_1, \ldots, P_d}f(x)=\EE_{m\in \prod_{j=1}^k[M_j]}f(T_1^{P_1(m)}\cdots T_d^{P_d(m)}x)
\end{align*}
converge for $\mu$-almost every $x\in X$, as  $\min\{M_1,\ldots, M_k\}\to\infty$.
\end{conjecture}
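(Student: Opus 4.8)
We conclude by outlining how the circle-method apparatus developed in this paper should bear on Conjecture \ref{con:1}. The plan is to run the multi-parameter circle method in a \emph{vector-valued} frequency setting. By the Calder\'on transference principle \cite{Cald} one reduces matters to the integer shift system on $\ZZ^d$ as in Example \ref{ex:1}: the averages $A_{M;\ZZ^d}^{P_1,\ldots,P_d}f$ become convolution operators whose $d$-parameter Fourier multiplier is
\begin{align}
\label{eq:newmult}
\widehat{A_M}(\xi)=\EE_{m\in Q_M}\ex\Big(-\sum_{i=1}^d\xi_iP_i(m)\Big),\qquad \xi=(\xi_1,\ldots,\xi_d)\in\TT^d.
\end{align}
The point is that \eqref{eq:newmult} is governed by the single \emph{combined polynomial} $\Xi(m):=\sum_{i=1}^d\xi_iP_i(m)\in\RR[{\rm m}_1,\ldots,{\rm m}_k]$, whose monomial set is contained in the fixed finite set $\bigcup_{i=1}^d\supp P_i$. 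One therefore attaches to this set the backwards Newton diagram of Section \ref{section:4}, obtaining a finite decomposition of the parameter space $\{M=(M_1,\ldots,M_k)\}$ into sectors, in each of which a single monomial $m^{\alpha}$ dominates. Within a fixed sector the decay of \eqref{eq:newmult} is dictated by the size of the coefficient of $m^{\alpha}$ in $\Xi$, which is the linear form $\ell_\alpha(\xi)=\sum_{i\colon\alpha\in\supp P_i}c_{i,\alpha}\xi_i$ in $\xi$, where $P_i=\sum_\alpha c_{i,\alpha}m^\alpha$; the major/minor-arc dissection is then performed relative to this linear form and the degree of $m^\alpha$.

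For the minor arcs one would prove a vector-valued analogue of Proposition \ref{prop:22}, i.e.\ a two-parameter (and, by iteration, $k$-parameter) Weyl-type inequality for $\EE_{m\in Q_M}\ex(\Xi(m))$ expressed through simultaneous rational approximations of the coefficients $\ell_\alpha(\xi)$ of the dominating part of $\Xi$. As in Section \ref{section:5}, this follows from iterated applications of Vinogradov's mean value theorem \cite{BDG}, now run with the linear forms $\ell_\alpha(\xi)$ in place of the individual polynomial coefficients. Summing the resulting $\ell^p$ and maximal bounds over the minor arcs of each sector, and then over the finitely many sectors, disposes of the minor-arc contribution, exactly as Proposition \ref{prop:23} is used in the single-polynomial case; the multi-parameter oscillation semi-norm calculus of Proposition \ref{prop:5} and Proposition \ref{prop:4} then packages these estimates as oscillation inequalities.

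On the major arcs, near a rational $a/q\in\TT^d$ one expects the asymptotic $\widehat{A_M}(\xi)\approx \mathfrak{G}(a/q)\,\Psi_M(\xi-a/q)$, where $\mathfrak{G}$ is a normalized complete exponential sum over $(\ZZ/q\ZZ)^d$ and $\Psi_M$ is a smooth $k$-parameter continuous multiplier of the shape already analyzed for $A_{M;X}^{P}$ in Theorem \ref{thm:main''}. The arithmetic factor $\mathfrak{G}$ is handled by the multi-parameter Ionescu--Wainger theory of Section \ref{section:6} (Theorem \ref{thm:IW1} and its multi-parameter upgrade), which applies once $\mathfrak{G}$ is shown to enjoy the same cancellation in $q$ as in the $d=1$ case; the continuous factor $\Psi_M$ obeys the requisite maximal and oscillation bounds by real-variable multi-parameter methods (here commutativity of the $T_i$ enters, both through the $\ZZ^d$-transference and through a Dunford--Zygmund-type iteration in the degenerate directions). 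Assembling the two regimes along the lines of Section \ref{section:7} would then yield the maximal and oscillation inequalities, hence norm and pointwise convergence, i.e.\ Conjecture \ref{con:1}.

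The main obstacle is the coupling between the frequency decomposition on $\TT^d$ and the sector decomposition of the $M$-space. Unlike Theorem \ref{thm:main}, where $d=1$ and a single scalar frequency $\xi$ interacts with a fixed backwards Newton diagram, here the forms $\ell_\alpha(\xi)$ degenerate on hyperplanes of $\TT^d$, so the \emph{effective} dominating monomial in a given sector — and with it the entire major/minor bookkeeping — depends on the direction of $\xi$; the circle method must be carried out uniformly across this stratification, with the ``non-effective'' directions of $\xi$ (where $\ell_\alpha$ is essentially constant) treated as additional, lower-dimensional major-arc regimes. The genuinely delicate point is then the control of the error terms on the major arcs, which now depend simultaneously on the sector and on the vector rational approximation $a/q$, and which must be furnished with asymptotic precision (as stressed after Theorem \ref{thm:main''}) in order to survive the $\TT^d$-worth of maximal and oscillation estimates. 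This is precisely where shared monomials among the $P_i$ bite: one cannot isolate the individual coefficients $\xi_i$, and the ad hoc Weyl-type and exponential-sum inputs have to be run with the forms $\ell_\alpha$ throughout.
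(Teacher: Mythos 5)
This statement is Conjecture \ref{con:1}, which the paper explicitly leaves \emph{open}: the authors write that ``it is not clear whether Conjecture \ref{con:1} is true for all polynomials'' and that its proof ``will require a deeper understanding and substantially new methods.'' What you have written is a research programme, not a proof — every key step is phrased as ``one would prove'', ``one expects'', ``is then handled by'' — and the places where you defer are exactly the places where the paper's methods are known to break down. The concrete obstruction, identified after \eqref{eq:75}, is the partially complete exponential sum
\begin{align*}
\frac{1}{M_1q}\sum_{m_1=1}^{M_1}\Big|\sum_{m_2=1}^q\ex(a_2m_2/q+a_3P(m_1, m_2)/q)\Big|,
\end{align*}
which must decay like $q^{-\delta}$ uniformly in $M_1$ when the circle method is run iteratively in the two variables. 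For polynomials with a linear partial degree (the paper's example is $P(m_1,m_2)=m_1^2m_2$) this decay fails when $M_1<q$, and your major-arc step — where you assert that the arithmetic factor $\mathfrak{G}$ ``is handled by the multi-parameter Ionescu--Wainger theory \ldots once $\mathfrak{G}$ is shown to enjoy the same cancellation in $q$ as in the $d=1$ case'' — silently assumes precisely the cancellation that is unavailable. This is why Theorem \ref{thm:main0} carries the hypothesis that all partial degrees are at least two, and why the general conjecture (where the $P_i$ may share monomials and have arbitrary partial degrees) is not accessible by these techniques.

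A secondary gap: your reduction to a single combined polynomial $\Xi(m)=\sum_i\xi_iP_i(m)$ does not yield a fixed backwards Newton diagram, because the coefficients $\ell_\alpha(\xi)$ vanish on hyperplanes of $\TT^d$, so the dominating monomial in a given sector of $M$-space changes with the direction of $\xi$. You correctly flag this as ``the main obstacle,'' but flagging it is not the same as resolving it; the sector decomposition of Section \ref{section:4}, the minor-arc estimates of Proposition \ref{prop:23}, and the error control of Section \ref{section:7} all rely on the diagram being determined by the polynomial alone, independently of the frequency. Until both of these issues are addressed, the proposal does not constitute a proof of Conjecture \ref{con:1}.
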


Even though we prove Conjecture \ref{con:0} here, it is not clear whether
Conjecture \ref{con:1} is true for all polynomials. If it is not true
for all polynomials, it would be interesting, in view of Theorem
\ref{thm:main0}, to characterize the class of those polynomials for
which Conjecture \ref{con:1} holds. Although the averages from Theorem
\ref{thm:main} and Theorem \ref{thm:main0} share a lot of difficulties
that arise in the general case there are some cases that are not
covered by the methods of this paper. An interesting difficulty arises for
the so-called partially complete exponential sums when we are seeking estimates  of the form
\begin{align}
\label{eq:75}
\frac{1}{M_1q}\sum_{m_1=1}^{M_1}\Big|\sum_{m_2=1}^q\ex(a_2m_2/q+a_3P(m_1, m_2)/q)\Big|\lesssim q^{-\delta},
\end{align}
for all $M_1, q\in\ZZ_+$ and some $\delta\in(0,1)$, whenever $(a_2, a_3, q)=1$. These kinds of estimates arise from applications of the circle method with respect to the second variable $m_2$ for the averages 
$A_{M_1, M_2; X}^{{\rm m}_1,{\rm m}_2 , P({\rm m}_1, \rm m_2)}f$ when we are at the stage of applying the circle method with respect to the first variable $m_1$. Here the assumption that $P$ has partial degrees  (as a polynomial of the variable $m_1$ and a polynomial of the variable $m_2$) at least two is essential. Otherwise, if $M_1<q$,  the decay $q^{-\delta}$ in \eqref{eq:75} is not possible. In order to see this it suffices to take $P(m_1, m_2)=m_1^2m_2$. A proof of Theorem \ref{thm:main0} for polynomials of this type as well as Conjecture \ref{con:1} will require a deeper understanding and substantially new methods. We believe that the
proof of Theorem \ref{thm:main} is an important contribution towards
understanding Conjecture \ref{con:1} that may shed new light on the
general case and either lead to its full resolution or to a
counterexample.  The second and fourth authors plan to pursue this problem in the future.

\subsection{In Memoriam}
It was a great privilege and an unforgettable experience for the
second and fourth authors to know and work with Elias M. Stein
(January 13, 1931 -- December 23, 2018) and Jean Bourgain (February
28, 1954 -- December 22, 2018).  Eli and Jean had an immeasurable
effect on our lives and careers.  It was a very sad time for us when we
learned that Eli and Jean passed away within an interval of one day in
December 2018.  We miss our friends and collaborators  dearly.

We now briefly describe how the collaboration on this project arose.
In 2011 the second and fourth authors started to work on some aspects
of a multi-parameter circle method in the context of various discrete
multi-parameter operators. These efforts resulted in a draft on
estimates for certain two-parameter exponential sums. This draft was
sent to the first author sometime in the first part of 2016.  In
October 2016, when the second author was a member of the Institute for
Advanced Study, it was realized (during a discussion between the first
two authors) that the estimates from this draft are closely related to
a multi-parameter Vinogradov's mean value theorem. This was
interesting to the first author who at that time was
involved in developing the theory of decoupling.  We also realized that some
ideas of a multi-parameter circle method from the draft of the second
and fourth authors may be upgraded and used in attacking a
multi-parameter variant of the Bellow and Furstenberg problem
formulated in Conjecture \ref{con:0}. That was the first time when the
second, third and fourth authors learned about this conjecture and
unpublished observations of the first author from the late 1980's that
resulted in establishing pointwise convergence in \eqref{eq:26}.  This
was the starting point of our collaboration. At that time another
question arose, which is also related to this paper.  It
is interesting whether a sharp multi-parameter variant of Vinogradov's
mean value theorem can be proved using the recent developments in the
decoupling theory from \cite{BDG}.  A multi-parameter Vinogradov's
mean value theorem was investigated in \cite{ACK}, but the bounds are
not optimal. So the question is about adapting the methods from
\cite{BDG} to the multi-parameter setting in order to obtain sharp
bounds, and their applications in the exponential sum estimates.

A substantial part of this project was completed at the end of
November/beginning of December 2016, when the fourth author visited
Princeton University and the Institute for Advanced Study. At that
time we discussed (more or less) all tools that were needed to
establish Theorem \ref{thm:main} for the monomial
$P(m_1, m_2)=m_1^2m_2^3$. Then we were convinced that we could
establish Conjecture \ref{con:1} in full generality, but various
difficulties arose when we started to work out the details and we
ultimately only managed to prove Theorem \ref{thm:main} and Theorem \ref{thm:main0}. The second
and fourth authors decided to illustrate the arguments in the
two-parameter setting and the reason is twofold. On the one hand, we
wanted to avoid introducing heavy multi-parameter notation capturing
all combinatorial nuances arising in this project. On the other hand,
what is more important we wanted to
illustrate the spirit of our discussions that took place in 2016. For instance the arguments
presented in Section \ref{section:5} can be derived by using Weyl
differencing argument, which may be even simpler and can be easily
adapted to the multi-parameter setting, though our presentation is very
close to the arguments that we developed in 2016, and also motivates the
question about the role of decoupling theory in the multi-parameter
Vinogradov's mean value theorem that we have stated above.

\subsection*{Acknowledgments}
We thank Mei-Chu Chang and Elly Stein who supported the idea of
completing this work. We thank Terry Tao for a fruitful discussion in
February 2015 about the estimates for multi-parameter exponential sums
and writing a very helpful blog on this subject \cite{TaoBlog}.  We
also thank Agnieszka Hejna, Dariusz Kosz and Bartosz Langowski for
careful reading of earlier versions of this manuscript and their
helpful comments and corrections.  Finally, we thank the referees for
careful reading of the manuscript and useful remarks that led to the
improvement of the presentation.

\section{Notation and useful tools}\label{section:2}
We now set up notation that will be used throughout the paper. We also collect useful tools and  basic properties of oscillation semi-norms that will be used in the paper. 

\subsection{Basic notation}  The set of positive integers and nonnegative
integers will be denoted respectively by $\ZZ_+:=\{1, 2, \ldots\}$ and
$\NN:=\{0,1,2,\ldots\}$. For $d\in\ZZ_+$ the sets $\ZZ^d$, $\RR^d$, $\CC^d$ and $\TT^d:=\RR^d/\ZZ^d$
have standard meaning.
For any $x\in\RR$ we will use the floor and fractional part functions
\begin{align*}
\lfloor x \rfloor: = \max\{ n \in \ZZ : n \le x \},
\qquad \text{ and } \qquad
\{x\}:=x-\lfloor x \rfloor.
\end{align*}
For $x, y\in\RR$ we shall also
write $x \vee y := \max\{x,y\}$ and $x \wedge y := \min\{x,y\}$.  
We denote $\RR_+:=(0, \infty)$ and
for every $N\in\RR_+$ we set
\[
[N]:=(0, N]\cap\ZZ=\{1, \ldots, \lfloor N\rfloor\},
\]
and we will also write
\begin{align*}
\NN_{\le N}:= [0, N]\cap\NN,\ \: \quad &\text{ and } \quad
\NN_{< N}:= [0, N)\cap\NN,\\
\NN_{\ge N}:= [N, \infty)\cap\NN, \quad &\text{ and } \quad
\NN_{> N}:= (N, \infty)\cap\NN.
\end{align*}
For any $\tau>1$ we will consider the set
\begin{align*}
\DD_{\tau}:=\{\tau^n: n\in\NN\}.
\end{align*}

For $a = (a_1,\ldots, a_n) \in \ZZ^n$ and $q\ge 1$ an integer, we denote by $(a,q)$ the greatest
common divisor of $a$ and $q$; that is, the largest integer $d\ge1$ that divides $q$ and all the components
$a_1, \ldots, a_n$. Clearly any vector in $\QQ^n$ has a unique representation as $a/q$ with $q\in \ZZ_{+}$,
$a \in \ZZ^n$ and $(a,q)=1$.

We use $\ind{A}$ to denote the indicator function of a set $A$. If $S$ 
is a statement we write $\ind{S}$ to denote its indicator, equal to $1$
if $S$ is true and $0$ if $S$ is false. For instance $\ind{A}(x)=\ind{x\in A}$.

Throughout the paper $C>0$ is an absolute constant which may
change from occurrence to occurrence. For two nonnegative quantities
$A, B$ we write $A \lesssim B$ if there is an absolute constant $C>0$
such that $A\le CB$. We will write $A \simeq B$ when
$A \lesssim B\lesssim A$.  We will write $\lesssim_{\delta}$ or
$\simeq_{\delta}$ to emphasize that the implicit constant depends on
$\delta$. For a function $f:X\to \CC$ and positive-valued function
$g:X\to (0, \infty)$, we write $f = O(g)$ if there exists a constant
$C>0$ such that $|f(x)| \le C g(x)$ for all $x\in X$. We will also
write $f = O_{\delta}(g)$ if the implicit constant depends on
$\delta$.

\subsection{Summation by parts}
For any real numbers $u<v$ and any sequences $(a_n:n\in\ZZ)\subseteq \CC$ and $(b_n:n\in\ZZ)\subseteq \CC$  we will use the following version of the summation by parts formula
\begin{align}
\label{eq:294}
\sum_{n\in(u, v]\cap\ZZ}a_nb_n=S_vb_{\lfloor v\rfloor}+\sum_{n\in(u, v-1]\cap\ZZ}S_n(b_n-b_{n+1}),
\end{align}
where $S_w:=\sum_{k\in(u, w]\cap\ZZ}a_k$ for any $w>u$.

\subsection{Euclidean spaces} For  $d\in\ZZ_+$ the set $\Set{e_i\in\RR^d}{i\in[d]}$ denotes the standard basis in
$\RR^d$. The standard inner product and the corresponding Euclidean norm on $\RR^d$ are denoted by 
\begin{align*}
x\cdot\xi:=\sum_{k=1}^dx_k\xi_k, \quad \text{ and } \quad
\abs{x}:=\abs{x}_2:=\sqrt{\ipr{x}{x}}
\end{align*}
for every $x=(x_1,\ldots, x_d)$ and $\xi=(\xi_1, \ldots, \xi_d)\in\RR^d$. 

Throughout the paper the $d$-dimensional torus $\TT^d$, which unless otherwise stated will be  identified with
$[-1/2, 1/2)^d$, is a priori endowed with the   periodic norm
\begin{align*}
\norm{\xi}:=\Big(\sum_{k=1}^d \norm{\xi_k}^2\Big)^{1/2}
\qquad \text{for}\qquad
\xi=(\xi_1,\ldots,\xi_d)\in\TT^d,
\end{align*}
where $\norm{\xi_k}=\dist(\xi_k, \ZZ)$ for all $\xi_k\in\TT$ and
$k\in[d]$.  However, identifying $\TT^d$ with $[-1/2, 1/2)^d$, we
see that the norm $\norm{\:\cdot\:}$ coincides with the Euclidean norm
$\abs{\:\cdot\:}$ restricted to $[-1/2, 1/2)^d$.

\subsection{Smooth functions}
The partial derivative of a differentiable function $f:\RR^d\to\CC$ with respect to the
$j$-th variable $x_j$ will be denoted by
$\partial_{x_j}f=\partial_j f$, while for any multi-index
$\alpha\in\NN^d$ let $\partial^{\alpha}f$ denote the derivative operator 
$\partial^{\alpha_1}_{x_1}\cdots \partial^{\alpha_d}_{x_d}f=\partial^{\alpha_1}_1\cdots \partial^{\alpha_d}_df$
of total order $|\alpha|:=\alpha_1+\ldots+\alpha_d$.

Let  $\eta:\RR\to[0, 1]$ be a smooth and even cutoff function such that
\begin{align*}
\ind{[-1, 1]}\le\eta\le \ind{[-2, 2]}.
\end{align*}
For any $n, \xi\in\RR$ we define
\begin{align*}
\eta_{\le n}(\xi):=\eta(2^{-n}\xi).
\end{align*}
For any $\xi=(\xi_1,\ldots, \xi_d)\in\RR^d$ and
$i\in[d]$ we also define
\begin{align*}
\eta_{\le n}^{(i)}(\xi):=\eta_{\le n}(\xi_i).
\end{align*}
More generally, for any 
$A=\{i_1,\ldots, i_m\}\subseteq [d]$ for some $m\in[d]$, and numbers $n_{i_1},\ldots, n_{i_m}\in\RR$ corresponding to the set $A$  we will write
\begin{align}
\label{eq:78}
\eta_{\le n_{i_1},\ldots, \le n_{i_m}}^{A}(\xi):=\prod_{j=1}^m\eta_{\le n_{i_j}}(\xi_{i_j})=\prod_{j=1}^m\eta_{\le n_{i_j}}^{(i_j)}(\xi).
\end{align}
If the elements of the set $A$ are ordered increasingly
$1\le i_1<\ldots< i_m\le d$ we will also write
\begin{align*}
\eta_{\le n_{i_1},\ldots, \le n_{i_m}}^{(i_1,\ldots, i_m)}(\xi):=\eta_{\le n_{i_1},\ldots, \le n_{i_m}}^{A}(\xi)=\prod_{j=1}^m\eta_{\le n_{i_j}}(\xi_{i_j})=\prod_{j=1}^m\eta_{\le n_{i_j}}^{(i_j)}(\xi).
\end{align*}
If $n_{i_1}=\ldots= n_{i_m}=n\in\RR$ we will abbreviate $\eta_{\le n_{i_1},\ldots, \le n_{i_m}}^{A}$ to $\eta_{\le n}^{A}$ and $\eta_{\le n_{i_1},\ldots, \le n_{i_m}}^{(i_1,\ldots, i_m)}$ to $\eta_{\le n}^{(i_1,\ldots, i_m)}$.

\subsection{Function spaces}
All vector spaces in this paper will be defined over the complex numbers $\CC$. 
The triple $(X, \mathcal B(X), \mu)$
is a measure space $X$ with $\sigma$-algebra $\mathcal B(X)$ and
$\sigma$-finite measure $\mu$.  The space of all $\mu$-measurable
complex-valued functions defined on $X$ will be denoted by $L^0(X)$.
The space of all functions in $L^0(X)$ whose modulus is integrable
with $p$-th power is denoted by $L^p(X)$ for $p\in(0, \infty)$,
whereas $L^{\infty}(X)$ denotes the space of all essentially bounded
functions in $L^0(X)$.
These notions can be extended to functions taking values in a finite
dimensional normed vector space $(B, \|\cdot\|_B)$, for instance
\begin{align*}
L^{p}(X;B)
:=\big\{F\in L^0(X;B):\|F\|_{L^{p}(X;B)} \coloneqq \left\|\|F\|_B\right\|_{L^{p}(X)}<\infty\big\},
\end{align*}
where $L^0(X;B)$ denotes the space of measurable functions from $X$ to
$B$ (up to almost everywhere equivalence). Of course, if $B$ is
separable, these notions can be extended to infinite-dimensional
$B$. In this paper, we will always be able to work in
finite-dimensional settings by appealing to  standard approximation arguments.
In our case we will usually have  $X=\RR^d$ or
$X=\TT^d$ equipped with Lebesgue measure, and   $X=\ZZ^d$ endowed with 
counting measure. If $X$ is endowed with counting measure we will
abbreviate $L^p(X)$ to $\ell^p(X)$ and $L^p(X; B)$ to $\ell^p(X; B)$.

If $T : B_1 \to B_2$ is a continuous linear  map between two normed
vector spaces $B_1$ and  $B_2$, we use $\|T\|_{B_1 \to B_2}$ to denote its
operator norm.

\medskip

The following extension of the Marcinkiewicz--Zygmund inequality to
the Hilbert space setting will be very useful in Section \ref{section:6}.
\begin{lemma}
\label{lem:10}
Let $(X, \mathcal B(X), \mu)$ be a $\sigma$-finite measure space endowed with
a family $T=(T_m: m\in\NN)$ of bounded linear operators
$T_m:L^p(X)\to L^p(X)$ for some $p\in(0, \infty)$.
 Suppose that 
\begin{align*}
A_p(T):=\sup_{(\omega_m:m\in\NN)\in\{-1, 1\}^{\NN}}\norm[\Big]{\sum_{m\in\NN}\omega_mT_m}_{L^p\to L^p}<\infty.
\end{align*}
Then there is a constant $C_p>0$ such that for every sequence
$(f_j: j\in\NN)\in L^p(X;\ell^2(\NN))$ we have
\begin{align}
\label{eq:175}
\norm[\Big]{\big(\sum_{j\in\NN}\sum_{m\in\NN}\abs{T_mf_j}^2\big)^{1/2}}_{L^p(X)}
\le C_pA_p(T)
\norm[\Big]{\big(\sum_{j\in\NN}\abs{f_j}^2\big)^{1/2}}_{L^p(X)}.
\end{align}
The index set $\NN$ in the inner sum  of \eqref{eq:175} can be
replaced by any other countable set and the result  remains valid.
\end{lemma}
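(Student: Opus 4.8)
The plan is to deduce \eqref{eq:175} from the classical scalar-to-$\ell^2$-valued Marcinkiewicz--Zygmund inequality applied to the randomised operators
$S_{\omega}:=\sum_{m\in\NN}\omega_m T_m$, indexed by $\omega=(\omega_m:m\in\NN)\in\{-1,1\}^{\NN}$, whose $L^p(X)\to L^p(X)$ operator (quasi-)norms are all at most $A_p(T)$ by hypothesis. By monotone convergence it suffices to prove \eqref{eq:175} with the outer sum over $j$ (and, if one wishes, the inner family of operators) restricted to a finite set, the constant being independent of that restriction, so below all sums may be taken finite and no convergence issue arises.

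First I would invoke the Hilbert-space-valued Khintchine--Kahane inequality: for any Hilbert space $H$, any finite family $(h_m)\subset H$, and any $0<p<\infty$,
\[
\Bigl(\EE_{\omega}\bigl\|\textstyle\sum_m\omega_m h_m\bigr\|_H^p\Bigr)^{1/p}\simeq_p\Bigl(\sum_m\|h_m\|_H^2\Bigr)^{1/2},
\]
which follows by combining the orthogonality identity $\EE_{\omega}\|\sum_m\omega_m h_m\|_H^2=\sum_m\|h_m\|_H^2$ with the Kahane inequality comparing the $L^p(\omega)$ and $L^2(\omega)$ norms of an $H$-valued Rademacher sum. Applying this for (almost) every fixed $x\in X$ with $H=\ell^2(\NN)$ indexed by $j$ and $h_m=(T_m f_j(x))_{j\in\NN}$, so that $\sum_m\omega_m h_m=(S_\omega f_j(x))_j$, then raising to the power $p$, integrating in $x$, and using Tonelli to exchange $\int_X\dif\mu$ with $\EE_\omega$, I obtain
\[
\Bigl\|\bigl(\textstyle\sum_{j}\sum_m|T_m f_j|^2\bigr)^{1/2}\Bigr\|_{L^p(X)}^p\simeq_p\EE_\omega\Bigl\|\bigl(\textstyle\sum_j|S_\omega f_j|^2\bigr)^{1/2}\Bigr\|_{L^p(X)}^p.
\]

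It then remains to bound, uniformly in $\omega$, the quantity $\|(S_\omega f_j)_j\|_{L^p(X;\ell^2(\NN))}$ by $C_pA_p(T)\|(f_j)_j\|_{L^p(X;\ell^2(\NN))}$. This is precisely the classical Marcinkiewicz--Zygmund inequality for the bounded linear operator $S_\omega$, whose short proof I would include for completeness: applying the scalar Khintchine--Kahane inequality twice (once to $(S_\omega f_j(x))_j$, once to $(f_j(x))_j$) and using the \emph{linearity} of $S_\omega$ to write $\sum_j\epsilon_j S_\omega f_j=S_\omega\bigl(\sum_j\epsilon_j f_j\bigr)$, one gets
\[
\bigl\|(S_\omega f_j)_j\bigr\|_{L^p(X;\ell^2)}^p\simeq_p\EE_\epsilon\bigl\|S_\omega\bigl(\textstyle\sum_j\epsilon_j f_j\bigr)\bigr\|_{L^p(X)}^p\le\|S_\omega\|_{L^p\to L^p}^p\,\EE_\epsilon\bigl\|\textstyle\sum_j\epsilon_j f_j\bigr\|_{L^p(X)}^p\simeq_p A_p(T)^p\bigl\|(f_j)_j\bigr\|_{L^p(X;\ell^2)}^p.
\]
Feeding this back into the previous display and taking $p$-th roots yields \eqref{eq:175}. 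Since no step uses any structure of the index set labelling the operators $T_m$, it may be replaced by an arbitrary countable set, which gives the final assertion of the lemma.

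The argument is essentially a formal rearrangement, so there is no serious obstacle; the one point demanding a little care is the range $0<p<1$, where $\|\cdot\|_{L^p(X)}$ is only a quasi-norm. There one must avoid Minkowski's inequality, keep every manipulation at the level of $p$-th powers (so that only Tonelli is needed to interchange integrals and expectations), and rely on the fact that both the scalar and the Hilbert-space-valued Khintchine--Kahane inequalities remain valid, with constants depending only on $p$, for all $0<p<\infty$. Everything else is routine.
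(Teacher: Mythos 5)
Your argument is correct and is essentially the standard double Khintchine--Kahane randomization proof; the paper gives no proof of its own but cites \cite{MST1}, where the same argument (Hilbert-space-valued Khintchine pointwise in $x$ to recover the $m$-square function, followed by the scalar Marcinkiewicz--Zygmund step using linearity of $S_\omega$) is used, and your treatment of the quasi-norm range $0<p<1$ by working with $p$-th powers is the right one. The only detail worth recording is that after truncating to a finite index set $F$ one needs $\sup_\omega\|\sum_{m\in F}\omega_m T_m\|_{L^p\to L^p}\lesssim_p A_p(T)$, which follows by averaging the two sign sequences extending $\omega|_F$ that differ by a global sign flip off $F$ (with a constant $2^{1/p-1}$ when $p<1$).
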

The proof of Lemma \ref{lem:10} can be found in \cite{MST1}.

\subsection{Fourier transform}  
We shall write $\ex(z)=e^{2\pi {\bm i} z}$ for
every $z\in\CC$, where ${\bm i}^2=-1$. Let $\calF_{\RR^d}$ denote the Fourier transform on $\RR^d$ defined for
any $f \in L^1(\RR^d)$ and for any $\xi\in\RR^d$ as
\begin{align*}
\calF_{\RR^d} f(\xi) := \int_{\RR^d} f(x) \ex(x\cdot\xi) d x.
\end{align*}
If $f \in \ell^1(\ZZ^d)$ we define the discrete Fourier
transform (Fourier series) $\calF_{\ZZ^d}$, for any $\xi\in \TT^d$, by setting
\begin{align*}
\calF_{\ZZ^d}f(\xi): = \sum_{x \in \ZZ^d} f(x) \ex(x\cdot\xi).
\end{align*}
Sometimes we shall abbreviate $\calF_{\ZZ^d}f$ to $\hat{f}$.

Let $\GG=\RR^d$ or $\GG=\ZZ^d$. The corresponding dual groups are $\GG^*=(\RR^d)^*=\RR^d$ or $\GG^*=(\ZZ^d)^*=\TT^d$ respectively.
For any bounded function $\mathfrak m: \GG^*\to\CC$ and a test function $f:\GG\to\CC$ we define the Fourier multiplier operator  by 
\begin{align}
\label{eq:100}
T_{\GG}[\mathfrak m]f(x):=\int_{\GG^*}\ex(-\xi\cdot x)\mathfrak m(\xi)\calF_{\GG}f(\xi)d\xi, \quad \text{ for } \quad x\in\GG.
\end{align}
One may think that $f:\GG\to\CC$ is a compactly supported function on $\GG$ (and smooth if $\GG=\RR^d$) or any other function for which \eqref{eq:100} makes sense.

Let $\RR_{\le d}[{\rm x}_1,\ldots, {\rm x}_n]$ be the vector space of all polynomials on $\RR^n$ of degree at most $d\in\ZZ_+$, which is equipped with the norm $\|P\|:=\sum_{0\le|\beta|\le d}|c_{\beta}|$ whenever
\begin{align*}
P(x)=\sum_{0\le|\beta|\le d}c_{\beta}x_1^{\beta_1}\cdots x_n^{\beta_n} \quad \text{ for } \quad x=(x_1,\ldots, x_n)\in\RR^n.
\end{align*}
We now formulate a multidimensional variant of the van der Corput lemma for polynomials that will be useful in our further applications.
\begin{proposition}\label{thm:CCW}
For each $d, n\in\ZZ_+$ there exists a constant $C_{d, n}>0$ such that
for any  $P\in \RR_{\le d}[{\rm x}_1,\ldots, {\rm x}_n]$ with $P(0) = 0$, one has
\begin{align*}
  \bigg|\int_{[0, 1]^n}\ex(P(x))dx\bigg|\le C_{d, n}\|P\|^{-1/d}.
\end{align*}
\end{proposition}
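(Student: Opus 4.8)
The plan is to prove Proposition~\ref{thm:CCW} by induction on the dimension $n$, reducing the multidimensional oscillatory integral bound to the classical one-dimensional van der Corput lemma for polynomials. First I would recall the base case $n=1$: for a real polynomial $P$ of degree at most $d$ with $P(0)=0$, the estimate $\big|\int_0^1 \ex(P(x))\,dx\big|\lesssim_d \|P\|^{-1/d}$ is a standard sublevel/van der Corput estimate. The usual route is to observe that since $P(0)=0$, some coefficient $c_j$ with $1\le j\le d$ satisfies $|c_j|\gtrsim_d\|P\|$, hence some $j$-th derivative $P^{(j)}$ is bounded below by $\gtrsim_d\|P\|$ (after noting $\|P\|$ controls all lower-order terms), and then the classical van der Corput lemma with $k=j$ derivatives gives decay $\lesssim_d |c_j|^{-1/j}\ge\|P\|^{-1/d}$ (using $\|P\|$ can be taken $\gtrsim 1$ or rescaling, since both sides are bounded by a constant when $\|P\|$ is small). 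One must be slightly careful: van der Corput with $k\ge2$ derivatives requires no monotonicity assumption, and for $k=1$ one needs monotonicity of $P'$, which holds on subintervals where $P''$ does not vanish—so it is cleanest to argue via the sublevel set estimate $|\{x\in[0,1]:|P(x)|\le\lambda\}|\lesssim_d(\lambda/\|P\|)^{1/d}$ together with a dyadic decomposition of the range of $\ex(P)$, or simply cite a known form of this lemma (e.g. Arkhipov--Chubarikov--Karatsuba, or the standard reference in analytic number theory).

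The inductive step: assume the result in dimension $n-1$. Write $x=(x',x_n)$ with $x'\in[0,1]^{n-1}$ and expand $P(x',x_n)=\sum_{\ell=0}^{d} Q_\ell(x')\, x_n^\ell$ where each $Q_\ell\in\RR_{\le d}[{\rm x}_1,\ldots,{\rm x}_{n-1}]$. The coefficients of the $Q_\ell$ are exactly the coefficients of $P$, so $\|P\|=\sum_{\ell=0}^d\|Q_\ell\|$ (up to the convention on the constant term of $Q_0$). I would split into two cases according to whether the ``genuinely $x_n$-dependent part'' is large. \textbf{Case 1:} $\sum_{\ell=1}^d\|Q_\ell\|\ge \tfrac12\|P\|$. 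Then for a fixed $x'$, the one-variable polynomial $x_n\mapsto P(x',x_n)-P(x',0)=\sum_{\ell=1}^d Q_\ell(x')x_n^\ell$ vanishes at $0$; integrating first in $x_n$ and applying the $n=1$ case would give decay $\|(Q_\ell(x'))_{\ell\ge1}\|^{-1/d}$, but this is not uniform in $x'$ because $Q_\ell(x')$ can be small on part of the cube—this is the main obstacle (see below), and I would handle it with a further sublevel-set argument in $x'$.

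\textbf{Case 2:} $\|Q_0\|\ge\tfrac12\|P\|$, i.e. the part of $P$ independent of $x_n$ carries most of the norm. Here $Q_0(x')=P(x',0)$ has $Q_0(0)=P(0)=0$, so one is tempted to just integrate in $x'$ and apply the inductive hypothesis—but the integrand still depends on $x_n$, so one cannot directly apply the $(n-1)$-dimensional statement to $\int ex(P(x',x_n))dx'$ as if the phase were $Q_0(x')$. Instead I would keep the full phase and use the elementary stability observation that Proposition~\ref{thm:CCW} for a polynomial with leading-norm-carrying part $Q_0$ can be reduced to the $Q_0$ estimate by treating $P(x',x_n)-Q_0(x')$ as a bounded perturbation only when it is small—which it need not be. The correct uniform tool is the \emph{sublevel set} formulation: prove by induction the stronger statement that $|\{x\in[0,1]^n:|\nabla_{\text{some fixed order derivative}}P|\le\lambda\}|$ is small, or more directly prove the oscillatory estimate via the following device. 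Choose the coefficient $c_\beta$ of $P$ with $1\le|\beta|$ and $|c_\beta|\gtrsim_{d,n}\|P\|$; say $\beta$ involves the variable $x_j$ to a positive power, so $\partial_j^{|\beta|}$ applied appropriately, or simpler: there is a multi-index $\beta$ with $|c_\beta|\ge\|P\|/\binom{n+d}{n}$ and $|\beta|\ge1$, hence some pure partial derivative $\partial_j^{m}P$ with $m=\beta_j\ge1$ (for the $j$ with $\beta_j\ge1$) has a coefficient of size $\gtrsim\|P\|$ in its expansion as a polynomial in $x_j$ with coefficients polynomials in the other variables.

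Given this, the cleanest complete argument—and the one I would actually write—uses the one-dimensional van der Corput lemma slicewise with a uniform lower bound obtained after a \emph{second} sublevel-set reduction. Concretely: pick $j$ and $m\ge1$ so that the coefficient of $x_j^m$ in $P$, viewed as a polynomial $c_m(x_{\widehat j})$ in the remaining variables $x_{\widehat j}\in[0,1]^{n-1}$, is not identically zero and in fact $\|c_m\|\gtrsim_{d,n}\|P\|$ (this follows from $|c_\beta|\gtrsim\|P\|$ for some $|\beta|\ge1$). Then $\partial_j^m P(x) = m!\, c_m(x_{\widehat j}) + (\text{terms with higher powers of }x_j)$; but rather than controlling $\partial_j^m P$ pointwise, integrate in $x_j$ first: by the one-dimensional van der Corput lemma applied to $x_j\mapsto P(x)$ (whose $x_j$-expansion has the coefficient $c_m(x_{\widehat j})$ in degree $m$, and after subtracting the value at $x_j=0$ still has that coefficient), we get
\begin{align}
\label{eq:proofCCW1}
\Big|\int_0^1 \ex(P(x))\,dx_j\Big| \lesssim_{d,n} |c_m(x_{\widehat j})|^{-1/d}.
\end{align}
Integrating \eqref{eq:proofCCW1} over $x_{\widehat j}\in[0,1]^{n-1}$ and decomposing dyadically according to the size of $|c_m(x_{\widehat j})|$ together with the sublevel-set estimate $|\{x_{\widehat j}\in[0,1]^{n-1}: |c_m(x_{\widehat j})|\le\lambda\}|\lesssim_{d,n}(\lambda/\|c_m\|)^{1/d}$ (which is itself the $(n-1)$-dimensional sublevel version, provable by the same induction, or by a classical real-polynomial sublevel estimate since $c_m$ has degree $\le d$ and $\|c_m\|\gtrsim\|P\|$), we obtain, after summing the resulting geometric-type series in $d$-powers,
\begin{align}
\label{eq:proofCCW2}
\Big|\int_{[0,1]^n}\ex(P(x))\,dx\Big| \lesssim_{d,n} \|c_m\|^{-1/d} \lesssim_{d,n} \|P\|^{-1/d},
\end{align}
which is the claim. \emph{The main obstacle} is exactly the non-uniformity in \eqref{eq:proofCCW1}: the one-dimensional decay degenerates where the relevant coefficient polynomial $c_m(x_{\widehat j})$ is small, and the whole point of the induction is that the set where it is small is controlled by a polynomial sublevel estimate of the correct power $1/d$—so the induction must be set up to carry both the oscillatory estimate and the companion sublevel estimate simultaneously. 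Once that bookkeeping is in place, everything else is the routine dyadic summation shown in \eqref{eq:proofCCW2}.
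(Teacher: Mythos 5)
First, note that the paper does not actually prove this proposition: it is quoted with a citation to Carbery--Christ--Wright \cite[Corollary 7.3]{CCW} (see also \cite{ACK}). So the only meaningful comparison is with the strategy of that cited source, and your outline --- slicewise one-dimensional van der Corput in a well-chosen variable, combined with a sublevel-set estimate for the coefficient polynomial in the remaining variables --- is indeed the standard CCW-style route. You also correctly identify the main obstacle (non-uniformity of the slicewise bound where $c_m(x_{\widehat{j}})$ is small) and the correct remedy (a polynomial sublevel-set estimate). The preliminary ``Case 1 / Case 2'' discussion is a dead end that you yourself abandon, and could simply be deleted.

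There is, however, one genuine quantitative gap in the final summation. You invoke the sublevel bound $|\{x_{\widehat{j}}:|c_m(x_{\widehat{j}})|\le\lambda\}|\lesssim_{d,n}(\lambda/\|c_m\|)^{1/d}$ and then claim the dyadic pieces sum as a ``geometric-type series.'' They do not: with pointwise decay $|c_m|^{-1/d}$ and sublevel exponent $1/d$, the two exponents cancel exactly, every dyadic scale $2^k\lesssim\|c_m\|$ contributes a constant multiple of $\|c_m\|^{-1/d}$, and the sum produces $\|c_m\|^{-1/d}\log\|c_m\|$ rather than $\|c_m\|^{-1/d}$. Equivalently, $\int_{[0,1]^{n-1}}\min\bigl(1,|c_m|^{-1/d}\bigr)\,dx_{\widehat{j}}=\int_0^1|\{|c_m|<s^{-d}\}|\,ds$ picks up a logarithm when the sublevel exponent is $1/d$. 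The fix is available but you missed it: since the monomial $c_\beta x^\beta$ has $\beta_j=m\ge1$ and total degree at most $d$, the coefficient polynomial $c_m$ has degree at most $d-m\le d-1$ in the remaining $n-1$ variables (you wrote ``degree $\le d$''), so the correct sublevel exponent is $1/(d-m)$, which strictly beats $1/d$; the dyadic sum then really is geometric and evaluates to $\lesssim\|c_m\|^{-1/(d-m)}\cdot\|c_m\|^{m/(d(d-m))}=\|c_m\|^{-1/d}$, with the degenerate case $m=d$ handled trivially because $c_m$ is then a constant of size $\gtrsim\|P\|$. With that one correction (and a precise statement of the $1$-dimensional base case and of the sublevel lemma for polynomials of degree $\le d-m$ in $n-1$ variables, both of which are standard), your argument closes.
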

The proof of Proposition \ref{thm:CCW} can be found in \cite[Corollary 7.3., p. 1008]{CCW}, see also \cite[Section 1]{ACK}.

\subsection{Comparing sums to integrals} A well-known but useful lemma comparing
sums to integrals is the following.  The proof can be found in \cite[Chapter V]{Zygmund}, see also \cite{V}.

\begin{lemma}\label{sum-integral} Suppose $f:[a,b] \to \RR$ is $C^1$ such that $f'$ is monotonic and $|f'(s)| \le 1/2$ on $[a,b]$. Then there is an 
absolute constant $A$ such that
\begin{align*}
\Big| \sum_{a<n\le b}\ex(f(n)) \ - \ \int_a^b \ex(f(s)) ds \Big| \ \le \ A.
\end{align*}
\end{lemma}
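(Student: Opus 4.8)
\emph{Proof sketch.} Write $S:=\sum_{a<n\le b}\ex(f(n))$ and $I_0:=\int_a^b\ex(f(s))\,ds$ for the two quantities appearing in the lemma, set $\rho(s):=\{s\}-\tfrac12$ (the $1$-periodic sawtooth) and $g(s):=\ex(f(s))$, so $g\in C^1[a,b]$ and $g'(s)=2\pi{\bm i}f'(s)\ex(f(s))$. The starting point is the Euler--Maclaurin / Riemann--Stieltjes identity
\begin{align*}
\sum_{a<n\le b}g(n)=\int_a^b g(s)\,ds-g(b)\rho(b)+g(a)\rho(a)+\int_a^b\rho(s)\,g'(s)\,ds,
\end{align*}
obtained by writing $\sum_{a<n\le b}g(n)=\int_{(a,b]}g(s)\,d\lfloor s\rfloor$, substituting $\lfloor s\rfloor=s-\tfrac12-\rho(s)$, and integrating by parts (the choice of the value of $\rho$ at integer endpoints only alters the identity by an $O(1)$ amount). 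Since $|g|\equiv1$ and $\|\rho\|_\infty=\tfrac12$, the two boundary terms are $O(1)$, so the lemma reduces to the uniform bound $\bigl|\int_a^b\rho(s)g'(s)\,ds\bigr|\le C$.

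To prove this bound I would Fourier-expand $\rho$ through its Cesàro (Fejér) means $\sigma_N\rho$, which satisfy $\sigma_N\rho\to\rho$ pointwise off $\ZZ$ and $\|\sigma_N\rho\|_\infty\le\|\rho\|_\infty=\tfrac12$ (the Fejér kernel being a probability density), and have the explicit form $\sigma_N\rho(s)=\sum_{0<|h|\le N}(1-\tfrac{|h|}{N+1})\tfrac{-1}{2\pi{\bm i}h}\ex(hs)$. Since $g'\in L^1[a,b]$, dominated convergence yields
\begin{align*}
\int_a^b\rho(s)g'(s)\,ds=-\sum_{h\in\ZZ\setminus\{0\}}\frac{1}{h}\,T_h,\qquad T_h:=\int_a^b f'(s)\,\ex\bigl(f(s)+hs\bigr)\,ds,
\end{align*}
provided the series on the right converges absolutely. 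The decisive estimate is therefore
\begin{align*}
|T_h|\le\frac{3}{\pi|h|}\qquad\text{for every integer }h\ne0.
\end{align*}

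To get it, I would integrate by parts in $T_h$ using $\ex(f(s)+hs)\,ds=\frac{1}{2\pi{\bm i}(f'(s)+h)}\,d\!\bigl[\ex(f(s)+hs)\bigr]$, which is legitimate because $|f'|\le\tfrac12$ forces $|f'(s)+h|\ge|h|-\tfrac12\ge|h|/2>0$; the boundary term is at most $\tfrac{1}{\pi|h|}$ (from $\tfrac{|f'|}{2\pi|f'+h|}\le\tfrac{1}{2\pi|h|}$), and the remaining Stieltjes integral equals $\int_a^b\ex(f(s)+hs)\,\tfrac{h}{2\pi{\bm i}(f'(s)+h)^2}\,df'(s)$, whose modulus is at most $\tfrac{|h|}{2\pi}\cdot\tfrac{4}{|h|^2}\cdot\mathrm{Var}_{[a,b]}(f')=\tfrac{2}{\pi|h|}\,|f'(b)-f'(a)|\le\tfrac{2}{\pi|h|}$, because $f'$ is monotone with values in $[-\tfrac12,\tfrac12]$ and hence has total variation at most $1$. (For $f$ merely $C^1$ one reads this in the Lebesgue--Stieltjes sense, or approximates $f'$ by $C^1$ monotone functions; for $f\in C^2$ it is ordinary integration by parts with $\int_a^b|f''|=|f'(b)-f'(a)|$.) Summing, $\bigl|\int_a^b\rho g'\bigr|\le\sum_{h\ne0}|T_h|/|h|\le\tfrac{3}{\pi}\sum_{h\ne0}h^{-2}=\tfrac{3}{\pi}\cdot\tfrac{\pi^2}{3}=\pi$, so $|S-I_0|\le\tfrac12+\tfrac12+\pi$, giving the lemma with $A=1+\pi$.

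The step I expect to be the real obstacle is obtaining the gain $|T_h|\lesssim|h|^{-1}$ rather than merely $|T_h|=O(1)$: the naive splitting $f'=(f'+h)-h$ exhibits $T_h$ as a difference of two $O(1)$ quantities, and $\sum_{h\ne0}|h|^{-1}\cdot O(1)$ diverges logarithmically, so one must integrate by parts keeping the weight $(f'(s)+h)^{-1}$ intact. It is precisely here that both hypotheses are used, and used sharply: $|f'|\le\tfrac12<1$ keeps $f'(s)+h$ away from $0$ for every $h\ne0$, while the monotonicity of $f'$ keeps $\mathrm{Var}(f')$ bounded. Everything else is routine, and the same argument yields the lemma — with a constant depending on $\theta$ — whenever $|f'|\le\theta<1$.
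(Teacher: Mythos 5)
Your proof is correct, and it is essentially the classical argument behind this lemma (the paper gives no proof of its own, citing Zygmund, Ch.~V, and Vinogradov): Euler--Maclaurin with the sawtooth $\rho$, Fourier expansion of $\rho$, and the crucial $O(1/|h|)$ gain for $\int_a^b f'(s)\ex(f(s)+hs)\,ds$ obtained by partial integration against $\ex(f(s)+hs)$, using $|f'|\le 1/2$ to keep $f'+h$ away from zero and the monotonicity of $f'$ to control the variation of $f'/(f'+h)$ --- exactly where the second mean value theorem is invoked in the classical treatments. Your identification of the $|T_h|\lesssim |h|^{-1}$ estimate as the decisive step, rather than the divergent naive bound, is precisely right.
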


\subsection{Coordinatewise order $\preceq$}  For any $x=(x_1,\ldots, x_k)\in\RR^k$ and $y=(y_1,\ldots, y_k)\in\RR^k$ 
we say  $x\preceq y$ if an only if $x_i\le y_i$ for each $i\in[k]$.
We also write $x\prec y$ if an only if $x\preceq y$ and
$x\neq y$, and $x\prec_{\rm s} y$ if an only if $x_i< y_i$ for each
$i\in[k]$. Let $\II\subseteq \RR^k$ be an index set
such that $\# \II\ge2$ and 
for every $J\in\ZZ_+\cup\{\infty\}$
define the set
\begin{align}
\label{eq:77}
\mathfrak S_J(\II):
=
\Set[\big]{(t_i:i\in\NN_{\le J})\subseteq \II}{t_{0}\prec_{\rm s}
t_{1}\prec_{\rm s}\ldots \prec_{\rm s}t_{J}},
\end{align}
where $\NN_{\le \infty}:=\NN$.
In other
words, $\mathfrak S_J(\II)$ is a family of all strictly increasing
sequences (with respect to the coordinatewise order) of length $J+1$
taking their values in the set $\II$.

\subsection{Oscillation semi-norms}
Let $\II\subseteq \RR^k$ be an index set such that $\#{\II}\ge2$. Let $(\mathfrak a_{t}(x): t\in\II)\subseteq\CC$ be a $k$-parameter
family of measurable functions defined on $X$. For any
$\JJ\subseteq \II$ and a sequence
$I=(I_i : i\in\NN_{\le J}) \in \mathfrak S_J(\II)$ the multi-parameter
oscillation semi-norm is defined by
\begin{align}
\label{eq:102}
O_{I, J}(\mathfrak a_{t}(x): t \in \JJ):=
\Big(\sum_{j=0}^{J-1}\sup_{t\in \BB[I,j]\cap\JJ}
\abs{\mathfrak a_{t}(x) - \mathfrak a_{I_j}(x)}^2\Big)^{1/2},
\end{align}
where
$\BB[I,i]:=[I_{i1}, I_{(i+1)1})\times\ldots\times[I_{ik}, I_{(i+1)k})$
is a box determined by the element $I_i=(I_{i1}, \ldots, I_{ik})$ of
the sequence $I\in \mathfrak S_J(\II)$. 
In order to avoid problems with measurability we always assume that
$\II\ni t\mapsto \mathfrak a_{t}(x)\in\CC$ is continuous for
$\mu$-almost every $x\in X$, or $\JJ$ is countable. We also use the
convention that the supremum taken over the empty set is zero.
\begin{remark}
\label{rem:1}
Some remarks concerning the definition of oscillation semi-norms are in order.
\begin{enumerate}[label*={\arabic*}.]
\item Clearly, $O_{I, J}(\mathfrak a_{t}: t \in \JJ)$ defines a semi-norm.

\item  Let $\II\subseteq \RR^k$ be an index set such that $\#{\II}\ge2$, and let $\JJ_1, \JJ_2\subseteq \II$ be disjoint. Then for any family $(\mathfrak a_t:t\in\II)\subseteq \CC$,  any $J\in\ZZ_+$ and any $I\in\mathfrak S_J(\II)$ one has
\begin{align}
\label{eq:116}
O_{I, J}(\mathfrak a_{t}: t\in\JJ_1\cup\JJ_2)\le O_{I, J}(\mathfrak a_{t}: t\in\JJ_1)+O_{I, J}(\mathfrak a_{t}: t\in\JJ_2).
\end{align}

\item Let $\II\subseteq \RR^k$ be a countable index set such that $\#{\II}\ge2$ and $\JJ\subseteq \II$. 
Then 
 for any family $(\mathfrak a_t:t\in\II)\subseteq \CC$, any  $J\in\ZZ_+$, any  $I\in\mathfrak S_J(\II)$  one has
 \begin{align*}
 O_{I, J}(\mathfrak a_{t}: t \in \JJ)\lesssim \Big(\sum_{t\in\II}|\mathfrak a_{t}|^2\Big)^{1/2}.
 \end{align*}
\item Let $\II\subseteq \RR^k$ be a countable index set such that $\#{\II}\ge2$.
For $l\in[k]$,
let $\proj_l:\RR^k \to \RR$ be the $l$th coordinate projection. 
Note that for any family $(\mathfrak a_t:t\in\II)\subseteq \CC$, any  $J\in\ZZ_+$, any  $I\in\mathfrak S_J(\II)$ and any $l\in[k]$ one has
\begin{align}
\label{eq:38}
\begin{split}
&O_{I, J}(\mathfrak a_{t}: t\in\II)=\Big(\sum_{j=0}^{J-1}\sup_{t\in\BB[I,j]\cap\II}|\mathfrak a_{t}-\mathfrak a_{I_j}|^2\Big)^{1/2}\\
&\hspace{1cm}\lesssim \Big(\sum_{t_l\in\proj_l(\II)}\sup_{\substack{(t_1,\ldots, t_{l-1},  t_{l+1},\ldots, t_k)\in\prod_{i\in[k]\setminus\{l\}}\proj_i(\II)\\(t_1,\ldots, t_{l-1}, t_l, t_{l+1},\ldots, t_k)\in\II}}|\mathfrak a_{(t_1,\ldots, t_{l-1}, t_l, t_{l+1},\ldots, t_k)}|^2\Big)^{1/2},
\end{split}
\end{align}
where $\proj_l(\II) \subset \RR$ is the image of $\II$ under $\proj_l$.
Inequality \eqref{eq:38} will be repeatedly used in Section \ref{section:7}. It is important to note that the parameter $t\in\II$ in the definition of oscillations and the sequence $I\in\mathfrak S_J(\II)$ both take values in $\II$.

\item We also recall the definition of $\rho$-variations. For any $\mathbb I\subseteq \RR$, any family $(\mathfrak a_t: t\in\mathbb I)\subseteq \CC$, and any exponent
$1 \leq \rho < \infty$, the $\rho$-variation semi-norm is defined to be
\begin{align*}
V^{\rho}( \mathfrak a_t: t\in\mathbb I):=
\sup_{J\in\ZZ_+} \sup_{\substack{t_{0}<\dotsb<t_{J}\\ t_{j}\in\mathbb I}}
\Big(\sum_{j=0}^{J-1}  |\mathfrak a_{t_{j+1}}-\mathfrak a_{t_{j}}|^{\rho} \Big)^{1/\rho},
\end{align*}
where the supremum is taken over all finite increasing sequences in
$\mathbb I$.

It is clear that for any $\II\subseteq \RR$  such that $\#{\II}\ge2$,
any $J\in\ZZ_+\cup\{\infty\}$ and any sequence $I=(I_i : i\in\NN_{\le J}) \in \mathfrak S_J(\II)$ one has
\begin{align}
\label{eq:137}
O_{I, J}(\mathfrak a_{t}: t \in \II)\le V^{\rho}( \mathfrak a_t: t\in\mathbb I),
\end{align}
whenever $1\le \rho\le 2$.

\item Inequality \eqref{eq:137} allows us to
deduce the Rademacher--Menshov inequality for oscillations, which
asserts that for any  $j_0, m\in\NN$ so that $j_0< 2^m$ and any
sequence of complex numbers $(\mathfrak a_k: k\in\NN)$, any $J\in[2^m]$ and any $I\in\mathfrak S_J([j_0, 2^m))$ we have
\begin{align}
\label{eq:164}
\begin{split}
O_{I, J}(\mathfrak a_{j}: j_0\leq j< 2^m)&\le V^{2}( \mathfrak a_j: j_0\leq j< 2^m)\\
&\leq \sqrt{2}\sum_{i=0}^m\Big(\sum_{j=0}^{2^{m-i}-1}\big|\sum_{\substack{k\in U_{j}^i\\
U_{j}^i\subseteq [j_0, 2^m)}} \mathfrak a_{k+1}-\mathfrak a_{k}\big|^2\Big)^{1/2},
\end{split}
\end{align}
where $U_j^i:=[j2^i, (j+1)2^i)$ for any $i, j\in\ZZ$. The latter inequality in \eqref{eq:164} immediately follows from 
\cite[Lemma 2.5., p. 534]{MSZ2}. Inequality \eqref{eq:164} will be used in Section \ref{section:6}.

\item For any $p\in[1, \infty]$ and for any family $(\mathfrak a_t:t\in\NN^k)\subseteq \CC$ of $k$-parameter measurable functions on $X$, one has
\begin{align}
\label{eq:150}
\begin{split}
\qquad \sup_{J\in\ZZ_+}\sup_{I\in \mathfrak S_J(\NN^k)}\norm[\big]{O_{I, J}(\mathfrak a_{t}: t \in \NN^k)}_{L^p(X)}\le &  \ 2 \
\big\|\sup_{t\in \NN^k}|\mathfrak a_{t}|\big\|_{L^p(X)}\\
&+
\sup_{J\in\ZZ_+}\sup_{I\in \mathfrak S_J(\ZZ_+^k)}\norm[\big]{O_{I, J}(\mathfrak a_{t}: t \in \ZZ_+^k)}_{L^p(X)}.
\end{split}
\end{align}
This easily follows from the definition of the set $\mathfrak S_J(\NN^k)$, see \eqref{eq:77}.
\item  For any $\II\subseteq\RR$ with
$\#{\II}\ge2$ and any sequence
$I=(I_i : i\in\NN_{\le J}) \in \mathfrak S_J(\II)$ of length $J\in\ZZ_+\cup\{\infty\}$ we define the diagonal sequence 
$\bar{I}=(\bar{I}_i : i\in\NN_{\le J}) \in \mathfrak S_J(\II^k)$ by setting 
$\bar{I}_i=(I_i,\ldots,I_i)\in\II^k$ for each $i\in\NN_{\le J}$. Then for any $\JJ\subseteq \II^k$ one has
\begin{align*}
\sup_{I\in \mathfrak S_J(\II)}\norm[\big]{O_{\bar{I}, J}(\mathfrak a_{t}: t \in \JJ)}_{L^p(X)}
\le
\sup_{I\in \mathfrak S_J(\II^k)}\norm[\big]{O_{I, J}(\mathfrak a_{t}: t \in \JJ)}_{L^p(X)}.
\end{align*}

\end{enumerate}
\end{remark}

It is not difficult to  show that oscillation semi-norms
always dominate maximal functions. 
\begin{proposition}
\label{prop:5}
Assume that $k\in\ZZ_+$ and let $(\mathfrak a_{t}: t\in\RR^k)\subseteq\CC$
be a $k$-parameter family of measurable functions on $X$. Let $\II\subseteq \RR$ and $\#{\II}\ge2$, then
for every $p\in[1, \infty]$ we have
\begin{align}
\label{eq:135}
\norm[\big]{\sup_{t \in (\II\setminus\{\sup\II\})^k}\abs{\mathfrak a_{t}}}_{L^p(X)}
\le \sup_{t \in \II^k}\norm{\mathfrak a_{t}}_{L^p(X)}
+\sup_{J\in\ZZ_+}\sup_{I\in \mathfrak S_J(\II)}\norm[\big]{O_{\bar{I}, J}(\mathfrak a_{t}: t \in \II^k)}_{L^p(X)},
\end{align}
where $\bar{I}\in\mathfrak S_J(\II^k)$ is the diagonal sequence
corresponding to a sequence $I\in \mathfrak S_J(\II)$ as in Remark
\ref{rem:1}.
\end{proposition}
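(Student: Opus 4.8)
The plan is to reduce the claimed bound to a pointwise estimate relating the supremum of $|\mathfrak a_t|$ over the ``truncated'' index cube $(\II\setminus\{\sup\II\})^k$ to a fixed reference value plus a single oscillation term, and then take $L^p$ norms. First I would fix $x\in X$ (suppressed from the notation) and an arbitrary $t=(t_1,\dots,t_k)\in(\II\setminus\{\sup\II\})^k$. The key point is that, because each $t_l$ is a non-maximal element of $\II$, one can choose a strictly increasing finite sequence in $\II$ — say the two-term sequence $I_0\prec_{\rm s}I_1$ with $I_0=(\min\II,\dots,\min\II)$ (or any fixed minimal choice) and $I_1$ chosen coordinatewise just above $t$, still inside $\II$ — so that $t$ lies in the box $\BB[I_0]$ determined by the diagonal sequence $\bar I=(\bar I_0,\bar I_1)$. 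Then by the triangle inequality $|\mathfrak a_t|\le|\mathfrak a_{\bar I_0}|+|\mathfrak a_t-\mathfrak a_{\bar I_0}|\le \sup_{s\in\II^k}|\mathfrak a_s|+\sup_{s\in\BB[\bar I_0]}|\mathfrak a_s-\mathfrak a_{\bar I_0}|$, and the last quantity is (for this length-$1$ sequence $I$) exactly $O_{\bar I,1}(\mathfrak a_s:s\in\II^k)$, which is $\le\sup_{J}\sup_I O_{\bar I,J}$.

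A cleaner formalization avoids choosing $I_1$ explicitly: since $t\prec_{\rm s}\bar I_1$ is needed only to place $t$ in a box, and since $t_l<\sup\II$ for each $l$, density of $\II$ near its supremum is not required — one just needs \emph{some} element of $\II$ strictly above $t_l$ in each coordinate, which exists precisely because $t_l\ne\sup\II$ and $\II$ is the index set appearing here (if $\sup\II\notin\II$ this is automatic; if $\sup\II\in\II$ one excludes it, which is why the statement writes $\II\setminus\{\sup\II\}$). So for each such $t$ pick $I_1\in\II^k$ with $t\prec_{\rm s}I_1$; together with a fixed $I_0\prec_{\rm s}I_1$ in $\II$ (shrink the first coordinate if necessary), the diagonal sequence $\bar I=(\bar I_0,\bar I_1)\in\mathfrak S_1(\II^k)$ does the job and $t\in\BB[\bar I_0]$. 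This gives the pointwise bound
\begin{align*}
\sup_{t\in(\II\setminus\{\sup\II\})^k}|\mathfrak a_t(x)|\le \sup_{s\in\II^k}|\mathfrak a_s(x)|+\sup_{J\in\ZZ_+}\sup_{I\in\mathfrak S_J(\II)}O_{\bar I,J}(\mathfrak a_t(x):t\in\II^k),
\end{align*}
where on the right the $\sup_{s\in\II^k}|\mathfrak a_s(x)|$ can legitimately be replaced inside the $L^p$ norm by something controlled by $\sup_{s\in\II^k}\|\mathfrak a_s\|_{L^p}$ — but here one must be slightly careful, since $\sup_s|\mathfrak a_s(x)|$ is not pointwise dominated by a single $\|\mathfrak a_s\|_{L^p}$.

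To handle that last point — which I expect to be the only real subtlety — I would not bound $\sup_s|\mathfrak a_s|$ pointwise but instead observe that $\sup_{s\in\II^k}|\mathfrak a_s(x)|\le|\mathfrak a_{s_0}(x)|+O_{\bar I,J}(\cdots)$ as well, for a suitable reference point $s_0$ and augmented sequence, so that the entire right-hand side collapses: choosing the reference point $\bar I_0$ to be a fixed \emph{minimal} diagonal element $(\min\II,\dots,\min\II)$ (or, if $\min\II$ is not attained, a fixed element of $\II^k$), every $t$ in the truncated cube satisfies $|\mathfrak a_t|\le|\mathfrak a_{\text{fixed}}|+(\text{oscillation from that fixed point})$, and taking $L^p$ norms and then the supremum over $J$ and $I$ yields exactly \eqref{eq:135}, with $\sup_{t\in\II^k}\|\mathfrak a_t\|_{L^p(X)}$ absorbing the fixed term. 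Thus the proof is: (1) for fixed $x$ and $t$, build a two-term strictly increasing diagonal sequence in $\II^k$ bracketing $t$; (2) apply the triangle inequality to split $|\mathfrak a_t|$ into a fixed term and an oscillation increment; (3) take the supremum over $t$, then the $L^p(X)$ norm, then $\sup_{J}\sup_I$; and (4) bound the fixed term by $\sup_{t\in\II^k}\|\mathfrak a_t\|_{L^p(X)}$. The main obstacle is purely bookkeeping: making sure the constructed sequence genuinely lies in $\mathfrak S_J(\II^k)$ (strict coordinatewise increase) and that the box $\BB[\bar I_0]$ really contains $t$, which is where the exclusion of $\sup\II$ is used, plus the measurability caveat already built into the definition of $O_{I,J}$.
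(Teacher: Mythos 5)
Your overall strategy---a two-term diagonal sequence with a fixed lower endpoint, plus the triangle inequality---is the right one (the paper itself only cites \cite{MSW-survey} for this proposition, and that is essentially the standard argument). But there is a genuine gap at step (3). For each $t$ you build a sequence $I^{(t)}=(I_0,I_1^{(t)})$ with $I_0$ fixed but $I_1^{(t)}$ depending on $t$; for the relevant index sets ($\II=\DD_\tau$, $\ZZ_+$, $\NN$) no single $I_1$ works for all $t$, so this dependence is unavoidable. After taking $\sup_t$ you are therefore left with
\[
\Big\|\sup_{t}O_{\bar I^{(t)},1}\big(\mathfrak a_s : s\in\II^k\big)\Big\|_{L^p(X)},
\]
and passing from this to $\sup_{J}\sup_{I}\|O_{\bar I,J}(\cdots)\|_{L^p(X)}$ is exactly the interchange of a pointwise supremum with the $L^p$ norm that you (correctly) refuse to perform for the term $\sup_s|\mathfrak a_s|$. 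As written, ``take the $L^p$ norm, then $\sup_J\sup_I$'' does not follow. The interchange is valid here, but for a reason you must state: with $I_0$ fixed, the boxes $\BB[\bar I_0]=[I_0,I_1)^k$ are nested increasing in $I_1$, so $I_1\mapsto O_{(I_0,I_1),1}(x)$ is non-decreasing and its supremum is a monotone limit along a countable cofinal sequence $I_1\uparrow\sup\II$; monotone convergence (and its trivial $p=\infty$ analogue) then gives $\|\sup_{I_1}O_{(I_0,I_1),1}\|_{L^p}\le\sup_{I_1}\|O_{(I_0,I_1),1}\|_{L^p}$. Equivalently and more cleanly: prove the bound with the supremum restricted to $\big([I_0,I_1)\cap\II\big)^k$, uniformly in $I_1$, and then let $I_1\uparrow\sup\II$.

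Two smaller points. The supremum in \eqref{eq:135} runs only over diagonal sequences, so $I_1$ must be taken of the form $(M,\dots,M)$ with $M\in\II$ and $M>\max_l t_l$ (such $M$ exists because $\max_l t_l\in\II\setminus\{\sup\II\}$); ``$I_1$ chosen coordinatewise just above $t$'' is not of this form. And your parenthetical fix when $\min\II$ is not attained does not work: a fixed element of $\II$ cannot lie below every $t$, so some $t$ escape $\BB[\bar I_0]$. In that case one must also let $I_0\downarrow\inf\II$, which the same exhaustion-plus-monotone-convergence device handles, since $\|\mathfrak a_{\bar I_0}\|_{L^p}\le\sup_{s\in\II^k}\|\mathfrak a_s\|_{L^p}$ uniformly in the choice of $I_0$. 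In the paper's applications $\II$ always has a minimum, so this last point only affects the general statement.
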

A remarkable feature of the oscillation semi-norms is that they imply
pointwise convergence, which is formulated precisely in the
following proposition.

\begin{proposition}
\label{prop:4}
Let $(X, \calB(X), \mu)$ be a $\sigma$-finite measure space. For
$k\in\ZZ_+$ let $(\mathfrak a_{t}: t\in\NN^k)\subseteq\CC$ be a
$k$-parameter family of measurable functions on $X$. Suppose that
there is $p\in[1, \infty)$ and a constant $C_p>0$ such that
\begin{align*}
\sup_{J\in\ZZ_+}\sup_{I\in \mathfrak S_J(\NN)}
\norm[\big]{O_{\bar{I}, J}(\mathfrak a_{t}: t \in \NN^k)}_{L^p(X)}\le C_p<\infty.
\end{align*}
where $\bar{I}\in\mathfrak S_J(\NN^k)$ is the diagonal sequence
corresponding to a sequence $I\in \mathfrak S_J(\NN)$ as in Remark
\ref{rem:1}.  Then the limit
\begin{align*}
\lim_{\min\{t_1,\ldots, t_k\}\to\infty}\mathfrak a_{(t_1,\ldots, t_k)} 
\end{align*}
exists  $\mu$-almost everywhere on $X$.
\end{proposition}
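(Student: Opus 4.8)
The plan is to argue by contradiction, exploiting the fact that a uniform bound on the diagonal oscillation seminorms forces, at $\mu$-almost every point, a quantitative Cauchy condition for the net $(\mathfrak a_t)_{t\in\NN^k}$: if convergence failed on a set of positive measure we could manufacture testing sequences along which the seminorm blows up. I would carry this out for the limit as $\min\{t_1,\ldots,t_k\}\to\infty$, the second limit being handled in an entirely analogous fashion.

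First I would fix $\lambda>0$ and let $E_\lambda$ be the set of those $x\in X$ at which the Cauchy criterion fails at scale $2\lambda$: for every $N\in\ZZ_+$ there exist $s,t\in\NN^k$ with $\min_i s_i\ge N$, $\min_i t_i\ge N$ and $\abs{\mathfrak a_s(x)-\mathfrak a_t(x)}>2\lambda$. Since $\NN^k$ is countable, $E_\lambda$ is a countable intersection of countable unions of measurable sets, hence measurable, and the set on which $\lim_{\min\{t_1,\ldots,t_k\}\to\infty}\mathfrak a_t(x)$ fails to exist is precisely $\bigcup_{n\ge1}E_{1/n}$. It therefore suffices to prove $\mu(E_\lambda)=0$ for every $\lambda>0$. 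Assume not; by $\sigma$-finiteness I would fix $E\subseteq E_\lambda$ with $0<c_0:=\mu(E)<\infty$.

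The heart of the matter is a greedy construction of, for each $J\in\ZZ_+$, a single strictly increasing sequence $I=(I_0,\ldots,I_J)\in\mathfrak S_J(\NN)$ along which the diagonal oscillation is large on a definite fraction of $E$. The key observation is a monotone exhaustion: for each fixed $n\in\NN$, applying the definition of $E_\lambda$ with $N=n$ together with the triangle inequality against the anchor value $\mathfrak a_{(n,\ldots,n)}(x)$ shows that every $x\in E$ admits some $u\in\NN^k$ with $\min_i u_i\ge n$ and $\abs{\mathfrak a_u(x)-\mathfrak a_{(n,\ldots,n)}(x)}>\lambda$; consequently the measurable sets $W_n^L:=\{x\in E:\exists\,u\in[n,L)^k\text{ with }\abs{\mathfrak a_u(x)-\mathfrak a_{(n,\ldots,n)}(x)}>\lambda\}$ satisfy $\mu(W_n^L)\uparrow c_0$ as $L\to\infty$. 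I would then recursively build a strictly increasing sequence $1=I_0<I_1<I_2<\cdots$, choosing at stage $j$ a value $I_{j+1}>I_j$ large enough that $\mu(E\setminus W_{I_j}^{I_{j+1}})<2^{-j-2}c_0$. For this single sequence the set $F_J:=E\cap\bigcap_{j=0}^{J-1}W_{I_j}^{I_{j+1}}$ has $\mu(F_J)\ge c_0/2$, and for every $x\in F_J$ each box $\BB[\bar{I}_j]=[I_j,I_{j+1})^k$ contains a point $u_j$ with $\abs{\mathfrak a_{u_j}(x)-\mathfrak a_{\bar{I}_j}(x)}>\lambda$, so that $O_{\bar{I},J}(\mathfrak a_t:t\in\NN^k)(x)\ge\lambda\sqrt J$.

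It follows that $\norm{O_{\bar{I},J}(\mathfrak a_t:t\in\NN^k)}_{L^p(X)}\ge\lambda\sqrt J\,(c_0/2)^{1/p}$ for every $J\in\ZZ_+$, which is incompatible with the hypothesis $\sup_J\sup_{I\in\mathfrak S_J(\NN)}\norm{O_{\bar{I},J}(\mathfrak a_t:t\in\NN^k)}_{L^p(X)}\le C_p<\infty$ once $J$ is large enough (here $1\le p<\infty$ is essential). Hence $\mu(E_\lambda)=0$ for all $\lambda>0$, and the almost everywhere limit exists. The step I expect to be the genuine obstacle is the uniformization in the preceding paragraph: for an individual $x$ it is immediate to produce a testing sequence, but that sequence a priori depends on $x$, and it is exactly the monotone exhaustion $\mu(W_n^L)\uparrow c_0$ that makes possible the greedy construction of one sequence capturing a fixed proportion of the mass simultaneously at every level $j$; everything else in the argument is soft.
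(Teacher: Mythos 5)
Your proof is correct. The paper does not prove Proposition \ref{prop:4} in the text but defers to \cite{MSW-survey}, and your argument --- contradiction via the Cauchy criterion on a set $E\subseteq E_\lambda$ of finite positive measure, the monotone exhaustion $\mu(W_n^L)\uparrow\mu(E)$ to uniformize the $x$-dependent witnesses, and the greedy construction of a single increasing sequence $I$ forcing $O_{\bar I,J}(\mathfrak a_t:t\in\NN^k)\ge\lambda\sqrt{J}$ on a set of measure at least $c_0/2$, which is incompatible with the uniform $L^p$ bound since $p<\infty$ --- is precisely the standard argument used there.
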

For detailed proofs of Proposition \ref{prop:5} and Proposition \ref{prop:4}, we refer to \cite{MSW-survey}.

\section{Basic reductions and ergodic theorems: Proof of Theorem \ref{thm:main}}
\label{section:3}
This section is intended to establish Theorem \ref{thm:main} for general measure-preserving systems by reducing the matter to the integer shift system.
 We first briefly explain that the oscillation inequality \eqref{eq:284} from item (iv) of 
Theorem \ref{thm:main} implies conclusions from items (i)--(iii) of this theorem. 
\subsection{Proof of Theorem \ref{thm:main}(iii)} Assuming Theorem \ref{thm:main}(iv) with $\tau=2$ and invoking Proposition \ref{prop:5} (this permits us to dominate maximal functions by oscillations) we see that for every $p\in(1, \infty)$ there is a constant $C_p>0$ such that for any $f\in L^p(X)$ one has
\begin{align}
\label{eq:143}
\big\|\sup_{M_1, M_2\in\DD_2}|A_{M_1, M_2; X}^{P}f|\big\|_{L^p(X)}\lesssim_{p, P}\|f\|_{L^p(X)}.
\end{align}
But for any $f\ge0$ we have also a simple pointwise bound 
\begin{align*}
\sup_{M_1, M_2\in\ZZ_+}A_{M_1, M_2; X}^{P}f\lesssim \sup_{M_1, M_2\in\DD_2}A_{M_1, M_2; X}^{P}f,
\end{align*}
which in view of \eqref{eq:143} gives \eqref{eq:230} as claimed.\qed 

\subsection{Proof of Theorem \ref{thm:main}(ii)}
We fix $p\in(1, \infty)$ and $f\in L^p(X)$. We can also assume that $f\ge0$.
Using \eqref{eq:284} with $\tau=2^{1/s}$ for every $s\in\ZZ_+$
and invoking Proposition \ref{prop:4} we conclude that 
there is $f_{s}^*\in L^p(X)$ such that
\begin{align*}
\lim_{\min\{n_1, n_2\}\to\infty}A_{2^{n_1/s}, 2^{n_2/s}; X}^{P}f(x)=f_{s}^*(x)
\end{align*}
$\mu$-almost everywhere on $X$ for every $s\in\ZZ_+$.
It is not difficult to see that $f^*_{1}=f^*_{s}$ for all $s\in\ZZ_+$, 
since $\mathbb D_2\subseteq \mathbb D_{2^{1/s}}$.
Now for each $s\in\ZZ_+$ and
each $M_1, M_2\in\ZZ_+$ let $n_{M_i}^i \in \NN$ be 
 such that $2^{n_{M_i}^i/s}\le M_i<2^{(n_{M_i}^i+1)/s}$ for $i\in[2]$. Then we may conclude
\begin{align*}
2^{-2/s}f^*_1(x)\le
\liminf_{\min\{M_1, M_2\}\to\infty}A_{M_1, M_2; X}^{P}f(x)\le
\limsup_{\min\{M_1, M_2\}\to\infty}A_{M_1, M_2; X}^{P}f(x)\le
2^{2/s}f^*_1(x).
\end{align*}
Letting $s\to \infty$ we obtain
\begin{align*}
\lim_{\min\{M_1, M_2\}\to\infty}A_{M_1, M_2; X}^{P}f(x)=f^*_1(x)
\end{align*}
$\mu$-almost everywhere on $X$. This completes the proof of Theorem \ref{thm:main}(ii).\qed

\subsection{Proof of Theorem \ref{thm:main}(i)} Finally,  pointwise convergence from Theorem \ref{thm:main}(ii) combined with the maximal inequality \eqref{eq:230} the  and dominated convergence theorem gives norm convergence for any $f\in L^p(X)$ with $1<p<\infty$. This completes the proof of Theorem \ref{thm:main}. \qed

\subsection{Proof of Theorem \ref{thm:main} in the degenerate case}\label{deg} It is perhaps worth remarking that the proof of Theorem \ref{thm:main} is fairly easy when $P\in\ZZ[{\rm m}_1, {\rm m}_2]$ is degenerate in the sense that it can be written as $P({\rm m}_1, {\rm m}_2)=P_1({\rm m}_1)+P_2({\rm m}_2)$, where $P_1\in\ZZ[{\rm m}_1]$ and $P_2\in\ZZ[{\rm m}_2]$ such that $P_1(0)=P_2(0)=0$ (see \eqref{eq:66}).  It suffices to prove \eqref{eq:284}. The crucial observation is the following identity
\begin{align}
\label{eq:46}
A_{M_1; X}^{P_1({\rm m}_1)}A_{M_2; X}^{P_2({\rm m}_2)}f=A_{M_2; X}^{P_2({\rm m}_2)}A_{M_1; X}^{P_1({\rm m}_1)}f=A_{M_1, M_2; X}^{P({\rm m}_1, \rm m_2)}f.
\end{align}
Recall from \cite{MST1} that for every $p\in(1, \infty)$ there is $C_p>0$ such that for every $f=(f_{\iota}:\iota\in\NN)\in L^p(X; \ell^2(\NN))$ and $i\in[2]$ one has
\begin{align}
\label{eq:37}
\Big\|\Big(\sum_{\iota\in\NN}\sup_{M_i\in\ZZ_+}\big|A_{M_i; X}^{P_i({\rm m}_i)}f_{\iota}\big|^2\Big)^{1/2}\Big\|_{L^p(X)}\le C_p\|f\|_{L^p(X; \ell^2)}.
\end{align}
Moreover from \cite{MSS}, it was proved  that for every $p\in(1, \infty)$ there is $C_p>0$ such that for every $f\in L^p(X)$ and $i\in[2]$ one has
\begin{align}
\label{eq:45}
\sup_{J\in\ZZ_+}\sup_{I\in\mathfrak S_J(\ZZ_+) }\big\|O_{I, J}(A_{M_i; X}^{P_i({\rm m}_i)}f:  M_i\in\ZZ_+)\|_{L^p(X)}\le C_p\|f\|_{L^p(X)}.
\end{align}
By \eqref{eq:46} for every $J\in\ZZ_+$, $I\in\mathfrak S_J(\ZZ_+^2)$ and  $j\in\NN_{<J}$ one can write 
\begin{align*}
\sup_{(M_1, M_2)\in\BB[I,j]}&\big|A_{M_1, M_2; X}^{P({\rm m}_1, \rm m_2)}f-A_{I_{j1}, I_{j2}; X}^{P({\rm m}_1, \rm m_2)}f\big|\\
&\le
\sup_{M_1\in\ZZ_+}\big|A_{M_1; X}^{P_1({\rm m}_1)}\big(\sup_{I_{j2}\le M_2<I_{(j+1)2}}|A_{M_2; X}^{P_2({\rm m}_2)}f-A_{I_{j2}; X}^{P_2({\rm m}_2)}f|\big)\\
&+
\sup_{M_2\in\ZZ_+}\big|A_{M_2; X}^{P_2({\rm m}_2)}\big(\sup_{I_{j1}\le M_1<I_{(j+1)1}}|A_{M_1; X}^{P_1({\rm m}_1)}f-A_{I_{j1}; X}^{P_1({\rm m}_1)}f|\big)\big|.
\end{align*}
Using this inequality with the vector-valued maximal inequality \eqref{eq:37} and one-parameter oscillation inequality \eqref{eq:45} one obtains
\begin{multline}
\label{eq:64}
\sup_{J\in\ZZ_+}\sup_{I\in\mathfrak S_J(\ZZ_+^2) }\big\|O_{I, J}(A_{M_1, M_2; X}^{P({\rm m}_1, \rm m_2)}f: M_1, M_2\in\ZZ_+)\|_{L^p(X)}\\
\lesssim \sum_{i\in[2]}
\sup_{J\in\ZZ_+}\sup_{I\in\mathfrak S_J(\ZZ_+) }\big\|O_{I, J}(A_{M_i; X}^{P_i({\rm m}_i)}f:  M_i\in\ZZ_+)\|_{L^p(X)}\lesssim_p\|f\|_{L^p(X)}.
\end{multline}
This completes the proof of Theorem \ref{thm:main} in the degenerate case. From now on we will additionally  assume that  $P\in\ZZ[{\rm m}_1, {\rm m}_2]$ is non-degenerate.\qed

\subsection{Reductions to truncated averages}
We have seen that the proof  of Theorem \ref{thm:main} has been reduced to proving the oscillation inequality \eqref{eq:284}. 
We begin with certain general reductions that will simplify our further arguments.   Let us fix our measure-preserving
transformations $T_1, \ldots, T_d$, our polynomials ${\mathcal P} = \{P_1,\ldots, P_d\}\subset {\mathbb Z}[{\rm m}_1,\ldots,{\rm m}_k]$ and define a truncated version of the average \eqref{eq:229} by
\begin{align}
\label{eq:229'}
\tilde{A}_{M_1,\ldots, M_k; X}^{{\mathcal P}}f(x):=\EE_{m\in R_{M_1,\ldots, M_k}}f(T_1^{P_1(m)}\cdots T_d^{P_d(m)}x), \qquad x\in X,
\end{align}
where
\[
R_{M_1,\ldots, M_k}:= ([M_1]\setminus[\tau^{-1}M_1])\times\cdots\times([M_k]\setminus[\tau^{-1}M_k])
\]
is a rectangle in $\ZZ^k$.

We will abbreviate $\tilde{A}_{M_1,\ldots, M_k; X}^{{\mathcal P}}$ to $\tilde{A}_{M; X}^{{\mathcal P}}$
and $R_{M_1,\ldots, M_k}$ to $R_M$ whenever $M=(M_1,\ldots, M_k)\in\ZZ_+^k$. We now show that the $L^p(X)$ norms of the oscillation semi-norms  associated with the averages from \eqref{eq:229} and \eqref{eq:229'} have comparable norms in the following sense.

\begin{proposition}
\label{prop:7}
Let $d, k\in\ZZ_+$  be given. Let $(X, \mathcal B(X), \mu)$ be a $\sigma$-finite measure space equipped with a family of commuting  invertible and  measure-preserving transformations $T_1,\ldots, T_d:X\to X$. Let 
${\mathcal P} = \{P_1,\ldots, P_d\}\subset \ZZ[{\rm m}_1, \ldots, {\rm m}_k]$, $M=(M_1,\ldots,M_k)$ and let $A_{M; X}^{{\mathcal P}}$ and $\tilde{A}_{M; X}^{{\mathcal P}}$ be the corresponding averaging operators defined respectively in \eqref{eq:229} and \eqref{eq:229'}.  For every $\tau>1$ and every $1\le  p\le \infty$  there is a finite constant $C:=C_{d, k, p, \tau}>0$ such that for any  $f\in L^p(X)$ one has
\begin{align}
\label{eq:19}
\begin{split}
\norm[\big]{\sup_{M\in\DD_{\tau}^k}|A_{M; X}^{{\mathcal P}}|}_{L^p(X)}
\le
C\norm[\big]{\sup_{M\in\DD_{\tau}^k}|\tilde{A}_{M; X}^{\mathcal P}|}_{L^p(X)}.
\end{split}
\end{align}
An oscillation variant of \eqref{eq:19} also holds
\begin{align}
\label{eq:23}
\begin{split}
&\sup_{J\in\ZZ_+}\sup_{I\in \mathfrak S_J(\DD_{\tau}^k)}
\norm[\big]{O_{I, J}(A_{M; X}^{\mathcal P}:M\in\DD_{\tau}^k)}_{L^p(X)}\\
&\hspace{1cm}\le C\sup_{J\in\ZZ_+}\sup_{I\in \mathfrak S_J(\DD_{\tau}^k)}
\norm[\big]{O_{I, J}(\tilde{A}_{M; X}^{\mathcal P}f:M\in\DD_{\tau}^k)}_{L^p(X)}+C\norm{f}_{L^p(X)}.
\end{split}
\end{align}
\end{proposition}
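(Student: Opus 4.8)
The plan is to exploit the telescoping identity expressing the full average $A_{M;X}^{\mathcal P}$ as a weighted combination of the truncated averages $\tilde A_{N;X}^{\mathcal P}$ over the dyadic-type scales $N\preceq M$ in $\DD_\tau^k$. Concretely, fix $\tau>1$ and write each $M_i\in\DD_\tau$ as $M_i=\tau^{n_i}$. Since $Q_{M}$ is the disjoint union, over $\ell_i\in\{0,1,\ldots,n_i\}$ for each coordinate $i$, of the boxes $R_{\tau^{\ell_1},\ldots,\tau^{\ell_k}}$ (with the convention that the innermost piece is $R_{\tau^0,\ldots}=[1]^k$, handled trivially), we obtain
\begin{align}
\label{eq:prop7-tele}
A_{M;X}^{\mathcal P}f
=\frac{1}{\#Q_M}\sum_{\ell\preceq n}(\#R_{\tau^{\ell}})\,\tilde A_{\tau^{\ell};X}^{\mathcal P}f,
\end{align}
where $\#R_{\tau^\ell}\simeq_{k,\tau}\#Q_{\tau^\ell}$ and $\sum_{\ell\preceq n}\#R_{\tau^\ell}=\#Q_M$. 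Thus $A_{M;X}^{\mathcal P}f$ is an average of the $\tilde A_{\tau^\ell;X}^{\mathcal P}f$ against nonnegative weights summing to $1$. From \eqref{eq:prop7-tele} the maximal estimate \eqref{eq:19} is immediate: $\sup_{M}|A_{M;X}^{\mathcal P}f|\le\sup_{N\in\DD_\tau^k}|\tilde A_{N;X}^{\mathcal P}f|$ pointwise (no loss at all), which is even stronger than \eqref{eq:19}.

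For the oscillation inequality \eqref{eq:23}, fix $J\in\ZZ_+$ and a strictly increasing sequence $I=(I_0\prec_{\rm s}\cdots\prec_{\rm s}I_J)$ in $\DD_\tau^k$, and estimate $O_{I,J}(A_{M;X}^{\mathcal P}f:M\in\DD_\tau^k)$. The idea is to split, for $M\in\BB[I_j]$, the representation \eqref{eq:prop7-tele} for $A_M$ and for $A_{I_j}$ into the scales $\ell$ with $\tau^\ell\preceq I_j$ (the \emph{common} part) and the remaining scales (the \emph{new} part). On the common part, $A_M$ and $A_{I_j}$ share the same truncated pieces $\tilde A_{\tau^\ell}f$ but with slightly different normalizing weights; the difference of the two weight systems is controlled in total variation by $O_{k,\tau}(1)\cdot\#R_{\tau^\ell}/\#Q_{I_j}$, so this contribution is bounded by a further average of $|\tilde A_{\tau^\ell;X}^{\mathcal P}f|$, hence dominated pointwise by a maximal-function term $\lesssim\sup_{N}|\tilde A_{N;X}^{\mathcal P}f|$ — and by Proposition \ref{prop:5} the $L^p$ norm of that maximal function is controlled by $\sup_{J,I}\|O_{I,J}(\tilde A_{N;X}^{\mathcal P}f:N\in\DD_\tau^k)\|_{L^p(X)}+\|f\|_{L^p(X)}$ (the lone $\|f\|_{L^p}$ comes from the $\sup_t\|\mathfrak a_t\|_{L^p}$ term in \eqref{eq:135}, which here is trivially $\le\|f\|_{L^p}$). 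On the new part, each relevant scale $\tau^\ell$ satisfies $I_{js}\le\tau^{\ell_s}<I_{(j+1)s}$ in at least one coordinate $s$ while $\tau^{\ell_s}<I_{(j+1)s}$ in all coordinates, i.e. $\tau^\ell$ lies in the enlarged box $\BB[I_j]$ up to a bounded dilation, so each term $|\tilde A_{\tau^\ell;X}^{\mathcal P}f-\tilde A_{I_j;X}^{\mathcal P}f|$ (after inserting and subtracting $\tilde A_{I_j}f$) is bounded by $\sup_{N\in c\BB[I_j]}|\tilde A_{N;X}^{\mathcal P}f-\tilde A_{I_j;X}^{\mathcal P}f|$, which is exactly the $j$-th summand of an oscillation semi-norm $O_{I',J}(\tilde A_{N;X}^{\mathcal P}f:N\in\DD_\tau^k)$ for a sequence $I'$ comparable to $I$ (one may need to refine or coarsen $I$ by a bounded factor depending on $\tau$, and absorb the bounded number of intermediate dyadic scales by the triangle inequality \eqref{eq:116} at the cost of a constant). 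Squaring, summing in $j$, taking $L^p$ norms and using Minkowski then yields \eqref{eq:23}.

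The main obstacle — and the only place requiring genuine care — is the bookkeeping on the common part: one must verify that the weight discrepancies telescope so that, when summed against the (bounded-overlap) family of boxes $\BB[I_j]$ indexed by $j$, they produce an $\ell^2_j$-square-sum that is still controlled by a single maximal function rather than accumulating a factor of $\sqrt J$. This is where one uses that $\#Q_{I_j}$ grows geometrically in $j$ along the strictly increasing sequence $I$, so that $\sum_j(\#R_{\tau^\ell}/\#Q_{I_j})^2$, for a fixed scale $\ell$, is a convergent geometric series; reversing the order of summation (scales $\ell$ outer, boxes $j$ inner) then gives the clean bound. A secondary technical point is the passage between the dilated boxes $c\BB[I_j]$ and honest boxes $\BB[I'_j]$ of a sequence $I'\in\mathfrak S_{J'}(\DD_\tau^k)$: since $c$ depends only on $\tau$, refining $I$ to $I'$ multiplies $J$ by at most $O_\tau(1)$ and, by \eqref{eq:116} and the elementary inequality $(a+b)^2\le 2a^2+2b^2$, costs only a $\tau$-dependent constant, which is harmless for \eqref{eq:23}.
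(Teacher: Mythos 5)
Your proof of the maximal inequality \eqref{eq:19} is correct and is the same as the paper's Step~1: decompose $Q_M$ into the disjoint boxes $R_{\tau^{\ell}}$ with $\ell\preceq n$ and use that the weights $|R_{\tau^{\ell}}|/|Q_M|$ are nonnegative and have bounded sum. The oscillation part, however, contains a genuine gap, and it is not the bookkeeping issue you flag but a structural one. For the ``common part'': with $M\in\BB[I_j]$ the total weight discrepancy over the scales $\tau^{\ell}\preceq I_j$ equals $\sum_{\ell\preceq n(I_j)}\bigl(\tfrac{1}{|Q_{I_j}|}-\tfrac{1}{|Q_M|}\bigr)|R_{\tau^{\ell}}|=1-|Q_{I_j}|/|Q_M|$, and taking $M$ near the upper corner of $\BB[I_j]$ this is $1-|Q_{I_j}|/|Q_{I_{j+1}}|$, which can be arbitrarily close to $1$ for \emph{every} $j$ (the boxes $\BB[I_j]$ may be very elongated). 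So bounding the common part by the weight discrepancy times $\sup_{N}|\tilde A_{N;X}^{\mathcal P}f|$ and square-summing in $j$ yields a factor $\sqrt{J}$. Your proposed rescue does not repair this: for a fixed scale $\ell$ one does get $\sum_{j}(|R_{\tau^{\ell}}|/|Q_{I_j}|)^2\lesssim_{k,\tau}1$ from the geometric growth of $|Q_{I_j}|$, but after Minkowski in $\ell^2_j$ these $O(1)$ quantities must still be summed over the unboundedly many scales $\ell$ below $I_J$, and that sum is again of order $J$. The root cause is that the right-hand side of \eqref{eq:23} only controls $|\tilde A_{N;X}^{\mathcal P}f-\tilde A_{I_j;X}^{\mathcal P}f|$ for $N\in\BB[I_j]$, while your telescoping representation of $A_{M;X}^{\mathcal P}f$ assigns non-decaying total weight to scales far outside $\BB[I_j]$, where only the maximal function is available. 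The ``new part'' suffers from the same defect: a scale $\tau^{\ell}\preceq M$ with $\tau^{\ell}\not\preceq I_j$ need only exceed $I_j$ in \emph{one} coordinate, and its remaining coordinates can lie far below those of $I_j$; such $\tau^{\ell}$ is in no bounded dilate of $\BB[I_j]$, so $|\tilde A_{\tau^{\ell};X}^{\mathcal P}f-\tilde A_{I_j;X}^{\mathcal P}f|$ is not a summand of any oscillation seminorm along a sequence comparable to $I$.

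The paper avoids this by never telescoping over all scales at once. It truncates one coordinate at a time, writing $\EE_{m\in B_M^{l-1}}\mathfrak a_m=u_{M_l}\EE_{m\in B_M^{l}}\mathfrak a_m+v_{M_l}\EE_{m\in D_M^{l}}\mathfrak a_m$ with $u_{M_l}+v_{M_l}=1$ and $v_{M_l}\approx\tau^{-1}$; only the deviations of $u_{M_l},v_{M_l}$ from the constants $1-\tau^{-1},\tau^{-1}$, which are square-summable in $M_l$, are estimated by maximal functions via \eqref{eq:38}. Since $D_M^{l}$ is a reindexed copy of $B_{M'}^{l-1}$ with $M_l'=\tau^{-1}M_l$, the oscillation of the remainder is $\tau^{-1}$ times the quantity being estimated and is \emph{absorbed} into the left-hand side; iterating over the $k$ coordinates gives \eqref{eq:134}. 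Any repair of your argument would need to reproduce this contraction--absorption mechanism; a direct common/new splitting of the full telescoping sum cannot close.
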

\begin{proof}
The proof will proceed in two steps. We begin with some general observations which will permit us to simplify further arguments leading to the proofs of \eqref{eq:19} and \eqref{eq:23}.
\paragraph{\bf Step 1} Suppose that
$(\mathfrak a_{m}: m\in\ZZ_+^k)$ is a $k$-parameter sequence of measurable functions on $X$. Then for $M=(M_1,\ldots, M_k)=(\tau^{n_1},\ldots, \tau^{n_k})\in\DD_{\tau}^k$ one can write 
\begin{align*}
\sum_{m\in Q_{M_1,\ldots, M_k}}\mathfrak a_{m}=\sum_{(l_1,\ldots, l_k)\in\NN_{\le n_1}\times\cdots\times\NN_{\le n_k}}\sum_{m\in R_{\tau^{l_1},\ldots, \tau^{l_k}}}\mathfrak a_{m},
\end{align*}
and
\begin{align*}
\sum_{(l_1,\ldots, l_k)\in\NN_{\le n_1}\times\cdots\times\NN_{\le n_k}}\frac{|R_{\tau^{l_1},\ldots, \tau^{l_k}}|}{|Q_{\tau^{n_1},\ldots, \tau^{n_k}}|}\lesssim_{k,\tau}1. 
\end{align*}
Combining these two estimates one sees that
\begin{align}
\label{eq:24}
\big\|\sup_{M\in\DD_{\tau}^k}|\EE_{m\in Q_M}\mathfrak a_{m}|\big\|_{L^p(X)}\lesssim_{k, \tau}\big\|\sup_{M\in\DD_{\tau}^k}|\EE_{m\in R_M}\mathfrak a_{m}|\big\|_{L^p(X)}.
\end{align}
Applying \eqref{eq:24} with $\mathfrak a_{m}(x)=f(T_1^{P_1(m)}\cdots T_d^{P_d(m)}x)$ we obtain \eqref{eq:19}.
\paragraph{\bf Step 2} As before  let
$(\mathfrak a_{m}: m\in\ZZ_+^k)$ be a $k$-parameter sequence of measurable functions on $X$. For $l\in\NN_{\le k}$ and $M=(M_1,\ldots, M_k)=(\tau^{n_1},\ldots, \tau^{n_k})\in\DD_{\tau}^k$ define the sets
\begin{align*}
B_M^{l}:=\prod_{i=1}^{l}([M_i]\setminus[\tau^{-1}M_i])\times\prod_{i=l+1}^k [M_{i}]
\quad \text{ and } \quad
D_M^{l}:=\prod_{i=1}^{l-1}([M_i]\setminus[\tau^{-1}M_i])\times [\tau^{-1}M_{l}]\times\prod_{i=l+1}^k [M_{i}].
\end{align*}
Note that $B_M^0=Q_M$ and $B_M^k=R_M$, and $B_M^{l-1}=B_M^{l}\cup D_M^{l}$. Moreover, for $l\in[k]$ one sees 
\begin{align}
\label{eq:31}
\begin{split}
&\sup_{J\in\ZZ_+}\sup_{I\in \mathfrak S_J(\DD_{\tau}^k)}
\norm[\big]{O_{I, J}(\EE_{m\in B_M^{l-1}}\mathfrak a_{m}:M\in\DD_{\tau}^k)}_{L^p(X)}\\
&\hspace{3cm}\le \sup_{J\in\ZZ_+}\sup_{I\in \mathfrak S_J(\DD_{\tau}^k)}
\norm[\big]{O_{I, J}(u_{M_l}\EE_{m\in B_M^{l}}\mathfrak a_{m}:M\in\DD_{\tau}^k)}_{L^p(X)}\\
&\hspace{3cm}+\sup_{J\in\ZZ_+}\sup_{I\in \mathfrak S_J(\DD_{\tau}^k)}
\norm[\big]{O_{I, J}(v_{M_l}\EE_{m\in D_M^{l}}\mathfrak a_{m}:M\in\DD_{\tau}^k)}_{L^p(X)},
\end{split}
\end{align}
where
\begin{align*}
u_{M_l}:=\frac{|B_M^{l}|}{|B_M^{l-1}|}=\frac{\lfloor M_l\rfloor-\lfloor \tau^{-1}M_l\rfloor}{\lfloor M_l\rfloor}
\quad\text{ and } \quad
v_{M_l}:=\frac{|D_M^{l}|}{|B_M^{l-1}|}=
\frac{\lfloor \tau^{-1}M_l\rfloor}{\lfloor M_l\rfloor}.
\end{align*}
Considering $\tilde{u}_{M_l}:=u_{M_l}-1+\tau^{-1}$ and $\tilde{v}_{M_l}:=v_{M_l}-\tau^{-1}$ we see that
\begin{align*}
\sum_{M_l\in\DD_{\tau}}\tilde{u}_{M_l}^2\lesssim_{\tau}1,
\quad\text{ and } \quad
\sum_{M_l\in\DD_{\tau}}\tilde{v}_{M_l}^2\lesssim_{\tau}1.
\end{align*}
Thus using \eqref{eq:38} one sees that 
\begin{align}
\label{eq:32}
\begin{split}
&\sup_{J\in\ZZ_+}\sup_{I\in \mathfrak S_J(\DD_{\tau}^k)}
\norm[\big]{O_{I, J}(\tilde{u}_{M_l}\EE_{m\in B_M^{l}}\mathfrak a_{m}:M\in\DD_{\tau}^k)}_{L^p(X)}\lesssim_{\tau}
\big\|\sup_{M\in\DD_{\tau}^k}|\EE_{m\in Q_M}\mathfrak a_{m}|\big\|_{L^p(X)},\\
&\sup_{J\in\ZZ_+}\sup_{I\in \mathfrak S_J(\DD_{\tau}^k)}
\norm[\big]{O_{I, J}(\tilde{v}_{M_l}\EE_{m\in D_M^{l}}\mathfrak a_{m}:M\in\DD_{\tau}^k)}_{L^p(X)}\lesssim_{\tau}
\big\|\sup_{M\in\DD_{\tau}^k}|\EE_{m\in Q_M}\mathfrak a_{m}|\big\|_{L^p(X)}.
\end{split}
\end{align}
By \eqref{eq:150} there is $C_{p, \tau}>0$ such that
\begin{align}
\label{eq:33}
\begin{split}
\sup_{J\in\ZZ_+}\sup_{I\in \mathfrak S_J(\DD_{\tau}^k)}
\norm[\big]{O_{I, J}(v_{M_l}\EE_{m\in D_M^{l}}\mathfrak a_{m}:M\in\DD_{\tau}^k)}_{L^p(X)}\le C_{p, \tau}\big\|\sup_{M\in\DD_{\tau}^k}|\EE_{m\in Q_M}\mathfrak a_{m}|\big\|_{L^p(X)}\\
+\sup_{J\in\ZZ_+}\sup_{I\in \mathfrak S_J(\DD_{\tau}^k)}
\norm[\big]{O_{I, J}(v_{M_l}\EE_{m\in B_M^{l-1}}\mathfrak a_{m}:M\in\DD_{\tau}^k)}_{L^p(X)}.
\end{split}
\end{align}
Finally combining \eqref{eq:31}, \eqref{eq:32} and \eqref{eq:33} one obtains the following bootstrap inequality
\begin{multline*}
\sup_{J\in\ZZ_+}\sup_{I\in \mathfrak S_J(\DD_{\tau}^k)}
\norm[\big]{O_{I, J}(\EE_{m\in B_M^{l-1}}\mathfrak a_{m}:M\in\DD_{\tau}^k)}_{L^p(X)}\le C_{p, \tau}\big\|\sup_{M\in\DD_{\tau}^k}|\EE_{m\in Q_M}\mathfrak a_{m}|\big\|_{L^p(X)}\\
+\tau^{-1}\sup_{J\in\ZZ_+}\sup_{I\in \mathfrak S_J(\DD_{\tau}^k)}
\norm[\big]{O_{I, J}(\EE_{m\in B_M^{l-1}}\mathfrak a_{m}:M\in\DD_{\tau}^k)}_{L^p(X)}\\
+
\tau^{-1}(\tau-1)
\sup_{J\in\ZZ_+}\sup_{I\in \mathfrak S_J(\DD_{\tau}^k)}
\norm[\big]{O_{I, J}(\EE_{m\in B_M^{l}}\mathfrak a_{m}:M\in\DD_{\tau}^k)}_{L^p(X)},
\end{multline*}
which  immediately yields
\begin{align}
\label{eq:34}
\begin{gathered}
\sup_{J\in\ZZ_+}\sup_{I\in \mathfrak S_J(\DD_{\tau}^k)}
\norm[\big]{O_{I, J}(\EE_{m\in B_M^{l-1}}\mathfrak a_{m}:M\in\DD_{\tau}^k)}_{L^p(X)}\le C_{p, \tau}\big\|\sup_{M\in\DD_{\tau}^k}|\EE_{m\in Q_M}\mathfrak a_{m}|\big\|_{L^p(X)}\\
+\sup_{J\in\ZZ_+}\sup_{I\in \mathfrak S_J(\DD_{\tau}^k)}
\norm[\big]{O_{I, J}(\EE_{m\in B_M^{l}}\mathfrak a_{m}:M\in\DD_{\tau}^k)}_{L^p(X)}.
\end{gathered}
\end{align}
Iterating \eqref{eq:34} $k$ times and using \eqref{eq:24} to control the maximal function, we conclude that
\begin{align}
\label{eq:134}
\begin{gathered}
\sup_{J\in\ZZ_+}\sup_{I\in \mathfrak S_J(\DD_{\tau}^k)}
\norm[\big]{O_{I, J}(\EE_{m\in Q_M}\mathfrak a_{m}:M\in\DD_{\tau}^k)}_{L^p(X)}\le C_{p, \tau}\big\|\sup_{M\in\DD_{\tau}^k}|\EE_{m\in R_M}\mathfrak a_{m}|\big\|_{L^p(X)}\\
+\sup_{J\in\ZZ_+}\sup_{I\in \mathfrak S_J(\DD_{\tau}^k)}
\norm[\big]{O_{I, J}(\EE_{m\in R_M}\mathfrak a_{m}:M\in\DD_{\tau}^k)}_{L^p(X)}.
\end{gathered}
\end{align}
Finally, using \eqref{eq:134} with $\mathfrak a_{m}(x)=f(T_1^{P_1(m)}\cdots T_d^{P_d(m)}x)$ and invoking Proposition \ref{prop:5} (to control the maximal function from \eqref{eq:134} by oscillation semi-norms) we obtain \eqref{eq:23} as desired.
\end{proof}

Now using Proposition \ref{prop:7} we can reduce the oscillation inequality \eqref{eq:284} from Theorem \ref{thm:main} to establishing the following result for non-degenerate polynomials in the sense of \eqref{eq:66}.

\begin{theorem}
\label{thm:main'}
Let $(X, \mathcal B(X), \mu)$ be a $\sigma$-finite measure space
equipped with an invertible measure-preserving transformation
$T:X\to X$.  Let $P\in\ZZ[\rm m_1, \rm m_2]$ be a non-degenerate
polynomial such that $P(0, 0)=0$.  Let
$\tilde{A}_{M; X}^{P}f$ with
$M=(M_1,M_2)$ be the average defined in \eqref{eq:229'} with $d=1$,
$k=2$, and $P_1 =P$.
If $1<p<\infty$ and $\tau>1$, and $\DD_{\tau}:=\{\tau^n:n\in\NN\}$,  then one has
\begin{align}
\label{eq:284'}
\qquad \qquad\sup_{J\in\ZZ_+}\sup_{I\in\mathfrak S_J(\DD_{\tau}^2) }\big\|O_{I, J}(\tilde{A}_{M_1, M_2; X}^{P}f: M_1, M_2\in\DD_{\tau})\|_{L^p(X)}\lesssim_{p, \tau, P}\|f\|_{L^p(X)}.
\end{align}
 The implicit constant in \eqref{eq:284'} can be taken to depend only on $p, \tau, P$.
\end{theorem}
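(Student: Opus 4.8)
The plan is to reduce \eqref{eq:284'} to the corresponding oscillation inequality on the integer shift system, which is Theorem \ref{thm:main''} (proved in Sections \ref{section:4}--\ref{section:7}), by means of the Calder\'on transference principle \cite{Cald}. The point is that since $d=1$ the orbit appearing in \eqref{eq:229'} runs along a single transformation, so each truncated average is a convolution along the $T$-orbit with a finitely supported kernel: writing $c_M(j):=|R_M|^{-1}\#\{m\in R_M:P(m)=j\}$, one has $\tilde A_{M;X}^{P}f(x)=\sum_{j\in\ZZ}c_M(j)\,f(T^jx)$, with $c_M$ supported in $\{|j|\le C_P(M_1\vee M_2)^{\deg P}\}$. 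For a fixed length $J\in\ZZ_+$ and a fixed sequence $I\in\mathfrak S_J(\DD_\tau^2)$, the oscillation seminorm on the left of \eqref{eq:284'} is, by \eqref{eq:102}, a sublinear functional of $f$ assembled through suprema and differences from only finitely many of the operators $\tilde A_{M;X}^{P}$ — precisely those with $M\in\DD_\tau^2$, $M_1<I_{J1}$, $M_2<I_{J2}$ — which is exactly the situation in which transference applies.

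To carry this out I would fix $p\in(1,\infty)$, $\tau>1$, $J\in\ZZ_+$ and $I\in\mathfrak S_J(\DD_\tau^2)$, and let $R=R(I,J,P)$ bound the supports of all kernels $c_M$ occurring in the seminorm. For $f\in L^p(X)$ and $x\in X$ put $g_x(j):=f(T^{-j}x)\in\ell^p(\ZZ)$; with $\tilde A_{M;\ZZ}^{P}$ the truncated average on $\ZZ$ from Example \ref{ex:1}, one has $\tilde A_{M;\ZZ}^{P}g_x(n)=\EE_{m\in R_M}g_x(n-P(m))=\tilde A_{M;X}^{P}f(T^{-n}x)$, and hence, since the oscillation seminorm depends only on the family indexed by $M$,
\begin{align*}
O_{I,J}\big(\tilde A_{M;X}^{P}f:M_1,M_2\in\DD_\tau\big)(T^{-n}x)=O_{I,J}\big(\tilde A_{M;\ZZ}^{P}g_x:M_1,M_2\in\DD_\tau\big)(n),\qquad n\in\ZZ.
\end{align*}
Raising to the $p$-th power, summing over $|n|\le N$, using $\mu$-invariance of $T^{-n}$ on the left and the fact that only values $g_x(j)$ with $|j|\le N+R$ enter on the right, and then letting $N\to\infty$, I would obtain
\begin{align*}
\big\|O_{I,J}\big(\tilde A_{M;X}^{P}f:M_1,M_2\in\DD_\tau\big)\big\|_{L^p(X)}\le\big\|g\mapsto O_{I,J}\big(\tilde A_{M;\ZZ}^{P}g:M_1,M_2\in\DD_\tau\big)\big\|_{\ell^p(\ZZ)\to\ell^p(\ZZ)}\|f\|_{L^p(X)}.
\end{align*}
Taking the supremum over $J$ and $I$, the operator norm on the right is exactly the quantity controlled by Theorem \ref{thm:main''}, so \eqref{eq:284'} follows with implicit constant $\lesssim_{p,\tau,P}1$. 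No integrability subtleties arise on a general $\sigma$-finite $X$, since every measure-theoretic step is an instance of the $\mu$-invariance of the invertible maps $T^{-n}$.

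The transference above is entirely routine; the whole difficulty is concentrated in Theorem \ref{thm:main''}, the oscillation inequality on $\ZZ$ for $\tilde A_{M;\ZZ}^{P}$ with $P$ non-degenerate, and that is where I expect the main obstacle to lie. As the overview explains, the trouble is the lack of nestedness of the rectangles $R_M$ when $M_1$ and $M_2$ run independently (Figure \ref{fig:2}): the one-parameter circle-method argument of \cite{B1,B2,B3} relies on nested diagonal scales and does not transplant. Overcoming this requires the backwards Newton diagram to split the parameter plane into finitely many sectors, each governed by a dominating monomial that dictates the major/minor arc decomposition (Section \ref{section:4}); the corresponding estimates for double and complete exponential sums, obtained via iterated use of the Vinogradov mean value theorem (Section \ref{section:5}); a multi-parameter, oscillation-valued Ionescu--Wainger multiplier theory (Section \ref{section:6}); and a delicate two-parameter iteration of the classical circle method with asymptotically precise control of the error terms on the major arcs (Section \ref{section:7}). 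That iterated circle method is the heart of the paper.
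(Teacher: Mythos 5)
Your proposal is correct and takes essentially the same route as the paper: the paper's own proof of this theorem consists precisely of invoking the Calder\'on transference principle to reduce \eqref{eq:284'} to the integer-shift inequality \eqref{eq:2} of Theorem \ref{thm:main''}, which is then the object of Sections \ref{section:4}--\ref{section:7}. The only difference is that you write out the (standard) transference argument — finitely many finitely supported kernels for fixed $J$ and $I$, the orbit function $g_x$, summation over $|n|\le N$, measure invariance, and the limit $N\to\infty$ — which the paper leaves implicit, and your execution of it is sound.
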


\subsection{Reduction to the integer shift system}
As mentioned in Example \ref{ex:1} the integer shift system is the most important for pointwise convergence problems. For $T=S_1$, for any $x\in\ZZ$ and for any finitely supported function $f:\ZZ\to\CC$, we may
write
\begin{align}
\label{eq:1}
\tilde{A}_{M_1, M_2; \ZZ, S_1}^{P}f(x)=\EE_{m\in R_{M_1, M_2}}f(x-P(m_1, m_2)).
\end{align}
We shall also abbreviate $\tilde{A}_{M_1, M_2; \ZZ, S_1}^{P}$ to $\tilde{A}_{M_1, M_2; \ZZ}^{P}$.
In fact, we will be able to deduce Theorem \ref{thm:main'} from its integer counterpart.

\begin{theorem}
\label{thm:main''}
Let $P\in\ZZ[\rm m_1, \rm m_2]$ be a non-degenerate polynomial (see \eqref{eq:66}) such that
$P(0, 0)=0$.
Let $\tilde{A}_{M_1, M_2; \ZZ}^{P}f$ be the  average defined  in \eqref{eq:1}.
If $1<p<\infty$ and $\tau>1$, and $\DD_{\tau}:=\{\tau^n:n\in\NN\}$, then one has
\begin{align}
\label{eq:2}
\sup_{J\in\ZZ_+}\sup_{I\in\mathfrak S_J(\DD_{\tau}^2) }\big\|O_{I, J}(\tilde{A}_{M_1, M_2; \ZZ}^{P}f: M_1, M_2\in\DD_{\tau})\|_{\ell^p(\ZZ)}\lesssim_{p, \tau, P}\|f\|_{\ell^p(\ZZ)},
\end{align}
The implicit constant in \eqref{eq:2} can be taken to depend only on $p, \tau, P$.
\end{theorem}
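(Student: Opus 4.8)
The plan is to transfer everything to the Fourier side, writing $\tilde A_{M_1,M_2;\ZZ}^{P}=T_{\ZZ}[\mathfrak m_{M_1,M_2}]$ with symbol $\mathfrak m_{M_1,M_2}(\xi)=\EE_{m\in R_{M_1,M_2}}\ex(-P(m)\xi)$ for $\xi\in\TT$. Since $M_1,M_2$ run over the lacunary set $\DD_\tau$, I would pass to the exponents $n_i=\log_\tau M_i\in\NN$ and split the oscillation seminorm into a ``long'' part (telescoping between the stopping indices $I_j$) and ``short'' parts supported on dyadic blocks of the $n_i$; the Rademacher--Menshov bound \eqref{eq:164}, applied in each variable together with \eqref{eq:38} and \eqref{eq:150}, reduces \eqref{eq:2} to a two-parameter maximal estimate plus square-function bounds for dyadically localised differences of the $\mathfrak m_{M_1,M_2}$. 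The core is then to understand these multipliers precisely enough --- with quantitative error terms --- to establish both.

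The obstruction is that the rectangles $R_{M_1,M_2}$ are not nested when $M_1,M_2$ vary independently, so no single circle-method decomposition works uniformly. The remedy is the backwards Newton diagram of $P$: I would partition the quadrant $\{(n_1,n_2):n_1,n_2\ge 0\}$ into $O_P(1)$ closed sectors, one for each edge (and vertex) of the relevant part of the Newton polygon, so that on each sector a single monomial $c_{ab}\,m_1^{a}m_2^{b}$ of $P$ dominates all others uniformly on $R_{M_1,M_2}$, with a well-defined dominant scale $N=N(M_1,M_2)\asymp M_1^{a}M_2^{b}$ governing the size of the exponential sum. By subadditivity \eqref{eq:116} of the oscillation seminorm it suffices to prove the required bounds for the family restricted to one sector and then sum the $O(1)$ contributions.

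On a fixed sector I would run the classical circle method in the dominant direction: fix a large parameter $s$, and decompose $\TT$ into major arcs about reduced fractions $a/q$ with $q\le N^{\veps}$ and the complementary minor arcs. On the minor arcs the double exponential sum estimate (Proposition \ref{prop:22}, the two-parameter Weyl inequality dictated by the Newton geometry, and its consequence Proposition \ref{prop:23}) gives a gain $N^{-\delta}$; summing over the relevant $(n_1,n_2)$ yields an $\ell^2$-bound, so this contribution to the oscillation seminorm is dominated by a square function and handled by item 3 of Remark \ref{rem:1} together with Lemma \ref{lem:10}. On the major arcs I would approximate $\mathfrak m_{M_1,M_2}$ by a product $G\,\Phi$, with $G$ a complete Gauss-sum factor (estimated via the complete exponential sum bounds of Section \ref{section:5}) and $\Phi$ a smooth Euclidean multiplier (obtained from Lemma \ref{sum-integral} and Proposition \ref{thm:CCW}); the arithmetic factor $G$ is then absorbed by the multi-parameter Ionescu--Wainger theory of Section \ref{section:6}, and $\Phi$ by standard continuous maximal and oscillation estimates after a Littlewood--Paley decomposition. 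Since a single monomial controls only the dominant variable, the procedure must then be iterated --- on the major arcs produced in the first pass one runs the circle method again in the subordinate variable --- and every error term must be retained with asymptotic (polynomially decaying in the scales) precision, so that the associated maximal functions and oscillation seminorms remain square-summable.

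The hard part will be the interaction of error terms across the two passes of the circle method, in particular controlling ``partially complete'' exponential sums of the type \eqref{eq:75}; this is exactly where non-degeneracy of $P$ (and, in Theorem \ref{thm:main0}, the partial-degree hypotheses) is used, and it is why a plain iteration of the one-parameter circle method from \cite{B1,B2,B3} does not suffice. Managing this, together with the bookkeeping needed to recombine the finitely many Newton-diagram sectors without losing the uniform $\ell^2$-summability that feeds the square-function and Ionescu--Wainger estimates, is the technical heart of Theorem \ref{thm:main''}.
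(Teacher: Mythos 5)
Your outline reproduces the paper's strategy essentially step for step: backwards Newton diagram to partition $\DD_{\tau}\times\DD_{\tau}$ into $O_P(1)$ sectors with a dominating monomial, the two-parameter Weyl inequality (Propositions \ref{prop:22}--\ref{prop:23}) on minor arcs, Gauss-sum times continuous-multiplier approximations on major arcs absorbed by the oscillation Ionescu--Wainger theory of Section \ref{section:6}, and an iterated (second variable, then first variable) circle method with polynomially decaying errors. You have also correctly located where non-degeneracy enters, via the averaged partial complete sums \eqref{eq:105}--\eqref{eq:106} rather than the genuinely problematic sums \eqref{eq:75}.

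There is, however, one concrete gap: the reduction to a single sector does not follow from subadditivity \eqref{eq:116} alone. That inequality splits the index set $\JJ$ over which the inner suprema in $O_{I,J}$ are taken, but it keeps the \emph{same} stopping sequence $I$; after applying it you are left with $\sup_{I\in\mathfrak S_J(\DD_{\tau}^2)}\|O_{I,J}(\tilde A^{P}_{M;\ZZ}f: M\in\SS_{\tau}(j))\|_{\ell^p}$, whereas the sector estimate (Theorem \ref{thm:main'''}) only controls $\sup_{I\in\mathfrak S_J(\SS_{\tau}(j))}$. These are genuinely different objects: the base points $I_i$ of the oscillation may lie outside $\SS_{\tau}(j)$, and a box $\BB[I_i]$ may straddle several sectors, so the difference $|\tilde A^{P}_{M;\ZZ}f-\tilde A^{P}_{I_i;\ZZ}f|$ with $M\in\BB[I_i]\cap\SS_{\tau}(j)$ is not of the form handled by the sector theorem. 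The paper bridges this with a dedicated argument: it splits $\NN_{<J}$ into $O(r)$ sparse subfamilies satisfying the separation condition \eqref{eq:8}, associates to each box a canonical entry point $P_i(j)\in\SS_{\tau}(j)$, and builds monotone chains of length at most $r+2$ connecting $I_i$ to $P_i(j)$ through the intermediate sectors, so that the extra telescoped differences assemble into admissible sector oscillations. Without some such device your sector decomposition does not close, so you should either supply this argument or prove the sector estimates for arbitrary $I\in\mathfrak S_J(\DD_{\tau}^2)$ directly. A second, smaller omission: a single monomial dominates only up to the boundary rays of each sector, and the quantitative gain needed in your ``sum the $O(1)$ contributions'' step comes from the further decomposition $S(j)=\bigcup_N S_1^N(j)\cup\bigcup_N S_2^N(j)$ and the exponential decay of Lemma \ref{lem:30}; this should be made explicit when you compare $\mathfrak m_{M_1,M_2}$ with the monomial multiplier.
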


We immediately see that Theorem \ref{thm:main''} is a special case of Theorem \ref{thm:main'}. However, it is also a standard matter, in view of the Calder{\'o}n transference principle \cite{Cald}, that this implication can be reversed and so in order to prove \eqref{eq:284'}, it suffices to establish \eqref{eq:2}. This reduction is important since we can use Fourier methods in the integer setting which are not readily available in abstract measure spaces.

From now on we will focus our attention on establishing Theorem \ref{thm:main''}.

\section{``Backwards'' Newton diagram: Proof of Theorem \ref{thm:main''} }\label{section:4}
The ``backwards'' Newton diagram $N_P$ of a nontrivial polynomial
$P\in\RR[\rm m_1, \rm m_2]$, 
\begin{align}
\label{eq:12}
P(m_1, m_2):=\sum_{\gamma_1, \gamma_2}c_{\gamma_1, \gamma_2}m_1^{\gamma_1}m_2^{\gamma_2}, \quad \text{ with} \quad c_{0, 0}=0,
\end{align}
is defined as the closed convex hull of the set
\begin{align*}
\bigcup_{(\gamma_1, \gamma_2)\in S_P}\{(x+\gamma_1, y+\gamma_2)\in\RR^2: x\le 0, y\le 0\},
\end{align*}
where $S_P:=\{(\gamma_1, \gamma_2)\in\NN\times\NN: c_{\gamma_1, \gamma_2}\neq0\}$ denotes the set of non-vanishing coefficients of $P$.

Let $V_P\subseteq S_P$ be the set of vertices (corner points) of
$N_P$. Suppose that $V_P:=\{v_1, \ldots, v_r\}$ where 
$v_j=(v_{j,1}, v_{j,2})$ satisfies
$v_{j, 1}<v_{j+1, 1}$, and $v_{j+1, 2}<v_{j, 2}$ for
each $j\in[r]$.

Let $\omega_0=(0, 1)$ and
$\omega_r=(1, 0)$ and for $j\in[r-1]$, let
$\omega_j=(\omega_{j, 1}, \omega_{j,2})$ denote a normal vector
to the edge $\overline{v_jv_{j+1}}:=v_{j+1}-v_j$ such that
$\omega_{j, 1}, \omega_{j, 2}$ are positive integers (the choice is not
unique but it is not an issue here). Observe that the slopes of the
lines along $\omega_j$'s are decreasing as $j$ increases
since $N_P$ is convex. The convexity of $N_P$ also yields
that
\begin{align}\label{nd1}
    \omega_j\cdot(v-v_j)\le0 \qquad \mbox{and}\qquad \omega_{j-1}\cdot(v-v_j)\le0 \ \ \ \ ({\rm with \ one \ inequality \ strict}),
\end{align}
for all $v\in S_P\setminus \{v_j\}$ and $j\in[r]$. Now for $j\in[r]$ let us define
\begin{align*}
W(j):=\bigcap_{v\in  S_P\setminus\{v_j\}}\{(a, b)\in \ZZ_+\times\ZZ_+: (a, b)\cdot(v-v_j)<0\},
\end{align*}
which is the intersection of various half planes. If ${\rm V}_P=\{v_1\}$
 then we simply define  $W(1)=\ZZ_+\times\ZZ_+$.

\begin{remark}
\label{rem:3}
Obviously if $1\le i<j\le r$ then $W(i)\cap W(j)=\emptyset$. Indeed, if $(a, b)\in W(i)\cap W(j)$, then $(a, b)\cdot(v-v_i)<0$ 
for all $v\in S_P\setminus\{v_i\}$ and $(a, b)\cdot(v-v_j)<0$ for all $v \in S_P\setminus\{v_j\}$. In particular $(a, b)\cdot(v_j-v_i)<0$ and $(a, b)\cdot(v_i-v_j)<0$ which is impossible.

\end{remark}

 \begin{lemma}\label{lemnd1}
 For $j\in[r]$ we have
 \begin{align*}
    W(j)=\{(a, b)\in \ZZ_+\times\ZZ_+:\  \exists _{\alpha, \beta>0}\ (a, b)=\alpha\omega_{j-1}+\beta\omega_{j} \}.
\end{align*}
 \end{lemma}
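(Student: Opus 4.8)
The plan is to show both inclusions. For the direction $\supseteq$, suppose $(a,b) = \alpha\omega_{j-1} + \beta\omega_j$ with $\alpha,\beta > 0$. Since $\omega_{j-1}$ and $\omega_j$ have positive integer coordinates (by construction, with $\omega_0=(0,1)$, $\omega_r=(1,0)$, and the intermediate $\omega_j$ chosen with positive integer entries), $(a,b)$ has positive coordinates, so $(a,b)\in\ZZ_+\times\ZZ_+$ once we also check integrality — but actually $W(j)$ as defined only requires $(a,b)\in\ZZ_+\times\ZZ_+$, so I should be slightly careful: the membership in $W(j)$ really forces the strict inequalities $(a,b)\cdot(v-v_j)<0$, and integrality is an extra constraint that both sides carry. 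For the strict inequalities: given any $v\in S_P\setminus\{v_j\}$, I want $(\alpha\omega_{j-1}+\beta\omega_j)\cdot(v-v_j) < 0$. This follows from \eqref{nd1}, which says $\omega_j\cdot(v-v_j)\le 0$ and $\omega_{j-1}\cdot(v-v_j)\le 0$ with at least one strict; since $\alpha,\beta>0$, the positive combination is then strictly negative. The edge cases $j=1$ (where $\omega_0=(0,1)$) and $j=r$ (where $\omega_r=(1,0)$) need \eqref{nd1} to hold there too, which the text asserts for all $j\in[r]$; and the case $V_P=\{v_1\}$ is handled by Remark \ref{rem:3}(1) separately — I would note that then $r=1$, $\omega_0=(0,1)$, $\omega_1=(1,0)$ span all of $\ZZ_+\times\ZZ_+$ with positive coefficients precisely when $(a,b)\in\ZZ_+\times\ZZ_+$, matching $W(1)=\ZZ_+\times\ZZ_+$.

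For the direction $\subseteq$, take $(a,b)\in W(j)$, so $(a,b)\in\ZZ_+\times\ZZ_+$ and $(a,b)\cdot(v-v_j)<0$ for all $v\in S_P\setminus\{v_j\}$. Since $\{\omega_{j-1},\omega_j\}$ is linearly independent (consecutive edge normals of a convex polygon have strictly decreasing slopes, hence are not parallel), I can write $(a,b) = \alpha\omega_{j-1} + \beta\omega_j$ for unique reals $\alpha,\beta$, and the task is to show $\alpha>0$ and $\beta>0$. The key geometric input is that $\omega_{j-1}$ is (up to positive scaling) the outward normal to the edge $\overline{v_{j-1}v_j}$ and $\omega_j$ the outward normal to $\overline{v_j v_{j+1}}$ — equivalently, the edge direction $v_j - v_{j-1}$ is orthogonal to $\omega_{j-1}$, and $v_{j+1}-v_j$ is orthogonal to $\omega_j$. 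To extract the sign of $\alpha$: pair $(a,b)$ with the edge direction $u_j := v_{j+1}-v_j$ (so $u_j\perp\omega_j$), giving $(a,b)\cdot u_j = \alpha\,(\omega_{j-1}\cdot u_j)$. The left side is $<0$ because $v_{j+1}\in S_P\setminus\{v_j\}$ and $(a,b)\in W(j)$; and $\omega_{j-1}\cdot u_j = \omega_{j-1}\cdot(v_{j+1}-v_j) < 0$ by \eqref{nd1} (this is the "other" inequality that is strict here, since $\omega_{j-1}\cdot(v_{j+1}-v_j)=0$ would make $v_{j+1}$ lie on the line through $v_{j-1}v_j$, contradicting that $N_P$ is a genuine convex polygon with distinct vertices). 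Dividing, $\alpha>0$. Symmetrically, pairing with $v_{j-1}-v_j$ (orthogonal to $\omega_{j-1}$) and using $(a,b)\cdot(v_{j-1}-v_j)<0$ together with $\omega_j\cdot(v_{j-1}-v_j)<0$ gives $\beta>0$.

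The main obstacle I anticipate is handling the boundary indices cleanly: when $j=1$ there is no vertex $v_0$ and no edge $\overline{v_0 v_1}$, yet $\omega_0=(0,1)$ still plays the role of "normal to the left boundary ray" of the Newton diagram (the downward-infinite edge), and similarly $\omega_r=(1,0)$ at $j=r$. I would address this by treating the two axis-directed boundary rays of $N_P$ on the same footing as internal edges — formally, the point $(a,b)\cdot(v_{j-1}-v_j)$ computation at $j=1$ should be replaced by a direct check using the definition of $W(1)$ restricted by $a>0$, since moving in the $\omega_0=(0,1)$ direction corresponds exactly to the constraint coming from the monomials directly below $v_1$. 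A second, minor, point to verify is that \eqref{nd1} indeed supplies strictness in exactly the pairings I use; since \eqref{nd1} only guarantees that \emph{one} of the two listed inequalities is strict for each fixed $v$, I should double-check that for $v=v_{j+1}$ it is $\omega_{j-1}\cdot(v_{j+1}-v_j)$ that is strict (true because $\omega_j\cdot(v_{j+1}-v_j)=0$ by definition of $\omega_j$ as the normal to that edge, so the strict one must be the $\omega_{j-1}$ pairing), and symmetrically for $v=v_{j-1}$. With these checks in place the argument is complete.
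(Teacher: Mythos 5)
Your proposal is correct and follows essentially the same route as the paper: decompose $(a,b)$ in the basis $\{\omega_{j-1},\omega_j\}$, pair with the edge directions $v_{j+1}-v_j$ and $v_{j-1}-v_j$ (which annihilate $\omega_j$ and $\omega_{j-1}$ respectively) to read off the signs of $\alpha$ and $\beta$ from \eqref{nd1}, and use positive scalar combinations for the converse. The paper resolves the boundary cases $j=1$ and $j=r$ exactly as you sketch, by projecting onto the coordinate where $\omega_0=(0,1)$ (resp.\ $\omega_r=(1,0)$) vanishes, i.e.\ $a=\beta\omega_{1,1}>0$ and $b=\alpha\omega_{r-1,2}>0$.
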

 \begin{proof}
 The convexity of $N_P$ implies that the normals $\omega_{j-1}, \omega_{j}$ are  linearly independent, therefore for
every $(a, b)\in \ZZ_+\times\ZZ_+$, 
there are $\alpha, \beta$ such that $(a, b)=\alpha \omega_{j-1}+\beta\omega_{j}$. We only need to show that $(a, b)\in W(j)$ if and only if $\alpha, \beta>0$. Firstly suppose that $(a, b)\in W(j)$. Then $(a, b)\cdot(v-v_j)<0$ for all $v\in  S_P\setminus\{v_j\}$. In particular $(a, b)\cdot(v_{j+1}-v_j)=(\alpha\omega_{j-1}+\beta\omega_{j})\cdot(v_{j+1}-v_j)<0$. But this implies that $\alpha\omega_{j-1}\cdot(v_{j+1}-v_j)<0$, since  $\omega_j\cdot(v_{j+1}-v_j)=0$. This immediately gives that $\alpha>0$, provided that $j\in[r-1]$, since $\omega_{j-1}\cdot(v_{j+1}-v_j)\le0$ by \eqref{nd1}. When $j=r$ then $\alpha>0$ since $\omega_{r}=(1, 0)$ and $0<b=(a, b)\cdot(0, 1)=(\alpha\omega_{r-1}+\beta\omega_{r})\cdot(0, 1)=\alpha\omega_{r-1}\cdot(0, 1)=\alpha \omega_{r-1, 2}$. Similarly taking $v=v_{j-1}$ for $1<j\le r$ we obtain $\beta>0$. When $j=1$ then $\beta>0$ because $\omega_{0}=(0, 1)$ and $0<a=(a, b)\cdot(1, 0)=(\alpha\omega_{0}+\beta\omega_{1})\cdot(1, 0)=\beta\omega_{1}\cdot(1, 0)=\beta \omega_{1,1}$. Conversely, if $\alpha>0$ and $\beta>0$ then for any $v\in S_P\setminus\{v_j\}$ we have
 $(a, b)\cdot(v-v_j)=\alpha\omega_{j-1}\cdot(v-v_j)+\beta\omega_{j}\cdot(v-v_j)<0$, since $\omega_{j-1}\cdot(v-v_j)\le 0$ and $\omega_{j}\cdot(v-v_j)\le 0$, with at least one inequality strict.
\end{proof}
Lemma \ref{lemnd1} means that $W(j)$ consists of those lattice points of $\ZZ_+\times\ZZ_+$ which are within the cone centered at the origin with the boundaries determined by the lines along the normals
$\omega_{j-1}$ and  $\omega_{j}$ respectively.  Now for $j\in[r]$, we set
\begin{align*}
S(j):=
\{(a, b)\in \NN\times\NN:\  \exists_{\alpha\ge0,\beta\ge0}\ (a, b)=\alpha\omega_{j-1}+\beta\omega_{j}\}.
\end{align*}

\begin{remark}
\label{rem:2-0}
Some comments are in order.
\begin{enumerate}[label*={\arabic*}.]
\item Having defined the sets $S(j)$ for $j\in[r]$ it is not difficult to see that
\begin{align}
\label{eq:132}
\bigcup_{j=1}^rS(j)=\NN\times\NN.
\end{align}
\item We note that for $(a, b)\in S(j)$ we have $(a, b)\cdot(v-v_j)\le 0$ for all $v\in S_P$ by \eqref{nd1}. However, the strict inequality may not be achieved even for $v\not=v_j$.
\item If $r\ge2$, then by construction of the sets $S(j)$ one sees that if $(a, b)\in S(j)$ then
\begin{align}
\label{eq:209}
\frac{\omega_{j,2}}{\omega_{j,1}}a\le b\le \frac{\omega_{j-1,2}}{\omega_{j-1,1}}a
\end{align}
for any $1< j< r$; and if $j=1$ or $j=r$ one has respectively
\begin{align}
\label{eq:210}
\frac{\omega_{1,2}}{\omega_{1,1}}a\le b<\infty, \qquad \text{ and } \qquad
0\le b\le \frac{\omega_{r-1,2}}{\omega_{r-1,1}}a.
\end{align}
\item If $r=1$ and $(a, b)\in S(1)$, then $0\le a, b<\infty$. 
\end{enumerate}
\end{remark}

Now for any given $(a, b)\in S(j)$ we try to determine
$\alpha$ and $\beta$ explicitly. Let $A_j:=[\omega_{j-1}|\omega_j]$ be the matrix whose column vectors are the normals $\omega_{j-1}, \omega_j$. Then
\begin{align*}
    \left(
  \begin{array}{c}
    a \\
    b \\
  \end{array}
\right)=
\left(
  \begin{array}{cc}
    \omega_{j-1, 1} & \omega_{j,1} \\
    \omega_{j-1, 2} & \omega_{j, 2} \\
  \end{array}
\right)
\left(
  \begin{array}{c}
    \alpha \\
    \beta \\
  \end{array}
\right).
\end{align*}
The convexity of $N_P$ (and the orientation we chose) ensures that $\det A_j<0$. Taking $d_j:=-\det A_j>0$ one has
\begin{align*}
    \left(
  \begin{array}{c}
    \alpha \\
    \beta \\
  \end{array}
\right)=\frac{1}{\det A_j}
\left(
  \begin{array}{cc}
    \omega_{j,2} & -\omega_{j,1} \\
    -\omega_{j-1, 2} & \omega_{j-1, 1} \\
  \end{array}
\right)
\left(
  \begin{array}{c}
    a \\
    b \\
  \end{array}
\right)=\frac{1}{d_j}
\left(
  \begin{array}{c}
    -a\omega_{j,2} \ +\  b\omega_{j,1} \\
    a\omega_{j-1, 2} - b\omega_{j-1, 1} \\
  \end{array}
\right).
\end{align*}
We have chosen the components of $\omega_{j-1}$ and $\omega_{j}$ to be non-negative integers, therefore for $j\in[r-1]$ (keeping in mind that $\alpha,\beta\ge0$ and $d_j>0$) we may rewrite 
\begin{align*}
S(j)=\{(a, b)\in\NN\times\NN\colon\exists_{(t_1, t_2)\in\NN\times\NN}\ (a, b)=\frac{t_1}{d_j}\omega_{j-1}+\frac{t_2}{d_j}\omega_{j}\}.
\end{align*}
We allow $t_1$ to be zero when $j=r$. 

We now split $S(j)$ into $S_1(j)$ and $S_2(j)$, where
\begin{align*}
    S_1(j):=\{(a, b)\in S(j): (a, b)=\frac{(n+N)}{d_j}\omega_{j-1}+\frac{N}{d_j}\omega_{j},\ n\in\NN, \ N\in\NN\},\\
    S_2(j):=\{(a, b)\in S(j): (a, b)=\frac{N}{d_j}\omega_{j-1}+\frac{(n+N)}{d_j}\omega_{j},\ n\in\NN, \ N\in\NN\}.
\end{align*}
We can further decompose
\begin{align*}
S_1(j)=\bigcup_{N\in\NN}S_1^N(j),
\qquad \text{ and } \qquad
S_2(j)=\bigcup_{N\in\NN}S_2^N(j),
\end{align*}
where
\begin{align}
\label{eq:42}
\begin{split}
    S_1^N(j):=&\{(a, b)\in S(j): (a, b)=\frac{(n+N)}{d_j}\omega_{j-1}+\frac{N}{d_j}\omega_{j},\ n\in\NN\},\\
    S_2^N(j):=&\{(a, b)\in S(j): (a, b)=\frac{N}{d_j}\omega_{j-1}+\frac{(n+N)}{d_j}\omega_{j},\ n\in\NN\}.
    \end{split}
\end{align}
\begin{lemma}
\label{lem:30}
For each $j\in[r]$  there exists $\sigma_j>0$ such that for every $v\in S_P\setminus\{v_j\}$ one has
\begin{align}
\label{eq:47}
(a, b)\cdot (v-v_j)
\le -\sigma_j N
\end{align}
for all $(a, b)\in S_1^N(j)$.
The same conclusion is true for  $S_2^N(j)$.
\end{lemma}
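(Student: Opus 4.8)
The plan is to read the conclusion off directly from the explicit parametrizations in \eqref{eq:42}, combined with the convexity relations \eqref{nd1} for the edge normals and the observation that the inner products in play are integers; concretely I expect to be able to take $\sigma_j:=1/d_j$. First I would fix $j\in[r]$, a vertex $v\in S_P\setminus\{v_j\}$, an integer $N\in\NN$, and a point $(a,b)\in S_1^N(j)$, so that by \eqref{eq:42} we have $(a,b)=\tfrac{n+N}{d_j}\omega_{j-1}+\tfrac{N}{d_j}\omega_{j}$ for some $n\in\NN$. Expanding the pairing with $v-v_j$ and using that, by \eqref{nd1}, both $\omega_{j-1}\cdot(v-v_j)\le0$ and $\omega_{j}\cdot(v-v_j)\le0$, together with $n,N\ge0$, I would estimate
\begin{align*}
(a,b)\cdot(v-v_j)
=\frac{n+N}{d_j}\,\omega_{j-1}\cdot(v-v_j)+\frac{N}{d_j}\,\omega_{j}\cdot(v-v_j)
\le \frac{N}{d_j}\Big(\omega_{j-1}\cdot(v-v_j)+\omega_{j}\cdot(v-v_j)\Big),
\end{align*}
where the inequality uses that enlarging the nonnegative coefficient in front of a nonpositive quantity only decreases the value.

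The remaining point is purely arithmetic. Since $v,v_j\in S_P\subseteq\NN\times\NN$ and the components of $\omega_{j-1}$ and $\omega_{j}$ are nonnegative integers (the boundary normals $\omega_0=(0,1)$ and $\omega_r=(1,0)$ included), both $\omega_{j-1}\cdot(v-v_j)$ and $\omega_{j}\cdot(v-v_j)$ are integers; by \eqref{nd1} at least one of them is strictly negative, hence $\le-1$, while the other is $\le0$, so their sum is $\le-1$. This yields $(a,b)\cdot(v-v_j)\le -N/d_j$, uniformly over $v$, $N$, and $(a,b)\in S_1^N(j)$, so $\sigma_j:=1/d_j>0$ works. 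For $(a,b)\in S_2^N(j)$ the argument is identical with the roles of $\omega_{j-1}$ and $\omega_{j}$ interchanged: one writes $(a,b)=\tfrac{N}{d_j}\omega_{j-1}+\tfrac{n+N}{d_j}\omega_{j}$, bounds $\tfrac{n+N}{d_j}\,\omega_{j}\cdot(v-v_j)\le\tfrac{N}{d_j}\,\omega_{j}\cdot(v-v_j)$, and concludes in the same way.

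I do not anticipate a genuine obstacle, since the lemma is essentially a bookkeeping consequence of \eqref{nd1}. The only points that warrant a line of care are: that the lower bound $-N/d_j$ must be seen to be independent of the vertex $v$, so that a single $\sigma_j$ serves all $v\in S_P\setminus\{v_j\}$; the degenerate case $r=1$, where $S_P\setminus\{v_1\}$ may be empty and the assertion is vacuously true; and checking that \eqref{nd1} and the integrality step remain valid at the extreme edges $j=1$ and $j=r$, where $\omega_0$ and $\omega_r$ play the role of an edge normal.
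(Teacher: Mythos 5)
Your proof is correct and follows essentially the same route as the paper: both arguments use the parametrization \eqref{eq:42} together with \eqref{nd1} to force a strictly negative contribution proportional to $N$. The only (harmless) difference is how the constant is extracted — the paper regroups the sum as $\tfrac{n}{d_j}\omega_{j-1}+\tfrac{N}{d_j}(\omega_{j-1}+\omega_j)$ and takes $\sigma_j$ as a minimum over the finitely many $v\in S_P\setminus\{v_j\}$, whereas you use integrality of the inner products to get the uniform value $\sigma_j=1/d_j$ directly.
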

\begin{proof}
For every $(a, b)\in S_1^N(j)$ we can write
\begin{align*}
(a, b)=\frac{(n+N)}{d_j}\omega_{j-1}+\frac{N}{d_j}\omega_{j}
=\frac{n}{d_j}\omega_{j-1}+\frac{N}{d_j}(\omega_{j}+\omega_{j-1})
\end{align*}
for some $n\in\NN$. By \eqref{nd1} we have
\begin{align*}
(v-v_j)\cdot (\omega_{j}+\omega_{j-1})<0
\end{align*}
for all $v\in S_P\setminus\{v_j\}$,
since $\omega_{j-1}$ and $\omega_{j}$ are linearly independent. Taking
\begin{align*}
\sigma_j:=\min_{v\in S_P\setminus\{v_j\}}\frac{1}{d_j}(v_j-v)\cdot (\omega_{j}+\omega_{j-1})>0,
\end{align*}
one sees, by \eqref{nd1} again, that
\begin{align*}
(a, b)\cdot (v-v_j)
=\frac{n}{d_j}\omega_{j-1}\cdot (v-v_j)+\frac{N}{d_j}(\omega_{j}+\omega_{j-1})\cdot (v-v_j)\le -\sigma_j N
\end{align*}
for all $(a, b)\in S_1^N(j)$. This immediately yields \eqref{eq:47} and the proof is finished.
\end{proof}
For any $\tau>1$ using the decomposition \eqref{eq:132} we may write
\begin{align}
\label{eq:5}
\DD_{\tau}\times\DD_{\tau}=\bigcup_{j=1}^r\SS_{\tau}(j),
\end{align}
where
\begin{align}
\label{eq:6}
\SS_{\tau}(j):=\{(\tau^{n_1}, \tau^{n_2})\in \DD_{\tau}\times\DD_{\tau}: (n_1, n_2)\in S(j)\},
\quad \text{ for } \quad j\in[r].
\end{align}
 Using \eqref{eq:42} we can further write
 \begin{align}
 \label{eq:48}
\SS_{\tau}(j)=\bigcup_{N\in\NN}\SS_{\tau, 1}^N(j)\cup\bigcup_{N\in\NN}\SS_{\tau, 2}^N(j),
\end{align}
where for any $j\in[r]$ one has
\begin{align}
\label{eq:49}
\begin{split}
\SS_{\tau, 1}^N(j):=&\{(\tau^{n_1}, \tau^{n_2})\in \DD_{\tau}\times\DD_{\tau}: (n_1, n_2)\in S_1^N(j) \},\\
\SS_{\tau, 2}^N(j):=&\{(\tau^{n_1}, \tau^{n_2})\in \DD_{\tau}\times\DD_{\tau}: (n_1, n_2)\in S_2^N(j) \}.
\end{split}
\end{align}

In view of decomposition \eqref{eq:5} our aim will be to restrict the estimates for oscillations to sectors from \eqref{eq:6}.

\begin{theorem}
\label{thm:main'''}
Let $P\in\ZZ[\rm m_1, \rm m_2]$ be a non-degenerate polynomial (see \eqref{eq:66}) such that
$P(0, 0)=0$. Let $r\in\ZZ_+$ be the number of corners in the corresponding Newton diagram $N_P$.
Let $f\in \ell^p(\ZZ)$ for some $1\le  p\le \infty$, and let $\tilde{A}_{M_1, M_2; \ZZ}^{P}f$ be the  average defined  in \eqref{eq:1}.
If $1<p<\infty$ and $\tau>1$ and $j\in[r]$, and $\SS_{\tau}(j)$ is a sector from \eqref{eq:6}, then one has
\begin{align}
\label{eq:3}
\sup_{J\in\ZZ_+}\sup_{I\in\mathfrak S_J(\SS_{\tau}(j))}\big\|O_{I, J}(\tilde{A}_{M_1, M_2; \ZZ}^{P}f: (M_1, M_2)\in\SS_{\tau}(j))\|_{\ell^p(\ZZ)}\lesssim_{p, \tau, P}\|f\|_{\ell^p(\ZZ)}.
\end{align}
 The implicit constant in \eqref{eq:3} may only depend on $p, \tau, P$.
\end{theorem}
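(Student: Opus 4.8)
The plan is to carry out the circle method inside a single sector $\SS_{\tau}(j)$, using the geometry of the backwards Newton diagram $N_P$ to pin down the scale at which the major/minor arc splitting should be performed. Fix $j\in[r]$ and let $v_j=(v_{j,1},v_{j,2})\in V_P$ be the associated corner. On $\SS_{\tau}(j)$ the monomial $c_{v_j}{\rm m}_1^{v_{j,1}}{\rm m}_2^{v_{j,2}}$ dominates $P$: after the further splitting $\SS_{\tau}(j)=\bigcup_{N\in\NN}\big(\SS_{\tau,1}^N(j)\cup\SS_{\tau,2}^N(j)\big)$ from \eqref{eq:48}, Lemma \ref{lem:30} yields, for every monomial ${\rm m}_1^{\gamma_1}{\rm m}_2^{\gamma_2}$ of $P$ other than the dominating one, the quantitative separation
\[
M_1^{\gamma_1}M_2^{\gamma_2}\le\tau^{-\sigma_j N}M_1^{v_{j,1}}M_2^{v_{j,2}}=:\tau^{-\sigma_j N}K,\qquad (M_1,M_2)\in\SS_{\tau,i}^N(j),\ i\in\{1,2\},
\]
uniformly on each layer. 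Thus $K=K(M_1,M_2)=M_1^{v_{j,1}}M_2^{v_{j,2}}$ is the correct scale, and I would first pass, via the Calder\'on transference principle \cite{Cald} and the Fourier transform, to the multiplier $\mathfrak m_{M_1,M_2}$ of $\tilde A_{M_1,M_2;\ZZ}^{P}$ and split it, relative to $K$, into a minor-arc part $\mathfrak m_{M_1,M_2}^{\mathrm{min}}$ (frequencies not approximable by a rational with small denominator at resolution $1/K$) and a major-arc part $\mathfrak m_{M_1,M_2}^{\mathrm{maj}}$.

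For the minor-arc part, Proposition \ref{prop:23}---whose proof rests on the two-parameter Weyl-type estimate Proposition \ref{prop:22} and on the shape of $N_P$---provides a power-saving operator bound: $T_{\ZZ}[\mathfrak m_{M_1,M_2}^{\mathrm{min}}]$ has $\ell^p(\ZZ)\to\ell^p(\ZZ)$ norm $O_P(K^{-\delta})$ for some $\delta=\delta(P)>0$. A short computation shows $\sum_{(M_1,M_2)\in\SS_{\tau}(j)}K(M_1,M_2)^{-2\delta}<\infty$ (one uses $v_j\neq(0,0)$, which holds since $c_{0,0}=0$, and, when $v_j$ lies on a coordinate axis, the cone inequalities \eqref{eq:209}--\eqref{eq:210} that then force $\SS_{\tau}(j)$ to be a proper sub-cone). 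By Remark \ref{rem:1}(3) the oscillation of the minor-arc contribution over the sector is dominated by the full square function $\big(\sum_{(M_1,M_2)\in\SS_{\tau}(j)}|T_{\ZZ}[\mathfrak m_{M_1,M_2}^{\mathrm{min}}]f|^2\big)^{1/2}$, whose $\ell^p$ norm is $\lesssim_{p,\tau,P}\|f\|_{\ell^p}$ after interpolating against the trivial $\ell^1$ and $\ell^\infty$ bounds and running a vector-valued Calder\'on--Zygmund argument to reach all $p\in(1,\infty)$.

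For the major-arc part one approximates $\mathfrak m_{M_1,M_2}^{\mathrm{maj}}$, with asymptotic precision, by $\sum_{q}\sum_{(a,q)=1}G(a/q)\,\Phi_{M_1,M_2}(\xi-a/q)\,\chi_K(\xi-a/q)$, where $G$ is a normalized complete exponential sum (controlled by the complete sum estimates of Section \ref{section:5}) and $\Phi_{M_1,M_2}(\xi)=\int_{[\tau^{-1},1]^2}\ex\big(\xi P(M_1t_1,M_2t_2)\big)\,dt$ is the (suitably normalized) continuous analogue of $\mathfrak m_{M_1,M_2}$. The arithmetic factor---the summation over $q$ and $a$---is handled by the multi-parameter Ionescu--Wainger theory of Section \ref{section:6}, itself bootstrapped from the one-parameter semi-norm variant Theorem \ref{thm:IW1}; this reduces matters to the continuous family $\{\Phi_{M_1,M_2}:(M_1,M_2)\in\SS_{\tau}(j)\}$ sampled at the relevant rationals. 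On $\SS_{\tau}(j)$, replacing $P$ by its dominating monomial replaces $\Phi_{M_1,M_2}$ by $\Psi_K(\xi)=\int_{[\tau^{-1},1]^2}\ex\big(\xi c_{v_j}K\,t_1^{v_{j,1}}t_2^{v_{j,2}}\big)\,dt$, which depends on $(M_1,M_2)$ \emph{only through $K$} and hence is a one-parameter family in $K\in\DD_{\tau}$; for it the oscillation estimate follows from the decay $|\Psi_K(\xi)|\lesssim_{\tau,P}(1+|\xi|K)^{-c}$ supplied by van der Corput's lemma (Proposition \ref{thm:CCW}) for some $c=c(P)>0$, together with a standard Littlewood--Paley and Rademacher--Menshov argument (inequality \eqref{eq:164} and the one-parameter theory of \cite{jsw,MST2}). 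Finally, the two-parameter oscillation over $\SS_{\tau}(j)$ of any family depending only on $K$ is dominated by the corresponding one-parameter oscillation in $K$, by a direct comparison of the semi-norms, using that $K$ is monotone in each of $M_1,M_2$ and that each $I_i$ is the lower-left corner of its box $\BB[I_i]$. The leftover discrepancies---$\Phi_{M_1,M_2}-\Psi_K$ and the error in the major-arc approximation---are controlled on the $N$-th layer by the margin $\tau^{-\sigma_j N}$ of Lemma \ref{lem:30} together with the power-saving $K^{-\delta}$ off the arcs, and are absorbed after crude oscillation and maximal bounds. Assembling the minor- and major-arc contributions by subadditivity of the oscillation semi-norm (Remark \ref{rem:1}(1)) yields \eqref{eq:3}.

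The principal obstacle is the bookkeeping in the major-arc step. The boxes $\BB[I_i]$ entering the sector oscillation are not nested in $K$ and cut across the layers $\SS_{\tau,i}^N(j)$---this is exactly the failure of nestedness illustrated in Figure \ref{fig:2}---so one cannot argue layer by layer and then sum in $N$; instead the circle-method decomposition, the Ionescu--Wainger step, and the comparison with the continuous family must all be performed uniformly over the whole sector, with $N$ entering only through the gains of Lemma \ref{lem:30}, and every error term must be produced with enough asymptotic precision to survive not merely an $\ell^p$-norm bound but the associated maximal and oscillation estimates. Carrying this out rigorously is the content of Section \ref{section:7}.
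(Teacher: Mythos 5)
Your overall strategy is the same as the paper's: work sector by sector, split the multiplier of $\tilde{A}_{M_1,M_2;\ZZ}^{P}$ into major and minor arcs at the scale $K=M_1^{v_{j,1}}M_2^{v_{j,2}}$ of the dominating monomial, kill the minor arcs with the double Weyl inequality and sum over the sector, approximate the major arcs by a complete exponential sum times a continuous multiplier, handle the arithmetic factor with Ionescu--Wainger theory, and reduce the continuous factor to the dominating monomial (hence to a one-parameter family in $K$) using the margin $\tau^{-\sigma_j N}$ of Lemma \ref{lem:30}. Two points, however, need attention.

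First, a mis-citation that changes the quantitative shape of the minor-arc step. Proposition \ref{prop:23} does \emph{not} give a power saving $O_P(K^{-\delta})$: its hypothesis \eqref{eq:215} only places $q$ above $(\log M_{r,j}^*)^{\beta}$, so the conclusion \eqref{eq:227} is merely $M_1M_2(\log M_{r,j}^*)^{-\alpha}$, and after interpolation with the Ionescu--Wainger bounds at $p_0,p_0'$ (these endpoint bounds are themselves not ``trivial $\ell^1$ and $\ell^\infty$ bounds''---the minor-arc multiplier is a difference involving the periodized projection $\Delta_{\le l,\le n}$, and one needs Theorem \ref{thm:IW} to control it off $\ell^2$) one gets only $(\log M_2)^{-\alpha_p}$ on $\ell^p$. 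The argument still closes, but for a different reason than the one you give: on $\SS_{\tau}(j)$ one has $\log M_1\lesssim\log M_2$ (or the symmetric relation), so the double sum of $(\log M_{r,j}^*)^{-\alpha_p}$ over the sector converges once $\alpha_p$ is large, which is why $\beta$ in \eqref{eq:299} must be taken enormous relative to $p$. Your summability criterion $\sum K^{-2\delta}<\infty$ is not the operative one.

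Second, and more seriously, the step you dismiss as ``bookkeeping'' is the actual content of the proof and is missing. The major-arc multiplier carries an Ionescu--Wainger truncation $\Sigma_{\le l^{\beta}(N)}$ whose scale $N$ must, at the moment one takes $\sup_{M_2}$ for fixed $M_1$ (via \eqref{eq:38}), be the \emph{smaller} parameter $M_1$; but the minor-arc estimate naturally produces the truncation at the \emph{larger} parameter $M_2$. Passing from one to the other uniformly in $M_2$ is the paper's ``changing scale'' estimate (Claim \ref{claim:4}), and it cannot be done by crude bounds: it requires running the circle method in the variable $m_2$ \emph{for each fixed} $m_1$, which produces the partial complete sums $G^1_{m_1}(a/q)$ and partial continuous multipliers $\mathfrak m^1_{m_1,M_2}$ (Claim \ref{claim:3}), a factorization of the periodized multiplier into an arithmetic factor $\Phi^{\Sigma_s}[G^1_{m_1},1]$ times an analytic factor, and crucially the \emph{averaged} decay $\frac{1}{M_1}\sum_{m_1\le M_1}|G^1_{m_1}(a/q)|\lesssim q^{-\delta}$ of Proposition \ref{prop:32} (this is exactly where non-degeneracy enters and where inequality \eqref{eq:75} becomes an issue for more general averages). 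None of this apparatus appears in your outline, and without it the supremum over $M_2$ inside each oscillation block cannot be controlled. The remainder of your sketch (the reduction of the continuous family to $\Psi_K$ and the domination of the two-parameter oscillation of a $K$-dependent family by a one-parameter oscillation) does match Steps 1--2 of the proof of Theorem \ref{thm:IW2} and is sound.
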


The proof of Theorem \ref{thm:main'''} is postponed to Section \ref{section:7}. However, assuming momentarily Theorem \ref{thm:main'''} we can derive Theorem \ref{thm:main''}.
\begin{proof}[Proof of Theorem \ref{thm:main''}] 
Assume that \eqref{eq:3} holds for all $j\in[r]$. By \eqref{eq:5} and \eqref{eq:116} one has 
\begin{multline*}
\sup_{J\in\ZZ_+}\sup_{I\in\mathfrak S_J(\DD_{\tau}^2) }\big\|O_{I, J}(\tilde{A}_{M; \ZZ}^{P}f: M\in\DD_{\tau}^2)\|_{\ell^p(\ZZ)}
\lesssim \sum_{j\in[r]}\sup_{J\in\ZZ_+}\sup_{I\in\mathfrak S_J(\DD_{\tau}^2) }\big\|O_{I, J}(\tilde{A}_{M; \ZZ}^{P}f: M\in\SS_{\tau}(j))\|_{\ell^p(\ZZ)}.
\end{multline*}
\paragraph{\bf Step 1} If suffices to show that for every $j\in[r]$, every $J\in\ZZ_+$ and every $I\in\mathfrak S_J(\DD_{\tau}^2)$, one has
\begin{multline}
\label{eq:7}
\Big\|\Big(\sum_{i\in\NN_{<J}}\sup_{M\in\BB[I,i]\cap\SS_{\tau}(j)}|\tilde{A}_{M; \ZZ}^{P}f-\tilde{A}_{I_{i}; \ZZ}^{P}f|^2\Big)^{1/2}\Big\|_{\ell^p(\ZZ)}\\
\lesssim \sum_{j\in[r]}
\sup_{J\in\ZZ_+}\sup_{I\in\mathfrak S_J(\SS_{\tau}(j))}\big\|O_{I, J}(\tilde{A}_{M; \ZZ}^{P}f: M\in\SS_{\tau}(j))\big\|_{\ell^p(\ZZ)}.
\end{multline}
We can assume that $J>Cr$ for a large $C>0$, otherwise the estimate in
\eqref{eq:7} easily follows from maximal function estimates.  Let us fix a sequence
$I = (I_0, \ldots, I_J) \in \mathfrak S_J(\DD_{\tau}^2)$ and a sector
$\SS_{\tau}(j)$. Let
$\omega_*:=\max\{\omega_{i1}, \omega_{i2}: i\in[r]\}$ and we split the set
$\NN_{<J}$  into $O(r)$ sparse sets $\JJ_1,\ldots, \JJ_{O(r)}\subset \NN_{<J}$, where each
$\JJ\in\{\JJ_1,\ldots, \JJ_{O(r)}\}$ satisfies the separation condition:
\begin{align}
\label{eq:8}
\log_{\tau}I_{i_21}-\log_{\tau}I_{(i_1+1)1}\ge 100r\omega_*
\quad \text{ and } \quad
\log_{\tau}I_{i_22}-\log_{\tau}I_{(i_1+1)2}\ge 100r\omega_*
\end{align}
for every $i_1, i_2\in\JJ$ such that $i_1< i_2$. Our task now is to establish \eqref{eq:7} with the 
summation over $\JJ$ satisfying \eqref{eq:8}  in place of $\NN_{<J}$ in the sum on the left-hand side of \eqref{eq:7}.

\paragraph{\bf Step 2}  To every element $I_i = (I_{i1}, I_{i2})$ with
$i\in\NN_{<J}$ in the sequence $I$ (which say lies in the sector
$\SS_{\tau}(j_i)$), we associate at most one point
$P_i(j) \in \SS_{\tau}(j)$ in the following way. If $j_i<j$ and the
box $\BB[I,i]$ intersects the sector $\SS_{\tau}(j)$, then the box
intersects the sector along the bottom edge. We set
$P_i(j) = (I_i^j, I_{i2})$ where $I_i^j$ be the least element in
$\DD_{\tau}$ such that $(I_i^j, I_{i2}) \in \SS_{\tau}(j)$. If $j<j_i$
and the box $\BB[I,i]$ intersects the sector $\SS_{\tau}(j)$, then it
intersects the sector along the left edge. We set
$P_i(j) = (I_{i1}, {\tilde{I}}_i^j)$ where ${\tilde{I}}_i^j$ be the
least element in $\DD_{\tau}$ such that
$(I_{i1}, {\tilde{I}}_i^j) \in \SS_{\tau}(j)$.  Finally if $j_i=j$, we
set $P_i(j) = I_i$. The sequence $P(j) = (P_i(j): i\in\NN_{\le J'})$ forms a strictly increasing sequence 
lying in $\mathfrak S_{J'}(\SS_{\tau}(j))$ for some $J'\le J$ and each
$P_i(j)=(P_{i1}(j), P_{i2}(j))$ is the least element among all the
elements $(M_1,M_2) \in \BB[I,i] \cap \SS_{\tau}(j)$.

\paragraph{\bf Step 3} We now produce a sequence of length at most $r+2$, which will allow us to move from $I_i$ to $P_i(j)$ when $I_i\neq P_i(j)$.
More precisely, we claim that there exists a sequence $u^i:=(u^i_m : m\in \NN_{<m_{I_i}})\subset \DD_{\tau}^2$ for some $m_{I_i}\in[r+1]$, with the property that
\begin{align}
\label{eq:16}
u_0^i\succ u_1^i\succ\ldots \succ u_{m_{I_i}-1}^i,
\quad \text{ and } \quad u_{m_{I_i}-1}^i\prec u_{m_{I_i}}^i,
\end{align}
where $(u_0^i, u_{m_{I_i}}^i)=(I_i, P_i(j))$ or  $(u_0^i, u_{m_{I_i}}^i)=(P_i(j), I_i)$.
Moreover,  two consecutive elements $u_m^i, u_{m+1}^i$ of this sequence belong to a unique sector $\SS_{\tau}(j_{u_m^i})$ except the elements  $u_{m_{I_i}-2}^i, u_{m_{I_i}-1}^i$ and $u_{m_{I_i}-1}^i, u_{m_{I_i}}^i$, which may belong to the same sector. Suppose now that $\BB[I,i] \cap \SS_{\tau}(j)\neq\emptyset$ and $I_i\in \SS_{\tau}(j_i)$ and $j_i< j$.  Let $u_{0}^i:=I_i$ be the starting point. Suppose that the elements $u_{0}^i\succ u_{1}^i\succ \ldots\succ u_{m}^i$  have been chosen for some $m\in\NN_{< r}$ so that $u_{s}^i$ lies on  the bottom boundary ray of $\SS_{\tau}(j_{i}+s-1)$ and $u_{s}^i\prec u_{s-1}^i$ for each $s\in[m]$. Then we take $u_m^i$ and move southwesterly to  $u_{m+1}^i$, 
the nearest  point on the bottom boundary ray of $\SS_{\tau}(j_i+m)$ such that $u_{m+1}^i\prec u_{m}^i$. Continuing this way after $m_{I_i}-1=j-j_i+1\le r$ steps we arrive at $u_{m_{I_i}-1}^i\in \SS_{\tau}(j)$ which will allow us to reach the last point   of this sequence  $u_{m_{I_i}}^i:=P_i(j)$ as claimed in \eqref{eq:16}. Assume now that $\BB[I,i] \cap \SS_{\tau}(j)\neq\emptyset$ and $I_i\in \SS_{\tau}(j_i)$ and $j_i> j$. We start from the point $u_0^i:=P_i(j)$ and proceed exactly the same  as in the previous case until we reach the point $u_{m_{I_i}}^i:=I_i$.
\paragraph{\bf Step 4} To complete the proof we use the sequence from \eqref{eq:16} for each $i\in\JJ$, and observe that
\begin{multline*}
\Big\|\Big(\sum_{i\in\JJ}\sup_{M\in\BB[I,i]\cap\SS_{\tau}(j)}|\tilde{A}_{M; \ZZ}^{P}f-\tilde{A}_{I_{i}; \ZZ}^{P}f|^2\Big)^{1/2}\Big\|_{\ell^p(\ZZ)}
\le
\Big\|\Big(\sum_{i\in\JJ}\sup_{M\in\BB[P(j), i]\cap\SS_{\tau}(j)}|\tilde{A}_{M; \ZZ}^{P}f-\tilde{A}_{I_{i}; \ZZ}^{P}f|^2\Big)^{1/2}\Big\|_{\ell^p(\ZZ)}\\
\lesssim_r
\Big\|\Big(\sum_{i\in\JJ}\sup_{M\in\BB[P(j), i]\cap\SS_{\tau}(j)}|\tilde{A}_{M; \ZZ}^{P}f-\tilde{A}_{P_{i}(j); \ZZ}^{P}f|^2\Big)^{1/2}\Big\|_{\ell^p(\ZZ)}\\
+\Big\|\Big(\sum_{i\in\JJ}\sum_{m\in \NN_{<m_{I_i}}}|\tilde{A}_{u_{m+1}^i; \ZZ}^{P}f-\tilde{A}_{u_m^i; \ZZ}^{P}f|^2\Big)^{1/2}\Big\|_{\ell^p(\ZZ)}.
\end{multline*}
Clearly, the first norm is dominated by the right-hand side of \eqref{eq:7}. The same is true for the second norm. It follows from the fact that for two consecutive integers $i_1< i_2$ such that $\BB[I,{i_1}] \cap \SS_{\tau}(j)\neq\emptyset$ and 
$\BB[I,{i_2}] \cap \SS_{\tau}(j)\neq\emptyset$, if we have $u_{j_1}^{i_1}$ and $u_{j_2}^{i_2}$ belonging to the same sector, they must satisfy $u_{j_1}^{i_1}\prec u_{j_2}^{i_2}$ by the separation condition \eqref{eq:8}.
 This complete the proof of the theorem.
\end{proof}

\section{Exponential sum estimates}\label{section:5}
This section is intended to establish certain double exponential sum estimates which will be used later. We begin by recalling the classical  Weyl inequality with a logarithmic loss.

\begin{proposition}
\label{prop:20}
Let $d\in\ZZ_+$, $d\ge2$ and let $P\in\RR[\rm m]$ be such that $P(m):=c_dm^d+\ldots+c_1m$.
Then there exists a constant $C>0$ such that for every $M\in\ZZ_+$ the following is true.
Suppose that for some $2\le j\le d$ there are  $a, q\in\ZZ$ such that $1\le q\le M^j$ and  $(a, q)=1$ and  
\begin{align*}
\Big|c_j-\frac a q\Big|\le\frac 1 {q^2}.
\end{align*}
Then for $\sigma(d):=2d^2-2d+1$ one has
\begin{align}
\label{eq:w4}
\Big|\sum_{m=1}^M \ex(P(m))\Big|\le
CM\log(2M)\bigg(\frac{1}{q}+\frac{1}{M}+\frac q{M^j}\bigg)^{\frac{1}{\sigma(d)}}.
\end{align}
\end{proposition}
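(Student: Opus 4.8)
The statement to prove is the classical Weyl inequality with a logarithmic loss (Proposition~\ref{prop:20}). Let me sketch how I would prove it.

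The plan is to run the classical Weyl differencing procedure, reducing the degree-$d$ sum to degree $1$ by successive squaring, and then invoke the standard rational-approximation estimate for linear exponential sums. The key parameter bookkeeping is that after $d-1$ differencings applied to $\big|\sum_{m=1}^M \ex(P(m))\big|^{2^{d-1}}$, the innermost sum becomes a geometric-type sum $\sum_m \ex(d!\,c_d\, h_1\cdots h_{d-1}\, m + \text{lower order in }m)$ over an interval, where $h_1,\dots,h_{d-1}$ range over dyadic-type intervals of length $\lesssim M$. Bounding this by $\min(M, \|d!\,c_d\,h_1\cdots h_{d-1}\|^{-1})$ and summing over the $h_i$, one collects the fractions $\big|d!\,c_d - a'/q'\big|$ type data; standard counting of lattice points shows the sum over $h$ is $\lesssim M^{d-1}\big(\frac{1}{q}+\frac{1}{M}+\frac{q}{M^d}\big)\log M$, roughly. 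This gives $\big|\sum \ex(P(m))\big|^{2^{d-1}} \lesssim M^{2^{d-1}-d} \cdot M^{d-1}\big(\tfrac1q+\tfrac1M+\tfrac{q}{M^d}\big)\log M \cdot M = M^{2^{d-1}}(\log M)\big(\tfrac1q+\tfrac1M+\tfrac{q}{M^d}\big)$; extracting the $2^{d-1}$-th root yields \eqref{eq:w4} with exponent $1/2^{d-1}$. However, the exponent claimed is $1/\sigma(d)$ with $\sigma(d)=2d^2-2d+1$, which is far \emph{worse} than $1/2^{d-1}$ for large $d$; this suggests the authors are deliberately using a weaker but more convenient version (perhaps a consequence of an inductive argument or of expressing everything in terms of a single rescaled polynomial), so I would instead follow the route that produces exactly this exponent.

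Concretely, I would argue by induction on $d$, or equivalently reduce to the leading coefficient only as follows. First, the claim is scale-invariant enough that it suffices to treat the coefficient $c_j$ with $2\le j\le d$ for which we have the rational approximation. Apply Weyl differencing $j-1$ times to the $j$-th power structure — more precisely, bound $\big|\sum_{m=1}^M\ex(P(m))\big|$ by repeatedly using $|S|^2 = \sum_{|h|<M}\sum_{m}\ex(P(m+h)-P(m))$, and note that $P(m+h)-P(m)$ has degree $d-1$ with leading behaviour controlled; but I only difference down to a polynomial whose \emph{linear} coefficient is a nonzero integer multiple of $c_j$ times a product of the difference parameters. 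The cleanest bookkeeping: after $d-1$ differencings one reaches $|S|^{2^{d-1}} \le M^{2^{d-1}-d}\sum_{h_1,\dots,h_{d-1}} \big|\sum_{m=1}^{M}\ex(j!\,c_j\,\theta(h)\,m + \cdots)\big|$ where $\theta(h)$ is an appropriate monomial in the $h_i$'s of degree $j-1$ contributing, together with the other $d-1-(j-1)$ trivial differencings contributing only the factor $M^{d-1-(j-1)}\cdot(\text{number of }h)$. Then bound the inner linear sum by $\min\{M,\|j!\,c_j\,\theta(h)\|^{-1}\}$, and use the divisor bound plus the approximation $|c_j - a/q|\le q^{-2}$ to show $\sum_h \min\{M,\|j!\,c_j\,\theta(h)\|^{-1}\} \lesssim M^{j-1}\log(2M)\big(\tfrac1q+\tfrac1M+\tfrac{q}{M^j}\big)$. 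Multiplying through and taking the $2^{d-1}$-th root, then crudely replacing the exponent $2^{-(d-1)}$ by the smaller $\sigma(d)^{-1}$ (legitimate since the quantity in parentheses is $\le 1$... actually one needs $\le 3$, so after absorbing constants it works), delivers \eqref{eq:w4}.

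The main obstacle I anticipate is the careful accounting in the Weyl differencing when $j<d$: one must difference with respect to the "right" variables so that the surviving linear coefficient is exactly a fixed integer multiple of $c_j$ times a single monomial $\theta(h)$, while the remaining differencings and the cross terms are absorbed into harmless powers of $M$ and into the multiplicity count. A secondary technical point is the lattice-point / divisor estimate $\#\{h \in [1,H]^{\ell} : \theta(h) \equiv r \ (q)\} \lesssim_\varepsilon q^\varepsilon H^{\ell-1}$ for the relevant monomial $\theta$, which one needs uniformly in $r$; this is standard but requires $\theta$ to have a variable appearing to the first power, which is arranged by the choice of differencing order. Finally, justifying that the resulting exponent may be weakened to $1/\sigma(d)$ and that $\sigma(d) = 2d^2-2d+1$ is the natural bound coming out of an \emph{inductive} (rather than pure-differencing) proof — if the authors prove it by induction on $d$, then $\sigma(d) = \sigma(d-1) + (4d-2) $ telescopes to $2d^2-2d+1$ starting from $\sigma(1)=1$ — would be the cleanest way to get precisely this constant, and I would present it that way if the inductive step goes through smoothly.
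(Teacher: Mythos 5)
The paper does not prove Proposition \ref{prop:20} at all: it quotes it from Wooley's Theorem 1.5, whose proof goes through Vinogradov's mean value theorem (efficient congruencing), and the same mechanism is what the authors run explicitly in the two-parameter analogue, Proposition \ref{prop:22}. Your Weyl-differencing route breaks at its central step. After $d-1$ difference operators, every monomial $c_im^i$ with $i\le d-2$ is annihilated outright, $c_{d-1}m^{d-1}$ survives only as the constant phase $(d-1)!\,c_{d-1}h_1\cdots h_{d-1}$, and the coefficient of $m$ in $\Delta_{h_1}\cdots\Delta_{h_{d-1}}P(m)$ is exactly $d!\,c_dh_1\cdots h_{d-1}$ --- it contains no trace of $c_j$ for $j<d$. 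So your key bookkeeping claim, that one can order the differencings so that ``the surviving linear coefficient is exactly a fixed integer multiple of $c_j$ times a single monomial $\theta(h)$,'' is false in precisely the range $2\le j\le d-1$ that the proposition must cover. Differencing fewer times keeps $c_j$ alive, but then the inner sum is a Weyl sum of degree $\ge2$ in $m$ and cannot be bounded by $\min\{M,\|\cdot\|^{-1}\}$.

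Even in the leading-coefficient case $j=d$, where differencing is sound and yields the exponent $2^{1-d}$, your final step of ``crudely replacing the exponent $2^{-(d-1)}$ by the smaller $\sigma(d)^{-1}$'' has the comparison backwards: $\sigma(d)^{-1}=(2d^2-2d+1)^{-1}$ is only polynomially small, so for $d\ge8$ one has $\sigma(d)^{-1}>2^{1-d}$ and the stated bound is strictly \emph{stronger} than the differencing bound (on a base $\le3$ one may harmlessly decrease an exponent, never increase it). The exponent $1/\sigma(d)$ is of Vinogradov type and genuinely requires Theorem \ref{thm:vino}: expand $|S|^{2s}$ via $J_{s,d}(M;\lambda)$ as in \eqref{eq:w1}, isolate the $j$-th coordinate to reduce to a sum $\sum_{|u|\le sM^j}\min\{\,\cdot\,,\|c_ju\|^{-1}\}$, apply Lemma \ref{lem:23} together with \eqref{eq:w10'}; this works for every $2\le j\le d$ and is exactly the scheme of Steps 1--5 in the paper's proof of Proposition \ref{prop:22}. (As a minor point, your guessed recursion $\sigma(d)=\sigma(d-1)+(4d-2)$ telescopes to $2d^2-1$, not $2d^2-2d+1$; the actual increment is $4(d-1)$.)
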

For the proof we refer to \cite[Theorem 1.5]{W}. The range of summation in \eqref{eq:w4} can be shifted to any segment of length $M$ without affecting the bound. We will also recall a simple lemma from \cite[Lemma A.15, p. 53]{MSZ3}, see also \cite[Lemma 1, p. 1298]{SW2}, which  follows from the Dirichlet principle.

\begin{lemma}
\label{lem:21}
Let $\theta\in\RR$ and $Q\in\ZZ\setminus\{0\}$.
Suppose that
\[
\abs[\Big]{\theta - \frac{a}{q}}
\leq
\frac{1}{q^2}
\]
for some integers $0\le a<q \leq M$
with $(a,q)=1$ for some $M\ge1$.
Then there is a reduced fraction $a'/q'$ so that $(a', q') = 1$ and
\[
\abs[\Big]{Q \theta - \frac{a'}{q'}}
\leq
\frac{1}{2q'M}
\]
with $q/(2|Q|) \leq q' \leq 2 M$.
\end{lemma}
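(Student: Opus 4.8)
The plan is to produce the fraction $a'/q'$ directly from Dirichlet's principle applied to $Q\theta$, so that the two easy conclusions ($q'\le 2M$ and the approximation quality $|Q\theta-a'/q'|\le\frac{1}{2q'M}$) come for free, and then to verify the genuinely new part, the lower bound $q'\ge q/(2Q)$, by a short contradiction argument using the hypothesis that $\theta$ is already very close to $a/q$ with $q\le M$.

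First I would note that one may assume $Q\ge 1$, since otherwise the asserted lower bound is vacuous. Applying Dirichlet's approximation theorem to the real number $Q\theta$ with parameter roughly $2M\ (\ge 2)$ produces coprime integers $a',q'$ with $1\le q'\le 2M$ and $|q'Q\theta-a'|\le\frac{1}{2M}$, equivalently $|Q\theta-a'/q'|\le\frac{1}{2q'M}$. (The only subtlety here is a routine juggling of integer parts needed to land $q'$ in $[1,2M]$ with exactly this precision; this is precisely the elementary ``Dirichlet principle'' referred to in the statement, and in the applications of the lemma $M$ is an integer so it is moot.) It therefore remains to prove $q'\ge q/(2Q)$.

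Suppose, for contradiction, that $q'<q/(2Q)$. I would split into two cases according to whether $a'/q'=Qa/q$ or not. If $a'/q'\ne Qa/q$, then $Qaq'-a'q$ is a nonzero integer, hence $\bigl|\frac{Qa}{q}-\frac{a'}{q'}\bigr|\ge\frac{1}{qq'}$; on the other hand, inserting $Q\theta$ and using $|\theta-a/q|\le q^{-2}$ together with the bound from the previous paragraph gives
\[
\Bigl|\frac{Qa}{q}-\frac{a'}{q'}\Bigr|\ \le\ Q\Bigl|\theta-\frac aq\Bigr|+\Bigl|Q\theta-\frac{a'}{q'}\Bigr|\ \le\ \frac{Q}{q^{2}}+\frac{1}{2q'M}.
\]
Multiplying the resulting inequality $\frac{1}{qq'}\le\frac{Q}{q^{2}}+\frac{1}{2q'M}$ through by $qq'>0$ yields $1\le\frac{Qq'}{q}+\frac{q}{2M}$; but $q'<q/(2Q)$ forces $\frac{Qq'}{q}<\tfrac12$ and $q\le M$ gives $\frac{q}{2M}\le\tfrac12$, so $1<1$, a contradiction. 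If instead $a'/q'=Qa/q$, then since $(a,q)=1$ we have $\gcd(Qa,q)=\gcd(Q,q)$, so the denominator of $Qa/q$ in lowest terms equals $q/\gcd(Q,q)$; by uniqueness of lowest terms this equals $q'$, whence $q'=q/\gcd(Q,q)\ge q/Q\ge q/(2Q)$, again contradicting $q'<q/(2Q)$. In both cases we reach a contradiction, so $q'\ge q/(2Q)$, which completes the proof.

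I expect the only point requiring genuine attention to be this two-case split in the last paragraph: one must separate the generic situation, where $Qaq'-a'q$ is a nonzero integer so the two rationals are at least $\frac{1}{qq'}$ apart, from the degenerate case $a'/q'=Qa/q$, in which the lower bound comes instead from bookkeeping on the reduced denominator. Everything else — the Dirichlet step and the triangle-inequality estimate — is routine.
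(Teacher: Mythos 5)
Your proof is correct, and it is essentially the standard Dirichlet-principle argument that the paper itself only alludes to (the paper gives no proof of Lemma \ref{lem:21}, citing instead \cite[Lemma A.15]{MSZ3} and \cite[Lemma 1]{SW2}): Dirichlet applied to $Q\theta$ at scale $2M$, followed by the dichotomy between $a'/q'\neq Qa/q$, where the spacing bound $|Qa/q-a'/q'|\ge \frac{1}{qq'}$ clashes with the triangle-inequality estimate, and $a'/q'=Qa/q$, where $q'=q/\gcd(Q,q)\ge q/Q$. Both cases are handled correctly, and your preliminary reductions (discarding $Q=0$, reducing the Dirichlet fraction to lowest terms) are sound.
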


We now extend Weyl's inequality in Proposition \ref{prop:20} to include the $j=1$ case. 

\begin{proposition}
\label{prop:20'}
Let $d\in\ZZ_+$ and let $P\in\RR[\rm m]$ be such that $P(m):=c_dm^d+\ldots+c_1m$.
Then there exists a constant $C>0$ such that for every $M\in\ZZ_+$ the following is true.
Suppose that for some $1\le j\le d$ there are  $a, q\in\ZZ$ such that $1\le q\le M^j$ and  $(a, q)=1$ and  
\begin{align}
\label{eq:27}
\Big|c_j-\frac a q\Big|\le\frac 1 {q^2}.
\end{align}
Then for certain $\tau(d)\in\ZZ_+$ one has
\begin{align}
\label{eq:w4'}
\Big|\sum_{m=1}^M \ex(P(m))\Big|\le
CM\log(2M)\bigg(\frac{1}{q}+\frac{1}{M}+\frac q{M^j}\bigg)^{\frac{1}{\tau(d)}}.
\end{align}
\end{proposition}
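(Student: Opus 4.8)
The plan is to bootstrap off Proposition~\ref{prop:20}, which already settles the cases $2\le j\le d$, so assume $j=1$. It is convenient to first dispose of the extreme ranges of $q$: if $q\le(\log 2M)^{\tau(d)}$ then $\tfrac1q\ge(\log 2M)^{-\tau(d)}$, so the right-hand side of \eqref{eq:w4'} is $\ge cM$ for an absolute $c>0$, and \eqref{eq:w4'} follows from the trivial bound $\big|\sum_{m=1}^M\ex(P(m))\big|\le M$; likewise if $q\ge M(\log 2M)^{-\tau(d)}$, using the term $q/M$. So it suffices to work in the bulk range $(\log 2M)^{\tau(d)}<q<M(\log 2M)^{-\tau(d)}$, where, by the arithmetic--geometric mean inequality applied to $\tfrac1q$ and $\tfrac qM$, the right-hand side of \eqref{eq:w4'} is $\ge cM\log(2M)\,M^{-1/(2\tau(d))}$.

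Next I would apply Dirichlet's principle to each higher coefficient: for $\ell\in\{2,\dots,d\}$ choose $a_\ell/q_\ell$ in lowest terms with $q_\ell\le M^\ell$ and $|c_\ell-a_\ell/q_\ell|\le 1/(q_\ell M^\ell)\le 1/q_\ell^2$, so Proposition~\ref{prop:20} applies with index $\ell$. Fix a small $\eta=\eta(d)>0$. If some $q_\ell$ falls in the moderate window $[M^{\eta},M^{\ell-\eta}]$, then Proposition~\ref{prop:20} gives $\big|\sum_{m=1}^M\ex(P(m))\big|\lesssim M\log(2M)\,M^{-\eta/\sigma(d)}$, which is dominated by the right-hand side of \eqref{eq:w4'} once $\tau(d)$ is large compared with $\sigma(d)/\eta$ (here the bulk-range lower bound is used). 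If instead $q_\ell>M^{\ell-\eta}$, re-apply Dirichlet to $c_\ell$ at the coarser scale $M^{\ell-\eta/2}$: either its denominator now lies in a moderate window (and we win), or $c_\ell$ is within $O(M^{-\ell+\eta})$ of a rational with denominator $\le M^{\eta/2}$. Thus the only surviving case is that every $c_\ell$, $\ell\ge2$, lies within $O(M^{-\ell+\eta})$ of a rational $b_\ell/Q$ with a common small denominator $Q:=\lcm(q_2,\dots,q_d)\le M^{C\eta}$.

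In this surviving case I would write $P(m)=c_1m+\tfrac{R(m)}{Q}+\mathcal E(m)$, where $R\in\ZZ[\mathrm m]$ has no linear or constant term and $\mathcal E(m):=\sum_{\ell=2}^d(c_\ell-b_\ell/Q)m^\ell$ is slowly varying on $[1,M]$ (one has $\mathcal E'(m)=O_d(M^{-1+\eta/2})$, so on subintervals of length $\asymp M^{1-\eta}$ the function $\ex(\mathcal E(m))$ has total variation $o(1)$). Peeling off this slowly varying factor by the summation-by-parts formula \eqref{eq:294} reduces matters to bounding $\sup_{1\le m\le M}\big|\sum_{k=1}^m\ex\big(c_1k+R(k)/Q\big)\big|$. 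Splitting $k$ into residue classes modulo $Q$, each inner sum becomes either a short geometric progression in the frequency $c_1$ (when $c_1$ is poorly approximated relative to $Q$) or a complete/incomplete exponential sum to a modulus dividing $qQ$ (when $c_1$ is well approximated); in both, $(a,q)=1$ forces non-degeneracy, and the classical complete-sum bound $\big|\sum_{s\le L}\ex(C(s)/(qQ))\big|\lesssim(qQ)^{1-1/d}\log(2qQ)$ (of the type established in Section~\ref{section:5}) gives $\big|\sum_{m=1}^M\ex(P(m))\big|\lesssim\big(Mq^{-1/d}+(qQ)^{1-1/d}\big)M^{\eta}\log(2M)$. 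Since $Q\le M^{C\eta}$ and $q$ lies in the bulk range, both terms are $\lesssim$ the right-hand side of \eqref{eq:w4'} once $\tau(d)\ge d$ and $\eta$ is small enough depending on $d,\tau(d)$; so a choice such as $\tau(d)=\max\{d,\lceil\sigma(d)/\eta\rceil\}$ with $\eta=\eta(d)$ fixed suitably small works.

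The main obstacle I anticipate is this last step: since $c_1$ is known only to precision $1/q^2$, for small $q$ one cannot replace $c_1$ by $a/q$ without an appreciable linear error $(c_1-a/q)m$, and the geometric sum $\sum_n\ex(c_1Qn)$ may degenerate (for instance when $q\mid Q$, or when $c_1-a/q$ is atypically small). Untangling this needs a further case split according to the size of $q$ relative to $Q$ and to $\sqrt M$, and a careful evaluation of the complete exponential sums modulo $qQ$ using $(a,q)=1$; this is exactly where the exponent $\tau(d)$ must be allowed to degrade relative to $\sigma(d)$, and no effort to optimize it is made.
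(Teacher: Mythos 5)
Your reduction to $j=1$, the trivial disposal of the extreme ranges of $q$, and the overall plan (Dirichlet on the higher coefficients, a case split on the sizes of their denominators, then residue classes modulo a common denominator $Q$ plus summation by parts for the linear phase) match the paper's proof in outline. The genuine gap is in the calibration of the windows and, as you yourself flag, in the surviving case. You cut the denominators $q_\ell$ at powers of $M$, so the common denominator in the surviving case is only controlled by $Q\le M^{C\eta}$. But the target \eqref{eq:w4'} only gives you room of size $q^{-1/\tau(d)}$, and in your bulk range $q$ can be as small as $(\log 2M)^{\tau(d)}$ --- far smaller than $Q$. Then (i) $q\mid Q$ is possible and the geometric series $\sum_n\ex(c_1Qn)$ genuinely degenerates, exactly the scenario you list; (ii) the linear error $(c_1-a/q)m$ completes an oscillation over intervals of length $\sim q^2\ll \lcm(q,Q)$, so no complete-sum cancellation can be extracted before that error moves; and (iii) even granting your claimed bound, the term $Mq^{-1/d}M^{\eta}\log(2M)$ exceeds the needed $Mq^{-1/\tau(d)}\log(2M)$ for any fixed $\eta>0$ once $q$ is, say, polylogarithmic in $M$, so the final accounting does not close for any choice of $\tau(d)$. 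These are not separate technicalities to be untangled later; they are all symptoms of the same miscalibration.

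The paper fixes this by measuring everything against $\kappa:=\min\{q,M/q\}$ rather than $M$: Dirichlet is applied to each $c_{j'}$ at scale $M^{j'}\kappa^{-\chi}$ with $\chi<(4d)^{-1}$, so either some $q_{j'}>\kappa^{\chi}$ and Proposition \ref{prop:20} already yields decay $\kappa^{-\chi/\sigma(d)}$, which beats $(1/q+1/M+q/M)^{1/\tau(d)}\ge\kappa^{-1/\tau(d)}$ once $\tau(d)=\lceil\sigma(d)\chi^{-1}\rceil$, or else $Q=\lcm\{q_{j'}\}\le\kappa^{d\chi}<q$. In the latter case $(a,q)=1$ and $Q<q$ force $q\nmid aQ$, so Lemma \ref{lem:21} gives $\|c_1Q\|\gtrsim 1/q'$ with $q'\ge q/(2Q)\ge\kappa^{1-d\chi}/2\ge 2$; the partial geometric sums are then $O(q')=O(M/\kappa)$, the higher-degree part on each residue class has increments $O(\kappa^{\chi}QM^{-1})$, and summation by parts \eqref{eq:294} closes the argument with the clean bound $M\kappa^{-1/2}$, i.e.\ $\tau(d)=2$ in this branch --- no complete exponential sums, no Hua-type bounds, and no $M^{\eta}$ losses are needed. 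If you replace your $M$-power thresholds by $\kappa$-power thresholds throughout, your argument becomes the paper's.
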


\begin{proof}
We first assume that $d=1$. Then $P(m)=c_1m$ and $j=1$. We can also assume that
$q\ge 2$ otherwise \eqref{eq:w4'} is obvious. Now it is easy to see that
\begin{align*}
\Big|\sum_{m=1}^M \ex(c_1m)\Big|\le \frac{1}{\|c_1\|}\lesssim q.
\end{align*}
Thus \eqref{eq:w4'} holds with $\tau(1)=1$. Now we assume that
$d\ge 2$. If \eqref{eq:27} holds for some $2\le j\le d$ then
\eqref{eq:w4'} follows from Proposition \ref{prop:20} with
$\tau(d)=\sigma(d)$, where $\sigma(d)$ is the exponent as in
\eqref{eq:w4}. Hence we can assume that $j=1$. Define
$\kappa:=\min\{q, M/q\}$, and let $\chi\in(0, (4d)^{-1})$. We may 
assume that $\kappa>100$ otherwise \eqref{eq:w4'} obviously follows.
For every $2\le j'\le d$, by Dirichlet's principle, there is a reduced
fraction $a_{j'}/q_{j'}$ such that
\begin{align}\label{cj}
\Big|c_{j'}-\frac{a_{j'}}{ q_{j'}}\Big|\le\frac{\kappa^{\chi}}{q_{j'} M^{j'}}
\end{align}
with $(a_{j'}, q_{j'})$ and $1\le q_j'\le M^{j'}\kappa^{-\chi}$. We may assume
that $1\le q_{j'}\le \kappa^{\chi}$ for all $2\le j'\le d$, since
otherwise the claim follows  from \eqref{eq:w4} with
$\tau(d)=\lceil \sigma(d)\chi^{-1}\rceil$. Let
$Q:=\lcm\{q_{j'}: 2\le j'\le d\}\le \kappa^{d\chi}$ and note that $Q \le M$ follows from the definition of $\kappa$.
We have
\begin{align*}
\Big|\sum_{m=1}^M \ex(P(m))\Big|&\le
\sum_{r=1}^Q\Big|\sum_{-\frac{r}{Q}< \ell\le \frac{M-r}{Q}} \ex(P(Q\ell+r))\Big|\\
&=\sum_{r=1}^Q\Big|\sum_{U< \ell\le V} A_{\ell} B_{\ell}\Big|,
\end{align*}
where $U:=-\frac{r}{Q}$, $V:=\frac{M-r}{Q}$ and $A_{\ell}:=\ex(c_1Q\ell)$ and
$$
B_{\ell}:=\ex\big(\sum_{j'=2}^d c_{j'} (Q\ell+r)^{j'} \big) \ = \ \ex\big(\sum_{j'=2}^d \alpha_{j'} (Q\ell + r)^{j'} +
\sum_{j'=2}^d \frac{a_{j'}}{q_{j'}} r^{j'}\big)
$$ 
where $\alpha_{j'} := c_{j'} - a_{j'}/q_{j'}$ satisfies the estimate \eqref{cj}.
Using the summation by parts
formula \eqref{eq:294} we obtain
\begin{align*}
\sum_{U < \ell\le V} A_{\ell} B_{\ell}=S_{V}B_{\lfloor V\rfloor} + \sum_{\ell\in(U, V-1]\cap\ZZ}S_{\ell}(B_{\ell}-B_{\ell+1}),
\end{align*}
with $S_{\ell}:=\sum_{k\in(U, \ell]\cap\ZZ}A_k$. 

From above, since $Q \le M$, we see that
\begin{align*}
|B_{\ell+1}-B_{\ell}| \lesssim \kappa^{\chi}QM^{-1}.
\end{align*}
By Lemma \ref{lem:21}  (with $M=q$) there is a reduced fraction $a'/q'$ such that $(a', q')=1$ and
\begin{align*}
\Big|c_1Q-\frac{a'}{q'}\Big|\le\frac{ 1 }{2qq'}
\qquad \text{ and } \qquad \kappa^{1-d\chi}/2\le q'\le 2q\le 2M/\kappa.
\end{align*}
Hence $q'\ge \kappa^{1-d\chi}/2\ge 2$ and so
\begin{align*}
|S_{\ell}|\lesssim \frac{1}{\|c_1Q\|}\lesssim q'\lesssim M/\kappa.
\end{align*}
Consequently, we conclude that
\begin{align*}
\Big|\sum_{m=1}^M \ex(P(m))\Big|\lesssim M\kappa^{-1/2}.
\end{align*}
This implies \eqref{eq:w4'} with $\tau(d)=2$ and the proof of Proposition \ref{prop:20'} is complete.
\end{proof}

We shall also use the Vinogradov mean value theorem. A detailed exposition of Vinogradov's method can be found in \cite[Section 8.5, p. 216]{IK}, see also \cite{W}. We shall follow \cite{IK}.  For each integers $s\ge1$ and   $k, N\ge2$ and for  $\lambda_1,\ldots, \lambda_k\in\ZZ$
let $J_{s, k}(N; \lambda_1, \ldots, \lambda_k)$
denote the number of solutions to the system of $k$ inhomogeneous equations in $2s$ variables given by
\begin{align}
\label{eq:57}
\left\{
\begin{array}{c}
  x_1+\ldots+x_s-y_1-\ldots-y_s=\lambda_1\\
 x_1^2+\ldots+x_s^2-y_1^2-\ldots-y_s^2=\lambda_2\\
   \vdots  \\
 x_1^k+\ldots+x_s^k-y_1^k-\ldots-y_s^k=\lambda_k,
\end{array}
\right.
\end{align}
where $x_j, y_j\in[N]$ for every $j\in[s]$. 
The number $J_{s, k}(N; \lambda_1, \ldots, \lambda_k)$ can be expressed in terms of a certain exponential sum. Let $R_k(x):=(x, x^2, \ldots, x^k)\in\RR^k$ denote the moment curve for $x\in\RR$. For  $\xi=(\xi_1,\ldots, \xi_k)\in\RR^k$ define the
exponential sum
\begin{align*}
S_k(\xi; N):=\sum_{n=1}^N\ex(\xi\cdot R_k(n))=\sum_{n=1}^N\ex(\xi_1n+\ldots+\xi_kn^k).
\end{align*}
One easily obtains
\begin{align}
  \label{eq:w1}
|S_k(\xi; N)|^{2s}=\sum_{|\lambda_1|\le sN}\ldots\sum_{|\lambda_k|\le sN^k}J_{s, k}(N; \lambda_1, \ldots, \lambda_k)\ex(\xi\cdot\lambda),
\end{align}
which by the Fourier inversion formula gives
\begin{align}
  \label{eq:w9}
J_{s, k}(N; \lambda_1, \ldots, \lambda_k)=\int_{[0,
  1)^k}|S_k(\xi; N)|^{2s}\ex(-\xi\cdot\lambda)d\xi.
\end{align}
 Moreover, from \eqref{eq:w9} one has
\begin{equation}\label{eq:w9.5}
J_{s, k}(N;\lambda_1, \ldots, \lambda_k)\le J_{s, k}(N):=J_{s, k}(N; 0, \ldots, 0),
\end{equation}
where the number $J_{s, k}(N)$ represents the number of solutions to the system of
$k$ homogeneous equations in $2s$ variables as in \eqref{eq:57} with $\lambda_1=\ldots=\lambda_k=0$.

Vinogradov's mean value theorem can be formulated as follows:
\begin{theorem}
\label{thm:vino}
For all integers $s\ge1$ and $k\ge2$ and any $\varepsilon>0$ there is a constant $C_{\varepsilon}>0$ such that for every integer $N\ge2$ one has
\begin{align}
\label{eq:w10}
J_{s, k}(N)\le C_{\varepsilon}\big(N^{s+\varepsilon}+N^{2s-\frac{k(k+1)}{2}+\varepsilon}\big).
\end{align}
Moreover, if additionally $s>\frac{1}{2}k(k+1)$ then there is a constant $C>0$ such that
\begin{align}
\label{eq:w10'}
J_{s, k}(N)\le C N^{2s-\frac{k(k+1)}{2}}.
\end{align}
\end{theorem}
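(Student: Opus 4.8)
The estimate \eqref{eq:w10} is, for $2s\le k(k+1)$, the Vinogradov main conjecture at and below the critical exponent, and for $2s\ge k(k+1)$ its standard supercritical consequence; the plan is to obtain it from the $\ell^2$-decoupling theorem for the moment curve $\RR\ni t\mapsto R_k(t)=(t,t^2,\ldots,t^k)\in\RR^k$ due to Bourgain, Demeter and Guth \cite{BDG}. (The classical form of the inequality presented in \cite[Section 8.5]{IK}, whose error exponent decays only as $s\to\infty$, is also enough for many purposes but gives worse numerology.) First I would record the critical case: $\ell^2$-decoupling for $\{R_k(t):t\in[0,1]\}$ at the critical Lebesgue exponent $p_k:=k(k+1)$ and spatial scale $N^{-1}$ yields, after the standard passage from decoupling to a discrete restriction estimate for the moment curve,
\begin{align}
\label{eq:dec}
\Big(\int_{[0,1)^k}|S_k(\xi;N)|^{p_k}\,d\xi\Big)^{1/p_k}\ \lesssim_{\veps}\ N^{1/2+\veps},
\end{align}
which, by the Fourier-inversion identity \eqref{eq:w9}, is precisely $J_{s_0,k}(N)\lesssim_{\veps}N^{s_0+\veps}$ with $s_0:=p_k/2=\tfrac12 k(k+1)\in\ZZ_+$.

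Next I would propagate the critical bound to all $s\ge1$, which is pure bookkeeping. If $1\le s\le s_0$, then since $[0,1)^k$ carries total mass $1$, H\"older's inequality gives $\|S_k(\cdot;N)\|_{L^{2s}}\le\|S_k(\cdot;N)\|_{L^{2s_0}}$, hence
\begin{align*}
J_{s,k}(N)\ \le\ J_{s_0,k}(N)^{s/s_0}\ \lesssim_{\veps}\ N^{s+\veps},
\end{align*}
and since $2s-\tfrac12 k(k+1)\le s$ in this range, this already yields \eqref{eq:w10}. If $s>s_0$, the trivial bound $|S_k(\xi;N)|\le N$ gives
\begin{align*}
J_{s,k}(N)=\int_{[0,1)^k}|S_k(\xi;N)|^{2(s-s_0)}\,|S_k(\xi;N)|^{2s_0}\,d\xi\ \le\ N^{2(s-s_0)}J_{s_0,k}(N)\ \lesssim_{\veps}\ N^{2s-s_0+\veps},
\end{align*}
and since $2s-\tfrac12 k(k+1)>s$ in this range, this is \eqref{eq:w10}. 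For orientation I would note that the exponents in \eqref{eq:w10} are sharp: the diagonal solutions give $J_{s,k}(N)\ge N^s$, while by \eqref{eq:w9.5} one has $J_{s,k}(N)=\max_{\lambda}J_{s,k}(N;\lambda)$, which is at least the average of $J_{s,k}(N;\lambda)$ over the $\simeq N^{k(k+1)/2}$ admissible vectors $\lambda$, namely $\simeq N^{2s-k(k+1)/2}$.

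Finally, for the $\veps$-free bound \eqref{eq:w10'} in the strictly supercritical range I would appeal to the sharper output of the same circle of ideas: in the range $2s>k(k+1)$ the decoupling (equivalently, the efficient congruencing) iteration terminates after a bounded number of steps, so the $N^{\veps}$ loss at the critical level is replaced by an $N$-independent constant, giving $J_{s_1,k}(N)\le C_{s_1,k}\,N^{2s_1-k(k+1)/2}$ for $s_1:=s_0+1$, the first supercritical integer (here we use $s_0\in\ZZ_+$). Because $s\in\ZZ_+$ and $s>s_0$ force $s\ge s_1$, the remaining cases follow from the same trivial-bound extraction, $J_{s,k}(N)\le N^{2(s-s_1)}J_{s_1,k}(N)\le C_{s_1,k}\,N^{2s-k(k+1)/2}$. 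The only genuinely deep ingredient above is the decoupling theorem \cite{BDG}; everything else is H\"older's inequality, the pointwise bound $|S_k|\le N$, and the identities \eqref{eq:w1}--\eqref{eq:w9.5} already in hand. Thus there is no real ``hard part'' for us to grind through here: the difficulty has been outsourced to \cite{BDG}, and the only small point requiring care is identifying, as in \eqref{eq:w10'}, the precise supercritical range in which the $\veps$ may be deleted.
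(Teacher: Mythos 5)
Your proposal is correct and follows essentially the route the paper itself takes: the paper offers no proof of Theorem \ref{thm:vino}, but simply cites \cite{BDG} (and \cite{Wol1}) for the critical estimate and points to \cite[Section 5]{BDG} for the $\varepsilon$-free supercritical bound \eqref{eq:w10'}, while your reduction of all $s$ to the critical exponent $s_0=\frac12 k(k+1)$ via H\"older and the trivial bound $|S_k(\xi;N)|\le N$ is the standard bookkeeping. The one imprecision is your stated reason for deleting the $\varepsilon$ in \eqref{eq:w10'}: it is not that ``the iteration terminates after a bounded number of steps'' (the decoupling iteration at a fixed supercritical exponent still loses $N^{\varepsilon}$); the $\varepsilon$-removal in \cite[Section 5]{BDG} instead comes from a circle-method argument giving the asymptotic formula for $J_{s,k}(N)$ when $s>\frac12 k(k+1)$, with the critical $\varepsilon$-loss estimate used on the minor arcs. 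Since you outsource that step to \cite{BDG} in any case --- exactly as the paper does --- this does not affect the validity of your argument.
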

Apart from the $N^{\varepsilon}$ loss in \eqref{eq:w10}, this bound
is known to be sharp. Inequality \eqref{eq:w10} is fairly simple
for $k=2$ and follows from elementary estimates for the divisor
function. The conclusion of Theorem \ref{thm:vino} for $k\ge3$, known as Vinogradov's mean value theorem, 
was a central problem in analytic number theory and 
had been open until recently.  The cubic case $k=3$ was solved by Wooley
\cite{Wol} using the efficient congruencing method. The case for any
$k\ge3$ was solved by the first author with Demeter and Guth \cite{BDG}
using the decoupling method. Not long afterwards, Wooley \cite{Wol1} also showed that the efficient congruencing method  can be used to solve the Vinogradov mean value conjecture for all $k\ge3$.   In fact, later we will only use
\eqref{eq:w10'}, which easily follows from \eqref{eq:w10}, the details can be found in \cite[Section 5]{BDG}.

\subsection{Double Weyl's inequality} Let $K_1, K_2\in\NN$, $M_1, M_2\in\ZZ_+$ satisfy $K_1< M_1$ and $K_2< M_2$. Let $Q\in\RR[\rm m_1, \rm m_2]$ be given 
and define double exponential sums by
\begin{align}
\label{eq:58}
S_{K_1, M_1, K_2, M_2}(Q):=&\sum_{m_1=K_1+1}^{M_1}\sum_{m_2=K_2+1}^{M_2}\ex(Q(m_1, m_2)),\\
\label{eq:87}
S_{K_1, M_1, K_2, M_2}^1(Q):=&\sum_{m_1=K_1+1}^{M_1}\Big|\sum_{m_2=K_2+1}^{M_2}\ex(Q(m_1, m_2))\Big|,\\
\label{eq:213}
S_{K_1, M_1, K_2, M_2}^2(Q):=&\sum_{m_2=K_2+1}^{M_2}\Big|\sum_{m_1=K_1+1}^{M_1}\ex(Q(m_1, m_2))\Big|.
\end{align}
If $K_1=K_2=0$ we will abbreviate \eqref{eq:58}, \eqref{eq:87} and \eqref{eq:213} respectively to
\begin{align}\label{00}
S_{M_1,  M_2}(Q),
\quad   \quad
S_{M_1,  M_2}^1(Q),
\quad  \text{ and } \quad
S_{M_1,  M_2}^2(Q).
\end{align}
By the triangle inequality we have
\begin{align}
\label{eq:226}
|S_{K_1, M_1, K_2, M_2}(Q)|\le S_{K_1, M_1, K_2, M_2}^1(Q),
\quad \text{ and } \quad
|S_{K_1, M_1, K_2, M_2}(Q)|\le S_{K_1, M_1, K_2, M_2}^2(Q).
\end{align}
We now provide  estimates for \eqref{eq:58}, \eqref{eq:87} and \eqref{eq:213} in the spirit of Proposition \ref{prop:20} above.
We first recall a technical lemma from \cite[Chapter IV, Lemma 5, p. 82]{Kar}.

\begin{lemma}
\label{lem:23}
Let $\alpha\in\RR$ and suppose that there are $a\in\ZZ, q\in\ZZ_+$ such that  $(a, q)=1$ and
\[
\Big| \alpha  \ - \ \frac{a}{q} \Big| \ \le \ \frac{1}{q^2}.
\]
Then for every $\beta \in \RR$, $U>0$ and $P\ge1$ one has
\begin{align}
\label{eq:w15}
\sum_{n=1}^P\min\bigg\{U, \frac{1}{\|\alpha n+\beta\|}\bigg\}\le
6\bigg(1+\frac{P}{q}\bigg)(U+q\log q).
\end{align}
\end{lemma}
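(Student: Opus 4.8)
The plan is to reduce \eqref{eq:w15} to a counting estimate for a single arithmetic progression of length $q$. First I would dispose of the degenerate cases: if $q=1$ then $q\log q=0$ and the bound follows at once from $\min\{U,1/\|\,\cdot\,\|\}\le U$ together with $P\le 1+P$, while if $q\ge 2$ the coprimality $(a,q)=1$ forces $a\ge 1$, so $\alpha$ really is a genuine perturbation of a nonzero rational with denominator $q$. Assume $q\ge 2$. Partition $\{1,\dots,\lfloor P\rfloor\}$ into $\lfloor P/q\rfloor+1\le 1+P/q$ consecutive blocks, each an integer interval $B=\{n^*+1,\dots,n^*+q'\}$ with $q'\le q$. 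It then suffices to prove, for each such block,
\[
\Sigma(B):=\sum_{n\in B}\min\Bigl\{U,\frac{1}{\|\alpha n+\beta\|}\Bigr\}\le 6\,(U+q\log q),
\]
and to sum over the $\le 1+P/q$ blocks.

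Fix a block $B$. Since consecutive values of $\alpha n+\beta$ differ by the constant $\alpha$, the points $x_n:=\alpha n+\beta\bmod 1$ with $n\in B$ form an arithmetic progression of common difference $\alpha$. Writing $\alpha=\tfrac aq+\tfrac{\theta}{q^2}$ with $|\theta|\le 1$ and $(a,q)=1$, for $n\in B$ we have
\[
x_n\equiv \frac{an}{q}+\beta+\frac{\theta n}{q^2}\pmod 1,
\]
and as $n$ runs over the $q'$ consecutive integers of $B$ the residues $an\bmod q$ are pairwise distinct; thus the ``main terms'' $\tfrac aq n+\beta\bmod 1$ are $q'$ distinct points lying in a coset of $\tfrac1q\ZZ/\ZZ$, in particular $\tfrac1q$-separated. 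The ``error'' $\tfrac{\theta n}{q^2}$ varies over $B$ by at most $|\theta|(q-1)/q^2<1/q$, so each $x_n$ lies within $1/q$ of its own main term, and distinct $n\in B$ have distinct main terms. The additive shift by $\beta$ is harmless: it only translates the whole configuration on $\RR/\ZZ$ and does not affect the distances to $\ZZ$ we must control.

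The heart of the matter is then the elementary fact that, because the main terms are $\tfrac1q$-separated and each $x_n$ is within $1/q$ of its main term, for every arc $I\subset\RR/\ZZ$ of length $\ell$ the number of $n\in B$ with $x_n\in I$ is $O(\ell q+1)$; for arcs of length $\le 1/q$ this is $O(1)$. I would use this twice. For $n$ with $\|\alpha n+\beta\|\le 1/(2q)$ there are only $O(1)$ such $n$, and I bound $\min\{U,1/\|\alpha n+\beta\|\}\le U$, contributing $O(U)$. For the remaining $n$ I split the range of $\|\alpha n+\beta\|$ into the annuli $\{y:k/(2q)<\|y\|\le (k+1)/(2q)\}$ for $1\le k\le q$; each annulus is a union of two arcs of length $\le 1/(2q)$, so it meets the progression in $O(1)$ points, and on it $\min\{U,1/\|\alpha n+\beta\|\}\le 2q/k$. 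Summing over $k$ gives $O\bigl(q\sum_{k\le q}1/k\bigr)=O(q\log q)$. Adding the two contributions yields $\Sigma(B)\lesssim U+q\log q$, and a slightly more economical version of this counting — keeping careful track of the implied constants and exploiting the room available in $q\log q$ — produces the explicit constant $6$.

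There is no deep obstacle here; the single point that genuinely needs care is the bookkeeping of the perturbation $\theta/q^2$: one must check that a term $x_n$ whose main part is at moderate distance from $\ZZ$ is not dragged close to $\ZZ$ in a way that would spoil the annular decomposition, and then be economical enough in the counting to land on the stated constant $6$ rather than a larger absolute constant. This is precisely the classical estimate going back to Vinogradov; the clean decomposition of $\{1,\dots,\lfloor P\rfloor\}$ into blocks of length $q$ is exactly what lets the hypothesis $(a,q)=1$ do the work.
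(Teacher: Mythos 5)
The paper does not prove this lemma at all --- it is quoted verbatim from Karatsuba \cite{Kar} --- and your argument is essentially the classical proof from that source: split $[1,P]$ into at most $1+P/q$ blocks of length $\le q$, use $(a,q)=1$ to place the points $\alpha n+\beta$ within $1/q$ of $q$ distinct points of a translate of $\tfrac1q\ZZ/\ZZ$, and sum the resulting harmonic series. Two bookkeeping remarks. First, the assertion that each $x_n$ lies within $1/q$ of the main term $\tfrac aq n+\beta$ is false as literally stated once $n>q$, since the error $\theta n/q^2$ need not be small in absolute value; you must re-centre at the left endpoint $n^\ast$ of the block, writing $\alpha(n^\ast+m)+\beta=(\alpha n^\ast+\beta)+\tfrac{am}{q}+\tfrac{\theta m}{q^2}$ with $1\le m\le q$ and absorbing the constant into the shift --- which is exactly what your block decomposition is designed for, so this is a slip of phrasing, not of substance. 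Second, your annuli of width $1/(2q)$ with a crude ``$O(1)$ points per annulus'' count produce a constant visibly larger than $6$, and the claim that more economical bookkeeping yields $6$ is asserted rather than shown; the standard route is to write $\alpha(n^\ast+m)+\beta\equiv (am+b+\mu_m)/q$ with $|\mu_m|<2$, bound the at most five terms with $am+b\equiv r\ (\mathrm{mod}\ q)$, $|r|\le2$, by $U$ each, and bound the remaining ones by $q/(|r|-2)$, which gives $5U+2q\log q+q\le 6(U+q\log q)$ for $q\ge2$. Neither point is a genuine gap: the approach is correct and coincides with the cited proof.
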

Estimate \eqref{eq:w15} will be useful in the proof of the following counterpart of Weyl's inequality for double sums. 
\begin{proposition}
\label{prop:22}
Let $d_1, d_2\in\ZZ_+$ and $Q\in\RR[\rm m_1, \rm m_2]$ be such that 
\begin{align*}
Q(m_1, m_2):=\sum_{\gamma_1=0}^{d_1}\sum_{\gamma_2=0}^{d_2}
c_{\gamma_1, \gamma_2}m_1^{\gamma_1}m_2^{\gamma_2}, \quad \text{ and } \quad c_{0, 0}=0.
\end{align*}
Then there exists a constant $C>0$ such that for every
$K_1, K_2\in\NN$, $M_1, M_2\in\ZZ_+$ satisfying $K_1\le M_1$ and
$K_2\le M_2$ the following holds.  Suppose that for some
$1 \le \rho_1\le d_1$ and $1\le \rho_2\le d_2$ there are
$a_{\rho_1, \rho_2}\in\ZZ, q_{\rho_1, \rho_2}\in\ZZ_+$ such that
$(a_{\rho_1, \rho_2}, q_{\rho_1, \rho_2})=1$ and
\begin{align}
\label{eq:w6}
\Big|c_{\rho_1, \rho_2}-\frac{a_{\rho_1, \rho_2}}{q_{\rho_1, \rho_2}}\Big|\le\frac{1}{q_{\rho_1, \rho_2}^2}.
\end{align}
Set $k_i:=d_i(d_i+1)$ for $i\in[2]$, $M_{-} := \min(M_1^{\rho_1}, M_2^{\rho_2})$ and $M_{+} := \max(M_1^{\rho_1}, M_2^{\rho_2})$.
Then for $i\in [2]$,
\begin{align}
\label{eq:86}
S_{K_1, M_1, K_2, M_2}^i(Q)
&\le CM_1M_2\bigg( \frac{1}{M_{-}}+\frac{q_{\rho_1, \rho_2}\log q_{\rho_1, \rho_2}}{M_1^{\rho_1}M_2^{\rho_2}}+\frac{1}{q_{\rho_1, \rho_2}}+\frac{\log q_{\rho_1, \rho_2}}{M_{+}}\bigg)^{\frac{1}{4k_1k_2}}.
\end{align}
In view of \eqref{eq:226} estimates \eqref{eq:86} clearly hold for $|S_{K_1, M_1, K_2, M_2}(Q)|$.
\end{proposition}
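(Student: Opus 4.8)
The plan is to prove the bound for $S^1_{K_1,M_1,K_2,M_2}(Q)$ only; the estimate for $S^2$ then follows by interchanging the roles of the two variables (replacing $(m_1,\rho_1,d_1)$ by $(m_2,\rho_2,d_2)$, which leaves $k_1k_2$ unchanged), and translating the summation variables reduces us to $K_1=K_2=0$ at the cost of absolute constants. The idea is to iterate the one–dimensional Weyl inequality: once in the variable $m_2$, applied to the inner polynomial $Q(m_1,\cdot)\in\RR[\mathrm m_2]$ of degree $d_2$, and once in the variable $m_1$, applied to the coefficient polynomial of that inner polynomial; in each variable the sharp Vinogradov mean value theorem (Theorem \ref{thm:vino}, in the form \eqref{eq:w10'}), fed into the standard Weyl machinery as in Proposition \ref{prop:20'}, produces a power gain with an exponent comparable to $1/k_i=1/(d_i(d_i+1))$, and nesting the two gains leaves the exponent $1/(4k_1k_2)$ with room to spare for the H\"older, Dirichlet and iteration losses.

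Write $T(m_1):=\sum_{m_2=1}^{M_2}\ex(Q(m_1,m_2))$, so that $S^1=\sum_{m_1=1}^{M_1}|T(m_1)|$. The $m_2^{\rho_2}$–coefficient of $Q(m_1,\cdot)$ is $C_{\rho_2}(m_1):=\sum_{\gamma_1=0}^{d_1}c_{\gamma_1,\rho_2}m_1^{\gamma_1}$, a polynomial in $m_1$ of degree $d_1$ whose $m_1^{\rho_1}$–coefficient is exactly the controlled coefficient $c_{\rho_1,\rho_2}$. Fix a parameter $W\in[1,M_2^{\rho_2}]$ to be optimized at the end. By Dirichlet's principle every $m_1$ admits a reduced fraction $a'/q'$ with $q'\le M_2^{\rho_2}W^{-1}$ and $\norm{q'C_{\rho_2}(m_1)}\le W(q')^{-1}M_2^{-\rho_2}$; call $m_1$ \emph{minor} if the least such $q'$ exceeds $W$ and \emph{major} otherwise. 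For minor $m_1$, Proposition \ref{prop:20'} applied to $T(m_1)$ through the coefficient $C_{\rho_2}(m_1)$ gives $|T(m_1)|\lesssim M_2\log(2M_2)\bigl(W^{-1}+M_2^{-1}+WM_2^{-\rho_2}\bigr)^{\theta_2}$ with $\theta_2\gtrsim 1/k_2$, uniformly in $m_1$; summing trivially over the at most $M_1$ minor values bounds their total contribution to $S^1$ by a constant times $M_1M_2\log(2M_2)\bigl(W^{-1}+M_2^{-1}+WM_2^{-\rho_2}\bigr)^{\theta_2}$.

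For major $m_1$ we use only the trivial bound $|T(m_1)|\le M_2$ and estimate their number. Every major $m_1$ satisfies $\norm{q_0C_{\rho_2}(m_1)}\le\delta$ for some $q_0\in[W]$, where $\delta:=WM_2^{-\rho_2}$, so it suffices to bound $\sum_{q_0=1}^{W}\#\bigl\{m_1\in[M_1]:\norm{q_0C_{\rho_2}(m_1)}\le\delta\bigr\}$. For each $q_0$ a Fej\'er–type majorant for the indicator of $\{x\in\RR:\norm{x}\le\delta\}$ bounds this count by a constant times $\delta M_1$ (the zero–frequency term) plus $\delta\sum_{0<|t|\le\delta^{-1}}\bigl|\sum_{m_1=1}^{M_1}\ex\bigl(tq_0C_{\rho_2}(m_1)\bigr)\bigr|$. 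The polynomial $tq_0C_{\rho_2}\in\RR[\mathrm m_1]$ has degree $d_1$ and $m_1^{\rho_1}$–coefficient $tq_0c_{\rho_1,\rho_2}$; since $c_{\rho_1,\rho_2}=a/q+\theta/q^2$ with $(a,q)=1$ and $|\theta|\le1$, Lemma \ref{lem:21} (applied with dilation $tq_0$) supplies a reduced fraction $a^*/q^*$ with $q^*\gtrsim q/(tq_0)$ and $|tq_0c_{\rho_1,\rho_2}-a^*/q^*|\le(q^*)^{-2}$. Applying Proposition \ref{prop:20'} once more, now in the variable $m_1$ (exponent $\theta_1\gtrsim 1/k_1$), and using Lemma \ref{lem:23} to dispose of the residual linear sums cleanly, we bound each of these inner sums, and hence — after summing over $t$ and $q_0$ and multiplying by $M_2$ — the major contribution to $S^1$ by $M_1M_2$ times a logarithmic factor times a positive power (of size $\gtrsim\theta_1$) of $W^2M_2^{-\rho_2}+q^{-1}+M_1^{-1}+q^*M_1^{-\rho_1}$.

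It then remains to choose $W$ so as to balance the power of $W$ gained in the minor estimate against the powers of $W$ lost in the major estimate, and to treat separately the degenerate ranges in which one of $M_1^{\rho_1},M_2^{\rho_2},q$ is so small that a crude bound already suffices; collecting terms produces the claimed factor $\bigl(M_-^{-1}+q\log q\,(M_1^{\rho_1}M_2^{\rho_2})^{-1}+q^{-1}+\log q\,M_+^{-1}\bigr)^{1/(4k_1k_2)}$, where the $M_-^{-1}$ and $\log q\,M_+^{-1}$ terms are precisely what the $M_i^{-1}$ and $q^*M_i^{-\rho_i}$ contributions of the two Weyl steps produce in the short–range and large–denominator regimes. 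The step I expect to be the main obstacle is the bookkeeping in the major contribution: turning the degree-$d_1$ condition ``$C_{\rho_2}(m_1)$ is $\delta$–close to $\ZZ$'' into Weyl sums while retaining \emph{quantitative} control of the denominator $q=q_{\rho_1,\rho_2}$ — in particular tracking its degradation to roughly $q/(tq_0)$ under the auxiliary sums over $t$ and $q_0$ — and arranging the two dichotomies so that a single exponent works uniformly across all the regimes cut out by the relative sizes of $q$, $M_1^{\rho_1}$ and $M_2^{\rho_2}$.
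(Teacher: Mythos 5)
Your strategy is genuinely different from the paper's. The paper never splits the $m_1$ range into major and minor points: it raises $S^1$ to the power $4k_1k_2$ by H\"older, converts both variables into Vinogradov systems via \eqref{eq:w1}--\eqref{eq:w9.5}, applies the sharp mean value theorem \eqref{eq:w10'} in each variable, and is then left with the complete bilinear sum $\sum_{u}\bigl|\sum_{v}\ex(\beta^1(u)\cdot v)\bigr|$ over boxes, whose inner sum is a product of geometric series, so that Lemma \ref{lem:23} applies verbatim and produces exactly the four terms of the bracket. Your dichotomy on the Diophantine type of the $m_2^{\rho_2}$-coefficient $C_{\rho_2}(m_1)$ of $Q(m_1,\cdot)$ is a legitimate classical alternative (the authors themselves remark in Section 1.6 that a Weyl-differencing argument could replace Section \ref{section:5}), and the minor-arc half goes through. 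The gap is in the major-arc count, and it is more than bookkeeping. After the Fej\'er majorant you must control $\sum_{q_0\le W}\sum_{0<|t|\le\delta^{-1}}\bigl|\sum_{m_1\le M_1}\ex(tq_0C_{\rho_2}(m_1))\bigr|$, and the only handle on the inner Weyl sum is its $\rho_1$-coefficient $tq_0c_{\rho_1,\rho_2}$, whose denominator after Lemma \ref{lem:21} is only guaranteed to satisfy $q^*\ge q/(2tq_0)$. Since $t$ ranges up to $\delta^{-1}=M_2^{\rho_2}/W$, summing the resulting bound $(tq_0/q+\cdots)^{1/\tau(d_1)}$ over $t$ and $q_0$ produces a factor of order $W(M_2^{\rho_2}/q)^{1/\tau(d_1)}$, which is $\ge W$ whenever $q\le M_2^{\rho_2}$ --- essentially the whole range of interest --- so the estimate collapses. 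To rescue it you must show that $q^*$ is small only for a sparse set of $t$ (governed by $\gcd(tq_0,q)$), i.e.\ you need a fractional-power analogue of Lemma \ref{lem:23} for Weyl sums of degree $d_1\ge 2$; your appeal to Lemma \ref{lem:23} to ``dispose of the residual linear sums'' only covers $d_1=1$. The natural repair --- H\"older and Vinogradov's mean value theorem in the $m_1$-variable, followed by Lemma \ref{lem:23} applied to the resulting genuine geometric series --- essentially reconstructs Steps 3--4 of the paper's proof.

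Two further shortfalls relative to the statement. First, your nesting (Weyl in $m_2$ first, then in $m_1$) can only yield the bound $A$ of Remark \ref{rem:2-0'}, with $1/M_2^{\rho_2}$ and $\log q/M_1^{\rho_1}$ in fixed positions, whereas \eqref{eq:86} asserts the minimum of $A$ and $B$ for \emph{each} of $S^1$ and $S^2$; because the absolute value in $S^1$ encloses the inner $m_2$-sum, you cannot simply interchange the variables to obtain $B$ for $S^1$ (the paper gets both bounds for $S^1$ from the symmetry $\beta^1(u)\cdot v=\beta^2(v)\cdot u$, which is only available after the H\"older step has removed the absolute values). Second, the double sum over $(t,q_0)$ carries divisor-function multiplicities in $n=tq_0$, which would degrade the $\log q_{\rho_1,\rho_2}$ factors in \eqref{eq:86} to $q_{\rho_1,\rho_2}^{\varepsilon}$ unless handled with the care that Lemma \ref{lem:23} encapsulates.
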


\begin{remark}
\label{rem:2-0'} The bracketed expression in \eqref{eq:86} is equal to $\min(A,B)$ where
$$
A \ = \ \frac{1}{M_2^{\rho_2}}+\frac{q_{\rho_1, \rho_2}\log q_{\rho_1, \rho_2}}{M_1^{\rho_1}M_2^{\rho_2}}+\frac{1}{q_{\rho_1, \rho_2}}+\frac{\log q_{\rho_1, \rho_2}}{M_1^{\rho_1}}
$$
and
$$
B \ = \ \frac{1}{M_1^{\rho_1}}+\frac{q_{\rho_1, \rho_2}\log q_{\rho_1, \rho_2}}{M_1^{\rho_1}M_2^{\rho_2}}+\frac{1}{q_{\rho_1, \rho_2}}+\frac{\log q_{\rho_1, \rho_2}}{M_2^{\rho_2}}.
$$
\end{remark}

Multi-parameter exponential sums were extensively investigated over the years. The best source about this subject is \cite{ACK}. However, here we need bounds as in \eqref{eq:86}, which will allow us to 
gain logarithmic factors on minor arcs (see Proposition \ref{prop:23}) in contrast to polynomial factors, which were obtained in \cite{ACK}. We prove Proposition \ref{prop:22} by giving an argument based on an iterative application of the Vinogradov mean value theorem.

\begin{proof}[Proof of Proposition \ref{prop:22}]
We only prove \eqref{eq:86} for $i=1$. The proof of \eqref{eq:86} for $i=2$ can be obtained similarly by symmetry. 
To prove inequality \eqref{eq:86} when $i=1$ we shall follow \cite[Section 8.5., p. 216]{IK} and proceed in five steps.
\paragraph{\bf Step 1.}
For $i\in[2]$ let us define the $d_i$-dimensional box
\[
\mathcal B_{d_i}(M_{i}):=\Big(\prod_{j=1}^{d_i}[-k_iM_i^{j}, k_iM_i^{j}]\Big)\cap\ZZ^{d_i}.
\] 
Observe that
\begin{align*}
Q(m_1, m_2)=
\sum_{\gamma_2=0}^{d_2}c_{\gamma_2}(m_1)m_2^{\gamma_2}=c(m_1)\cdot R_{d_2}(m_2)+c_0(m_1),
\end{align*}
where for $\gamma_2\in[d_2]\cup\{0\}$ one has
\begin{align*}
c(m_1):=(c_1(m_1),\ldots, c_{d_2}(m_1)) \qquad\text{ and } \qquad
c_{\gamma_2}(m_1):=\sum_{\gamma_1=0}^{d_1}
c_{\gamma_1, \gamma_2}m_1^{\gamma_1}.
\end{align*}
Recall that $R_{d_2}(m_2) = (m_2, m_2^2, \ldots, m_2^{d_2})$.
By \eqref{eq:87} we note that
\begin{align*}
S_{K_1, M_1, K_2, M_2}^1(Q)
\le S_{M_1, M_2}^1(Q)+S_{M_1,K_2}^1(Q)
\lesssim \max_{N_2\in[M_2]}S_{M_1,N_2}^1(Q).
\end{align*}
For any $k_2\in\ZZ_+$, by
H\"older's inequality and by \eqref{eq:w1},  we obtain
\begin{align}
\label{eq:w12}
\begin{split}
S_{K_1, M_1, K_2, M_2}^1(Q)^{2k_2}&\lesssim M_1^{2k_2-1}\max_{N_2\in[M_2]}
\sum_{m_1=1}^{M_1}|S_{d_2}(c(m_1); N_2)|^{2k_2}\\
&=M_1^{2k_2-1}\max_{N_2\in[M_2]}\sum_{u\in \mathcal B_{d_2}(N_2)}
J_{k_2, d_2}(N_2; u) \sum_{m_1=1}^{M_1}
\ex(c(m_1)\cdot u).
\end{split}
\end{align}
\paragraph{\bf Step 2.} We see that
\[
c(m_1)\cdot u=\sum_{\gamma_1=0}^{d_1}
\sum_{\gamma_2=1}^{d_2}c_{\gamma_1,
  \gamma_2}u_{\gamma_2}m_1^{\gamma_1}
=\beta^1(u)\cdot R_{d_1}(m_1)+\beta_0^1(u),
\]
where for $u=(u_1,\ldots, u_{d_2})\in\ZZ^{d_2}$ and $\gamma_1\in[d_1]\cup\{0\}$ we set
\[
\beta^1(u):=(\beta_1^1(u),\ldots, \beta_{d_1}^1(u))\qquad\text{ and } \qquad\beta_{\gamma_1}^1(u):=\sum_{\gamma_2=1}^{d_2}c_{\gamma_1, \gamma_2}u_{\gamma_2}.
\]
Similarly for  $v=(v_1,\ldots, v_{d_1})\in\ZZ^{d_1}$ and $\gamma_2\in[d_2]\cup\{0\}$, we also set
\[
\beta^2(v):=(\beta_1^2(v),\ldots, \beta_{d_2}^2(v))\qquad\text{ and } \qquad\beta_{\gamma_2}^2(v):=\sum_{\gamma_1=1}^{d_1}c_{\gamma_1, \gamma_2}v_{\gamma_1}.
\]
This implies, raising both sides of \eqref{eq:w12} to power $2k_1$ for
any $k_1\in\ZZ_+$, that 
\begin{align}
\label{eq:w11}
\begin{split}
S_{K_1, M_1, K_2, M_2}^1(Q)^{4k_1k_2}
&\lesssim M_1^{4k_1k_2-2k_1}\max_{N_2\in[M_2]}\Big(\sum_{u\in \mathcal B_{d_2}(N_2)}
J_{k_2, d_2}(N_2; u) |S_{d_1}(\beta^1(u); M_1)|\Big)^{2k_1}\\
&\lesssim M_1^{4k_1k_2-2k_1}M_2^{4k_1k_2-2k_2}\max_{N_2\in[M_2]}\sum_{u\in \mathcal B_{d_2}(N_2)}
J_{k_2, d_2}(N_2; u) |S_{d_1}(\beta^1(u); M_1)|^{2k_1}.  
\end{split}
\end{align}
In \eqref{eq:w11} we used H\"older's inequality and 
\[
\sum_{u\in \mathcal B_{d_2}(N_2)}
 J_{k_2, d_2}(N_2; u)=N_2^{2k_2}.
\]
\paragraph{\bf Step 3.} For $v=(v_1, \ldots, v_{d_1})\in\ZZ^{d_1}$ we have
\begin{align}\label{eq:w11'}
\beta^1(u)\cdot v=\sum_{\gamma_2=1}^{d_2}\sum_{\gamma_1=1}^{d_1}c_{\gamma_1, \gamma_2}v_{\gamma_1}u_{\gamma_2}=\beta^2(v)\cdot u.
\end{align}
Applying \eqref{eq:w1} and \eqref{eq:w9.5} to the last sum in \eqref{eq:w11} we obtain
\begin{align}\label{eq:w11.5}
\nonumber \max_{N_2\in[M_2]}\sum_{u\in \mathcal B_{d_2}(N_2)}&
J_{k_2, d_2}(N_2; u) |S_{d_1}(\beta^1(u); M_1)|^{2k_1}\\
\nonumber &\le J_{k_2,d_2}(M_2) \sum_{u\in \mathcal B_{d_2}(M_2)}
|S_{d_1}(\beta^1(u);M_1)|^{2k_1}\\
\nonumber &= J_{k_2,d_2}(M_2) \sum_{u\in \mathcal B_{d_2}(M_2)}\sum_{v\in \mathcal B_{d_1}(M_1)} 
J_{k_1, d_1}(M_1; v)\ex(\beta^1(u)\cdot v)\\
&\le J_{k_1, d_1}(M_1) J_{k_2,d_2}(M_2) \sum_{v\in \mathcal B_{d_1}(M_1)} 
\big|\sum_{u\in \mathcal B_{d_2}(M_2)} \ex(\beta^2(v)\cdot u)\big|,
\end{align}
where we used \eqref{eq:w11'} in the last inequality.
In a slightly more involved process, we now obtain a different estimate for the last sum in \eqref{eq:w11}.
We apply \eqref{eq:w1} twice to obtain
\begin{align*}
\max_{N_2\in[M_2]}\sum_{u\in \mathcal B_{d_2}(N_2)}&
J_{k_2, d_2}(N_2; u) |S_{d_1}(\beta^1(u); M_1)|^{2k_1}\\
&= \max_{N_2\in[M_2]}\sum_{u\in \mathcal B_{d_2}(N_2)}\sum_{v\in \mathcal B_{d_1}(M_1)} J_{k_2, d_2}(N_2; u) J_{k_1, d_1}(M_1; v)\ex(\beta^1(u)\cdot v)\\
&= \max_{N_2\in[M_2]}\sum_{v\in \mathcal B_{d_1}(M_1)}  J_{k_1, d_1}(M_1; v)
\sum_{u\in \mathcal B_{d_2}(N_2)} J_{k_2, d_2}(N_2; u)\ex(\beta^2(v)\cdot u)\\
&= \max_{N_2\in[M_2]}\sum_{v\in \mathcal B_{d_1}(M_1)}  J_{k_1, d_1}(M_1; v) |S_{d_2}(\beta^2(v);N_2)|^{2k_2}
\end{align*}
where we used \eqref{eq:w11'} in the penultimate equality. Hence by \eqref{eq:w1}, \eqref{eq:w9.5} and \eqref{eq:w11'},
\begin{align}\label{eq:w11.6}
\nonumber \max_{N_2\in[M_2]}\sum_{u\in \mathcal B_{d_2}(N_2)}&
J_{k_2, d_2}(N_2; u) |S_{d_1}(\beta^1(u); M_1)|^{2k_1}\\
\nonumber &\le J_{k_1,d_1}(M_1) \max_{N_2\in[M_2]}\sum_{v\in \mathcal B_{d_1}(M_1)} |S_{d_2}(\beta^2(v);N_2)|^{2k_2}\\
\nonumber &= J_{k_1, d_1}(M_1) \max_{N_2\in[M_2]}\sum_{u\in \mathcal B_{d_2}(N_2)} J_{k_2, d_2}(N_2; u)
\sum_{v\in \mathcal B_{d_1}(M_1)} \ex(\beta^1(u)\cdot v)\\
&\qquad \le J_{k_1, d_1}(M_1)J_{k_2, d_2}(M_2)\sum_{u\in \mathcal B_{d_2}(M_2)} \big|\sum_{v\in \mathcal B_{d_1}(M_1)} 
\ex(\beta^1(u)\cdot v) \big|.
\end{align}
\paragraph{\bf Step 4.} In this step we prove (for $q = q_{\rho_1,\rho_2}$)
\begin{align}\label{sum-I-bound}
\sum_{u\in \mathcal B_{d_2}(M_2)} \big|\sum_{v\in \mathcal B_{d_1}(M_1)} 
\ex(\beta^1(u)\cdot v) \big| \lesssim \prod_{j=1}^2 M_j^{\frac{d_j(d_j+1)}{2}} 
\bigg( \frac{1}{M_2^{\rho_2}}+\frac{q\log q}{M_1^{\rho_1}M_2^{\rho_2}}+\frac{1}{q}+\frac{\log q}{M_1^{\rho_1}}\bigg)
\end{align}
and 
\begin{align}\label{sum-II-bound}
\sum_{v\in \mathcal B_{d_1}(M_1)} \big|\sum_{u\in \mathcal B_{d_2}(M_2)} 
\ex(\beta^2(v)\cdot u) \big| \lesssim \prod_{j=1}^2 M_j^{\frac{d_j(d_j+1)}{2}} 
\bigg( \frac{1}{M_1^{\rho_1}}+\frac{q\log q}{M_1^{\rho_1}M_2^{\rho_2}}+\frac{1}{q}+\frac{\log q}{M_2^{\rho_2}}\bigg).
\end{align}
We only establish \eqref{sum-I-bound}. The symmetric bound \eqref{sum-II-bound} is similar. The exponential sum
$$
\sum_{v\in \mathcal B_{d_1}(M_1)}  \ex(\beta^1(u)\cdot v) = \prod_{\gamma_1=1}^{d_1} \sum_{|v_{\gamma_1}| \le k_1 M_1^{\gamma_1}}
\ex(\beta^1_{\gamma_1}(u) v_{\gamma_1})
$$
is a product of geometric series which we can easily evaluate to conclude
\begin{align*}
\nonumber &\sum_{u\in \mathcal B_{d_2}(M_2)}  \big|\sum_{v\in \mathcal B_{d_1}(M_1)} 
\ex(\beta^1(u)\cdot v) \big| \lesssim \sum_{u\in \mathcal B_{d_2}(M_2)} \prod_{\gamma_1=1}^{d_1}
\min\bigg\{2d_1M_1^{\gamma_1}, \frac{1}{\|\beta_{\gamma_1}^1(u)\|}\bigg\}\\
&\hspace{1cm}\le
(2d_1M_1)^{\frac{d_1(d_1+1)}{2}-\rho_1}  \sum_{u\in \mathcal
B_{d_2}(M_2)}
\min\bigg\{2d_1M_1^{\rho_1}, \frac{1}{\|\beta_{\rho_1}^1(u)\|}\bigg\}.
\end{align*}
Since \eqref{eq:w6} holds and 
\[
\beta_{\rho_1}^1(u)=c_{\rho_1,\rho_2}u_{\rho_2}+\beta(u), \qquad\text{ where } \qquad
\beta(u):=\sum_{\substack{\gamma_2=1\\\gamma_2\neq\rho_2}}^{d_2}c_{\rho_1, \gamma_2}u_{\gamma_2},
\]
we can apply \eqref{eq:w15} with $P=k_{2}M_2^{\rho_2}$, $U=2d_1M_1^{\rho_1}$ and $q=q_{\rho_1, \rho_2}$ and obtain
\begin{align*}
\sum_{|u_{\rho_2}|\le k_{2}M_2^{\rho_2}} \min\bigg\{2d_1M_1^{\rho_1}, \frac{1}{\|\beta_{\rho_1}^1(u)\|}\bigg\}
&\lesssim M_1^{\rho_1}+q\log q+\frac{M_1^{\rho_1}M_2^{\rho_2}}{q}+M_2^{\rho_2}\log q\\
&\lesssim M_1^{\rho_1}M_2^{\rho_2}\bigg( \frac{1}{M_2^{\rho_2}}+\frac{q\log q}{M_1^{\rho_1}M_2^{\rho_2}}+\frac{1}{q}+\frac{\log
q}{M_1^{\rho_1}}\bigg).
\end{align*}
Hence
$$
\sum_{u\in \mathcal B_{d_2}(M_2)} \big|\sum_{v\in \mathcal B_{d_1}(M_1)} 
\ex(\beta^1(u)\cdot v) \big| \lesssim \bigg(\prod_{j=1}^2 M_j^{\frac{d_j(d_j+1)}{2}} \bigg) 
\bigg( \frac{1}{M_2^{\rho_2}}+\frac{q\log q}{M_1^{\rho_1}M_2^{\rho_2}}+\frac{1}{q}+\frac{\log q}{M_1^{\rho_1}}\bigg),
$$
establishing \eqref{sum-I-bound}.
\paragraph{\bf Step 5} We use the bound \eqref{sum-I-bound} in \eqref{eq:w11.6} to conclude
\begin{align*}
S_{K_1, M_1, K_2, M_2}^1(Q)^{4k_1k_2}
&\lesssim  M_1^{4k_1k_2-2k_1}M_2^{4k_1k_2-2k_2}J_{k_1, d_1}(M_1)J_{k_2, d_2}(M_2)\\
&\qquad \times M_1^{\frac{d_1(d_1+1)}{2}}M_2^{\frac{d_2(d_2+1)}{2}}
\bigg( \frac{1}{M_2^{\rho_2}}+\frac{q\log q}{M_1^{\rho_1}M_2^{\rho_2}}+\frac{1}{q}+\frac{\log
q}{M_1^{\rho_1}}\bigg).
\end{align*}
From Vinogradov's mean value theorem (or more precisely from \eqref{eq:w10'} with $s=k_i:=d_i(d_i+1)$ and $k=d_i$ for $i\in[2]$)  we conclude
from \eqref{eq:w10'}, $J_{k_i, d_i}(M_i) \le C M_i^{3 k_i/2}, i=1,2$ and so
\begin{align*}
S_{K_1, M_1, M_2, M_2}^1(Q)^{4k_1k_2}
\lesssim M_1^{4k_1k_2}M_2^{4k_1k_2}\bigg( \frac{1}{M_2^{\rho_2}}+\frac{q\log q}{M_1^{\rho_1}M_2^{\rho_2}}+\frac{1}{q}+\frac{\log
q}{M_1^{\rho_1}}\bigg).  
\end{align*}
In a similar way, using \eqref{sum-II-bound} in \eqref{eq:w11.5}, we also have
\begin{align*}
S_{K_1, M_1, M_2, M_2}^1(Q)^{4k_1k_2}
\lesssim M_1^{4k_1k_2}M_2^{4k_1k_2}\bigg( \frac{1}{M_1^{\rho_1}}+\frac{q\log q}{M_1^{\rho_1}M_2^{\rho_2}}+\frac{1}{q}+\frac{\log
q}{M_2^{\rho_2}}\bigg).  
\end{align*}
Therefore $S_{K_1, M_1, M_2, M_2}^1(Q)^{4k_1k_2}$ is bounded from above by the minimum of these two bounds.
By Remark \ref{rem:2-0'}, 
this completes the proof of Proposition  \ref{prop:22}.
\end{proof}

\subsection{Double Weyl's inequality in the Newton diagram sectors}
Throughout this subsection we assume that $P\in\ZZ[\rm m_1, \rm m_2]$ and $P(0, 0)=0$.
Moreover, we assume that $P$ is non-degenerate in the sense of \eqref{eq:66}; see the remark below Theorem \ref{thm:main}.   
Then for every $\xi\in\RR$, we define a corresponding polynomial $P_{\xi}\in \RR[\rm m_1, \rm m_2]$ by setting
\begin{align}
\label{eq:211}
P_{\xi}(m_1, m_2):=\xi P(m_1, m_2).
\end{align}
It is clear to see that the backwards Newton diagrams of $P$ and $P_{\xi}$ are the same $N_P=N_{P_{\xi}}$. Let  $r\in\ZZ_+$ be the number of vertices in the backwards Newton diagram $N_{P}$. In view of \eqref{eq:209} and \eqref{eq:210} from  Remark \ref{rem:2-0} for $r\ge2$ we have
\begin{align}
\label{eq:107}
\begin{split}
&\log M_1\lesssim \log M_2 \quad \text{ if } \quad (M_1, M_2)\in\SS_{\tau}(1),\\
&\log M_1\simeq \log M_2  \quad \text{ if } \quad (M_1, M_2)\in\SS_{\tau}(j) \text{ for } 1<j<r,\\
&\log M_2\lesssim \log M_1 \quad \text{ if } \quad (M_1, M_2)\in\SS_{\tau}(r).
\end{split}
\end{align}
Consequently we may  define a quantity $M_{r, j}^*$ as follows.
If $r=1$, we simply set
\begin{align}
\label{eq:212'}
M_{1,1}^*:=M_1\vee M_2 \quad \text{ if }\quad (M_1, M_2)\in\SS_{\tau}(1)=\DD_{\tau}\times \DD_{\tau}.
\end{align}
If $r\ge2$,  we  set
\begin{align}
\label{eq:212}
M_{r, j}^*:=
\begin{cases}
M_2 & \text{ if } (M_1, M_2)\in\SS_{\tau}(1) \text{ for } j=1,\\
M_1\vee M_2 & \text{ if } (M_1, M_2)\in\SS_{\tau}(j) \text{ for } 1<j<r,\\
M_1 & \text{ if } (M_1, M_2)\in\SS_{\tau}(r)\text{ for } j=r.
\end{cases}
\end{align}
 The quantity $\log M_{r,j}^*$ will always  allow us to extract the larger parameter (larger up to a multiplicative constant as in \eqref{eq:107}) from $\log M_1$ and $\log M_2$.
 We estimate $|S_{K_1, M_1, K_2, M_2}(P_{\xi})|$ in terms of $\log M_{r, j}^*$, whenever $(M_1, M_2)\in\SS_{\tau}(j)$   for $j\in[r]$, and $(K_1, K_2)\in\NN^2$ satisfying $M_1\lesssim K_1\le M_1$ and $M_2\lesssim K_2\le M_2$.

\begin{proposition}
\label{prop:23}
Let $P_{\xi}\in \RR[\rm m_1, \rm m_2]$ be the polynomial in \eqref{eq:211} corresponding to a polynomial $P\in\ZZ[\rm m_1, \rm m_2]$ with the properties above. Let $r\in\ZZ_+$ be the number of vertices in the backwards Newton diagram $N_{P}$. Let $\tau>1$, $\alpha>1$, $j\in[r]$ be given. Let $v_j=(v_{j, 1}, v_{j, 2})$ be the vertex of the backwards Newton diagram $N_{P}$.
Then there exists a constant $\beta_0:=\beta_0(\alpha)>\alpha$ such that for every $\beta\in(\beta_0,\infty)\cap\ZZ_+$ we find  a constant $0<C=C(\alpha, \beta_0, \beta, j, \tau, P)<\infty$ such that  for every $(M_1, M_2)\in\SS_{\tau}(j)$ and $(K_1, K_2)\in\NN^2$ satisfying $M_1\lesssim K_1\le M_1$ and $M_2\lesssim K_2\le M_2$ the following holds. Suppose that there are
$a\in\ZZ, q\in\ZZ_+$ such that   $(a, q)=1$
and
\begin{align}
\label{eq:215}
(\log M_{r, j}^*)^{\beta}\lesssim
q\le M_1^{v_{j, 1}}M_2^{v_{j, 2}}(\log M_{r, j}^*)^{-\beta},
\end{align}
and 
\begin{align}
\label{eq:w6-1}
\Big|\xi-\frac{a}{q}\Big|\le\frac{(\log M_{r, j}^*)^{\beta}}{qM_1^{v_{j, 1}}M_2^{v_{j, 2}}},
\end{align}
where $M_{r, j}^*$ is defined in \eqref{eq:212}. Then one has
\begin{align}
\label{eq:227}
|S_{K_1, M_1, K_2, M_2}(P_{\xi})|\le C M_1M_2(\log M_{r, j}^*)^{-\alpha}.
\end{align}
\end{proposition}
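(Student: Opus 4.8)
The plan is to work sector by sector and, inside $\SS_\tau(j)$, to deduce \eqref{eq:227} from the one- and two-variable Weyl estimates of Propositions \ref{prop:20'} and \ref{prop:22}, the backwards Newton diagram serving only to single out the vertex monomial $m_1^{v_{j,1}}m_2^{v_{j,2}}$ as the one that carries the oscillation in the sector. Fix $j\in[r]$ and $(M_1,M_2)\in\SS_\tau(j)$, and let $c_{v_j}\in\ZZ\setminus\{0\}$ be the coefficient of $m_1^{v_{j,1}}m_2^{v_{j,2}}$ in $P$. We may assume $M_{r,j}^*$ is larger than any fixed constant depending on $\alpha,P,\tau$: otherwise \eqref{eq:227} holds trivially with a large $C$, and in any case \eqref{eq:215} already forces $M_{r,j}^*$ to be bounded below. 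The crucial observation is that \eqref{eq:w6-1} converts into Diophantine information on the coefficient of the dominating monomial: since the exponential sum is unchanged when the coefficient of $m_1^{v_{j,1}}m_2^{v_{j,2}}$ is reduced modulo $1$, and since the fractional part of $a\,c_{v_j}/q$ equals $\bar a/q'$ in lowest terms with $q/|c_{v_j}|\le q'\le q$ (hence $q'\simeq_P q$), the upper bound in \eqref{eq:215} together with \eqref{eq:w6-1} gives $\big|\{\xi c_{v_j}\}-\bar a/q'\big|\le|c_{v_j}|\,|\xi-a/q|\le|c_{v_j}|(\log M_{r,j}^*)^\beta\big/\big(q\,M_1^{v_{j,1}}M_2^{v_{j,2}}\big)\le1/(q')^2$.

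Let $d_i$ be the degree of $P$ in the variable $m_i$, put $k_i:=d_i(d_i+1)$ and $L:=4k_1k_2\alpha$, and observe from the construction of $N_P$ that $v_{j,1},v_{j,2}\ge1$ whenever $1<j<r$, while $v_{1,2}=d_2$ and $v_{r,1}=d_1$, so only the boundary vertices can have a vanishing coordinate. Call $(M_1,M_2)$ \emph{$j$-balanced} if $\min\big(M_1^{v_{j,1}},M_2^{v_{j,2}}\big)>(\log M_{r,j}^*)^L$; by \eqref{eq:107} this is automatic for $1<j<r$, since then $\log M_1\simeq\log M_2\simeq\log M_{r,j}^*$ and both exponents are positive. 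In the $j$-balanced case I would apply Proposition \ref{prop:22} to $Q=P_\xi$ with $(\rho_1,\rho_2)=(v_{j,1},v_{j,2})$ — legitimate since $v_{j,i}\ge1$ in this range and the required hypothesis \eqref{eq:w6} for $P_\xi$ is exactly the inequality just displayed — obtaining $|S_{K_1,M_1,K_2,M_2}(P_\xi)|\le CM_1M_2\,(\cdots)^{1/(4k_1k_2)}$, and then I bound the bracket term by term: $1/q'\lesssim(\log M_{r,j}^*)^{-\beta}$ and $q'\log q'\big/(M_1^{v_{j,1}}M_2^{v_{j,2}})\lesssim(\log M_{r,j}^*)^{1-\beta}$ by \eqref{eq:215}, while $1/M_-$ and $(\log q')/M_+$ are $\lesssim(\log M_{r,j}^*)^{-L}$ by $j$-balancedness (for $M_+$ note the larger power is $\gtrsim(M_{r,j}^*)^c$). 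Hence if $\beta$ exceeds a suitable $\beta_0=\beta_0(\alpha,P)>\alpha$ of size comparable to $L$ plus the Weyl exponents $\sigma(d_i),\tau(d_i)$, the whole bracket is $\lesssim(\log M_{r,j}^*)^{-4k_1k_2\alpha}$, which gives \eqref{eq:227}.

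In the $j$-unbalanced case necessarily $j\in\{1,r\}$; by the symmetry $m_1\leftrightarrow m_2$ assume $j=1$, so $v_{1,2}=d_2$, $M_{r,1}^*=M_2$, and (as $M_2^{d_2}$ is huge) $M_1^{v_{1,1}}\le(\log M_2)^L$. Then $|S_{K_1,M_1,K_2,M_2}(P_\xi)|\le(M_1-K_1)\max_{K_1<m_1\le M_1}\big|\sum_{m_2=K_2+1}^{M_2}\ex(\xi P(m_1,m_2))\big|$, and for each fixed $m_1$ I apply Proposition \ref{prop:20'} (after discarding the constant term in $m_2$, irrelevant for the modulus) to the degree-$d_2$ polynomial $m_2\mapsto\xi P(m_1,m_2)$ at its top coefficient $\xi p(m_1)$, where $p(m_1):=\sum_{\gamma_1}c_{\gamma_1,d_2}m_1^{\gamma_1}$ is a nonzero integer polynomial of degree $v_{1,1}$, so $1\le|p(m_1)|\lesssim_P M_1^{v_{1,1}}\le(\log M_2)^L$. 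Reducing $\{\xi p(m_1)\}$ to $\bar a''/q''$ in lowest terms one gets $q(\log M_2)^{-L}\lesssim_P q''\le q\le M_2^{d_2}$ (the last bound from $M_1^{v_{1,1}}\le(\log M_2)^L$ and \eqref{eq:215}) and $\big|\{\xi p(m_1)\}-\bar a''/q''\big|\le|p(m_1)|\,|\xi-a/q|\le(\log M_2)^\beta/(qM_2^{d_2})\le1/(q'')^2$; Proposition \ref{prop:20'} then yields $\big|\sum_{m_2}\ex(\xi P(m_1,m_2))\big|\lesssim M_2\log M_2\,(1/q''+1/M_2+q''/M_2^{d_2})^{1/\tau(d_2)}$, each of whose terms is $\lesssim(\log M_2)^{-(\beta-L)}$ apart from the harmless $1/M_2$, so for $\beta>\beta_0$ this is $\lesssim M_2(\log M_2)^{-\alpha}$; multiplying by $M_1$ gives \eqref{eq:227}.

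The main obstacle I anticipate is the bookkeeping that ties the threshold $L$ to the lower bound $\beta_0$: in the unbalanced regime the loss $M_1^{v_{1,1}}$ suffered when clearing denominators of $\xi p(m_1)$ must be absorbed while the resulting modulus $q''$ stays inside the usable window $\big[(\log M_{r,j}^*)^{\beta-L},\,M_2^{d_2}\big]$, and in the balanced regime $L$ must be chosen large enough to defeat the stubborn error term $1/\min(M_1^{v_{j,1}},M_2^{v_{j,2}})$ produced by Proposition \ref{prop:22}; one also needs the elementary Newton-diagram facts $v_{1,2}=d_2$, $v_{r,1}=d_1$, and $v_{j,i}\ge1$ for $1<j<r$ in order for the balanced/unbalanced dichotomy to be clean. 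Each individual estimate is routine, but making the full chain of inequalities close simultaneously — so that a single $\beta_0=\beta_0(\alpha,P)$ works for every $\beta>\beta_0$ uniformly over the sector — is where the care is required.
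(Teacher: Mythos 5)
Your architecture coincides with the paper's own proof: you split the sector into the regime where one of $M_1^{v_{j,1}},M_2^{v_{j,2}}$ is at most a fixed power of $\log M_{r,j}^*$ (necessarily $j\in\{1,r\}$, handled by freezing the small variable and applying Proposition \ref{prop:20'} to the inner sum at its top coefficient) and the regime where both are large (handled by Proposition \ref{prop:22} at the vertex monomial). This is exactly the paper's Steps 1--2 versus Step 3, your identification of $v_{1,2}=d_2$, $\deg p=v_{1,1}$ and the automatic balancedness for $1<j<r$ via \eqref{eq:107} is correct, and the final exponent bookkeeping closes as you describe.

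The gap is in the Diophantine transference step, in both regimes. To invoke Proposition \ref{prop:20'} or Proposition \ref{prop:22} you must approximate $Q\xi$ (with $Q=c_{v_j}$, resp.\ $Q=p(m_1)$) to within $1/(q'')^2$ of a reduced fraction, and you do this by reducing $aQ/q$ to lowest terms $\bar a/q''$ and writing $|Q\xi-\bar a/q''|\le|Q|\,|\xi-a/q|$. But $q''=q/\gcd(Q,q)$ need not shrink: if $\gcd(Q,q)=1$ then $q''=q$, and the inequality you need, $|Q|\,|\xi-a/q|\le 1/q^2$, amounts to $|Q|\,q\le M_1^{v_{j,1}}M_2^{v_{j,2}}(\log M_{r,j}^*)^{-\beta}$, which fails when $q$ sits at the top of the window \eqref{eq:215} and $|Q|\ge2$ (take $|\xi-a/q|$ equal to the bound in \eqref{eq:w6-1} to see the hypothesis of the Weyl estimates is genuinely not met by that fraction). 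In the balanced case $|Q|=|c_{v_j}|=O_P(1)$, so this is only a constant-factor defect, but in the unbalanced case $|Q|=|p(m_1)|$ can be as large as $(\log M_2)^{L}$ and the claimed chain of inequalities is false. The repair is Lemma \ref{lem:21}, stated in the paper for precisely this purpose: it produces a possibly different denominator $q'$ in a window $[q/(2Q),2M]$ with approximation quality $1/(2q'M)\le 1/(q')^2$, costing only the factor $2Q$ in the lower bound for $q'$, which your margin $(\log M_{r,j}^*)^{\beta-L}$ absorbs. With Lemma \ref{lem:21} substituted at these two points (and a word about the finitely many $m_1$ with $p(m_1)=0$, where you assert $1\le|p(m_1)|$ without justification --- an edge case the paper's own Step 2 also leaves implicit), your argument closes and is essentially the paper's.
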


\begin{proof}
We note that the following three scenarios may occur when $r>1$:
\begin{enumerate}[label*={\arabic*}.]
\item If $j=1$ we  have $v_{1, 1}=0$ or $v_1\in\ZZ_+\times\ZZ_+$. In this  case we also have  $\log M_1\lesssim \log M_2$.
\item If $j=r$ we  have $v_{r, 2}=0$ or $v_r\in\ZZ_+\times\ZZ_+$. In this  case we also have  $\log M_1\gtrsim \log M_2$.
\item If $1<j<r$ we have  $v_j\in\ZZ_+\times\ZZ_+$. In this case we also have  $\log M_1\simeq \log M_2$.
\end{enumerate}
Note that if $r=1$ then $\SS_{\tau}(1)=\DD_{\tau}\times\DD_{\tau}$ and $v_1\in\ZZ_+\times\ZZ_+$, since $P$ is non-degenerate in the sense of \eqref{eq:66}. Throughout the proof, in the case of  $r=1$, we will additionally assume that $\log M_1\le \log M_2$. Taking into account \eqref{eq:212'} and \eqref{eq:212} we can also assume that $\log M_1\vee \log M_2$ is sufficiently large, i.e. $\log M_1\vee \log M_2>C_{0}$, where $C_{0}=C_0(\alpha, \beta_0, j, \tau, P)>0$ is a large absolute constant. Otherwise, inequality  \eqref{eq:227} follows.
The proof will be divided into three steps. 
\vskip 5pt
\paragraph{\bf Step 1.}
We first establish \eqref{eq:227} when $j=1$ and $v_{1, 1}=0$ or $j=r$ and $v_{r, 2}=0$. Suppose that $j=1$ and $v_{1, 1}=0$ holds. The case when $j=r$ and $v_{r, 2}=0$ can be proved in a similar way so we omit the details. 
As we have seen above $\log M_1\lesssim \log M_2$. By \eqref{eq:215} and \eqref{eq:w6-1} we obtain
\begin{align*}
\Big|\xi-\frac{a}{q}\Big|\le\frac{1}{q^2}.
\end{align*}
Applying Lemma \ref{lem:21} with $Q= c_{0, v_{1, 2}}$ and $M=q$ we may find a fraction $a'/q'$ such that $(a', q')=1$ and $q/(2c_{0, v_{1, 2}})\le q'\le 2q$ and 
\begin{align*}
\Big|c_{0, v_{1, 2}}\xi-\frac{a'}{q'}\Big|\le\frac{1}{2q'q}\le \frac{1}{(q')^2}.
\end{align*}
Thus by Proposition \ref{prop:20'}, noting that $v_{1,2}\ge 1$, 
we obtain
\begin{align*}
|S_{K_1, M_1, K_2, M_2}(P_{\xi})|&\le S_{K_1, M_1, K_2, M_2}^1(P_{\xi})\\
&\lesssim M_1M_2\log(M_2)\bigg(\frac 1{q'}+\frac{1}{M_2}+\frac{q'}{M_2^{v_{1, 2}}}\bigg)^{\frac{1}{\tau(\deg P)}}\\
&\lesssim M_1M_2 (\log M_{r, j}^*)^{-\frac{\beta}{\tau(\deg P)}+1},
\end{align*}
since $\log M_{r, j}^*\simeq \log M_2$. It suffices to take $\beta>\tau(\deg P)(\alpha+1)$ and the claim in \eqref{eq:227} follows. 
\vskip 5pt
\paragraph{\bf Step 2.} 
We now establish \eqref{eq:227} when $1\le j\le r$ and $v_j\in\ZZ_+\times\ZZ_+$ (note that when $1<j<r$, we automatically have
$v_j\in\ZZ_+\times\ZZ_+$). If $r=1$ then we assume that $\log M_1\le \log M_2$.  If $r\ge2$ we will assume that $1\le j<r$, which 
gives that $\log M_1\lesssim \log M_2$. The case when $j=r$ can be proved in much the same way, (with the difference that $\log M_1\gtrsim \log M_2$), we omit the details. In this step, we additionally assume that  $M_1\le (\log M_{r, j}^*)^{\chi}$ for some  $0<\chi<\beta/(8\deg P)$ 
with $\beta$ to be specified later.

Notice that \eqref{eq:215} and \eqref{eq:w6-1} imply
\begin{align*}
\Big|\xi-\frac{a}{q}\Big|\le\frac{1}{q^2}.
\end{align*}
By  \eqref{eq:215}  and  $M_1\le (\log M_{r, j}^*)^{\chi}$ we conclude
\begin{align*}
(\log M_{r, j}^*)^{\beta}\le
q\le M_2^{v_{j, 2}}(\log M_{r, j}^*)^{-3\beta/4}
\end{align*}
since $\chi  < \beta/(8\deg P)$. 
We note that the polynomial $P$ can be written as
\begin{align*}
P(m_1, m_2)=P_{v_{j, 1}}(m_1)m_2^{v_{j, 2}}+\sum_{\substack{(\gamma_1, \gamma_2)\in S_P\\ \gamma_2\neq v_{j, 2}}}c_{\gamma_1, \gamma_2}m_1^{\gamma_1}m_2^{\gamma_2},
\end{align*}
where $P_{v_{j, 1}}\in\ZZ[\rm m_1]$ and $\deg P_{v_{j, 1}}=v_{j, 1}$.

Observe that for every $1\le m_1\le M_1\le (\log M_{r, j}^*)^{\chi}$ one has
\begin{align*}
|P_{v_{j, 1}}(m_1)|\le \#S_P\max_{(\gamma_1, \gamma_2)\in S_P}|c_{\gamma_1, \gamma_2}|M_1^{\deg P}\lesssim_P
(\log M_{r, j}^*)^{\beta/4}.
\end{align*}

Applying Lemma \ref{lem:21} with $M=M_2^{v_{j, 2}}(\log M_{r, j}^*)^{-3\beta/4}$ and $Q=P_{v_{j, 1}}(m_1)$ for each $K_1< m_1\le M_1$ (noting that $P_{v_{j,1}}(m_1) \not = 0$ for large $m_1$) we find a fraction $a'/q'$ so that $(a', q')=1$ and $(\log M_{r, j}^*)^{3\beta/4}\lesssim q'\le 2M_2^{v_{j, 2}}(\log M_{r, j}^*)^{-3\beta/4}$ and
\begin{align*}
\Big|P_{v_{j, 1}}(m_1)\xi-\frac{a'}{q'}\Big|\le\frac{(\log M_{r, j}^*)^{3\beta/4}}{2q'M_2^{v_{j, 2}}}\le \frac{1}{(q')^2}.
\end{align*}
We apply Proposition \ref{prop:20'} for each $1\le m_1\le M_1$, noting that $v_{j,2}\ge 1$, to bound
\begin{align*}
\Big|\sum_{m_2=K_2+1}^{M_2}\ex(P_{\xi}(m_1, m_2))\Big|
\lesssim M_2\log(M_2)\bigg(\frac 1{q'}+\frac{1}{M_2}+\frac{q'}{M_2^{v_{j, 2}}}\bigg)^{\frac{1}{\tau(\deg P)}}\lesssim M_2 (\log M_{r, j}^*)^{-\frac{3\beta}{4\tau(\deg P)}+1},
\end{align*}
since $\log M_{r, j}^*\simeq \log M_2$ for  $j\in[r-1]$. It suffices to take $\beta>\frac{4}{3}\tau(\deg P)(\alpha+1)$ and \eqref{eq:227} follows. 
\vskip 5pt
\paragraph{\bf Step 3.}
As in the previous step $1\le j<r$ (or $r=1$ and $\log M_1\le \log M_2$)  and we now  assume that $(\log M_{r, j}^*)^{\chi}\le M_1\lesssim M_2$ for some  $0<\chi<\beta/(8\deg P)$, which will be further adjusted. The case when $j=r$ can be established in a similar fashion keping in mind that $\log M_1\gtrsim \log M_2$. In fact, we take $\chi:=\beta/(16\deg P)+1$, which forces $\beta>16\deg P$.   

Applying Lemma \ref{lem:21} with $Q=c_{v_{j, 1}, v_{j,2}}$ and $M=q$, we find a fraction $a'/q'$ so that $(a', q')=1$ and 
$(\log M_{r, j}^*)^{\beta}\lesssim_P q (2Q)^{-1} \le q'\le 2q$ and
\begin{align*}
\Big|c_{v_{j, 1}, v_{j,2}}\xi-\frac{a'}{q'}\Big| \ \le \  \frac{1}{(q')^2}.
\end{align*}
From Proposition \ref{prop:22}, we obtain (with $M_{-} = \min(M_1^{v_{j,1}}, M_2^{v_{j,2}})$ and $M_{+} = \max(M_1^{v_{j,1}}, M_2^{v_{j,2}})$)
\begin{align*}
|S_{K_1, M_1, K_2, M_2}(P_{\xi})|&\lesssim M_1M_2
\bigg( \frac{1}{M_{-}}+\frac{q'\log q'}{M_1^{v_{j, 1}}M_2^{v_{j, 2}}}+\frac{1}{q'}+\frac{\log
q'}{M_{+}}\bigg)^{\frac{1}{4(1+\deg P)^5}}\\
&\lesssim M_1M_2(\log M_{r, j}^*)^{-\frac{\beta}{64(1+\deg P)^5}}.
\end{align*}
Taking $\beta>64(1+\deg P)^5(\alpha+1)$ we obtain \eqref{eq:227}. This completes the proof of Proposition \ref{prop:23}.   
\end{proof}

\subsection{Estimates for double complete exponential sums}
In this subsection we provide estimates for double complete exponential sums in the spirit of Gauss.
We begin with a well-known bound  which is also a simple consequence of Proposition \ref{prop:22}.

\begin{lemma}[\cite{ACK}]
\label{lem:31}
Let $P\in\QQ[\rm m_1, \rm m_2]$ be a
polynomial as in \eqref{eq:12} and let 
$a_{\gamma_1, \gamma_2}\in\ZZ$ and $q\in\ZZ_+$ satisfy
 $c_{\gamma_1, \gamma_2}=a_{\gamma_1, \gamma_2}/q$ for each $(\gamma_1, \gamma_2)\in S_P$ 
such that
\begin{align*}
\gcd(\{a_{\gamma_1, \gamma_2}: (\gamma_1, \gamma_2)\in S_P\}\cup\{q\})=1.
\end{align*}
Consider the exponential sum $S_{q,q}$ from \eqref{00}.
 Then there are  $C>0$ and
$\delta\in(0, 1)$ such that
\begin{align}
\label{eq:97}
|S_{q, q}(P)|&\le Cq^{2-\delta}
\end{align}
holds. The constant $C$ can be taken to depend only on the degree of $P$.
\end{lemma}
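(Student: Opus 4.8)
The goal is to bound the complete double sum $S_{q,q}(P)=\sum_{m_1=1}^q\sum_{m_2=1}^q\ex(P(m_1,m_2))$ by $Cq^{2-\delta}$, where the coefficients of $P$ are $a_{\gamma_1,\gamma_2}/q$ with the joint gcd of all numerators and $q$ equal to $1$. The natural strategy is to deduce this from the ad-hoc Weyl-type estimate in Proposition \ref{prop:22}, applied with $M_1=M_2=q$ and $K_1=K_2=0$. For that we must (i) locate a ``leading'' pair $(\rho_1,\rho_2)\in S_P$ for which the coefficient $c_{\rho_1,\rho_2}=a_{\rho_1,\rho_2}/q$ admits a good rational approximation with controlled denominator, and (ii) check that with this choice the bracketed quantity in \eqref{eq:86} is $\lesssim q^{-\delta'}$ for some $\delta'>0$, so that raising to the power $1/(4k_1k_2)$ gives the claimed $q^{-\delta}$ with $\delta=\delta'/(4k_1k_2)$.

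\textbf{Choice of the leading monomial and its approximation.} First I would reduce to the case where $S_P$ contains a pair $(\rho_1,\rho_2)$ with $\rho_1\ge 1$ and $\rho_2\ge 1$: if every nonvanishing coefficient lies on one of the coordinate axes, then $P(m_1,m_2)=P_1(m_1)+P_2(m_2)$ splits and $S_{q,q}(P)$ factors as a product of two one-dimensional complete Gauss-Weyl sums, at least one of which has a numerator coprime to $q$, and then the classical complete sum bound (e.g.\ via Proposition \ref{prop:20'} at $M=q$, or the standard Hua/Gauss estimate) gives $|S_{q,q}(P)|\lesssim q^{1-\delta}\cdot q=q^{2-\delta}$. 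In the genuinely two-dimensional case, since the joint gcd of $\{a_{\gamma_1,\gamma_2}\}\cup\{q\}$ is $1$, there is at least one pair $(\rho_1,\rho_2)\in S_P$ with $\gcd(a_{\rho_1,\rho_2},q)=q/q' $ small; after dividing out the common factor we may write $c_{\rho_1,\rho_2}=a'/q'$ with $(a',q')=1$ and $q'\ge$ some positive power of $q$ — indeed, standard lemmas (the kind used for single complete sums, cf.\ \cite{IK}) show that if $\gcd(a_{\rho_1,\rho_2},q)$ were large for \emph{every} $(\rho_1,\rho_2)$, the joint gcd condition would be violated, so we may pick $(\rho_1,\rho_2)$ with $q' \ge q^{c}$ for an explicit $c>0$ depending only on $\deg P$. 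This pair and the trivial approximation $|c_{\rho_1,\rho_2}-a'/q'|=0\le 1/(q')^2$ satisfy the hypothesis \eqref{eq:w6} of Proposition \ref{prop:22} with $q_{\rho_1,\rho_2}=q'$.

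\textbf{Applying Proposition \ref{prop:22} and concluding.} With $M_1=M_2=q$ the quantity $M_-=M_+=q^{\max(\rho_1,\rho_2)}$ (or $q^{\rho_i}$, in any case a fixed power of $q$ at least $q$), and the bracket in \eqref{eq:86} becomes $\lesssim q^{-1}+q^{1-\rho_1-\rho_2}q'\log q'+1/q'+q^{-1}\log q'$. Since $q'\le q$ and $q'\ge q^{c}$, and since $\rho_1+\rho_2\ge 2$, each term is $\lesssim q^{-c}\log q\lesssim q^{-c/2}$ for $q$ large; absorbing the logarithm costs only a negligible power. Hence $S^i_{q,q}(P)\lesssim q^2\cdot q^{-c/(8k_1k_2)}$, and by \eqref{eq:226} the same bound holds for $|S_{q,q}(P)|$, giving \eqref{eq:97} with $\delta=c/(8k_1k_2)$ (any $q$ below the threshold is handled trivially by $|S_{q,q}|\le q^2$ after shrinking $\delta$ or enlarging $C$). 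The main obstacle is the combinatorial/number-theoretic step (ii) above: guaranteeing that \emph{some} monomial in $S_P$ has reduced denominator $q'$ at least a fixed positive power of $q$. This is where the joint coprimality hypothesis is essential, and it requires a careful pigeonhole argument over the finitely many monomials of $P$ (the number of which is bounded in terms of $\deg P$ only); everything else is a routine substitution into Proposition \ref{prop:22}. Since this is exactly the kind of statement catalogued in \cite{ACK}, one may alternatively just cite it, but the self-contained route via Proposition \ref{prop:22} is short and I would include it.
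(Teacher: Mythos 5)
Your overall strategy --- put all coefficients over the common denominator $q$, locate one monomial whose reduced denominator is at least a fixed positive power of $q$, and feed that single monomial into Proposition \ref{prop:22} with $M_1=M_2=q$ --- is exactly the route the paper gestures at when it calls the lemma ``a simple consequence of Proposition \ref{prop:22}'' (the paper gives no proof of its own, only the citation to \cite{ACK}). The pigeonhole you flag as the main obstacle is also correct as far as it goes: writing $q_\gamma:=q/\gcd(a_\gamma,q)$, the hypothesis $\gcd(\{a_\gamma\}\cup\{q\})=1$ forces $\lcm_\gamma q_\gamma=q$, hence $\max_\gamma q_\gamma\ge q^{1/\#S_P}$, and your bookkeeping of the bracket in \eqref{eq:86} is fine once such a monomial is in hand.

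The gap is that the monomial produced by this pigeonhole need not lie in $S_P\cap(\ZZ_+\times\ZZ_+)$, while Proposition \ref{prop:22} requires $\rho_1\ge1$ \emph{and} $\rho_2\ge1$ (and its conclusion is useless otherwise, since the bracket contains $1/q_{\rho_1,\rho_2}$). Your reduction only disposes of the fully degenerate case where \emph{every} monomial sits on a coordinate axis; it does not cover the mixed case where the coprimality is carried entirely by axis monomials. Concretely, let $q$ be a large multiple of a fixed prime $p$ and $P(m_1,m_2)=\frac{1}{q}m_1+\frac{q/p}{q}m_1m_2$. The joint gcd condition holds (via $a_{1,0}=1$) and $P$ is non-degenerate, but the unique off-axis monomial has reduced denominator $p=O(1)$, so Proposition \ref{prop:22} gives nothing, and the monomial with large reduced denominator is $(1,0)$, where the proposition does not apply. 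The lemma is still true for this $P$ --- indeed $S_{q,q}(P)=0$, because summing over $m_2$ first yields $q\ind{p\mid m_1}$ and the remaining sum of $\ex(m_1/q)$ over the progression $p\mid m_1$ vanishes --- but seeing this requires summing out one variable and then running a one-dimensional complete-sum argument on a subprogression with a modified modulus, not a single application of Proposition \ref{prop:22}. A smaller imprecision: in the split case $P=P_1(m_1)+P_2(m_2)$ it is not true that one factor ``has a numerator coprime to $q$''; what the hypothesis gives is that the reduced denominators $q_1,q_2$ of $P_1,P_2$ satisfy $\lcm(q_1,q_2)=q$, so one of them is at least $q^{1/2}$, after which the classical complete Weyl-sum bound applies to that factor. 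To close the main gap you need the extra case analysis (essentially the reduction/induction carried out in \cite{ACK}); as written, your argument does not cover all polynomials admitted by the lemma.
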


 We now derive simple
consequences of Lemma \ref{lem:31} for exponential sums that arise in
the proof of our main result.  Let $P\in\ZZ[\rm m_1, \rm m_2]$ be such
that
\begin{align}
\label{eq:103}
P(m_1, m_2):=\sum_{(\gamma_1, \gamma_2)\in S_P}c_{\gamma_1, \gamma_2}^Pm_1^{\gamma_1}m_2^{\gamma_2},
\end{align}
where $c_{(0,0)}^P=0$. We additionally assume that $P$ 
is  non-degenerate (see the remark below Theorem \ref{thm:main}). That is, we have
$S_{P}\cap(\ZZ_{+}\times\ZZ_{+})\neq\emptyset$.
Using the definition of $P_{\xi}$ from \eqref{eq:211}, we define the complete exponential sum by 
\begin{align}
\label{eq:323}
G(a/q):=\frac{1}{q^2}\sum_{r_1=1}^q\sum_{r_2=1}^q\ex(P_{a/q}(r_1, r_2)),\qquad  a/q\in\QQ,
\end{align}
and we also have partial complete exponential sums defined by
\begin{align}
\label{eq:325}
\begin{split}
G_{m_1}^1(a/q):=\frac{1}{q}\sum_{r_2=1}^q\ex(P_{a/q}(m_1, r_2)), \qquad a/q\in\QQ,\; m_1\in\ZZ,\\
G_{m_2}^2(a/q):=\frac{1}{q}\sum_{r_1=1}^q\ex(P_{a/q}(r_1, m_2)), \qquad a/q\in\QQ,\; m_2\in\ZZ.
\end{split}
\end{align}
\begin{proposition}
\label{prop:32}
Let $P\in\ZZ[\rm m_1, \rm m_2]$ be a polynomial  as in \eqref{eq:103} which is non-degenerate
(that is, $S_{P}\cap(\ZZ_{+}\times\ZZ_{+})\neq\emptyset$). Then there is $C_P>0$ and $\delta\in(0, 1)$ such that the following inequalities hold. If $a/q\in\QQ$ and $(a, q)=1$, then
\begin{align}
\label{eq:104}
|G(a/q)|\le C_P \, q^{-\delta}.
\end{align}
Moreover,  for every sufficiently large $K_1, M_1\in\ZZ_+$ depending on $P$ one has
\begin{align}
\label{eq:105}
\frac{1}{M_1}\sum_{m_1=K_1 + 1}^{M_1}|G_{m_1}^1(a/q)|\le C_P \, q^{-\delta},
\end{align}
and similarly  for every sufficiently large $K_2, M_2\in\ZZ_+$ depending on $P$ one has
\begin{align}
\label{eq:106}
\frac{1}{M_2}\sum_{m_2=K_2 + 1}^{M_2}|G_{m_2}^2(a/q)|\le C_P \, q^{-\delta}.
\end{align}
\end{proposition}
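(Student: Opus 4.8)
The plan is to deduce Proposition \ref{prop:32} from the single-scale complete sum bound in Lemma \ref{lem:31}, handling \eqref{eq:104} directly and \eqref{eq:105}, \eqref{eq:106} by a combination of a Weil-type bound on the partial sums together with a dyadic/counting argument. First I would treat \eqref{eq:104}: since $P$ is non-degenerate, $S_P\cap(\ZZ_+\times\ZZ_+)\neq\emptyset$, and because $(a,q)=1$ the polynomial $P_{a/q}=(a/q)P$ has coefficients $a\,c^P_{\gamma_1,\gamma_2}/q$ whose numerators together with $q$ have gcd equal to $(a,q)=1$ (after clearing the common factor $a$ against $q$, which is $1$). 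Thus Lemma \ref{lem:31} applies verbatim to $P_{a/q}$ and gives $|G(a/q)|=q^{-2}|S_{q,q}(P_{a/q})|\le C q^{-\delta}$, which is \eqref{eq:104}.

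For \eqref{eq:105} the point is that $G^1_{m_1}(a/q)$ is a one-parameter complete exponential sum in $r_2$ with phase the polynomial $r_2\mapsto P_{a/q}(m_1,r_2)$, whose coefficient of $r_2^{\gamma_2}$ is $(a/q)\sum_{\gamma_1}c^P_{\gamma_1,\gamma_2}m_1^{\gamma_1}$. Write this polynomial in reduced form $b(m_1)/Q(m_1)$ where $Q(m_1)\mid q$; the classical Weil/Hua bound for complete sums gives $|G^1_{m_1}(a/q)|\lesssim_P (q/Q(m_1))^{?}\,Q(m_1)^{-\delta_0}$ — more precisely, splitting off the part of $q$ that does not appear in the reduced denominator produces a factor, so one gets $|G^1_{m_1}(a/q)|\lesssim_P Q(m_1)^{-\delta_0}$ where $Q(m_1)$ is the least common multiple of the reduced denominators of the coefficients $(a/q)\sum_{\gamma_1}c^P_{\gamma_1,\gamma_2}m_1^{\gamma_1}$ over $\gamma_2\ge 1$. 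Because $P$ is non-degenerate there is some $(\gamma_1,\gamma_2)\in S_P$ with $\gamma_1,\gamma_2\ge 1$, so the coefficient of $r_2^{\gamma_2}$ is $(a/q)$ times a \emph{nonconstant} polynomial in $m_1$; hence $Q(m_1)$ can be small only for $m_1$ lying in few residue classes modulo each prime power dividing $q$. The averaging step is then: for each divisor $e\mid q$, the number of $m_1\in[M_1]$ with $Q(m_1)\le e$ is $\lesssim_P (M_1/e + 1)\cdot(\text{number of bad residues})$, and one shows $\frac{1}{M_1}\sum_{m_1=1}^{M_1}Q(m_1)^{-\delta_0}\lesssim_P q^{-\delta}$ for a smaller $\delta>0$ by summing the dyadic contributions $e\le Q(m_1)<2e$. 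The bound \eqref{eq:106} follows identically by symmetry in $m_1\leftrightarrow m_2$, again using that $P$ has a monomial with both exponents positive.

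The main obstacle I anticipate is the divisor-counting estimate controlling $\#\{m_1\in[M_1]:Q(m_1)\le e\}$ uniformly in $q$ and $M_1$: one must argue that a fixed nonconstant integer polynomial $R(m_1)=\sum_{\gamma_1}c^P_{\gamma_1,\gamma_2}m_1^{\gamma_1}$ (for the distinguished $\gamma_2$) satisfies $p^j\mid aR(m_1)/q$-type divisibility for at most $O_P(p^{j})$-\,minus a power saving\,-many residues $m_1\bmod p^{j}$, so that only a density $\lesssim (Q(m_1)/q)^{c}$ fraction of $m_1$ have $Q(m_1)$ much smaller than $q$. This is a standard consequence of the fact that $R$ has only $O_P(1)$ roots modulo any prime (away from the finitely many primes dividing the discriminant/leading coefficient, which contribute only a bounded multiplicative constant), combined with Hensel-type lifting; once this counting bound is in hand, interchanging the sum over $m_1$ with the sum over the size of $Q(m_1)$ and invoking the one-variable Weil bound yields the geometric decay. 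I would organize the proof by first recording the one-variable complete-sum bound as a lemma, then the polynomial-congruence counting lemma, and finally assembling \eqref{eq:104}--\eqref{eq:106}; the value of $\delta$ in the statement is whatever emerges from the worst of these three steps, e.g. $\delta = \delta_0/(2\deg P)$ or similar.
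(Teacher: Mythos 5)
Your treatment of \eqref{eq:104} coincides with the paper's: clear denominators, check the gcd hypothesis using $(a,q)=1$, and apply Lemma \ref{lem:31} to $P_{a/q}$. For \eqref{eq:105}--\eqref{eq:106} you take a genuinely different route. The paper splits on the size of $M_1$: for $M_1\ge q^{\chi}$ it feeds the whole average into the two-variable Weyl inequality of Proposition \ref{prop:22} (with $K_1=K_2=0$, $M_2=q$), so the gain over the average in $m_1$ comes from the iterated-Vinogradov machinery; for $M_1<q^{\chi}$ it argues pointwise in $m_1$, using that the integers $P_{\gamma_2}(m_1)$ then have size at most $q^{1/4}$, so Lemma \ref{lem:21} produces a reduced fraction with denominator $\gtrsim q^{3/4}$ approximating $P_{\rho_2}(m_1)a/q$ and Proposition \ref{prop:20'} applies. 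Your scheme instead bounds each $G^1_{m_1}(a/q)$ by the complete-sum (Hua) estimate at the reduced modulus $Q(m_1)=q/\gcd\big(q,\{aR_{\gamma_2}(m_1)\}_{\gamma_2\ge1}\big)$ — note there is no extra factor here: by periodicity the normalized sum over $r_2\in[q]$ equals exactly the normalized complete sum mod $Q(m_1)$ — and then counts the $m_1\in[M_1]$ for which $Q(m_1)$ is small via polynomial congruences. This is more elementary in that it avoids Proposition \ref{prop:22} (hence Vinogradov's mean value theorem) entirely for this proposition, and both ingredients you isolate (Hua's bound and the root-count $\#\{m_1\bmod d: d\mid R(m_1)\}\lesssim_P d^{1-1/\deg R+o(1)}$ for the nonconstant $R=R_{\gamma_2^*}$) are standard.

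The gap is the regime $M_1\ll q$. Your counting controls the density of bad $m_1$ only up to a boundary term: the condition $Q(m_1)\le e$ is a union over divisors $d\mid q$ with $d\ge q/e$ of congruences modulo $d$, and for $d>M_1$ the bound $\#\{m_1\in[M_1]:d\mid R(m_1)\}\le(M_1/d+1)\nu(d)$ degenerates to $\nu(d)$, which can be of size $d^{1-1/k}\gg M_1$; so the argument yields nothing when $M_1$ is small compared with $q$, whereas the statement is claimed for every $M_1\in\ZZ_+$. The fix stays inside your framework but is a different mechanism from counting: for $m_1\le q^{\chi}$ with $\chi$ small each $R_{\gamma_2}(m_1)$ is an integer of absolute value at most $q^{1/4}$, so whenever some $R_{\gamma_2}(m_1)\ne0$ one gets $\gcd(q,R_{\gamma_2}(m_1))\le q^{1/4}$ for free, hence $Q(m_1)\ge q^{3/4}$ and the pointwise Hua bound already gives $q^{-3\delta_0/4}$ with no averaging; you should make this case split explicit. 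Finally, be aware that both your argument and the paper's tacitly assume that for each $m_1$ some $R_{\gamma_2}(m_1)$ with $\gamma_2\ge1$ is nonzero: if $P(m_1,\cdot)$ is constant for some integer $m_1$ (e.g.\ $P(m_1,m_2)=(m_1-1)m_2+(m_1^2-1)m_2^2$ at $m_1=1$, which is non-degenerate) then $|G^1_{m_1}(a/q)|=1$ and \eqref{eq:105} fails for bounded $M_1$. This is a defect of the statement in extreme cases rather than of your method, but it must be excluded or absorbed (e.g.\ by discarding the $O_P(1)$ exceptional values of $m_1$) when you write the argument up.
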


\begin{proof}
We prove Proposition \ref{prop:32} in two steps. 
\paragraph{\bf Step 1} In this step we establish \eqref{eq:104}. Fix $a/q\in\QQ$
such that $(a, q)=1$.  For any $(\gamma_1, \gamma_2)\in S_P$ we let
$a_{\gamma_1, \gamma_2}:=ac_{\gamma_1, \gamma_2}^P/(c_{\gamma_1, \gamma_2}^P, q)$
and $q_{\gamma_1, \gamma_2}:=q/(c_{\gamma_1, \gamma_2}^P, q)$.  Now
with this notation we see that
\begin{align*}
P_{a/q}(r_1, r_2)=Q(r_1, r_2):=\sum_{\gamma_1=0}^{d_1}\sum_{\gamma_2=0}^{d_2}\frac{a_{\gamma_1, \gamma_2}}{q_{\gamma_1, \gamma_2}}r_1^{\gamma_1}r_2^{\gamma_2},
\end{align*}
for some integers $d_1, d_2\ge1$. Furthermore,
$G(a/q) = q^{-2} S_{q,q}(Q)$, see \eqref{eq:323}. We take $(\rho_1, \rho_2)\in S_{P}\cap(\ZZ_{+}\times\ZZ_{+})\neq\emptyset$ and 
use \eqref{eq:97},  which yields
\begin{align*}
|G(a/q)|=q^{-2}|S_{q, q}(Q)|\lesssim_P q^{-\delta}.
\end{align*}
This completes the proof of \eqref{eq:104}.
\vskip 5pt
\paragraph{\bf Step 2}
We only prove \eqref{eq:105}, the proof of \eqref{eq:106} is exactly the same. We fix  $a/q\in\QQ$ such that $(a, q)=1$, and  
we also fix $(\rho_1, \rho_2)\in S_{P}\cap(\ZZ_+\times\ZZ_+)\neq\emptyset$.  Using Lemma \ref{lem:21} we find a reduced fraction $a_{\rho_1, \rho_2}/q_{\rho_1, \rho_2}$ so that $(a_{\rho_1, \rho_2},q_{\rho_1, \rho_2})=1$ and
\begin{align*}
\Big|\frac{a c_{\rho_1, \rho_2}^P}{q}-\frac{a_{\rho_1, \rho_2}}{q_{\rho_1, \rho_2}}\Big|\le\frac{1}{2q_{\rho_1, \rho_2}q}
\end{align*}
with $q/(2c_{\rho_1, \rho_2})\le q_{\rho_1, \rho_2}\le 2q$. We fix $\chi>0$ and assume first that $M_1\ge q^{\chi}$. Appealing to inequality \eqref{eq:86} with $M_2=q$ we obtain for some $\delta\in(0, 1)$ that
\begin{align*}
\frac{1}{M_1}\sum_{m_1=K_1+1}^{M_1}|G_{m_1}^1(a/q)|\lesssim_P q^{-\delta}
\end{align*}

We now establish a similar bound assuming that $M_1< q^{\chi}$ for a sufficiently small $\chi>0$, which will be specified momentarily. Our polynomial $P$ from \eqref{eq:103} can be rewritten as
\begin{align*}
P(m_1, m_2)=\sum_{\gamma_2=1}^{d_2}P_{\gamma_2}(m_1)m_2^{\gamma_2}+P_{0}(m_1),
\end{align*}
for some $d_2\ge1$ where $P_{\gamma_2}\in\ZZ[\rm m_1]$ and $\deg P_{\gamma_2}\le \deg P$.  Take $0<\chi<\frac{1}{10\deg P}$, and
observe that for every $1\le \gamma_2\le d_2$ and  for every $1\le m_1\le M_1\le q^{\chi}$ one has
\begin{align}
\label{eq:61}
|P_{\gamma_2}(m_1)|\le \#S_P\max_{(\gamma_1, \gamma_2)\in S_P}|c_{\gamma_1, \gamma_2}|M_1^{\deg P}\le
q^{1/4},
\end{align}
whenever $q$ is sufficiently large in terms of the coefficients of $P$.

Assume first that $d_2\ge2$, and we may take $\rho_2=d_2$. Applying Lemma \ref{lem:21} with $Q=P_{\rho_2}(m_1)$ for each $K_1< m_1\le M_1$ (noting that $P_{\rho_2}(m_1) \not= 0$  for sufficiently  large $m_1\ge K_1$), we find a fraction $a'/q'$ so that $(a', q')=1$ and $\frac{1}{2}q^{3/4}\le q'\le 2q$ and
\begin{align*}
\Big|P_{\rho_2}(m_1)\frac{a}{q}-\frac{a'}{q'}\Big|\le\frac{1}{2q'q}\le \frac{1}{(q')^2}.
\end{align*}
Then we apply Proposition \ref{prop:20'} for each $K_1 < m_1\le M_1$, which gives
\begin{align*}
|G_{m_1}^1(a/q)|
\lesssim \log(2q)\bigg(\frac 1{q'}+\frac 1 q+\frac{q'}{q^{d_2}}\bigg)^{\frac{1}{\tau(d_2)}}\lesssim  (\log q)q^{-\frac{3}{4\tau(d_2)}}\lesssim q^{-\delta},
\end{align*}
for some $\delta\in(0, 1)$ 
and \eqref{eq:105} follows, since $d_2\ge2$.

Assume now that $d_2=1$, then
\begin{align*}
\frac{1}{M_1}\sum_{m_1=K_1 + 1}^{M_1}|G_{m_1}^1(a/q)|= \frac{1}{M_1}\#\{K_1 < m_1\le M_1: P_{1}(m_1)\equiv 0 \bmod q \}=0,
\end{align*}
in view of \eqref{eq:61}, which ensures that $\{m_1\in [M_1]: P_{1}(m_1)\equiv 0 \bmod q \}=\emptyset$.
\end{proof}
\section{Multi-parameter Ionescu--Wainger theory}
\label{section:6}

One of the most important ingredients in our argument is the
Ionescu--Wainger multiplier theorem \cite{IW}, see also \cite{M10},
and its vector-valued variant from \cite{MSZ3}, see  also \cite{TaoIW}. 
We begin with recalling the results from \cite{IW} and \cite{MSZ3} and
fixing necessary notation and terminology.

\subsection{Ionescu--Wainger multiplier theorem}
Let $\PP$ be the set of all prime numbers, and let $\rho\in(0, 1)$ be
a sufficiently small absolute constant.  We then define the natural
number
\[
D:=D_{\rho}:=\lfloor 2/\rho \rfloor + 1,
\]
and for any integer  $l\in\NN$, set
\begin{align*}
N_0 := N_0^{(l)} := \lfloor 2^{\rho l/2} \rfloor + 1,
\quad \text{ and } \quad
Q_0 := Q_0^{(l)} := (N_0!)^D.
\end{align*}
We also define the set
\begin{align*}
P_{\leq l}:=\big\{q = Qw: Q|Q_0 \text{ and } w\in W_{\le l}\cup\{1\}\big\},
\end{align*}
where
\begin{align*}
W_{\leq l}:=\bigcup_{k\in[D]}\bigcup_{(\gamma_1,\dots,\gamma_k)\in[D]^k}\big\{p_1^{\gamma_1} \cdots p_k^{\gamma_k}\colon
 p_1,\ldots, p_k\in(N_0^{(l)}, 2^l]\cap\PP \text{ are distinct}\big\}.
\end{align*}
In other words $W_{\leq l}$ is the set of all products of prime
factors from $(N_0^{(l)}, 2^l]\cap\PP$ of length at most $D$, at
powers between $1$ and $D$.

\begin{remark}
\label{rem:2}
For every $\rho\in(0, 1)$ there exists a large absolute constant $C_{\rho}\ge1$ such that the following elementary facts about the sets $P_{\leq l}$ hold:
\begin{enumerate}[label*={(\roman*)}]
\item \label{IW1}  If $l_1\le l_2$, then $P_{\leq l_1}\subseteq P_{\leq l_2}$.
\item \label{IW2}  One has $[2^l] \subseteq P_{\leq l} \subseteq [2^{C_\rho 2^{\rho l}}]$.
\item \label{IW3}  If $q \in P_{\leq l}$, then all factors of $q$ also lie in $P_{\leq l}$.
\item \label{IW0}  One has $Q_{\le l}:=\lcm  (P_{\leq l})\lesssim 2^{C_\rho2^l}$.
\end{enumerate}
\end{remark}
By property \ref{IW1} it makes sense to define $P_l := P_{\leq l} \backslash P_{\leq l-1}$, with the
convention that $P_{\leq l}$ is empty for negative $l$.  From
property \ref{IW2}, for all $q \in P_l$, we have
\begin{align}
\label{eq:371}
2^{l-1} < q \leq 2^{C_\rho 2^{\rho l}}.
\end{align}
Let $d\in\ZZ_+$ and define $1$-periodic   sets
\begin{align}
\label{eq:372}
\Sigma_{\leq l}^d := \Big\{ \frac{a}{q}\in(\QQ\cap\TT)^d:  q \in P_{\leq l} \text{ and } (a, q)=1\Big\},
\quad \text{ and } \quad
\Sigma_l^d := \Sigma_{\leq l}^d \backslash \Sigma_{\leq l-1}^d,
\end{align}
where $(a, q)=(a_1,\ldots, a_d, q)=1$ for any $a=(a_1,\ldots, a_d)\in\ZZ^d$.
Then by \eqref{eq:371} we see
\begin{align}
\label{eq:373}
\# \Sigma_{\leq l}^d \ \le \ 2^{C_\rho(d+1) 2^{\rho l}}.
\end{align}

Let $k\in\ZZ_+$ be fixed. For any finite family of fractions $\Sigma\subseteq (\TT\cap\QQ)^k$ and  a measurable function $\mathfrak m: \RR^k \to B$ taking its values in a separable Banach space $B$ which is supported on the unit
cube $[-1/2, 1/2)^k$, define a $1$-periodic extension of $\mathfrak m$ by
\begin{align*}
\Theta_{\Sigma}[\mathfrak m](\xi):=\sum_{a/q\in\Sigma}\mathfrak m(\xi-a/q), \qquad \xi\in\TT^k.
\end{align*}

We will also need to introduce the notion of  $\Gamma$-lifted extensions of $\mathfrak m$. For $d\in\ZZ_+$ consider $\Gamma:=\{i_1,\ldots, i_k\}\subseteq [d]$ of size $k\in[d]$. We define a $\Gamma$-lifted
$1$-periodic extension of $\mathfrak m$ by
\begin{align*}
\Theta_{\Sigma}^{\Gamma}[\mathfrak m](\xi):=\sum_{a/q\in\Sigma}\mathfrak m(\xi_{i_1}-a_1/q,\ldots, \xi_{i_k}-a_k/q),  \quad \text{ for } \quad \xi=(\xi_1,\ldots, \xi_d)\in\TT^d.
\end{align*}

We now recall the following  vector-valued Ionescu--Wainger
multiplier theorem from \cite{MSZ3,TaoIW}.

\begin{theorem}
\label{thm:IW}
Let $d\in\ZZ_+$ be given. For every $\rho\in(0, 1)$ and for every  $p \in (1,\infty)$, there exists an absolute constant
$C_{p, \rho, d}>0$, that depends only on $p$, $\rho$  and $d$,  such that, for every $l\in\NN$, the following
holds.
Let $0<\varepsilon_l \le 2^{-10 C_\rho 2^{2\rho l}}$, and let
$\mathfrak m: \RR^d \to L(H_0,H_1)$ be a measurable function supported on
$\varepsilon_{l}[-1/2, 1/2)^d$,  with values in the space $L(H_{0},H_{1})$ of bounded linear
operators between separable Hilbert spaces $H_{0}$ and $H_{1}$. Let  
\begin{align}
\label{eq:374}
\mathbf A_{p}:=\|T_{\RR^d}[\mathfrak m]\|_{L^{p}(\RR^d;H_0)\to L^{p}(\RR^d;H_1)}.
\end{align}
Then the $1$-periodic multiplier
\begin{align}
\label{eq:375}
\Theta_{\Sigma_{\le l}^d}[\mathfrak m](\xi)=\sum_{a/q \in\Sigma_{\le l}^d}
\mathfrak m(\xi - a/q)
\quad \text{ for } \quad \xi\in\TT^d,
\end{align}
where $\Sigma_{\le l}^d$ is the set of all reduced fractions in \eqref{eq:372}, satisfies
\begin{align}
\label{eq:376}
\|T_{\ZZ^d}[\Theta_{\Sigma_{\le l}^d}[\mathfrak m]]f\|_{\ell^p(\ZZ^d;H_1)}
\le C_{p,\rho, d}
\mathbf A_{p}
\|f\|_{\ell^p(\ZZ^d;H_0)}
\end{align}
for every $f\in \ell^p(\ZZ^d;H_0)$.
\end{theorem}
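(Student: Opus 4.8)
The plan is to run the Ionescu--Wainger argument, exploiting the multiplicative structure $P_{\le l}=\{Qw:Q\mid Q_0,\ w\in W_{\le l}\cup\{1\}\}$ to reduce matters to the Euclidean bound $\mathbf{A}_p$ from \eqref{eq:374}. We may assume $\mathbf{A}_p<\infty$ and that $H_0,H_1$ are separable. Two elementary observations organise everything. First, although the denominators occurring in $\Sigma_{\le l}^d$ can be as large as $Q_{\le l}\lesssim 2^{C_\rho 2^l}$ by Remark \ref{rem:2}, every stage of the recursion below will only involve denominators bounded by $Q_0\cdot 2^{Dl}$, and the square of this quantity stays well below $\varepsilon_l^{-1}$ because $\rho<1/2$ forces the exponent $2^{2\rho l}$ in the hypothesis $\varepsilon_l\le 2^{-10C_\rho 2^{2\rho l}}$ to dominate $l\,2^{\rho l/2}$ and $Dl$; hence at each such stage the frequency supports $\mathfrak m(\,\cdot-a/q)$ appearing there are pairwise disjoint, and $\ell^2$-orthogonality is available. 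Second, by \eqref{eq:373} the total number of fractions is only $2^{C_\rho(d+1)2^{\rho l}}$, which is harmless for bounding crude intermediate expressions but must be absent from the final estimate; so the entire gain has to come from a recursion whose length is the \emph{absolute} integer $D=D_\rho$, never from a union bound over $\Sigma_{\le l}^d$.

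The core argument is an induction on the number $k\in\{0,1,\dots,D\}$ of distinct primes from $(N_0,2^l]$ dividing the denominator, proving the bound at generation $k$ with constant $C_{p,\rho,d}^{\,k+1}$. The base case $k=0$ is the set of all $a/q$ with $q\mid Q_0$, which coincides with $\{a/Q_0:0\le a<Q_0\}$; here the supports are disjoint (since $Q_0^2<\varepsilon_l^{-1}$), and the Magyar--Stein--Wainger sampling principle — lifting $\ZZ^d$ along the $Q_0$-grid, equivalently working on $(\ZZ/Q_0\ZZ)^d\times\ZZ^d$ — identifies $T_{\ZZ^d}[\Theta_{\Sigma}[\mathfrak m]]$ with a rescaled Euclidean multiplier operator, giving norm $\le C_d\,\mathbf{A}_p$ with $C_d$ independent of $Q_0$ (for $p=2$ this is just $\|\Theta_{\Sigma}[\mathfrak m]\|_{L^\infty(\TT^d)}=\|\mathfrak m\|_{L^\infty}\le\mathbf{A}_2$). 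For the inductive step I would peel off one prime $p$ together with its exponent $\gamma\le D$: since $p\nmid Q_0$ and $p$ is distinct from the remaining $k-1$ primes, Chinese remaindering writes each relevant fraction as $b/q'+c/p^\gamma$ with $q'$ a generation-$(k-1)$ denominator, and a rescaling that exploits the $p^{-\gamma}$-quasi-periodicity of the resulting multiplier and the dilation-invariance of $\|T_{\RR^d}[\mathfrak m]\|_{L^p\to L^p}$ reduces the problem, for each fixed $p$, to the generation-$(k-1)$ operator applied to a multiplier of the \emph{same} Euclidean norm $\mathbf{A}_p$. One then aggregates over all primes $p\in(N_0,2^l]$: there are far too many of them for a union bound, so I would pass to a square function in the prime variable and invoke Lemma \ref{lem:10} with $T=(T_p)$ the single-prime operators — one checks that $A_p(T)\lesssim\mathbf{A}_p$, because the randomly signed Euclidean pieces still have pairwise disjoint frequency supports and hence assemble into a single multiplier of norm $\le\mathbf{A}_p$, whereupon \eqref{eq:175} supplies the desired square-function estimate; alternatively one could build this square function dyadically via the Rademacher--Menshov inequality \eqref{eq:164}. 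None of these steps sees the Hilbert-space target, so the vector-valued conclusion is automatic.

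Summing the $k=0,\dots,D$ contributions produces the bound $C_{p,\rho,d}\,\mathbf{A}_p$ with $C_{p,\rho,d}$ depending only on $p$, $\rho$, $d$ (through $D=D_\rho$ and the fixed number of invocations of the sampling principle and of Lemma \ref{lem:10}); if one prefers to reach $p\neq 2$ by interpolation rather than by running the recursion at a general exponent, it suffices to interpolate a crude $\ell^2$-type estimate of size $\lesssim 2^{C_\rho 2^{\rho l}}$ against the bound for $p$ in a fixed neighbourhood of $2$, but this detour is not essential. The step I expect to be the genuine obstacle is the inductive one: decomposing $q=q'p^\gamma$ and arranging the rescaling so that it reconstitutes the Euclidean norm $\mathbf{A}_p$ \emph{exactly} — any loss of a positive power of $p$ would be catastrophic when $p$ ranges up to $2^l$ — while simultaneously keeping the frequency supports disjoint at every level. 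This is precisely where the smallness hypothesis on $\varepsilon_l$ and the counting \eqref{eq:373} are consumed, and it is the heart of the Ionescu--Wainger mechanism.
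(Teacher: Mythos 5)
First, a point of order: the paper does not prove Theorem \ref{thm:IW} at all --- it is imported verbatim from the literature (\cite{IW}, \cite{M10}, \cite{MSZ3}, and, for the uniform constant $\mathbf A_p$ as stated, \cite{TaoIW}), so there is no internal proof to compare against. Judged on its own terms, your outline correctly reproduces the scaffolding of the Ionescu--Wainger argument: the decomposition of $P_{\le l}$ into a ``smooth'' part $Q\mid Q_0$ plus at most $D$ large primes, the disjointness of the translates $\mathfrak m(\cdot-a/q)$ guaranteed by $q^2\le\varepsilon_l^{-1}$, the Magyar--Stein--Wainger sampling principle for the base case $k=0$, and the warning that no union bound over \eqref{eq:373} is admissible. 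Up to that point the sketch is sound.

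The genuine gap is the aggregation over primes in the inductive step, and it is not a technical loose end but a circularity. You invoke Lemma \ref{lem:10} with $T=(T_p)_p$ the single-prime operators, and justify the hypothesis $A_p(T)\lesssim\mathbf A_p$ by saying that the randomly signed pieces ``still have pairwise disjoint frequency supports and hence assemble into a single multiplier of norm $\le\mathbf A_p$.'' Disjointness of Fourier supports gives this only for $p=2$ (via Plancherel); for $p\neq2$ the randomized sum $\sum_p\omega_pT_p=T_{\ZZ^d}\big[\sum_p\omega_p\sum_{a/q\in\Sigma_p}\mathfrak m(\cdot-a/q)\big]$ is an operator of \emph{exactly} the type the theorem is trying to bound, so the claimed estimate on $A_p(T)$ presupposes the generation-$k$ case you are in the middle of proving. (The same objection applies to the Rademacher--Menshov alternative, and one would additionally need to pass back from the square function to the sum $\sum_pT_pf$, which requires the dual randomized bound.) This is precisely where the actual proofs deploy machinery absent from your sketch: in \cite{IW} and \cite{M10} one works at even exponents $p=2s$, expands the $2s$-th moment of the sum over $k$-tuples of primes, and uses disjointness of supports to kill all but a pairable set of cross terms (super-orthogonality), then interpolates with $L^2$ --- and even this only yields \eqref{eq:376} with an extra factor $(l+1)^D$, resp.\ $(l+1)$. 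Removing the $l$-dependence entirely, as the statement requires, is Tao's contribution \cite{TaoIW} via an adelic factorization of the sampling principle, a mechanism your recursion does not produce. As written, the proposal would not close even with the $(l+1)$ loss.
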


The advantage of applying Theorem~\ref{thm:IW} is that one can
directly transfer square function estimates from the continuous to the
discrete setting, which will be useful in Section \ref{section:7}.
The hypothesis \eqref{eq:374}, unlike the support hypothesis, is
scale-invariant, in the sense that the constant $\mathbf A_{p}$ does
not change when $\mathfrak m$ is replaced by $\mathfrak m(A\cdot)$ for
any invertible linear transformation $A:\RR^d\to \RR^d$.

Theorem
\ref{thm:IW} was originally established by Ionescu and Wainger
\cite{IW} in the scalar-valued setting with an extra factor $(l+1)^D$ 
in the right hand side of \eqref{eq:376}. Their proof is based on an
intricate inductive argument that exploits super-orthogonality
phenomena. A slightly different proof with factor $(l+1)$ in
\eqref{eq:376} was given in \cite{M10}. The latter proof, instead of
induction as in \cite{IW}, used certain recursive arguments, which
clarified the role of the underlying square functions and
orthogonalities (see also \cite[Section 2]{MSZ3}). The theorem in the context of super-orthogonality phenomena is
discussed in a survey by Pierce \cite{Pierce} in a much broader
context.  
Finally we refer to
the recent paper of Tao \cite{TaoIW}, where Theorem \ref{thm:IW} as stated above, with a uniform constant
$\mathbf A_{p}$, is established.

For future reference we also recall the sampling principle of
Magyar--Stein--Wainger from \cite{MSW}, which was an important
ingredient in the proof of Theorem \ref{thm:IW}.

\begin{proposition}
\label{prop:msw}
Let $d\in\ZZ_+$ be given. There exists an absolute constant $C>0$ such that the following holds.
Let $p \in [1,\infty]$ and $q\in\ZZ_+$, and let
$B_1, B_2$ be finite-dimensional Banach spaces.  Let
$\mathfrak m : \RR^d \to L(B_1, B_2)$ be a bounded operator-valued
function supported on $[-1/2,1/2)^d/q$ and let
$\mathfrak m^{q}_{\mathrm{per}}$ be the periodic multiplier
\[
\mathfrak m^{q}_{\mathrm{per}}(\xi) : = \sum_{n\in\ZZ^d} \mathfrak m(\xi-n/q),\qquad \xi\in\TT^d.
\]
Then
\[
\|T_{\ZZ^d}[\mathfrak m^{q}_{\mathrm{per}}]\|_{\ell^{p}(\ZZ^d;B_1)\to \ell^{p}(\ZZ^d;B_2)}\le
C\|T_{\RR^d}[\mathfrak m]\|_{L^{p}(\RR^d;B_1)\to L^{p}(\RR^d;B_2)}.
\]
\end{proposition}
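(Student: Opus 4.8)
The plan is to reduce the statement to the case $q=1$ and prove that case by embedding $\ell^p(\ZZ^d)$ into $L^p(\RR^d)$ through a band–limited interpolation, after which a Plancherel--Polya sampling inequality for band–limited functions does the rest. Throughout, the finite–dimensionality of $B_1,B_2$ is used only to make the $L(B_1,B_2)$–valued Fourier transforms and the elementary manipulations below unambiguous.

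First I would dispose of the reduction to $q=1$. Put $\tilde{\mathfrak m}(\xi):=\mathfrak m(\xi/q)$, which is supported on $[-1/2,1/2)^d$; dilation invariance of Fourier multiplier norms on $\RR^d$ gives $\|T_{\RR^d}[\tilde{\mathfrak m}]\|_{L^p(\RR^d;B_1)\to L^p(\RR^d;B_2)}=\|T_{\RR^d}[\mathfrak m]\|_{L^p(\RR^d;B_1)\to L^p(\RR^d;B_2)}$, and one checks directly that $\mathfrak m^q_{\mathrm{per}}(\xi)=(\tilde{\mathfrak m})^1_{\mathrm{per}}(q\xi)$. Next, for any $\ZZ^d$–periodic symbol $\mathfrak n$ with convolution kernel $K$ on $\ZZ^d$, the operator $T_{\ZZ^d}[\mathfrak n(q\,\cdot)]$ decouples over residue classes modulo $q$: expanding $\mathfrak n$ in a Fourier series shows that the kernel of $T_{\ZZ^d}[\mathfrak n(q\,\cdot)]$ is supported on $q\ZZ^d$ and equals $z\mapsto K(z/q)$ there, so that for $f\in\ell^p(\ZZ^d;B_1)$ and $r\in\{0,\dots,q-1\}^d$ one has $\big(T_{\ZZ^d}[\mathfrak n(q\,\cdot)]f\big)(r+qy)=\big(T_{\ZZ^d}[\mathfrak n]f_r\big)(y)$ with $f_r(y):=f(r+qy)$. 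Summing $p$-th powers over $r$ and $y$ (with the obvious modification for $p=\infty$) yields $\|T_{\ZZ^d}[\mathfrak n(q\,\cdot)]\|_{\ell^p\to\ell^p}\le\|T_{\ZZ^d}[\mathfrak n]\|_{\ell^p\to\ell^p}$; applying this with $\mathfrak n=(\tilde{\mathfrak m})^1_{\mathrm{per}}$ reduces everything to $q=1$.

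For $q=1$ the symbol $\mathfrak m$ is supported on the fundamental domain $[-1/2,1/2)^d$, hence $\mathfrak m^1_{\mathrm{per}}$ agrees with $\mathfrak m$ on $\TT^d$. I would fix a Schwartz function $\varphi$ on $\RR^d$ with $\widehat\varphi\equiv 1$ on $[-1/2,1/2)^d$ and $\widehat\varphi$ supported in $(-1,1)^d$, and for finitely supported $f\colon\ZZ^d\to B_1$ set $F:=\sum_{n\in\ZZ^d}f(n)\varphi(\,\cdot-n)$, so that $\widehat F=\widehat\varphi\cdot\calF_{\ZZ^d}f$. The rapid decay of $\varphi$ together with Young's inequality on $\ZZ^d$ gives $\|F\|_{L^p(\RR^d;B_1)}\le C_d\|f\|_{\ell^p(\ZZ^d;B_1)}$. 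Since $\mathfrak m$ is supported where $\widehat\varphi=1$, the function $G:=T_{\RR^d}[\mathfrak m]F$ satisfies $\widehat G=\mathfrak m\cdot\calF_{\ZZ^d}f$, which is supported in $[-1/2,1/2)^d$; evaluating the inverse Fourier transform at integer points then shows $G|_{\ZZ^d}=T_{\ZZ^d}[\mathfrak m^1_{\mathrm{per}}]f$. Because $\widehat G$ lives in a unit cube, the Plancherel--Polya inequality gives $\|G|_{\ZZ^d}\|_{\ell^p(\ZZ^d;B_2)}\le C_{d,p}\|G\|_{L^p(\RR^d;B_2)}$, and chaining the estimates,
\[
\|T_{\ZZ^d}[\mathfrak m^1_{\mathrm{per}}]f\|_{\ell^p(\ZZ^d;B_2)}\le C_{d,p}\|G\|_{L^p(\RR^d;B_2)}\le C_{d,p}\,\|T_{\RR^d}[\mathfrak m]\|\,\|F\|_{L^p(\RR^d;B_1)}\le C_{d,p}\,\|T_{\RR^d}[\mathfrak m]\|\,\|f\|_{\ell^p(\ZZ^d;B_1)}.
\]

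The only genuinely delicate point is the Plancherel--Polya sampling inequality at the critical frequency cube $[-1/2,1/2]^d$, whose closure the support of $\mathfrak m$ may touch. I would handle this by the standard dilation/limiting argument: apply the inequality to $G(\,\cdot/(1+\varepsilon))$, whose Fourier transform is supported in the strictly smaller cube $(1+\varepsilon)^{-1}[-1/2,1/2]^d$, where the sampling inequality with a fixed constant is classical, and then let $\varepsilon\to 0$; a harmless preliminary approximation reducing to $\mathfrak m$ compactly supported in the open cube (together with a more careful choice of the reproducing function $\varphi$) can moreover be used to make the final constant independent of $p$, so that the constant depends only on $d$, as asserted. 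Everything else is bookkeeping with Fourier supports, and the argument is manifestly uniform in $q$, in $B_1,B_2$, and (after the refinement) in $p$; we refer to \cite{MSW} for the details of this last point.
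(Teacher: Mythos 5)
The paper does not prove this proposition itself but cites \cite[Corollary 2.1]{MSW}; your argument is correct and is essentially that Magyar--Stein--Wainger sampling proof --- band-limited interpolation of $f$ via a reproducing Schwartz function followed by a Plancherel--Polya inequality --- the only (cosmetic) difference being that you first reduce to $q=1$ by decoupling over residue classes modulo $q$ rather than carrying the $q$-dilated reproducing function through the whole argument. The endpoint worry in your last paragraph is moreover vacuous: since the spectrum of $G$ lies in $[-1/2,1/2]^d$ while one samples on $\ZZ^d$, one may take $\widehat{\psi}\equiv 1$ on the closed cube and supported in $(-1,1)^d$, so the reproducing identity $G=G*\psi$ together with H\"older and Young give the sampling bound directly, with a constant depending only on $d$ and uniform in $p$.
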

The proof can be found in \cite[Corollary 2.1, pp. 196]{MSW}.
We also refer to \cite{MSZ1} for a generalization of Proposition \ref{prop:msw} to real interpolation spaces.
We emphasize that $B_1$ and  $B_2$ are general (finite dimensional) Banach spaces in Proposition \ref{prop:msw},
in contrast to the Hilbert space-valued multipliers appearing in Theorem \ref{thm:IW} and so Proposition \ref{prop:msw} 
includes maximal function formulations and can also accommodate oscillation semi-norms.

\subsection{One-parameter semi-norm variant of Theorem \ref{thm:IW}}
Let $\Lambda:=\{\lambda_1,\ldots,\lambda_k\}\subset\ZZ_+$ be a  set  of size $k\in\ZZ_+$ of natural exponents,
 and consider the associated one-parameter family of dilations which for every $x\in\RR^k$, is defined by
\begin{align*}
(0,\infty)\ni t\mapsto t\circ x:=(t^{\lambda_1}x_1,\ldots,t^{\lambda_k}x_k)\in\RR^k.
\end{align*}
Let  $\Upsilon:=(\Upsilon_n:\RR^k\to\CC: n\in\NN)$ be a sequence of measurable
functions which define a positive sequence of operators in the sense that for every $n\in\NN$, one has
\begin{align}
\label{eq:377}
T_{\RR^k}[\Upsilon_n]f\ge0 \quad\text{if}\quad f\ge0.
\end{align}
Furthermore suppose there exist  $C_{\Upsilon}>0$,
 $0<\delta_{\Upsilon}<1$ and $1<\tau\le 2$ such that for every $\xi\in\RR^k$ and
$n\in\NN$, one has
\begin{align}
\label{eq:378}
\abs{\Upsilon_n(\xi)}&\le
C_{\Upsilon}\min\big\{1, \abs{\tau^n\circ \xi}^{-\delta_{\Upsilon}}\big\},\\
\label{eq:379}
\abs{\Upsilon_n(\xi)-1}&\le
C_{\Upsilon}\min\big\{1, \abs{\tau^n\circ \xi}^{\delta_{\Upsilon}}\big\}.
\end{align}
The condition \eqref{eq:377} implies that the operator  $T_{\RR^k}[\Upsilon_n] f = f *\mu_n$ is convolution
with positive measure $\mu_n$ and condition \eqref{eq:379} implies $\Upsilon_n(0) = 1$ and so each $\mu_n$ 
is a probability measure. Hence for every
$p\in[1, \infty)$, 
\begin{align}
\label{eq:380}
A_p^{\Upsilon}:=\sup_{n\in\NN}\|T_{\RR^k}[\Upsilon_n]\|_{L^p(\RR^k)\to L^p(\RR^k)}\le 1.
\end{align}
In this generality, $L^p(\RR^k)$ estimates with $1<p\le \infty$ for the maximal function 
$\sup_{n\in\NN}|T_{\RR^k}[\Upsilon_n] f(x)|$ were obtained in \cite{DR}
and corresponding $r$-variational and jump inequalites were established in
\cite{jsw} (see also \cite{MSZ2}). Here we extend these results further.

For $d\in\ZZ_+$ consider $\Gamma:=\{i_1,\ldots, i_k\}\subseteq [d]$ of size $k\in[d]$ and define a $\Gamma$-lifted sequence of measurable
functions $\Upsilon^{\Gamma}:=(\Upsilon_n^{\Gamma}:\RR^d\to\CC: n\in\NN)$ by setting
\begin{align*}
\Upsilon_n^{\Gamma}(\xi):=\Upsilon_n(\xi_{i_1},\ldots, \xi_{i_k}) \quad \text{ for } \quad \xi=(\xi_1,\ldots, \xi_d)\in\RR^d.
\end{align*}

Our first main result is the following  one-parameter semi-norm variant of Theorem
\ref{thm:IW}.

\begin{theorem}
\label{thm:IW1}
Let $d\in\ZZ_+$ and $\Gamma\subseteq[d]$ of size $k\in[d]$ be given. Let $\Upsilon=(\Upsilon_n:\RR^k\to\CC: n\in\NN)$ be a
sequence of measurable functions satisfying conditions \eqref{eq:377},
\eqref{eq:378} and \eqref{eq:379}, and let
$\Upsilon^{\Gamma}:=(\Upsilon_n^{\Gamma}:\RR^d\to\CC: n\in\NN)$ be the corresponding $\Gamma$-lifted sequence.
For every
$\rho\in(0, 1)$ and for every $p \in (1,\infty)$, there exists an
absolute constant
$0<C=C(d, p, \rho, \tau, \Gamma, A_p^{\Upsilon}, C_{\Upsilon})<\infty$ such
that for every integer $l\in\NN$ and $m\le -10C_{\rho}2^{2\rho l}$
the following holds. If
\begin{align}
\label{eq:167}
\supp \Upsilon_n\subseteq 2^m[-1/2, 1/2)^k
\quad \text{ for all }\quad n\in\NN,
\end{align}
then for every
$f=(f_{\iota}:\iota\in\NN)\in \ell^p(\ZZ^d; \ell^2(\NN))$ one has
\begin{align}
\label{eq:381}
\sup_{J\in\ZZ_+}\sup_{I\in\mathfrak S_J(\NN)}
\norm[\bigg]{\Big(\sum_{\iota\in\NN}O_{I, J}\big(T_{\ZZ^d}\big[\Theta_{\Sigma_{\le l}^d}[\Upsilon_n^{\Gamma}\eta_{\le m}^{\Gamma^c}]\big]f_{\iota}:n\in\NN\big)^2\Big)^{1/2}}_{\ell^p(\ZZ^{d})}
\leq C
(l+1)
\|f\|_{\ell^p(\ZZ^{d}; \ell^2(\NN))},
\end{align}
with $\Theta_{\Sigma^d_{\le l}}$  defined in \eqref{eq:375}.
In particular, \eqref{eq:381} implies the maximal estimate
\begin{align*}
\norm[\bigg]{\Big(\sum_{\iota\in\NN}\sup_{n\in\NN}\big|T_{\ZZ^d}\big[\Theta_{\Sigma_{\le l}^d}[\Upsilon_n^{\Gamma}\eta_{\le m}^{\Gamma^c}]\big]f_{\iota}\big|^2\Big)^{1/2}}_{\ell^p(\ZZ^{d})}
\leq C
(l+1) 
\|f\|_{\ell^p(\ZZ^{d}; \ell^2(\NN))}.
\end{align*}
\end{theorem}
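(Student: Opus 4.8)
The plan is to establish \eqref{eq:381} as the oscillation--seminorm and $\ell^2(\NN)$-valued refinement of the Ionescu--Wainger multiplier theorem, Theorem \ref{thm:IW}. Since $O_{I,J}$ is a seminorm but \emph{not} a Hilbert space norm, Theorem \ref{thm:IW} cannot be invoked as a black box; instead I would reprove it in the seminorm category, following the scheme of \cite{M10} (which is also the source of the logarithmic loss $l+1$), the two inputs being a continuous oscillation inequality for the model operators $T_{\RR^k}[\Upsilon_n]$ and the \emph{Banach}-space-valued sampling principle of Magyar--Stein--Wainger, Proposition \ref{prop:msw}, whose target space need not be Hilbertian. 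The first move is to reduce from $\RR^d$ to $\RR^k$: writing $T_{\RR^d}[\Upsilon_n^{\Gamma}\eta_{\le m}^{\Gamma^c}]=T_{\RR^d}[\Upsilon_n^{\Gamma}]\,T_{\RR^d}[\eta_{\le m}^{\Gamma^c}]$, the factor $T_{\RR^d}[\eta_{\le m}^{\Gamma^c}]$ is convolution with a tensor product of isotropic dilates of a fixed bump, hence has kernel of $L^1$-norm bounded uniformly in $m$; as it is $n$-independent and acts only in the directions complementary to $\Gamma$, Minkowski's inequality for $O_{I,J}$ and in $\ell^2(\NN)$ together with Fubini reduce \eqref{eq:381} to the analogous estimate for $T_{\RR^k}[\Upsilon_n]$ on $\RR^k$.

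For that $k$-dimensional continuous estimate, the hypotheses \eqref{eq:377}--\eqref{eq:379} are exactly those under which the oscillation (indeed the $r$-variation for $r>2$ and the jump) inequalities for $T_{\RR^k}[\Upsilon_n]$ are available: one decomposes each $\Upsilon_n$ by a Littlewood--Paley partition of frequency space, passes from oscillations to $2$-variations by \eqref{eq:137}, and runs the arguments of \cite{jsw,DR,MSZ2}; the $\ell^2(\NN)$-valued upgrade over $\iota$ follows from Lemma \ref{lem:10} together with the Fefferman--Stein inequality, since everything passes through square functions. Crucially the resulting constant depends only on $k,p,\tau,A_p^{\Upsilon},C_{\Upsilon}$ and (implicitly) $\delta_{\Upsilon}$, and in particular not on $m$ nor on the particular sequence $\Upsilon$.

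Next I would transfer to the discrete side. For fixed $I,J$ the map $a\mapsto(\sum_{\iota}O_{I,J}(a_{\iota})^2)^{1/2}$ is a norm, hence is realized (after truncation) as the norm of a finite-dimensional Banach space $B$. For a single denominator $q$, the operator sending $f$ to $\big(T_{\ZZ^d}[\sum_{a}\Upsilon_n^{\Gamma}(\,\cdot-a/q)\eta_{\le m}^{\Gamma^c}(\,\cdot-a/q)]f_{\iota}\big)_{n,\iota}$, with $a$ running over all residues modulo $q$, is a $1$-periodic Fourier multiplier operator with $B$-valued symbol supported in $q^{-1}[-1/2,1/2)^d$ --- this uses \eqref{eq:167} and $\supp\eta_{\le m}\subseteq 2^m[-1/2,1/2)$ together with $2^m\le 2^{-10C_{\rho}2^{2\rho l}}\ll q^{-2}$ for $q\in P_{\le l}$, so the bumps attached to the $q$ distinct fractions $a/q$ are in fact pairwise disjoint. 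Proposition \ref{prop:msw} then bounds its $\ell^p(\ZZ^d;\ell^2(\NN))\to\ell^p(\ZZ^d;B)$ norm by the $L^p(\RR^d;\ell^2(\NN))\to L^p(\RR^d;B)$ norm of the continuous symbol $\xi\mapsto(\Upsilon_n^{\Gamma}(\xi)\eta_{\le m}^{\Gamma^c}(\xi))_n$, which is $\le C$ by the previous step; restricting $a$ to $(a,q)=1$ costs a factor at most the number of divisors of $q$, that is $\lesssim_{\varepsilon}q^{\varepsilon}$, by M\"obius inversion. It remains to sum these single-denominator pieces over the disjoint decomposition $\Sigma_{\le l}^d=\bigsqcup_{s=0}^{l}\Sigma_s^d$, which I would do by running the recursive super-orthogonality argument of \cite{M10}: one combines a discrete Littlewood--Paley decomposition with the local frequency-disjointness just noted and the nested-exceptional-set structure (the quantitative choice $2^m\le 2^{-10C_{\rho}2^{2\rho l}}$ supplying precisely the separation the argument demands, cf. Remark \ref{rem:2}), while the combinatorics of $P_{\le l}$ --- whose elements are products of at most $D$ primes drawn from the $\lesssim l$ dyadic blocks inside $(N_0^{(l)},2^l]$ --- produces the factor $l+1$.

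The main obstacle, and the technical heart of Section \ref{section:6}, is exactly that this super-orthogonality must be transported out of the Hilbertian setting in which Theorem \ref{thm:IW} lives: the oscillation seminorm is not a Hilbert space norm and its natural ambient lattice is not UMD, so the usual Khintchine randomization over the fractions has to be replaced by a direct almost-orthogonality argument resting on the disjointness of the bump supports, and the Rademacher--Menshov passage \eqref{eq:164} from oscillations to $\ell^2$-valued square functions must be arranged so that the ensuing series over dyadic lag-scales converges --- which is where the quantitative decay in \eqref{eq:378}--\eqref{eq:379}, not merely $L^p$-boundedness, is genuinely used. Once \eqref{eq:381} is in hand, the maximal estimate stated at the end of the theorem follows immediately from Proposition \ref{prop:5}, i.e. from inequality \eqref{eq:135}.
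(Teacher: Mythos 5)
Your proposal correctly identifies several ingredients that do appear in the paper's proof (the Rademacher--Menshov inequality \eqref{eq:164}, the Magyar--Stein--Wainger sampling principle Proposition \ref{prop:msw} with Banach-space targets chosen to carry the oscillation seminorm, the continuous oscillation estimate for $T_{\RR^k}[\Upsilon_n]$ via \cite{DR}, Lemma \ref{lem:10}, and Proposition \ref{prop:5} for the final maximal bound), but the architecture is wrong at the decisive point. Your opening claim --- that Theorem \ref{thm:IW} ``cannot be invoked as a black box'' and must be reproved in the seminorm category by transporting the super-orthogonality of \cite{M10} --- is the opposite of what the argument actually does. The paper's strategy is to reduce the oscillation seminorm to Hilbert-space-valued square functions precisely so that Theorem \ref{thm:IW} (which, in the form used here via \cite{TaoIW}, carries a \emph{uniform} constant) applies directly. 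Concretely: one splits the scales at $\kappa_l\sim l+1$ defined in \eqref{eq:383}. On the small scales $n<2^{\kappa_l}$, Rademacher--Menshov converts the oscillation into a sum of $\kappa_l$ square functions of dyadic block differences, each of which is an $\ell^2$-valued multiplier handled by Theorem \ref{thm:IW} plus Lemma \ref{lem:10} plus standard Littlewood--Paley theory; \emph{this} is where the factor $l+1$ comes from, not from the combinatorics of $P_{\le l}$ as you assert. On the large scales $n\ge 2^{\kappa_l}$, the high-frequency piece $\Upsilon_n^{\Gamma}(1-\eta_{\le -2^{2C_\rho l}}^{\Gamma})$ decays geometrically in $n$ by \eqref{eq:378} and is summed trivially, while the low-frequency piece factorizes as $\Theta_{Q_{\le l}^{-1}[Q_{\le l}]^k}^{\Gamma}[\Upsilon_n^{\Gamma}\eta_{\le -2^{2C_\rho l}}^{\Gamma}]\cdot\Theta_{\Sigma_{\le l}^d}[\eta_{\le m}^{[d]}]$; the second factor is a single scalar multiplier controlled by Theorem \ref{thm:IW}, and the first is periodic with the \emph{single} period $Q_{\le l}$, so Proposition \ref{prop:msw} is applied exactly once --- there is no summation over denominators and no super-orthogonality in the oscillation category at all.

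The gap in your route is that step (d) of your plan --- summing the single-denominator pieces over $\Sigma_{\le l}^d=\bigsqcup_s\Sigma_s^d$ by ``a direct almost-orthogonality argument resting on the disjointness of the bump supports'' --- is not a proof and, as stated, fails: disjointness of frequency supports gives orthogonality only at $p=2$, and the absence of any naive almost-orthogonality for $p\neq 2$ is exactly the difficulty the Ionescu--Wainger theorem exists to overcome. You flag this as ``the technical heart'' but offer no mechanism to resolve it. Relatedly, applying Rademacher--Menshov to the full unbounded range $n\in\NN$ produces an infinite sum of square functions, each only of size $O(\|f\|_p)$; you gesture at using \eqref{eq:378}--\eqref{eq:379} to make this series converge, but the portion of $\Theta_{\Sigma_{\le l}^d}[\Upsilon_n^{\Gamma}\eta_{\le m}^{\Gamma^c}]$ supported near the rationals at frequencies below $2^{-2^{2C_\rho l}}$ has no decay in $n$ whatsoever, so the scale-splitting at $\kappa_l$ (and the separate sampling argument for large scales) cannot be avoided. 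Finally, your preliminary reduction from $\RR^d$ to $\RR^k$ by factoring out $T[\eta_{\le m}^{\Gamma^c}]$ is only legitimate \emph{after} periodization has been handled, since $\Theta_{\Sigma_{\le l}^d}$ couples the $\Gamma$ and $\Gamma^c$ coordinates through the common denominator $q$; the paper performs this factorization only in the large-scale low-frequency regime, where the support conditions make the identity exact.
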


Some remarks about Theorem \ref{thm:IW1} are in order.
\begin{enumerate}[label*={\arabic*}.]
\item Theorem \ref{thm:IW1} is a semi-norm variant of the Ionescu--Wainger \cite{IW} theorem for oscillations. The proof below works also for $r$-variations or jumps in place of oscillations as well as for norms corresponding to real interpolation spaces. We refer to \cite{MSZ1} for definitions.
\item In practice Theorem \ref{thm:IW1} will be applied with $\Gamma=[d]$. However, the concept of  $\Gamma$-lifted sequences is introduced here for further references. 
\item A careful inspection of the proof below allows us to show that the conclusion of Theorem \ref{thm:IW1} also holds in $\RR^d$.
For every $d\in\ZZ_+$, every sequence $\Upsilon=(\Upsilon_n:\RR^d\to\CC: n\in\ZZ)$ 
 of measurable functions satisfying conditions \eqref{eq:377},
\eqref{eq:378}, \eqref{eq:379} and \eqref{eq:380}, and 
for every $p\in(1, \infty)$, there exists a constant $C>0$ such that for every
$f=(f_{\iota}:\iota\in\NN)\in L^p(\RR^d; \ell^2(\NN))$ one has
\begin{align}
\label{eq:108}
\sup_{J\in\ZZ_+}\sup_{I\in\mathfrak S_J(\ZZ)}
\norm[\bigg]{\Big(\sum_{\iota\in\NN}O_{I, J}\big(T_{\RR^d}[\Upsilon_n]f_{\iota}:n\in\ZZ\big)^2\Big)^{1/2}}_{L^p(\RR^{d})}
\leq C
\|f\|_{L^p(\RR^{d}; \ell^2(\NN))}.
\end{align}
An important feature of our approach is that we do not need to invoke the corresponding inequality for martingales  in the proof. This stands in a sharp contrast to variants of inequality \eqref{eq:108} involving $r$-variations, where all arguments to the best of our knowledge use the corresponding $r$-variational inequalities for martingales. 
\end{enumerate}

\begin{proof}[Proof of Theorem \ref{thm:IW1}]
Fix $p\in(1, \infty)$ and  a sequence 
$f=(f_\iota:\iota\in\NN)\in\ell^2(\ZZ^d; \ell^2(\NN))\cap\ell^p(\ZZ^d; \ell^2(\NN))$.
For each $l\in\NN$ define an integer
\begin{align}
\label{eq:383}
\kappa_l:=\big\lfloor \big(100 C_{\rho}+ \log_2(\delta_{\Upsilon}\log_2\tau )^{-1}\big)(l+1)\big\rfloor+2,
\end{align}
where $C_\rho$ is the constant from Remark \ref{rem:2}, see property \ref{IW0}.
By \eqref{eq:135} it only suffices to establish \eqref{eq:381}, which will follow from the oscillation inequalities respectively for small  scales
\begin{align}
\label{eq:384}
\sup_{J\in\ZZ_+}\sup_{I\in\mathfrak S_J(\NN_{<2^{\kappa_l}})}\norm[\bigg]{\Big(\sum_{\iota\in\NN}O_{I, J}\big(T_{\ZZ^d}\big[\Theta_{\Sigma_{\le l}^d}[\Upsilon_n^{\Gamma}\eta_{\le m}^{\Gamma^c}]\big]f_{\iota}:n\in\NN_{<2^{\kappa_l}}\big)^2\Big)^{1/2}}_{\ell^p(\ZZ^{d})}
\lesssim(l+1)
\|f\|_{\ell^p(\ZZ^{d}; \ell^2(\NN))},
\end{align}
and large scales
\begin{align}
\label{eq:385}
\sup_{J\in\ZZ_+}\sup_{I\in\mathfrak S_J(\NN_{\ge2^{\kappa_l}})}\norm[\bigg]{\Big(\sum_{\iota\in\NN}O_{I, J}\big(T_{\ZZ^d}\big[\Theta_{\Sigma_{\le l}^d}[\Upsilon_n^{\Gamma}\eta_{\le m}^{\Gamma^c}]\big]f_{\iota}:n\in\NN_{\ge2^{\kappa_l}}\big)^2\Big)^{1/2}}_{\ell^p(\ZZ^{d})}
\lesssim 
\|f\|_{\ell^p(\ZZ^{d}; \ell^2(\NN))},
\end{align}
\paragraph{\bf Step 1}
We now prove inequality \eqref{eq:384}. 
 We fix $J\in\ZZ_+$
and a sequence $I\in\mathfrak S_J(\NN_{<2^{\kappa_l}})$. Then, by the Rademacher--Menshov inequality \eqref{eq:164}, we see that
\begin{align*}
&\norm[\bigg]{\Big(\sum_{\iota\in\NN}O_{I, J}\big(T_{\ZZ^d}\big[\Theta_{\Sigma_{\le l}^d}[\Upsilon_n^{\Gamma}\eta_{\le m}^{\Gamma^c}]\big]f_{\iota}:n\in\NN_{<2^{\kappa_l}}\big)^2\Big)^{1/2}}_{\ell^p(\ZZ^{d})}\\
&\qquad \lesssim \sum_{v=0}^{\kappa_l}\norm[\bigg]{\Big(\sum_{\iota\in\NN}\sum_{u=0}^{2^{\kappa_l-v}-1}\abs[\big]{\sum_{n\in U_u^v}
T_{\ZZ^d}\big[\Theta_{\Sigma_{\le l}^d}[(\Upsilon_{n+1}^{\Gamma}-\Upsilon_{n}^{\Gamma})\eta_{\le m}^{\Gamma^c}]\big]f_{\iota}}^2\Big)^{1/2}}_{\ell^p(\ZZ^{d})},
\end{align*}
where $U_u^v=[u2^v, (u+1)2^v)\cap\ZZ$. Hence it suffices to prove
\begin{align}
\label{eq:152}
\norm[\bigg]{\Big(\sum_{\iota\in\NN}\sum_{u=0}^{2^{\kappa_l-v}-1}\abs[\big]{\sum_{n\in U_u^v}T_{\ZZ^d}\big[\Theta_{\Sigma_{\le l}^d}[(\Upsilon_{n+1}^{\Gamma}-\Upsilon_{n}^{\Gamma})\eta_{\le m}^{\Gamma^c}]\big]f_{\iota}}^2\Big)^{1/2}}_{\ell^p(\ZZ^{d})}\lesssim
\|f\|_{\ell^p(\ZZ^{d}; \ell^2(\NN))},
\end{align}
uniformly in $v$. 
By Theorem \ref{thm:IW} and by our choice of $\kappa_l$ in \eqref{eq:383}, since $m\le -10C_{\rho}2^{2\rho l}$, \eqref{eq:152} will follow
if for every sequence 
$(f_\iota:\iota\in\NN)\in L^2(\RR^d; \ell^2(\NN))\cap L^p(\RR^d; \ell^2(\NN))$,
\begin{align}
\label{eq:165}
\norm[\bigg]{\Big(\sum_{\iota\in\NN}\sum_{u=0}^{2^{\kappa_l-v}-1}\abs[\big]{\sum_{n\in U_u^v}
T_{\RR^d}[(\Upsilon_{n+1}^{\Gamma}-\Upsilon_{n}^{\Gamma})\eta_{\le m}^{\Gamma^c}]f_{\iota}}^2\Big)^{1/2}}_{L^p(\RR^{d})}\lesssim
\|f\|_{L^p(\RR^{d}; \ell^2(\NN))}
\end{align}
holds uniformly in $v$.

To
prove inequality \eqref{eq:165}, in view of Lemma \ref{lem:10}, it
suffices to show that for every $p\in(1, \infty)$ and for every
$f\in L^p(\RR^d)$ one has
\begin{align}
\label{eq:166}
\sup_{(\omega_u)\in\{-1, 1\}^{\NN}}\norm[\Big]
{\sum_{u=0}^{2^{\kappa_l-v}-1}
\omega_u \bigl[ T_{\RR^d}[(\Upsilon_{(u+1)2^v}^{\Gamma}-\Upsilon_{u2^v}^{\Gamma})\eta_{\le m}^{\Gamma^c}
]f\bigr]}_{L^p(\RR^d)}
\lesssim_p\norm{f}_{L^p(\RR^d)},
\end{align}
unformly in $v \in [0,\kappa_l]$ and $l$.
The proof of \eqref{eq:166}, using conditions 
\eqref{eq:377}, \eqref{eq:378}, \eqref{eq:379} and \eqref{eq:380}, follows from standard Littlewood--Paley theory as developed in
\cite{DR}. We refer for instance to \cite{MSZ2} for details in this context.

\paragraph{\bf Step 2}
We now prove inequality \eqref{eq:385}. By the  support condition \eqref{eq:167},  we may write (see 
property \ref{IW0} from Remark \ref{rem:2}) 
\begin{align*}
T_{\ZZ^d}\big[\Theta_{\Sigma_{\le l}^d}[\Upsilon_n^{\Gamma}\eta_{\le m}^{\Gamma^c}]\big]=
T_{\ZZ^d}\big[\Theta_{\Sigma_{\le l}^d}[\Upsilon_n^{\Gamma}(1-\eta_{\le-2^{2C_\rho l}}^{\Gamma})\eta_{\le m}^{\Gamma^c}]\big]
+
T_{\ZZ^d}\big[\Theta_{\Sigma_{\le l}^d}[\Upsilon_n^{\Gamma}\eta_{\le-2^{2C_\rho l}}^{\Gamma}\eta_{\le m}^{\Gamma^c}]\big],
\end{align*}
where $\eta_{\le-2^{2C_\rho l}}^{\Gamma}:=\prod_{i\in\Gamma}\eta_{\le-2^{2C_\rho l}}^{(i)}$, (see definition \eqref{eq:78}).
The proof of \eqref{eq:385} will be complete if we show \eqref{eq:385} with 
$T_{\ZZ^d}\big[\Theta_{\Sigma_{\le l}^d}[\Upsilon_n^{\Gamma}(1-\eta_{\le-2^{2C_\rho l}}^{\Gamma})\eta_{\le m}^{\Gamma^c}]\big]$, and 
$T_{\ZZ^d}\big[\Theta_{\Sigma_{\le l}^d}[\Upsilon_n^{\Gamma}\eta_{\le-2^{2C_\rho l}}^{\Gamma}\eta_{\le m}^{\Gamma^c}]\big]$ in place of 
$T_{\ZZ^d}\big[\Theta_{\Sigma_{\le l}^d}[\Upsilon_n^{\Gamma}\eta_{\le m}^{\Gamma^c}]\big]$.
To establish \eqref{eq:385}  with
$T_{\ZZ^d}\big[\Theta_{\Sigma_{\le l}^d}[\Upsilon_n^{\Gamma}(1-\eta_{\le-2^{2C_\rho l}}^{\Gamma})\eta_{\le m}^{\Gamma^c}]\big]$, it suffices to prove that for every $p\in(1, \infty)$ there exists $\delta_p\in(0, 1)$ such that for every $n\ge 2^{\kappa_l}$ and every $f=(f_\iota:\iota\in\NN)\in\ell^2(\ZZ^d; \ell^2(\NN))\cap\ell^p(\ZZ^d; \ell^2(\NN))$ one has
\begin{align}
\label{eq:92}
\norm[\bigg]{\Big(\sum_{\iota\in\NN}\big|T_{\ZZ^d}\big[\Theta_{\Sigma_{\le l}^d}[\Upsilon_n^{\Gamma}(1-\eta_{\le-2^{2C_\rho l}}^{\Gamma})\eta_{\le m}^{\Gamma^c}]\big]f_{\iota}\big|^2\Big)^{1/2}}_{\ell^p(\ZZ^{d})}
\lesssim\tau^{-\delta_p n} 
\|f\|_{\ell^p(\ZZ^{d}; \ell^2(\NN))}.
\end{align}
Inequality \eqref{eq:92}, in view of Lemma \ref{lem:10} and Theorem \ref{thm:IW}, can be reduced to showing that for every $p\in(1, \infty)$ there exists $\delta_p\in(0, 1)$ such that
\begin{align}
\label{eq:93}
\|T_{\RR^d}[\Upsilon_n^{\Gamma}(1-\eta_{\le-2^{2C_\rho l}}^{\Gamma})\eta_{\le m}^{\Gamma^c}]f\|_{L^p(\RR^d)}\lesssim
\tau^{-\delta_p n} 
\|f\|_{L^p(\RR^{d})}
\end{align}
holds for every $n\ge 2^{\kappa_l}$.
By interpolation it suffices to prove \eqref{eq:93} for $p=2$ and by Plancherel’s theorem, this reduces to showing that
\begin{align*}
|\Upsilon_n^{\Gamma}(\xi)(1-\eta_{\le-2^{2C_\rho l}}^{\Gamma}(\xi))\eta_{\le m}^{\Gamma^c}(\xi)|\lesssim \tau^{-\delta_{\Upsilon}n/2}\quad 
\end{align*}
holds uniformly in $\xi$ for all $n\ge 2^{\kappa_l}$. This follows from the definition of $\kappa_l$ and \eqref{eq:378}.
\paragraph{\bf Step 3}
We now establish \eqref{eq:385} with 
$T_{\ZZ^d}\big[\Theta_{\Sigma_{\le l}^d}[\Upsilon_n^{\Gamma}\eta_{\le-2^{2C_\rho l}}^{\Gamma}\eta_{\le m}^{\Gamma^c}]\big]$ in place of 
$T_{\ZZ^d}\big[\Theta_{\Sigma_{\le l}^d}[\Upsilon_n^{\Gamma}\eta_{\le m}^{\Gamma^c}]\big]$.
Taking $Q_{\le l}$ from property \ref{IW0}, note that 
\begin{align*}
T_{\ZZ^d}\big[\Theta_{\Sigma_{\le l}^d}[\Upsilon_n^{\Gamma}\eta_{\le-2^{2C_\rho l}}^{\Gamma}\eta_{\le m}^{\Gamma^c}]\big]=
T_{\ZZ^d}\big[\Theta_{Q_{\le l}^{-1}[Q_{\le l}]^k}^{\Gamma}[\Upsilon_n^{\Gamma}\eta_{\le-2^{2C_\rho l}}^{\Gamma}]\big]
T_{\ZZ^d}\big[\Theta_{\Sigma_{\le l}^d}[\eta_{\le m}^{[d]}]\big].
\end{align*}

Using this factorization 
it suffices to show that
\begin{align}
\label{eq:169}
\norm[\bigg]{\Big(\sum_{\iota\in\NN}\abs[\big]{T_{\ZZ^d}\big[\Theta_{\Sigma_{\le l}^d}[\eta_{\le m}^{[d]}]\big]f_{\iota}}^2\Big)^{1/2}}_{\ell^p(\ZZ^{d})}\lesssim
 \|f\|_{\ell^p(\ZZ^{d}; \ell^2(\NN))},
\end{align}
 and
\begin{align}
\label{eq:170}
\begin{gathered}
\sup_{J\in\ZZ_+}\sup_{I\in\mathfrak S_J(\NN_{\ge2^{\kappa_l}})}\norm[\bigg]{\Big(\sum_{\iota\in\NN}O_{I, J}\big(T_{\ZZ^d}\big[\Theta_{Q_{\le l}^{-1}[Q_{\le l}]^k}^{\Gamma}[\Upsilon_n^{\Gamma}\eta_{\le-2^{2C_\rho l}}^{\Gamma}]\big]
f_{\iota}:n\in\NN_{\ge2^{\kappa_l}}\big)^2\Big)^{1/2}}_{\ell^p(\ZZ^{d})}\\
\lesssim 
\|f\|_{\ell^p(\ZZ^{d}; \ell^2(\NN))}.
\end{gathered}
\end{align}
By Lemma \ref{lem:10} and Theorem \ref{thm:IW}, the bound \eqref{eq:169} follows from
\begin{align*}
\big\|T_{\RR^d}[\eta_{\le m}^{[d]}]f\big\|_{L^p(\RR^d)}\lesssim_{p}\|f\|_{L^p(\RR^d)},
\end{align*}
which clearly holds for all $p\in[1, \infty]$. To prove \eqref{eq:170} we can use the sampling principle formulated in Proposition \ref{prop:msw} to reduce matters to proving
\begin{align}
\label{eq:178}
\sup_{J\in\ZZ_+}\sup_{I\in\mathfrak S_J(\NN_{\ge2^{\kappa_l}})}\norm[\bigg]{\Big(\sum_{\iota\in\NN}O_{I, J}\big(T_{\RR^k}[\Upsilon_n]
f_{\iota}:n\in\NN_{\ge2^{\kappa_l}}\big)^2\Big)^{1/2}}_{L^p(\RR^{k})}
\lesssim 
\|f\|_{L^p(\RR^{k}; \ell^2(\NN))}.
\end{align}
To do this we carefully choose the finite dimensional Banach spaces $B_1$ and $B_2$ in Proposition \ref{prop:msw} to accommodate the oscillation semi-norm $O_{I,J}$. See the remark after Proposition \ref{prop:msw}.

\paragraph{\bf Step 4} Let $\eta$ be a smooth function with $\ind{[-1,1]^k} \le \eta \le \ind{[-\tau,\tau]^k}$
and set $\chi_{n}(\xi):=  \eta(\tau^{-n} \circ \xi)$ 
Using  conditions  \eqref{eq:377},
\eqref{eq:378}, \eqref{eq:379} and \eqref{eq:380} we see that Theorem B in   \cite{DR} implies
\begin{align}
\label{eq:179}
\norm[\bigg]{\Big(\sum_{n\in\NN}\big|T_{\RR^k}[\Upsilon_n-\chi_{-n}]
f\big|^2\Big)^{1/2}}_{L^p(\RR^k)}
\lesssim 
\|f\|_{L^p(\RR^k)}
\end{align}
for $1<p<\infty$ since $|\Upsilon_n(\xi) - \chi_{-n}(\xi)| \lesssim \min(|\tau^n\circ \xi|, |\tau^n \circ \xi|^{-1})^{\delta_{\Upsilon}}$ and both maximal functions $\sup_{n\in\NN} |T_{\RR^k}[\Upsilon_n]f|$ and $\sup_{n\in\NN} |T_{\RR^k}[\chi_{-n}]f|$ are both bounded on all $L^q(\RR^k)$ for all $1<q<\infty$.

Using Lemma \ref{lem:10}, we see that inequality \eqref{eq:179} reduces \eqref{eq:178} to proving
\begin{align}
\label{eq:181}
\sup_{J\in\ZZ_+}\sup_{I\in\mathfrak S_J(\NN)}\norm[\bigg]{\Big(\sum_{\iota\in\NN}O_{I, J}\big(T_{\RR^k}[\chi_{n}]
f_{\iota}:n\in\ZZ\big)^2\Big)^{1/2}}_{L^p(\RR^k)}
\lesssim 
\|f\|_{L^p(\RR^k; \ell^2(\NN))}.
\end{align}
To prove \eqref{eq:181} we note that for every $m < n$ we have
\begin{align*}
\chi_{m}\chi_{n}=  \chi_{m}.
\end{align*}
 We fix $J\in\ZZ_+$
and a sequence $I\in\mathfrak S_J(\NN)$. Then
\begin{align*}
O_{I, J}\big(T_{\RR^k}[\chi_{n}]
f_{\iota}:n\in\NN\big)&\lesssim\Big(\sum_{j=0}^{J-1}\sup_{I_j\le n<I_{j+1}}\big|T_{\RR^k}[\chi_{n}-\chi_{ I_{j}}]f_{\iota}\big|^2\Big)^{1/2}\\
&= \Big(\sum_{j=0}^{J-1}\sup_{I_j< n< I_{j+1}}\big|T_{\RR^k}[\chi_{n}]T_{\RR^k}[\chi_{I_{j+1}}-\chi_{I_{j}}]f_{\iota}\big|^2\Big)^{1/2}\\
&\le\Big(\sum_{j\in\NN}\sup_{n\in\ZZ}\big(\varphi_n*\big|T_{\RR^k}[\chi_{I_{j+1}}-\chi_{I_{j}}]f_{\iota}\big|\big)^2\Big)^{1/2},
\end{align*}
where $\varphi_n(x):=|T_{\RR^k}[\chi_{n}](x)|$.
Using this estimate and the Fefferman-Stein vector-valued maximal function estimate (see \cite{Stein}), we conclude that
\begin{multline}
\label{eq:196}
\sup_{J\in\ZZ_+}\sup_{I\in\mathfrak S_J(\NN)}\norm[\bigg]{\Big(\sum_{\iota\in\NN}O_{I, J}\big(T_{\RR^k}[\chi_{n}]
f_{\iota}:n\in\ZZ\big)^2\Big)^{1/2}}_{L^p(\RR^k)}\\
\le\sup_{I\in\mathfrak S_{\infty}(\NN)}\norm[\bigg]{\Big(\sum_{\iota\in\NN}\sum_{j\in\NN}\sup_{n\in\ZZ}\big(\varphi_n*\big|T_{\RR^k}[\chi_{ I_{j+1}}-\chi_{I_{j}}]f_{\iota}\big|\big)^2\Big)^{1/2}}_{L^p(\RR^k)}\\
\lesssim_{p}\sup_{I\in\mathfrak S_{\infty}(\NN)}\norm[\bigg]{\Big(\sum_{\iota\in\NN}\sum_{j\in\NN}\big|T_{\RR^k}[\chi_{I_{j+1}}-\chi_{I_{j}}]f_{\iota}\big|^2\Big)^{1/2}}_{L^p(\RR^k)}.
\end{multline}
As above, using Theorem B in \cite{DR}, we see that for every $p\in(1, \infty)$,
\begin{align}
\label{eq:168}
\sup_{I\in\mathfrak S_{\infty}(\NN)}\norm[\bigg]{\Big(\sum_{j\in\NN}\big|T_{\RR^k}[\chi_{I_{j+1}}-\chi_{I_{j}}]f\big|^2\Big)^{1/2}}_{L^p(\RR^k)}\lesssim_{p}\|f\|_{L^p(\RR^k)}.
\end{align}
Then invoking  \eqref{eq:168} and Lemma \ref{lem:10} we obtain
\begin{align}
\label{eq:188}
\sup_{I\in\mathfrak S_{\infty}(\NN)}\norm[\bigg]{\Big(\sum_{\iota\in\NN}\sum_{j\in\NN}\big|T_{\RR^k}[\chi_{I_{j+1}}-\chi_{I_{j}}]f_{\iota}\big|^2\Big)^{1/2}}_{L^p(\RR^k)}\lesssim_p\|f\|_{L^p(\RR^k; \ell^2(\NN))}.
\end{align}
Combining \eqref{eq:196} with \eqref{eq:188} we obtain the desired claim in \eqref{eq:181} and this completes the proof of Theorem \ref{thm:IW1}.
\end{proof}
\subsection{Multi-parameter semi-norm variant of Theorem \ref{thm:IW}} We will generalize Theorem \ref{thm:IW1} to the multi-parameter setting for a class of multipliers arising in our question. We formulate our main result in the two-parameter setting, but all arguments are adaptable to multi-parameter settings.

Let $P\in\RR[\rm m_1, \rm m_2]$ be a polynomial with $\deg P\ge2$ such that
\begin{align}
\label{eq:111}
P(m_1, m_2):=\sum_{(\gamma_1, \gamma_2)\in S_P}c_{\gamma_1, \gamma_2}m_1^{\gamma_1}m_2^{\gamma_2},
\end{align}
where $c_{(0,0)}=0$. In addition, we assume that $P$ 
is  non-degenerate in the sense that $S_P\cap(\ZZ_+\times\ZZ_+)\neq\emptyset$, see the remark below Theorem \ref{thm:main}.
Let $r\in\ZZ_+$ be the number of vertices in the backwards Newton diagram $N_{P}$ corresponding to the polynomial $P$ from \eqref{eq:111}.
For any vertex $v_j=(v_{j, 1}, v_{j, 2})$ of $N_P$ we denote the associated monomial by
\begin{align}
\label{eq:112}
P^j(m_1, m_2):=c_{(v_{j, 1}, v_{j, 2})}m_1^{v_{j, 1}}m_2^{v_{j, 2}}.
\end{align}
From Section \ref{section:4} (see Remark \ref{rem:2-0}) we know that $P^j$ is the main monomial in the sector $S(j)$ for $j\in[r]$.

We fix the lacunarity factor $\tau>1$. 
Throughout this subsection  we  allow all the implied constants to depend on $\tau$.
For real numbers $M_1, M_2\ge1$ and $\xi\in\RR$, we consider the multiplier
\begin{align}
\label{eq:197}
\mathfrak m_{M_1, M_2}^P(\xi):=\frac{1}{(1-\tau^{-1})^2}\int_{\tau^{-1}}^1\int_{\tau^{-1}}^1\ex(P_{\xi}(M_1y_1, M_2y_2))dy_1dy_2,
\end{align}
where recall $P_{\xi}\in\RR[\rm m_1, \rm m_2]$ is defined as
$P_{\xi}(m_1,m_2) = \xi P(m_1,m_2)$. 

As an application of Theorem \ref{thm:IW1} we obtain the following two-parameter  oscillation inequality.

\begin{theorem}
\label{thm:IW2}
Let $\tau>1$ be given and let $(\mathfrak m_{M_1, M_2}^{P}: (M_1, M_2)\in\DD_{\tau}\times\DD_{\tau})$ be the two-parameter sequence of multipliers from \eqref{eq:197} corresponding to the polynomial $P$ from \eqref{eq:111}. Let $r\in\ZZ_+$ be the number of vertices in the backwards Newton diagram $N_{P}$.
 For every
$\rho\in(0, 1)$ and  $p \in (1,\infty)$ and any $j\in[r]$, there exists an
absolute constant
$0<C=C(p, \rho, \tau, j, P)<\infty$ such
that for every integers $l\in\NN$ and $m\le -10C_{\rho}2^{2\rho l}$ and for every
$f=(f_{\iota}:\iota\in\NN)\in \ell^p(\ZZ; \ell^2(\NN))$, one has
\begin{align}
\label{eq:9}
\begin{gathered}
\sup_{J\in\ZZ_+}\sup_{I\in\mathfrak S_J(\SS_{\tau}(j))}
\norm[\bigg]{\Big(\sum_{\iota\in\NN}O_{I, J}\big(T_{\ZZ}\big[\Theta_{\Sigma_{\le l}}[\mathfrak m_{M_1, M_2}^{P}\eta_{\le m}]\big]f_{\iota}:(M_1, M_2)\in \SS_{\tau}(j)\big)^2\Big)^{1/2}}_{\ell^p(\ZZ)}\\
\leq C
(l+1) 
\|f\|_{\ell^p(\ZZ; \ell^2(\NN))},
\end{gathered}
\end{align}
with $\Theta_{\Sigma_{\le l}}$  defined in \eqref{eq:375}.
In particular, \eqref{eq:9} also implies the maximal estimate
\begin{align*}
\norm[\bigg]{\Big(\sum_{\iota\in\NN}\sup_{(M_1, M_2)\in \SS_{\tau}(j)}\big|T_{\ZZ}\big[\Theta_{\Sigma_{\le l}}[\mathfrak m_{M_1, M_2}^{P}\eta_{\le m}]\big]f_{\iota}\big|^2\Big)^{1/2}}_{\ell^p(\ZZ)}
\leq C
(l+1)
\|f\|_{\ell^p(\ZZ; \ell^2(\NN))}.
\end{align*}
\end{theorem}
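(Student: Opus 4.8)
The plan is to reduce Theorem \ref{thm:IW2} to the one-parameter result Theorem \ref{thm:IW1} by exploiting the structure of the sectors $\SS_\tau(j)$ dictated by the backwards Newton diagram. Fix $j\in[r]$. On the sector $\SS_\tau(j)$ the monomial $P^j(m_1,m_2) = c_{(v_{j,1},v_{j,2})}m_1^{v_{j,1}}m_2^{v_{j,2}}$ is dominant, and by \eqref{eq:209}--\eqref{eq:210} the parameters are comparable up to a constant in the log scale. The key observation is that on $\SS_\tau(j)$ the multiplier $\mathfrak m^P_{M_1,M_2}$ should be well-approximated, after restriction to the support $\eta_{\le m}$, by the multiplier associated to the single monomial $P^j$, because on $2^m[-1/2,1/2)$ with $m\le -10C_\rho 2^{2\rho l}$ the phase contribution of all non-dominant monomials $c_{\gamma_1,\gamma_2}\xi M_1^{\gamma_1}M_2^{\gamma_2} y_1^{\gamma_1}y_2^{\gamma_2}$ with $(\gamma_1,\gamma_2)\ne v_j$ is small relative to that of the dominant one (this is exactly Lemma \ref{lem:30}, which gives a gain $-\sigma_j N$ in the relevant inner product). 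So the first step is to split $\SS_\tau(j)$ via \eqref{eq:48} into the pieces $\SS_{\tau,1}^N(j)$ and $\SS_{\tau,2}^N(j)$, and on each piece compare $\mathfrak m^P_{M_1,M_2}$ to the monomial multiplier; the error is summable in $N$ after paying the van der Corput bound from Proposition \ref{thm:CCW} and using the decay $2^{-\sigma_j N}$.

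Second, I would analyze the monomial multiplier itself. Write $\mathfrak n_{M_1,M_2}(\xi) = (1-\tau^{-1})^{-2}\int_{\tau^{-1}}^1\int_{\tau^{-1}}^1 \ex(c_{v_j}\xi M_1^{v_{j,1}}M_2^{v_{j,2}} y_1^{v_{j,1}} y_2^{v_{j,2}})\,dy_1\,dy_2$. This depends on $(M_1,M_2)$ only through the product $t := M_1^{v_{j,1}}M_2^{v_{j,2}}$, which is a genuine \emph{one}-parameter quantity. Restricted to the sector, as $(M_1,M_2)$ ranges over $\SS_\tau(j)$ in the strictly-increasing (coordinatewise) sense of $\mathfrak S_J(\SS_\tau(j))$, the associated values $t$ form a strictly increasing sequence in $\DD_{\tau^{\gcd(v_{j,1},v_{j,2})}}$ or a comparable lacunary set. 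Crucially, along any sequence $I\in\mathfrak S_J(\SS_\tau(j))$ the oscillation seminorm $O_{I,J}$ of $(M_1,M_2)\mapsto T_{\ZZ}[\Theta_{\Sigma_{\le l}}[\mathfrak n_{M_1,M_2}\eta_{\le m}]]f$ is dominated by the one-parameter oscillation seminorm in the variable $t$ — because the box $\BB[I_i]\cap\SS_\tau(j)$ maps, under $(M_1,M_2)\mapsto t$, into an interval $[t_i, t_{i+1})$ in the $t$-scale, and the projection of a strictly increasing sequence is increasing. Then $\mathfrak n$, viewed as a function of $t$, satisfies the structural conditions \eqref{eq:377}, \eqref{eq:378} (with dilation exponent $1$ in the single variable $t$) and \eqref{eq:379}: positivity because it is an average of a probability measure in disguise, the decay $|\mathfrak n(\xi)|\lesssim \min(1, |t\xi|^{-1/\deg})$ from Proposition \ref{thm:CCW}, and $|\mathfrak n(\xi)-1|\lesssim \min(1,|t\xi|)$ by the mean value theorem. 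Hence Theorem \ref{thm:IW1} with $d=k=1$ applies and yields \eqref{eq:9} with the monomial multiplier in place of $\mathfrak m^P$.

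Third, I would combine the two pieces: the triangle inequality for oscillation seminorms \eqref{eq:116} together with Remark \ref{rem:1}(3) to handle the summable-in-$N$ error terms produced in the first step (each error contributes a fixed multiple of $\|f\|_{\ell^p(\ZZ;\ell^2(\NN))}$ with geometric decay $2^{-c\sigma_j N}$, since it is controlled by a square function bound that survives the Ionescu--Wainger transfer via Theorem \ref{thm:IW} and Lemma \ref{lem:10}), plus the main term estimated by the previous paragraph. The maximal inequality is then immediate from Proposition \ref{prop:5}. One subtle point that must be attended to: the approximation in the first step must be carried out \emph{after} applying $\Theta_{\Sigma_{\le l}}$ and $T_{\ZZ}$, so I would first reduce via Theorem \ref{thm:IW} to a continuous square-function/oscillation estimate on $\RR$, perform the monomial approximation there (where it is just a matter of Taylor expansion of the phase on the tiny support $2^m[-1/2,1/2)$ and van der Corput), and only then invoke Theorem \ref{thm:IW1}.

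The main obstacle, I expect, is making the monomial approximation uniform and summable across the infinitely many sub-sectors $\SS_{\tau,i}^N(j)$: one needs the gain from Lemma \ref{lem:30} to beat both the loss from differentiating/expanding the non-dominant monomials and the fact that the support parameter $m$ is tied to $l$ through $m\le -10C_\rho 2^{2\rho l}$, so that the $(l+1)$ factor in \eqref{eq:9} is not degraded. A careful bookkeeping — isolating the dominant monomial, bounding the difference multiplier pointwise by $C 2^{-\sigma_j N}\min(1,|t\xi|^{\delta})$ for an appropriate $\delta$, and feeding this into the square-function machinery with the geometric sum over $N$ — should resolve it, but this is where the real work lies.
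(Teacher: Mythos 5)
Your proposal is correct and follows essentially the same route as the paper's proof: decompose the sector via \eqref{eq:48}--\eqref{eq:49} into the one-parameter sub-sectors $\SS_{\tau,i}^N(j)$, compare $\mathfrak m^{P}_{M_1,M_2}$ with the dominant-monomial multiplier $\mathfrak m^{P^j}_{M_1,M_2}$ using the gain $\tau^{-\sigma_j N}$ from Lemma \ref{lem:30} interpolated against the van der Corput decay of Proposition \ref{thm:CCW}, and transfer the resulting square-function bound through Theorem \ref{thm:IW} and Lemma \ref{lem:10}. The remaining monomial term is then, exactly as you say, a genuinely one-parameter object in $M=M_1^{v_{j,1}}M_2^{v_{j,2}}$ whose two-parameter oscillation is dominated by the one-parameter oscillation handled by Theorem \ref{thm:IW1}.
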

Some remarks about Theorem \ref{thm:IW2} are in order.
\begin{enumerate}[label*={\arabic*}.]

\item Theorem \ref{thm:IW2} is the simplest instance of a
multi-parameter oscillation variant of the Ionescu--Wainger theorem
\cite{IW}.  More general variants of Theorem \ref{thm:IW2} can be
also proved, for instance, an analogue of Theorem \ref{thm:IW2} for the following multipliers
\[
\ \ \ \ \ \ \mathfrak m_{M_1, M_2}^P(\xi_1, \xi_2, \xi_3)=\int_{0}^1\int_{0}^1\ex(\xi_1(M_1y_1)+\xi_2(M_2y_2)+\xi_3 P(M_1y_1, M_2y_2))dy_1dy_2,
\]
can be established using the methods of the paper. However, this goes beyond the scope of this paper and will be
discussed in the future.
\item In contrast to the one-parameter theory, it is not clear whether
multi-parameter $r$-variational or jump counterparts of Theorem
\ref{thm:IW2} are available. As far as we know it is not even clear if
there are useful multi-parameter definitions of $r$-variational or
jump semi-norms. From this point of view the multi-parameter
oscillation semi-norm is an invaluable tool allowing us to handle
pointwise convergence problems in the multi-parameter setting.

\item A careful inspection of the proof
allows us to establish an analogue of Theorem \ref{thm:IW2} in the continuous setting. Namely, for every $p\in(1, \infty)$ there is a constant $C>0$ such that for every $f=(f_{\iota}:\iota\in\NN)\in L^p(\RR; \ell^2(\NN))$ one has
\begin{align*}
\begin{gathered}
\sup_{J\in\ZZ_+}\sup_{I\in\mathfrak S_J(\SS_{\tau}(j))}
\norm[\bigg]{\Big(\sum_{\iota\in\NN}O_{I, J}\big(T_{\RR}[\mathfrak m_{M_1, M_2}^{P}]f_{\iota}:(M_1, M_2)\in \SS_{\tau}(j)\big)^2\Big)^{1/2}}_{L^p(\RR)}\\
\leq C \|f\|_{L^p(\RR; \ell^2(\NN))}.
\end{gathered}
\end{align*}
\end{enumerate}

\begin{proof}[Proof of Theorem \ref{thm:IW2}]
We will only prove Theorem \ref{thm:IW2} for  $j=r=1$ or for $1\le j<r$ with $r\ge2$. The same argument can be used to prove the case for $j=r$. In view of \eqref{eq:135} it suffices to prove \eqref{eq:9}.
We divide the proof  into two steps to make the argument clearer.
\vskip 5pt
\paragraph{\bf Step 1} 
We prove that for every $p\in(1, \infty)$ and every $f=(f_{\iota}:\iota\in\NN)\in \ell^p(\ZZ; \ell^2(\NN))$ one has
\begin{align*}
\norm[\bigg]{\Big(\sum_{(M_1, M_2)\in \SS_{\tau}(j)}\sum_{\iota\in\NN}\big|T_{\ZZ}\big[\Theta_{\Sigma_{\le l}}[(\mathfrak m_{M_1, M_2}^{P}-\mathfrak m_{M_1, M_2}^{P^j})\eta_{\le m}]\big]f_{\iota}\big|^2\Big)^{1/2}}_{\ell^p(\ZZ)}
\lesssim
\|f\|_{\ell^p(\ZZ; \ell^2(\NN))}.
\end{align*}
Using \eqref{eq:48} and \eqref{eq:49} it suffices to prove that for every $p\in(1, \infty)$  there is $\sigma_{j, p}\in(0, 1)$ such that for every $N\in\NN$, $i\in[2]$ and every $f=(f_{\iota}:\iota\in\NN)\in \ell^p(\ZZ; \ell^2(\NN))$ one has
\begin{align}
\label{eq:50}
\norm[\bigg]{\Big(\sum_{(M_1, M_2)\in \SS_{\tau, i}^N(j)}\sum_{\iota\in\NN}\big|T_{\ZZ}\big[\Theta_{\Sigma_{\le l}}[(\mathfrak m_{M_1, M_2}^{P}-\mathfrak m_{M_1, M_2}^{P^j})\eta_{\le m}]\big]f_{\iota}\big|^2\Big)^{1/2}}_{\ell^p(\ZZ)}
\lesssim \tau^{-\sigma_{j, p}N}
\|f\|_{\ell^p(\ZZ; \ell^2(\NN))}.
\end{align}
We only prove \eqref{eq:50} for $i=1$ as the proof for $i=2$ is  the same. By the construction of the sets $\SS_{\tau, 1}^N(j)$, see definition \eqref{eq:49}, the problem becomes a one-parameter problem. Indeed, if $(M_1, M_2)\in \SS_{\tau, 1}^N(j)$, then $(M_1, M_2)=(\tau^{n_1}, \tau^{n_2})$ and
\begin{align*}
(n_1, n_2)=\frac{n}{d_j}\omega_{j-1}+\frac{N}{d_j}(\omega_{j}+\omega_{j-1})
\quad \text{ for some } \quad
n \in\ZZ_+.
\end{align*}
Defining $(n_1^k, n_2^k):=\frac{k}{d_j}\omega_{j-1}+\frac{N}{d_j}(\omega_{j}+\omega_{j-1})$ for any $k\in\ZZ_+$, inequality \eqref{eq:50}
can be written as
\begin{align*}
\norm[\bigg]{\Big(\sum_{k\in\ZZ_+}\sum_{\iota\in\NN}\big|T_{\ZZ}\big[\Theta_{\Sigma_{\le l}}[(\mathfrak m_{\tau^{n_1^k}, \tau^{n_2^k}}^{P}-\mathfrak m_{\tau^{n_1^k}, \tau^{n_2^k}}^{P^j})\eta_{\le m}]\big]f_{\iota}\big|^2\Big)^{1/2}}_{\ell^p(\ZZ)}
\lesssim \tau^{-\sigma_{j, p}N}
\|f\|_{\ell^p(\ZZ; \ell^2(\NN))}.
\end{align*}
By Lemma \ref{lem:10} and Theorem \ref{thm:IW} it suffices to prove that for every $p\in(1, \infty)$ 
there is $\sigma_{j, p}\in(0, 1)$ such that for every $N\in\NN$ and $f\in L^p(\RR)$, one has
\begin{align}
\label{eq:52}
\sup_{(\varepsilon_k: k\in\ZZ_+)\in\{0, 1\}^{\ZZ_+}}\norm[\Big]{\sum_{k\in\ZZ_+}\varepsilon_kT_{\RR}\big[(\mathfrak m_{\tau^{n_1^k}, \tau^{n_2^k}}^{P}-\mathfrak m_{\tau^{n_1^k}, \tau^{n_2^k}}^{P^j})\big]f}_{L^p(\RR)}
\lesssim \tau^{-\sigma_{j, p}N} 
\|f\|_{L^p(\RR)}.
\end{align}
By \eqref{eq:111}, \eqref{eq:112} and Lemma \ref{lem:30} we obtain
\begin{align*}
|P(\tau^{n_1}y_1, \tau^{n_2}y_2)-P^j(\tau^{n_1}y_1, \tau^{n_2}y_2)|&\le \sum_{(\gamma_1, \gamma_2)\in S_P\setminus\{v_j\}}|c_{\gamma_1, \gamma_2}|\tau^{(\gamma_1, \gamma_2)\cdot(n_1,n_2)}|y_1|^{\gamma_1}|y_2|^{\gamma_2}\\
&\le (\sup_{v\in S_P}|c_{v}|)\tau^{(n_1, n_2)\cdot v_j}\sum_{v\in S_P\setminus\{v_j\}} \tau^{(n_1, n_2)\cdot (v-v_j)}\\
& \le \#S_P(\sup_{v\in S_P}|c_{v}|)\tau^{(n_1, n_2)\cdot v_j}
\tau^{-\sigma_j N}
\end{align*}
whenever $|y_1|, |y_2|\le 1$, with $\sigma_j>0$ defined in \eqref{eq:47}. Consequently, we have
\begin{align}
\label{eq:53}
|\mathfrak m_{\tau^{n_1^k}, \tau^{n_2^k}}^{P}(\xi)-\mathfrak m_{\tau^{n_1^k}, \tau^{n_2^k}}^{P^j}(\xi)|\lesssim_P
\tau^{-\sigma_j N}(\tau^{(n_1^k, n_2^k)\cdot v_j}
|\xi|).
\end{align}
Moreover by van der Corput's lemma (Proposition \ref{thm:CCW}), we can find a $\delta_0\in (0, 1)$ such that
\begin{align}
\label{eq:59}
|\mathfrak m_{\tau^{n_1^k}, \tau^{n_2^k}}^{P}(\xi)-\mathfrak m_{\tau^{n_1^k}, \tau^{n_2^k}}^{P^j}(\xi)|\lesssim_P
(\tau^{(n_1^k, n_2^k)\cdot v_j}
|\xi|)^{-\delta_0}
\end{align}
for sufficiently large $N\in\NN$. A convex combination of \eqref{eq:53} and \eqref{eq:59} gives
\begin{align}
\label{eq:60}
|\mathfrak m_{\tau^{n_1^k}, \tau^{n_2^k}}^{P}(\xi)-\mathfrak m_{\tau^{n_1^k}, \tau^{n_2^k}}^{P^j}(\xi)|\lesssim_P
\tau^{-\sigma_j' N}\min\big\{(\tau^{(n_1^k, n_2^k)\cdot v_j}
|\xi|)^{\delta_0'}, (\tau^{(n_1^k, n_2^k)\cdot v_j}
|\xi|)^{-\delta_0'}\big\},
\end{align}
for some $\delta_0', \sigma_j'\in(0, 1)$.

Using \eqref{eq:60} and Plancherel's theorem we obtain \eqref{eq:52} for $p=2$. Standard Littlewood--Paley theory arguments 
(see for example Theorem D in \cite{DR}) allows us then to obtain \eqref{eq:52} for all $p\in(1, \infty)$. 
\vskip 5pt
\paragraph{\bf Step 2} The argument from the first step allows us to reduce matters to proving
\begin{align*}
\begin{gathered}
\sup_{J\in\ZZ_+}\sup_{I\in\mathfrak S_J(\DD_{\tau}^2)}
\norm[\bigg]{\Big(\sum_{\iota\in\NN}O_{I, J}\big(T_{\ZZ}\big[\Theta_{\Sigma_{\le l}}[\mathfrak m_{M_1, M_2}^{P^j}\eta_{\le m}]\big]f_{\iota}:(M_1, M_2)\in \DD_{\tau}^2\big)^2\Big)^{1/2}}_{\ell^p(\ZZ)}\\
\lesssim (l+1)
\|f\|_{\ell^p(\ZZ; \ell^2(\NN))}.
\end{gathered}
\end{align*}
We define a new one-parameter multiplier
\begin{align*}
\mathfrak g_{M}^{P^j}(\xi):=\frac{1}{(1-\tau^{-1})^2}\int_{\tau^{-1}}^1\int_{\tau^{-1}}^1
\ex(c_{(v_{j, 1}, v_{j, 2})} M\xi y_1^{v_{j, 1}}y_2^{v_{j, 2}})dy_1dy_2.
\end{align*}
Observe that by Theorem \ref{thm:IW1} we obtain
\begin{gather*}
\sup_{J\in\ZZ_+}\sup_{I\in\mathfrak S_J(\DD_{\tau}^2)}
\norm[\bigg]{\Big(\sum_{\iota\in\NN}O_{I, J}\big(T_{\ZZ}\big[\Theta_{\Sigma_{\le l}}[\mathfrak m_{M_1, M_2}^{P^j}\eta_{\le m}]\big]f_{\iota}:(M_1, M_2)\in \DD_{\tau}^2\big)^2\Big)^{1/2}}_{\ell^p(\ZZ)}\\
\le
\sup_{J\in\ZZ_+}\sup_{I\in\mathfrak S_J(\DD_{\tau})}
\norm[\bigg]{\Big(\sum_{\iota\in\NN}O_{I, J}\big(T_{\ZZ}\big[\Theta_{\Sigma_{\le l}}\mathfrak g_{M}^{P^j}\eta_{\le m}\big]f_{\iota}: M\in \DD_{\tau}\big)^2\Big)^{1/2}}_{\ell^p(\ZZ)}
\lesssim (l+1)
\|f\|_{\ell^p(\ZZ; \ell^2(\NN))}.
\end{gather*}
This completes the proof of the theorem.
\end{proof}

\section{Two-parameter circle method: Proof of Theorem \ref{thm:main'''}}
\label{section:7}
Throughout this section $\tau>1$ is fixed and we  allow all the implied constants to depend on $\tau$.  Let $P\in\ZZ[\rm m_1, \rm m_2]$ be a polynomial obeying
$P(0, 0)=0$, which is  non-degenerate in the sense that $S_P\cap(\ZZ_{+}\times\ZZ_{+})\neq\emptyset$; see \eqref{eq:66}.
For every real number $N\ge1$ define
\begin{align*}
\chi_N(x):=\frac{1}{|(\tau^{-1}N, N]\cap\ZZ|}\ind{(\tau^{-1}N, N]}(x), \quad x\in \RR.
\end{align*}
For every real numbers $M_1, M_2\ge1$ and $\xi\in\RR$ we consider the multiplier
\begin{align*}
m_{M_1, M_2}(\xi):=\sum_{m_1\in\ZZ}\sum_{m_2\in\ZZ}\ex(P_{\xi}(m_1, m_2))\chi_{M_1}(m_1)\chi_{M_2}(m_2),
\end{align*}
with $P_{\xi}(m_1,m_2) = \xi P(m_1,m_2)$. The corresponding partial multipliers are defined by
\begin{align}
\label{eq:128}
\begin{split}
m_{m_1, M_2}^1(\xi):=&\sum_{m_2\in\ZZ}\ex(P_{\xi}(m_1, m_2))\chi_{M_2}(m_2), \qquad m_1\in\ZZ,\\
m_{M_1, m_2}^2(\xi):=&\sum_{m_1\in\ZZ}\ex(P_{\xi}(m_1, m_2))\chi_{M_1}(m_1), \qquad m_2\in\ZZ.
\end{split}
\end{align}

We fix further notation and terminology. 
For functions $G:\QQ\cap\TT\to \CC$, $\mathfrak m:\TT\to\CC$, a finite set $\Sigma\subset \QQ\cap \TT$, any $n\in\ZZ$ and any $\xi\in\TT$ we define the following  $1$-periodic multiplier
\begin{align}\label{Phi}
\Phi_{\le n}^{\Sigma}[G, \mathfrak m](\xi):=\sum_{a/q\in\Sigma}G(a/q)\mathfrak m(\xi-a/q)\eta_{\le n}(\xi-a/q).
\end{align}

In a similar way, for any $l\in\NN$, $n\in\ZZ$, any $\xi\in\TT$  we define the following projection multipliers
(recall the definition of $\Sigma_{\le l}:=\Sigma_{\le l}^1$ from \eqref{eq:372})
\begin{align*}
\Delta_{\le l, \le n}(\xi):=\sum_{a/q\in\Sigma_{\le l}}\eta_{\le n}(\xi-a/q),
\qquad \text{ and } \qquad
\Delta_{\le l, \le n}^{c}(\xi):=1-\Delta_{\le l, \le n}(\xi).
\end{align*}

All these multipliers  will be applied with different choices of parameters. 
 For $\beta>0$, $M_1, M_2, M>0$, $N\ge0$, and $v=(v_1, v_2)\in\ZZ^2$ we define
\begin{align}
\label{eq:228}
l^{\beta}(M):= \log_2\big((\log_{\tau} M)^{\beta}\big),
\qquad\text{ and }\qquad
n_{M_1, M_2}^{v}(N):=\log_2 (M_1^{v_{ 1}}M_2^{v_{ 2}})-N.
\end{align}
Using \eqref{eq:228} we also set
\begin{align}
\label{eq:228'}
n_{M_1, M_2}^{v, \beta}(M):=n_{M_1, M_2}^{v}(l^{\beta}(M))=\log_2 (M_1^{v_{ 1}}M_2^{v_{ 2}}(\log_{\tau} M)^{-\beta}).
\end{align}
Definitions \eqref{eq:228} and \eqref{eq:228'} will be applied with
$v\in\ZZ^2$ being a vertex of the backwards Newton diagram $N_{P}$.
In this section we shall abbreviate $\mathfrak m_{M_1, M_2}^P$ to
\begin{align*}
\mathfrak m_{M_1, M_2}(\xi):=\frac{1}{(1-\tau^{-1})^2}\int_{\tau^{-1}}^1\int_{\tau^{-1}}^1\ex(P_{\xi}(M_1y_1, M_2y_2))dy_1dy_2, \qquad \xi\in\RR.
\end{align*}
We also define the following two partial  multipliers
\begin{align}
\label{eq:326}
\begin{split}
\mathfrak m_{m_1, M_2}^1(\xi):=\frac{1}{1-\tau^{-1}}\int_{\tau^{-1}}^1\ex(P_{\xi}(m_1, M_2y_2))dy_2, \qquad \xi\in\RR, \; m_1\in\ZZ,\\
\mathfrak m_{M_1, m_2}^2(\xi):=\frac{1}{1-\tau^{-1}}\int_{\tau^{-1}}^1\ex(P_{\xi}(M_1y_1, m_2))dy_1, \qquad \xi\in\RR, \; m_2\in\ZZ.
\end{split}
\end{align}

Our main result of this section is Theorem \ref{thm:maint}, which is a restatement of Theorem \ref{thm:main'''}.
\begin{theorem}
\label{thm:maint}
Let $r\in\ZZ_+$ be the number of vertices in the backwards Newton diagram $N_{P}$.  
Then for every $p\in(1, \infty)$ and $j\in[r]$ and  for every $f\in\ell^p(\ZZ)$ one has
\begin{align}
\label{eq:289}
\sup_{J\in\ZZ_+}\sup_{I\in\mathfrak S_J(\SS_{\tau}(j))}\|O_{I, J}(T_{\ZZ}[m_{M_1, M_2}]f: (M_1, M_2)\in\SS_{\tau}(j))\|_{\ell^p(\ZZ)}\lesssim_{p, \tau}\|f\|_{\ell^p(\ZZ)}.
\end{align}
\end{theorem}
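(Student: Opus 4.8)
The proof of Theorem \ref{thm:maint} follows the classical Hardy--Littlewood circle method, now carried out in the two-parameter setting but \emph{restricted to a single Newton sector} $\SS_{\tau}(j)$ so that there is a well-defined dominating monomial $P^j(m_1,m_2)=c_{(v_{j,1},v_{j,2})}m_1^{v_{j,1}}m_2^{v_{j,2}}$ and hence a one-parameter-like scaling governing the major/minor arc dissection. The first step is to replace the sharp-cutoff multiplier $m_{M_1,M_2}$ by its smooth analogue $\mathfrak m_{M_1,M_2}$ up to an error controlled by summation by parts (Lemma \ref{sum-integral}, comparing the sum to the integral) and by the oscillation bounds of Proposition \ref{prop:7}; this step loses nothing beyond an $\|f\|_{\ell^p}$ term because the difference multiplier is a sum of commutators with $L^p$-bounded pieces. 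Next, with $\alpha>1$ fixed (chosen large relative to the eventual number of dyadic losses), pick $\beta=\beta_0(\alpha)$ as in Proposition \ref{prop:23} and perform a \emph{Farey/Dirichlet dissection} of $\TT$ at height dictated by $M_1^{v_{j,1}}M_2^{v_{j,2}}$: the minor arcs are where the reduced denominator $q$ lies in the window $(\log M^*_{r,j})^{\beta}\le q\le M_1^{v_{j,1}}M_2^{v_{j,2}}(\log M^*_{r,j})^{-\beta}$, and on these arcs Proposition \ref{prop:23} gives $|S_{K_1,M_1,K_2,M_2}(P_\xi)|\lesssim M_1M_2(\log M^*_{r,j})^{-\alpha}$. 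Since the number of scales $(M_1,M_2)\in\SS_\tau(j)$ up to a given size is $O(\log M^*_{r,j})$ by \eqref{eq:107}, the oscillation seminorm over the minor-arc contribution is bounded by $(\log M^*_{r,j})^{1-\alpha}$ summed over dyadic blocks, which is summable once $\alpha>2$, giving the desired $\|f\|_{\ell^p}$ bound on the minor arcs by crude $\ell^2\hookrightarrow\ell^\infty$ control of oscillations (Remark \ref{rem:1}(3)) together with Plancherel on $\ell^2$ and interpolation with the trivial $\ell^\infty$ bound.

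\textbf{Major arcs.} On the major arcs one writes, near each $a/q\in\Sigma_{\le l}$ with $l\simeq l^\beta(M^*_{r,j})$, the multiplier as $G(a/q)\,\mathfrak m_{M_1,M_2}(\xi-a/q)\,\eta_{\le n}(\xi-a/q)$ plus an error, where $G$ is the complete exponential sum \eqref{eq:323} estimated by Proposition \ref{prop:32} ($|G(a/q)|\lesssim q^{-\delta}$) and $n=n^{v_j,\beta}_{M_1,M_2}(M^*_{r,j})$ from \eqref{eq:228'}. The decomposition of $S_{q,q}$-weighted Gauss-sum times smooth-integral approximation is the standard splitting $\ex(P_\xi(m))=\ex(P_{a/q}(r))\ex(P_\theta(m))$ with $m=qs+r$, $\theta=\xi-a/q$; the error terms are handled with asymptotic precision using Lemma \ref{sum-integral} again and the partial-sum multipliers \eqref{eq:128}, \eqref{eq:326}, \eqref{eq:325} whose complete-sum averages obey \eqref{eq:105}--\eqref{eq:106}. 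The resulting main term on $\ell^p(\ZZ)$ is, via Theorem \ref{thm:IW} (the Ionescu--Wainger theorem) and its two-parameter oscillation upgrade Theorem \ref{thm:IW2}, comparable to the continuous multiplier $\mathfrak m^P_{M_1,M_2}$ acting through the periodized lift $\Theta_{\Sigma_{\le l}}[\mathfrak m^P_{M_1,M_2}\eta_{\le m}]$; Theorem \ref{thm:IW2} gives precisely the oscillation bound \eqref{eq:9} with the harmless $(l+1)$ loss, and since $l\lesssim \log\log M^*_{r,j}$ this loss is absorbed after summing over the $O(\log\log)$-many relevant values of $l$ dyadically, using that the complete-sum gain $q^{-\delta}\le 2^{-\delta(l-1)}$ beats $(l+1)$. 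One then sums the major-arc pieces over $l$ with the super-exponential separation \eqref{eq:373} between the scales $\Sigma_l$ to avoid overlap losses, exactly as in the one-parameter theory of \cite{B1,B2,B3,MSZ3}.

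\textbf{Assembling the oscillation estimate.} Throughout, the oscillation seminorm $O_{I,J}$ is handled by the triangle-inequality splitting \eqref{eq:116} into the minor-arc part, the major-arc "error" part, and the major-arc "main" part. For the minor-arc and error parts the oscillation is dominated by the square function (Remark \ref{rem:1}(3)) and the decay estimates above give summability in the relevant dyadic parameters; for the main part one invokes Theorem \ref{thm:IW2} directly, which is stated as an oscillation inequality over $\SS_\tau(j)$ and therefore plugs in with no further work beyond matching the multiplier $\mathfrak m^P_{M_1,M_2}$ to its approximant. Finally, the continuous oscillation estimate needed to feed Theorem \ref{thm:IW2} --- i.e. that $\sup_J\sup_I\|O_{I,J}(T_{\RR}[\mathfrak m^P_{M_1,M_2}]f:(M_1,M_2)\in\SS_\tau(j))\|_{L^p(\RR)}\lesssim\|f\|_{L^p(\RR)}$ --- is itself obtained by the same dyadic/Littlewood--Paley machinery as in Step 1 of the proof of Theorem \ref{thm:IW1}, using van der Corput (Proposition \ref{thm:CCW}) for the decay of $\mathfrak m^P_{M_1,M_2}$ away from the origin and Lemma \ref{lem:10} to pass to the vector-valued setting.

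\textbf{The main obstacle.} I expect the genuinely hard part to be the control of the \emph{major-arc error terms with asymptotic precision}, carried out simultaneously over \emph{all} scales $(M_1,M_2)$ in the sector while the parameters $M_1,M_2$ run independently (the lack of nestedness). Concretely: after the splitting $m=qs+r$ one must estimate differences $m_{M_1,M_2}(\xi)-G(a/q)\mathfrak m_{M_1,M_2}(\xi-a/q)$ uniformly in $(M_1,M_2)\in\SS_\tau(j)$, and these involve the partially complete exponential sums \eqref{eq:325} whose averages \eqref{eq:105}--\eqref{eq:106} decay only in $q$, not in $M_i$ --- so one has to be careful that the window for $q$ on the major arcs (driven by $M_1^{v_{j,1}}M_2^{v_{j,2}}$, a genuinely two-variable quantity) is compatible with the $\ell^2$ orthogonality used to sum the error pieces over scales. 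This is where the "backwards" Newton diagram does its essential work: restricting to $\SS_\tau(j)$ forces $\log M_1$ and $\log M_2$ to be comparable up to constants (for $1<j<r$) or one to dominate the other (for $j=1$ or $j=r$), via \eqref{eq:107}, which is exactly what turns the two-parameter dissection back into something that can be iterated one variable at a time using Theorem \ref{thm:IW1}. Making this iteration rigorous while keeping all error bounds quantitatively summable is the crux.
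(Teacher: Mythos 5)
Your plan follows essentially the same route as the paper: a sector-by-sector circle method in which the minor arcs are killed by Proposition \ref{prop:23}, the major arcs are factorized into complete Gauss sums times continuous multipliers, the main term is fed into the two-parameter Ionescu--Wainger oscillation theorem (Theorem \ref{thm:IW2}), and the $(l+1)$ loss there is absorbed by the $q^{-\delta}$ gain from Proposition \ref{prop:32}. You also correctly identify the crux --- the interaction between the scale-dependent Farey truncation and the un-nested parameters.

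Two points, one cosmetic and one substantive. First, your opening step --- replacing $m_{M_1,M_2}$ by $\mathfrak m_{M_1,M_2}$ globally via Lemma \ref{sum-integral} --- is not valid as stated: the sum-to-integral comparison needs the derivative bound $|f'|\le 1/2$, which holds only when $\xi$ is very close to a rational with controlled denominator, i.e.\ only on major arcs. The paper makes this replacement only after localizing near each $a/q$ and extracting the Gauss sum factor (Claim \ref{claim:3}, Claim \ref{claim:7}); since your subsequent dissection does exactly that, this reads as a mis-statement rather than a fatal flaw. Second, and more seriously, the step you flag as ``the crux'' is genuinely left unresolved in your outline: the truncation height $l^{\beta}(M^{*}_{r,j})$ of the major-arc dissection depends on the scale $(M_1,M_2)$, whereas Theorem \ref{thm:IW2} is stated for a fixed family $\Sigma_{\le l}$, so it cannot be applied directly to your major-arc main term. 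The paper bridges this by a dedicated ``changing scale'' argument (Claims \ref{claim:33} and \ref{claim:4}): decompose over the Ionescu--Wainger level $s$, restrict for each $s$ to scales with $M_1\ge \tau^{2^{s/\beta}}$, switch the truncation from $l^{\beta}(M_2)$ to $l^{\beta}(M_1)$ at the cost of a maximal function in $M_2$ controlled by the averaged partial Gauss sums \eqref{eq:105}, and only then factor off $\Phi_{\le \cdot}^{\Sigma_s}[G,1]$ with operator norm $O(2^{-\delta_p s})$, leaving a scale-dependent factor to which Theorem \ref{thm:IW2} applies. Without some version of this mechanism the major-arc analysis in the first variable cannot be run, so your outline, while structurally faithful to the paper, is missing its technical heart.
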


The proof of Theorem \ref{thm:maint} is divided into several steps. We  apply iteratively  the classical  circle method,
taking into account the geometry of the backwards Newton diagram $N_P$. 
\subsection{Preliminaries}
The number of vertices $r\in\ZZ_+$ in the backwards Newton diagram $N_{P}$  is fixed. Let $v_j=(v_{j, 1}, v_{j, 2})$ denote the vertex of $N_{P}$ corresponding to  $j\in[r]$.

It suffices to establish inequality \eqref{eq:289} for
 $j=r=1$ assuming additionally that $\log M_1\le \log M_2$
when $(M_1, M_2)\in\SS_{\tau}(1)$, or for any $r\ge2$ and 
any $1\le j<r$. Both cases  
ensure that
\begin{align}
\label{eq:29}
\log M_1\lesssim \log M_2\quad\text{ whenever }\quad
(M_1, M_2)\in\SS_{\tau}(j),
\end{align}
which means that $M_1\le M_2^{K_j}$ for some $K_j>0$, see Remark \ref{rem:2-0}.
 The case when $j=r$ with $r\ge2$ can be proved in much the same way, with the difference that $\log M_1\gtrsim \log M_2$ whenever $(M_1, M_2)\in\SS_{\tau}(r)$. We only outline the most important changes, omitting the details, which can be easily adjusted using the arguments below.

 From now on $p\in(1, \infty)$ is fixed and we let $p_0\in(1, 2)$ be such that $p\in(p_0, p_0')$.   The proof will involve several parameters that have to be suitably adjusted to $p\in(p_0,p_0')$.

 We begin by setting
\begin{align*}
\theta_p:=\bigg(\frac{1}{p_0}-\frac{1}{\min\{p, p'\}}\bigg)\bigg(\frac{1}{p_0}-\frac{1}{2}\bigg)^{-1}\in(0, 1).
\end{align*}
We will take
\begin{align}
\label{eq:299}
\alpha>100\:\theta_p^{-1},
\qquad \text{ and } \qquad
\beta> 1000\max\big\{\delta^{-1}, (1+\deg P)^5\big\}(\alpha+1),
\end{align}
where $\beta\in\ZZ_+$ plays the role of the parameter $\beta\in\ZZ_+$ from Proposition \ref{prop:23}, and $\delta\in (0, 1)$ is the parameter that arises in the complete sum estimates, see Proposition  \ref{prop:32}.

Finally, we need the parameter $\rho>0$, introduced in the Ionescu--Wainger multiplier theorem (see Theorem \ref{thm:IW} as well as Theorem \ref{thm:IW1} and Theorem \ref{thm:IW2}), to satisfy
\begin{align}
\label{eq:301}
\rho\beta<\frac{1}{1000}.
\end{align}

\subsection{Minor arc estimates}
We first establish the minor arcs estimates. 
\begin{claim}
\label{claim:1}
For every $1\le j<r$ and for every $(M_1, M_2)\in\SS_{\tau}(j)$ one has
\begin{align}
\label{eq:54}
\|T_{\ZZ}[m_{M_1, M_2}\Delta^{c}_{\le l^{\beta}(M_2), \le -n_{M_1, M_2}^{v_j,\beta}(M_2)}]f\|_{\ell^2(\ZZ)}\lesssim_{ \tau} (\log M_2)^{-\alpha}\|f\|_{\ell^2(\ZZ)}, \qquad f\in\ell^2(\ZZ),
\end{align}
with $\alpha$ as in \eqref{eq:299}.
The same estimate holds when $j=r=1$, as long as $\log M_1\le \log M_2$.
\end{claim}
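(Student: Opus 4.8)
The plan is to deduce \eqref{eq:54} by interpolating an $\ell^{2}$ estimate that carries the full exponential-sum decay against an $\ell^{p_{0}}$ estimate that loses at most a factor $\log\log M_{2}$. Since $1\le j<r$ (and also when $j=r=1$ with $\log M_{1}\le\log M_{2}$) one has $\log M_{r,j}^{*}\simeq\log M_{2}$, so every occurrence of $\log M_{r,j}^{*}$ may be read as $\log M_{2}$. We may assume $M_{2}$ is as large as we wish, since otherwise $(\log M_{2})^{-\alpha_{p}}\gtrsim_{\tau}1$ and \eqref{eq:54} follows from the trivial bound $\|T_{\ZZ}[m_{M_{1},M_{2}}]\|_{\ell^{p}\to\ell^{p}}\lesssim1$.

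\textbf{The $\ell^{2}$ bound.} By Plancherel this reduces to the pointwise estimate $\bigl\|m_{M_{1},M_{2}}\,\Delta^{c}_{\le l^{\beta}(M_{2}),\,\le-n_{M_{1},M_{2}}^{v_{j},\beta}(M_{2})}\bigr\|_{L^{\infty}(\TT)}\lesssim(\log M_{2})^{-\alpha}$. Fix $\xi$ in the support of this multiplier. Because $\eta\ge\ind{[-1,1]}$, and because the bumps $\eta_{\le-n_{M_{1},M_{2}}^{v_{j},\beta}(M_{2})}(\cdot-a/q)$ with $a/q\in\Sigma_{\le l^{\beta}(M_{2})}$ are pairwise disjoint --- their common radius $2^{-n_{M_{1},M_{2}}^{v_{j},\beta}(M_{2})}=(\log_{\tau}M_{2})^{\beta}M_{1}^{-v_{j,1}}M_{2}^{-v_{j,2}}$ being far smaller than the separation $\gtrsim 2^{-C2^{\rho l^{\beta}(M_{2})}}=2^{-C(\log_{\tau}M_{2})^{\beta\rho}}$ of the fractions, using Remark \ref{rem:2}\ref{IW2} and $\beta\rho<1/1000$ from \eqref{eq:301} --- we get $\dist(\xi,a/q)>(\log_{\tau}M_{2})^{\beta}M_{1}^{-v_{j,1}}M_{2}^{-v_{j,2}}$ for every reduced $a/q$ with $q\in P_{\le l^{\beta}(M_{2})}$, in particular for every $q\le 2^{l^{\beta}(M_{2})}=(\log_{\tau}M_{2})^{\beta}$. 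Applying Dirichlet's principle with a suitable $Q\simeq M_{1}^{v_{j,1}}M_{2}^{v_{j,2}}(\log_{\tau}M_{2})^{-\beta}$ yields a reduced fraction $a/q$ with $q\le Q$ and $|\xi-a/q|\le(qQ)^{-1}$; the distance lower bound forces $q>(\log_{\tau}M_{2})^{\beta}$, so $q$ and $|\xi-a/q|$ meet exactly the hypotheses \eqref{eq:215}--\eqref{eq:w6-1} of Proposition \ref{prop:23}. Since $m_{M_{1},M_{2}}(\xi)$ equals $S_{K_{1},M_{1},K_{2},M_{2}}(P_{\xi})$ divided by $(\lfloor M_{1}\rfloor-\lfloor\tau^{-1}M_{1}\rfloor)(\lfloor M_{2}\rfloor-\lfloor\tau^{-1}M_{2}\rfloor)$ with $M_{i}\lesssim K_{i}:=\lfloor\tau^{-1}M_{i}\rfloor\le M_{i}$, Proposition \ref{prop:23} gives $|m_{M_{1},M_{2}}(\xi)|\lesssim(\log M_{2})^{-\alpha}$, hence $\|T_{\ZZ}[m_{M_{1},M_{2}}\Delta^{c}]\|_{\ell^{2}\to\ell^{2}}\lesssim(\log M_{2})^{-\alpha}$.

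\textbf{The $\ell^{p_{0}}$ bound and conclusion.} Abbreviating $l=l^{\beta}(M_{2})$, $n=-n_{M_{1},M_{2}}^{v_{j},\beta}(M_{2})$, write $m_{M_{1},M_{2}}\Delta^{c}=m_{M_{1},M_{2}}-m_{M_{1},M_{2}}\Delta_{\le l,\le n}$; as $T_{\ZZ}[m_{M_{1},M_{2}}]$ is bounded on every $\ell^{p}$ it remains to treat $m_{M_{1},M_{2}}\Delta_{\le l,\le n}$. On each of the (pairwise disjoint) arcs $\{|\xi-a/q|\lesssim 2^{n}\}$, $a/q\in\Sigma_{\le l}$, I would replace $m_{M_{1},M_{2}}(\xi)$ by $G(a/q)\,\mathfrak m_{M_{1},M_{2}}(\xi-a/q)$, with $G$ the complete exponential sum \eqref{eq:323}, the replacement being justified with asymptotic precision by the residue-class-by-residue-class comparison of the exponential sum $m_{M_{1},M_{2}}$ to the integral $\mathfrak m_{M_{1},M_{2}}$ through Lemma \ref{sum-integral}. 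This exhibits $m_{M_{1},M_{2}}\Delta_{\le l,\le n}$ as $\Phi_{\le n}^{\Sigma_{\le l}}[G,\mathfrak m_{M_{1},M_{2}}]$ plus an error that, being $\Theta_{\Sigma_{\le l}}$ of a bounded operator-valued bump supported near the origin, is again of Ionescu--Wainger type. The support condition of Theorem \ref{thm:IW} holds since $2^{n}\ll 2^{-10C_{\rho}2^{2\rho l}}$ (again by \eqref{eq:301}), so the Ionescu--Wainger theorem --- accommodating the bounded Gauss-sum factors $G(a/q)$ via the bound $|G(a/q)|\lesssim q^{-\delta}\le1$ of \eqref{eq:104} and the $L^{p_{0}}(\RR)$ boundedness of $\mathfrak m_{M_{1},M_{2}}$ --- bounds both pieces by $\lesssim(l+1)\lesssim\log\log M_{2}$ on $\ell^{p_{0}}$, and by duality on $\ell^{p_{0}'}$ as well. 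Interpolating the $\ell^{2}$ decay against these bounds with parameter $\theta_{p}$ gives $\|T_{\ZZ}[m_{M_{1},M_{2}}\Delta^{c}]\|_{\ell^{p}\to\ell^{p}}\lesssim(\log M_{2})^{-\alpha\theta_{p}}(\log\log M_{2})^{1-\theta_{p}}\lesssim(\log M_{2})^{-\alpha\theta_{p}/2}$ for $M_{2}$ large, so \eqref{eq:54} holds with $\alpha_{p}:=\alpha\theta_{p}/2$, which exceeds $10$ by $\alpha>100\,\theta_{p}^{-1}$ from \eqref{eq:299}. The case $j=r=1$ (with $\log M_{1}\le\log M_{2}$) is identical, and $j=r\ge2$ is symmetric, exchanging the roles of $M_{1},M_{2}$ and of $v_{j,1},v_{j,2}$.

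\textbf{Main obstacle.} The delicate point is the $\ell^{p_{0}}$ estimate. A naive triangle inequality over $\Sigma_{\le l}$ would cost the factor $\#\Sigma_{\le l}\lesssim 2^{C2^{\rho l}}=2^{C(\log_{\tau}M_{2})^{\beta\rho}}$ which, although $o(M_{2}^{\varepsilon})$, grows faster than every power of $\log M_{2}$ and would swamp the $\ell^{2}$ decay under interpolation; this already fails for the main term $\Phi_{\le n}^{\Sigma_{\le l}}[G,\mathfrak m]$ (the coefficients $|G(a/q)|\lesssim q^{-\delta}$ are not summable over $a/q$), and likewise for the error term whenever $\min(M_{1},M_{2})$ is only a power of $\log M_{2}$. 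Replacing that factor by $(l+1)$ forces one both to extract the genuine major-arc main term with asymptotic precision and to invoke the Ionescu--Wainger theorem; arranging the sum-to-integral errors so that they assemble into a single periodized bump amenable to Theorem \ref{thm:IW} is the part requiring the most care.
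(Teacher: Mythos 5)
Your overall scheme -- a pointwise $\ell^2$ estimate via Dirichlet's principle and Proposition \ref{prop:23}, interpolated against a near-trivial $\ell^{p_0}$, $\ell^{p_0'}$ bound -- is exactly the paper's, and your $\ell^2$ leg (including the observation that a denominator $q_0\le(\log_\tau M_2)^\beta$ would place $\xi$ inside a major arc where $\Delta^c$ vanishes, so that the surviving $\xi$ satisfy \eqref{eq:215}--\eqref{eq:w6-1}) matches the paper's argument step for step. The interpolation numerology at the end is also fine: a loss of $(l+1)\lesssim\log\log M_2$ on $\ell^{p_0}$ would still be absorbed by the $\ell^2$ decay.

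The problem is your $\ell^{p_0}$ leg. Theorem \ref{thm:IW} as stated applies to $1$-periodizations $\Theta_{\Sigma_{\le l}}[\mathfrak m]$ of a \emph{single} multiplier $\mathfrak m$; it does not ``accommodate'' fraction-dependent coefficients, so the main term $\Phi_{\le n}^{\Sigma_{\le l}}[G,\mathfrak m_{M_1,M_2}](\xi)=\sum_{a/q}G(a/q)\,\mathfrak m_{M_1,M_2}(\xi-a/q)\eta_{\le n}(\xi-a/q)$ is not covered by it, and the bound $|G(a/q)|\le 1$ alone does not rescue you (as you yourself note, summing over $\Sigma_{\le l}$ is fatal). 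Making this rigorous requires the dyadic shell decomposition over $\Sigma_s$, the Gauss-sum decay $|G(a/q)|\lesssim 2^{-\delta s}$ on $\ell^2$, the trivial $2^{C2^{\rho s}}$ count on $\ell^{p_0}$, and interpolation in $s$ -- precisely the machinery the paper deploys later for Claims \ref{claim:33}--\ref{claim:8}, but which you only gesture at. The same remark applies to your error term, whose per-fraction $\ell^{p_0}$ multiplier norm is only $O(1)$, so it too needs an $\ell^2$--$\ell^{p_0}$ interpolation rather than assembly ``into a single periodized bump.'' All of this is avoidable: the paper's $\ell^{p_0}$ bound \eqref{eq:286} is a one-liner, because $T_{\ZZ}[m_{M_1,M_2}\Delta_{\le l^{\beta}(M_2),\le -n}]=T_{\ZZ}[m_{M_1,M_2}]\circ T_{\ZZ}[\Theta_{\Sigma_{\le l^{\beta}(M_2)}}[\eta_{\le -n}]]$; the first factor is an average (norm $\le 1$ on every $\ell^u$) and the second is literally the object of Theorem \ref{thm:IW} with the scalar bump $\mathfrak m=\eta_{\le -n}$, whose support condition holds since $n\gtrsim\log_2 M_2\gg 2^{2\rho l^{\beta}(M_2)}=(\log_\tau M_2)^{2\rho\beta}$ by \eqref{eq:301}. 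No major-arc approximation, and no $G(a/q)$, is needed at this stage. So while your claim is true and your architecture sound, the $\ell^{p_0}$ estimate as you present it contains a genuine unproved step; either supply the shell decomposition or, better, use the factorization above.
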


The case $j=r\ge2$ requires a minor modification. Keeping in mind that $\log M_2\lesssim \log M_1$, it suffices to establish
an analogue of \eqref{eq:54}. Namely, one has 
\begin{align*}
\|T_{\ZZ}[m_{M_1, M_2}\Delta^{ c}_{\le l^{\beta}(M_1), \le -n_{M_1, M_2}^{v_j,\beta}(M_1)}]f\|_{\ell^2(\ZZ)}\lesssim_{\tau} (\log M_1)^{-\alpha}\|f\|_{\ell^2(\ZZ)}, \qquad f\in\ell^2(\ZZ).
\end{align*}

\begin{proof}[Proof of Claim \ref{claim:1}]
Since  $\log M_1\lesssim \log M_2$, one has $\log M_{r, j}^*\simeq \log M_2$, where $M_{r, j}^*$ was defined  in \eqref{eq:212}. We can also assume that $M_2$ is a large number.  To prove \eqref{eq:54}, by Plancherel's theorem, it suffices to show for every $\xi\in\TT$ that
\begin{align}
\label{eq:288}
|m_{M_1, M_2}(\xi)\Delta_{\le l^{\beta}(M_2), \le -n_{M_1, M_2}^{v_j,\beta}(M_2)}^{c}(\xi)|\lesssim (\log M_2)^{-\alpha}.
\end{align}
For this purpose we use Dirichlet's principle to find a rational fraction $a_0/q_0$ such that $(a_0, q_0)=1$ and $1\le q_0 \le C M_1^{v_{j, 1}}M_2^{v_{j, 2}}\log(M_{r, j}^*)^{-\beta}=C 2^{n_{M_1, M_2}^{v_j, \beta}(M_{r, j}^*)}$ and
\begin{align*}
\Big|\xi-\frac{a_0}{q_0}\Big|\le\frac{\log(M_{r, j}^*)^{\beta}}{q_0CM_1^{v_{j, 1}}M_2^{v_{j, 2}}}\le \frac{1}{q_0^2}
\end{align*}
for a large constant $C>1$ to be specified later. If $q_0< \log(M_2)^{\beta}$ then $a_0/q_0\in \Sigma_{\le l^{\beta}(M_2)}$ and consequently the left-hand side of \eqref{eq:288} vanishes (if $C>1$ is large enough)  and there is nothing to prove. Thus we can assume that $ \log(M_{r, j}^*)^{\beta}\lesssim q_0\lesssim M_1^{v_{j, 1}}M_2^{v_{j, 2}}\log(M_{r, j}^*)^{-\beta}$. We now can apply Proposition \ref{prop:23} and obtain \eqref{eq:288} as claimed. 
\end{proof}

\subsection{Major arcs estimates}
Recalling  \eqref{Phi}
we begin with a simple approximation formula. 
\begin{lemma}
\label{lemma:3}
Suppose that  $1\le j<r$ and $(M_1, M_2)\in\SS_{\tau}(j)$. Then for every $0\le l, l'\le l^{\beta}(M_2)$  and $(M_1', M_2')\in\SS_{\tau}(j)$ and $m_1\simeq M_1'$ such that $1\le M_1'\le M_1$ and $2^{C_{\rho}2^{\rho l}}\le M_2'\le M_2$ one has 
\begin{align}
\label{eq:129}
\begin{split}
m_{m_1, M_2'}^1(\xi)\Delta_{\le l, \le -n_{M_1, M_2}^{v_j}(l')}(\xi)
=\Phi_{\le -n_{M_1, M_2}^{v_j}(l')}^{\Sigma_{\le l}}[G_{m_1}^1, \mathfrak m_{m_1, M_2'}^1](\xi)+O(2^{C_{\rho}2^{\rho l}}(M_{2}')^{-1}),
\end{split}
\end{align}
where  $n_{M_1, M_2}^{v}(N)$, $G_{m_1}^1$,  $m_{m_1, M_2}^1$ and $\mathfrak m_{m_1, M_2}^1$ were defined respectively in \eqref{eq:228'}, \eqref{eq:325},  \eqref{eq:128}  and \eqref{eq:326}.
In particular, \eqref{eq:129} immediately yields
\begin{align}
\label{eq:76}
\begin{split}
m_{M_1', M_2'}(\xi)\Delta_{\le l, \le -n_{M_1, M_2}^{v_j}(l')}(\xi)
=&\sum_{m_1\in\ZZ}\Phi_{\le -n_{M_1, M_2}^{v_j}(l')}^{\Sigma_{\le l}}[G_{m_1}^1, \mathfrak m_{m_1, M_2'}^1](\xi)\chi_{M_1'}(m_1)\\
&+O(2^{C_{\rho}2^{\rho l}}(M_{2}')^{-1}).
\end{split}
\end{align}
The same claims hold when $j=r=1$, as long as $\log M_1\le \log M_2$.
\end{lemma}

A similar conclusion holds when $j=r\ge2$. Taking into account that $\log M_2\lesssim \log M_1$ whenever 
$(M_1, M_2)\in\SS_{\tau}(r)$  and assuming that 
$0\le l, l'\le l^{\beta}(M_1)$,  one has for every $(M_1', M_2')\in\SS_{\tau}(j)$ and $m_2\simeq M_2'$ satisfying $2^{C_{\rho}2^{\rho l}}\le M_1'\le M_1$ and $1\le M_2'\le M_2$ that
\begin{align}
\label{eq:130}
\begin{split}
m_{M_1', m_2}^2(\xi)\Delta_{\le l, \le -n_{M_1, M_2}^{v_j}(l')}(\xi)
=\Phi_{\le -n_{M_1, M_2}^{v_j}(l')}^{\Sigma_{\le l}}[G_{m_2}^2, \mathfrak m_{M_1', m_2}^2](\xi)+O(2^{C_{\rho}2^{\rho l}}(M_{1}')^{-1}).
\end{split}
\end{align}
In particular, \eqref{eq:130} yields
\begin{align*}
m_{M_1', M_2'}(\xi)\Delta_{\le l, \le -n_{M_1, M_2}^{v_j}(l')}(\xi)
=&\sum_{m_2\in\ZZ}\Phi_{\le -n_{M_1, M_2}^{v_j}(l')}^{\Sigma_{\le l}}[G_{m_2}^2, \mathfrak m_{M_1', m_2}^2](\xi)\chi_{M_2'}(m_2)\\
&+O(2^{C_{\rho}2^{\rho l}}(M_{1}')^{-1}).
\end{align*}

\begin{proof}[Proof of Lemma \ref{lemma:3}]
For every $a/q\in\Sigma_{\le l}$,  we note 
\begin{align}
\label{eq:320}
\ex(P_{\xi}(m_1, m_2))=
\ex(P_{\xi-a/q}(m_1, qm + r_2))\ex(P_{a/q}(m_1, r_2)),
\end{align}
whenever $m_1\in\ZZ$, $m_2 = qm + r_2$ and $r_2\in\ZZ_q$.
Then, by \eqref{eq:320}, since $q\le 2^{C_{\rho}2^{\rho l}}\le M_2'$, we have
\begin{align}
\label{eq:321}
&\sum_{m_2\in\ZZ}\ex(P_{\xi}(m_1, m_2))\chi_{M_2'}(m_2)=
\sum_{r_2=1}^q\ex(P_{a/q}(m_1, r_2))\\
\nonumber&\hspace{3cm}\times\sum_{m\in \ZZ}\ex(P_{\xi-a/q}(m_1, qm+r_2))\chi_{M_2'}(qm + r_2).
\end{align}
The summation in $m$ ranges over $m_{*}\le m \le m_{**}$ where $m_{*}/m_{**}$ is minimal/maximal with respect to
$\tau^{-1}M_2' \le qm + r_2 \le M_2'$. We will use Lemma \ref{sum-integral} to compare  
$$
{\rm the \ sum} \ \  \sum_{m_{*} <m \le m_{**}} \ex(f(m)) \ \ \ {\rm to \ the \ integral} \ \ \int_{m_{*}}^{m_{**}} \ex(f(s)) ds,
$$
where $f(m) = P_{\xi-a/q}(m_1, qm+r_2))$.
Suppose that $a/q\in\Sigma_{\le l}$ approximates $\xi$ in the following sense
\begin{align}\label{xi-bound}
\Big|\xi-\frac{a}{q}\Big|\le \frac{(\log_{\tau} M_2)^{\beta}}{M_1^{v_{j, 1}}M_2^{v_{j, 2}}}.
\end{align}
 From the definition of $\Sigma_{\le l}$, we see that $q  \le 2^{C_{\rho}2^{\rho l}}$.
Therefore by \eqref{xi-bound},  the derivative $f'$ satisfies
$$
|f'(m)| \lesssim  q |\xi - a/q| (M_1')^{v_{j, 1}} (M_2')^{v_{j,2} - 1} \lesssim
q  (\log_{\tau} M_2)^{\beta}M_2^{-1} < \ 1/2
$$
since $v_{j,2}\ge 1$, $\log_2 q \lesssim 2^{\rho l^{\beta}(M_2)} \le (\log_{\tau}M_2)^{\rho \beta}$
and $\rho \beta \le 1/10$ by \eqref{eq:301}.
By Lemma \ref{sum-integral}, we have
$$
\Big| q \sum_{m_{*}\le m\le m_{**}}\ex(P_{\xi-a/q}(m_1, qm+r_2))
- \int_{\tau^{-1}M_2'}^{M_2'} \ex(P_{\xi-a/q}(m_1, t)) dt\Big| \lesssim q
$$
and hence by \eqref{eq:321}, we obtain
$$
\Big|\sum_{m_2\in\ZZ}\ex(P_{\xi}(m_1, m_2))\chi_{M_2'}(m_2) -  G_{m_1}^1(a/q) \mathfrak m_{m_1, M_2'}^1(\xi-a/q)\Big|\lesssim q (M_2')^{-1},
$$
which by $q  \le 2^{C_{\rho}2^{\rho l}}$ proves  \eqref{eq:129} as desired. 
\end{proof}

For $i\in[2]$ and $j\in[r]$ let $M_1^c = M_2, M_2^c = M_1$,
\begin{align*}
\SS_{\tau}^i(j):=\{M_i\in\DD_{\tau}: (M_1, M_2)\in \SS_{\tau}(j) \, {\rm for \ some} \, M_i^c\}  
\end{align*}
and for $M_1, M_2\in\DD_{\tau}$ we also let
\begin{align*}
\SS_{\tau}^1(j; M_2):=&\{M_1\in\DD_{\tau}: (M_1, M_2)\in \SS_{\tau}(j)\},\\
\SS_{\tau}^2(j; M_1):=&\{M_2\in\DD_{\tau}: (M_1, M_2)\in \SS_{\tau}(j)\}.
\end{align*}

\vskip 5pt
\subsection{Changing  scale estimates}
In our next step we will have to change the scale (or more precisely, we will truncate the size of denominators of fractions in $\Sigma_{\le l^{\beta}(M_2)}$) to make the approximation estimates with respect to the first variable possible.

We formulate the change of scale argument
as follows.

\begin{claim}
\label{claim:4}
For every $1\le j<r$  and for every $M_1\in\SS_{\tau}^1(j)$ one has
\begin{align}
\label{eq:315}
\|\sup_{M_2\in\SS_{\tau}^2(j; M_1)}|T_{\ZZ}[g_{M_1, M_2}^{M_2}-h_{M_1, M_2}^{M_1}]f|\|_{\ell^2(\ZZ)}
\lesssim_{\tau} (\log M_1)^{-\alpha}\|f\|_{\ell^2(\ZZ)}, \qquad f\in\ell^2(\ZZ),
\end{align}
with $\alpha$ as in \eqref{eq:299}, where  
\begin{align}
\label{eq:342}
\begin{gathered}
g_{M_1, M_2}^N:=m_{M_1, M_2}\Delta_{\le l^{\beta}(N), \le -n_{M_1, M_2}^{v_j,\beta}(N)}, \qquad N\ge1,\\
h_{M_1, M_2}^{N}(\xi):=\sum_{m_1\in\ZZ}\Phi_{\le -n_{M_1, M_2}^{v_j, \beta}(N)}^{\Sigma_{\le l^{\beta}(N)}}[G_{m_1}^1, \mathfrak m_{m_1, M_2}^1](\xi)\chi_{M_1}(m_1) , \qquad N\ge1.
\end{gathered}
\end{align}
The same estimate holds when $j=r=1$, as long as $\log M_1\le \log M_2$.
\end{claim}
The case $j=r\ge2$ requires a minor modification. Keeping in mind that $\log M_2\lesssim \log M_1$,  it suffices to establish
an analogue of \eqref{eq:315}. Namely, for every $M_2\in\SS_{\tau}^2(j)$ one has
\begin{align}
\label{eq:316}
\|\sup_{M_1\in\SS_{\tau}^1(j; M_2)}|T_{\ZZ}[g_{M_1, M_2}^{M_1}-h_{M_1, M_2}^{M_2}]f|\|_{\ell^2(\ZZ)}
\lesssim_{\tau} (\log M_2)^{-\alpha}\|f\|_{\ell^2(\ZZ)}.
\end{align}
We only
present the proof of \eqref{eq:315}, inequality \eqref{eq:316} can be
proved in a similar way.

\begin{proof}[Proof of Claim \ref{claim:4}]
The proof will proceed in several steps.
\vskip 5pt
\paragraph{\bf Step 1}
Using \eqref{eq:76} from Lemma \ref{lemma:3} we have  
\begin{align}
\label{eq:333}
\|T_{\ZZ}[g_{M_1, M_2}^{M_2}-h_{M_1, M_2}^{M_2}]f\|_{\ell^2(\ZZ)}
\lesssim_{\tau} M_2^{-1/2}\|f\|_{\ell^2(\ZZ)}.
\end{align}
Hence, by \eqref{eq:333} it suffices to prove (with $\alpha$ as in \eqref{eq:299})  that for every $M_1\in\SS_{\tau}^1(j)$ one has
\begin{align}
\label{eq:317}
\|\sup_{M_2\in\SS_{\tau}^2(j; M_1)}|T_{\ZZ}[h_{M_1, M_2}^{M_2}-h_{M_1, M_2}^{M_1}]f|\|_{\ell^2(\ZZ)}
\lesssim_{\tau} (\log M_1)^{-\alpha}\|f\|_{\ell^2(\ZZ)}, \qquad f\in\ell^2(\ZZ).
\end{align}
 
\paragraph{\bf Step 2}
To prove \eqref{eq:317} we define for any $s\in\NN$ a new multiplier by
\begin{align}
\label{eq:68}
h_{M_1, M_2, s}^{N}(\xi):=\sum_{m_1\in\ZZ}\Phi_{\le -n_{M_1, M_2}^{v_j, \beta}(N)}^{\Sigma_{s}}[G_{m_1}^1, \mathfrak m_{m_1, M_2}^1](\xi)\chi_{M_1}(m_1).
\end{align}
In view of \eqref{eq:29} we may assume that $l^{\beta}(M_1)<  l^{\beta}(M_2)$, then
one can  write
\begin{align*}
h_{M_1, M_2}^{M_2}(\xi)-h_{M_1, M_2}^{M_1}(\xi)=&\sum_{0\le s\le l^{\beta}(M_1)}\big(h_{M_1, M_2, s}^{M_2}(\xi)-h_{M_1, M_2, s}^{M_1}(\xi)\big)\\
&+\sum_{l^{\beta}(M_1)< s\le l^{\beta}(M_2)}h_{M_1, M_2, s}^{M_2}(\xi).
\end{align*}
For sufficiently large $s\in\NN$ if $l^{\beta}(M_1)\ge s$, then by \eqref{eq:29} we have $\log_{\tau}M_2\ge K_j^{-1}2^{s/\beta}\ge 2^{s/(2\beta)}$. Similarly,  if $l^{\beta}(M_2)\ge s$, then $\log_{\tau}M_2\ge  2^{s/(2\beta)}$.
Thus, we set $N_s:=\tau^{2^{s/(2\beta)}}$ for any $s\in\NN$, and 
let 
\begin{align*}
\tilde{\SS}_{\tau, M_1}^2(j; s):=\{M_2\in \SS_{\tau}^2(j; M_1): M_2\ge N_s\}.
\end{align*}
The proof will be finished if we can show (with $\alpha$ and $\delta$ as in \eqref{eq:299}) that for every $f\in\ell^2(\ZZ)$ and for every $M_1\in\SS_{\tau}^1(j)$, and   $0\le s\le l^{\beta}(M_1)$ one has
\begin{align}
\label{eq:328}
\|\sup_{M_2\in\tilde{\SS}_{\tau, M_1}^2(j; s)}|T_{\ZZ}[h_{M_1, M_2, s}^{M_2}-h_{M_1, M_2, s}^{M_1}]f|\|_{\ell^2(\ZZ)}
\lesssim_{\tau} 2^{-\delta s}(\log M_1)^{-\alpha}\|f\|_{\ell^2(\ZZ)},
\end{align}
and moreover, for every $s\in\NN$,  one also has
\begin{align}
\label{eq:334}
\|\sup_{M_2\in\tilde{\SS}_{\tau, M_1}^2(j; s)}|T_{\ZZ}[h_{M_1, M_2, s}^{M_2}]f|\|_{\ell^2(\ZZ)}
\lesssim_{\tau} s2^{-\delta s}\|f\|_{\ell^2(\ZZ)}.
\end{align}
Then summing \eqref{eq:334} over $s\ge l^{\beta}(M_1)$ we obtain the desired claim by \eqref{eq:299}.

\paragraph{\bf Step 3}
We now establish \eqref{eq:328}.   If $N\in\{M_1, M_2\}$ and $(M_1, M_2)\in\SS_{\tau}(j)$, then for $M_2\ge N_s$ we note that
\begin{align}
\label{eq:56}
\begin{split}
\eta_{\le -n_{M_1, M_2}^{v_j, \beta}(N)}(\xi)&=\eta_{\le -n_{M_1, M_2}^{v_j, \beta}(N)}(\xi)\eta_{\le -n_{M_1, M_2}^{v_j, \beta}(N)+1}(\xi)\\
&=\eta_{\le -n_{M_1, M_2}^{v_j, \beta}(N)}(\xi)\eta_{\le -n_{M_1, N_s}^{v_j, \beta}(N_s)+1}(\xi),
\end{split}
\end{align}
since $1\le j<r$ and  $v_{j, 2}\neq0$. Using \eqref{eq:56} we may write
\begin{align}
\label{eq:335}
\begin{split}
&\Phi_{\le -n_{M_1, M_2}^{v_j, \beta}(N)}^{\Sigma_{s}}[G_{m_1}^1, \mathfrak m_{m_1, M_2}^1](\xi)\\
&\hspace{2cm}=\Phi_{\le -n_{M_1, N_s}^{v_j, \beta}(N_s)+2}^{\Sigma_{\le s}}[1, \mathfrak m_{m_1, M_2}^1\eta_{\le -n_{M_1, M_2}^{v_j, \beta}(N)}](\xi)
\times\Phi_{\le -n_{M_1, N_s}^{v_j, \beta}(N_s)+1}^{\Sigma_{s}}[G_{m_1}^1, 1](\xi),
\end{split}
\end{align}
for sufficiently large $s$ such that $0\le s\le l^{\beta}(M_1)$, which in turn guarantees that  $M_2> N_{s}$ as we have seen in the previous step. Denote
\begin{gather*}
I(m_1, M_2):=
T_{\ZZ}\Big[\Phi_{\le -n_{M_1, N_s}^{v_j, \beta}(N_s)+2}^{\Sigma_{\le s}}\big[1, \sum_{N=l^{\beta}(M_1)}^{l^{\beta}(M_2)-1}\mathfrak m_{m_1, M_2}^1{\rm D}_N\big(\eta_{\le -n_{M_1, M_2}^{v_j}(N)}\big)\big]\Big],
\end{gather*}
where, see definitions \eqref{eq:228} and \eqref{eq:228'},
\begin{align*}
{\rm D}_N\big(\eta_{\le -n_{M_1, M_2}^{v_j}(N)}\big):=\eta_{\le -n_{M_1, M_2}^{v_j}(N+1)}-\eta_{\le -n_{M_1, M_2}^{v_j}(N)}.
\end{align*}
Using the factorization from \eqref{eq:335} one sees
\begin{align*}
&\|\sup_{M_2\in\tilde{\SS}_{\tau, M_1}^2(j; s)}|T_{\ZZ}[h_{M_1, M_2, s}^{M_2}-h_{M_1, M_2, s}^{M_1}]f|\|_{\ell^2(\ZZ)}\\
&\hspace{2cm}\le\sum_{m_1\in\ZZ}\|I(m_1, M_2)\|_{\ell^2(\ZZ)\to\ell^2(\ZZ; \ell^{\infty}_{M_2}(\tilde{\SS}_{\tau, M_1}^2(j; s)))}\chi_{M_1}(m_1)\\
&\hspace{4cm}\times
\Big\|
T_{\ZZ}\big[\Phi_{ \le -n_{M_1, N_s}^{v_j, \beta}(N_s)+1}^{\Sigma_{s}}[G_{m_1}^1, 1]\big]f\Big\|_{\ell^2(\ZZ)}.
\end{align*}
Using the Ionescu--Wainger multiplier theory (see Theorem \ref{thm:IW})  we conclude that
\begin{align*}
\sup_{m_1\in(\tau^{-1}M_1, M_1]\cap\ZZ}\|I(m_1, M_2)\|_{\ell^2(\ZZ)\to\ell^2(\ZZ; \ell^{\infty}_{M_2}(\tilde{\SS}_{\tau, M_1}^2(j; s)))}
\lesssim_{\tau} (\log M_1)^{-\alpha}
\end{align*}
with $\alpha$ as in \eqref{eq:299},
since using standard square function continuous arguments we have
\begin{align*}
\begin{gathered}
\bigg\|\bigg(\sum_{M_2\in \tilde{\SS}_{\tau, M_1}^2(j; s)}
\Big|T_{\RR}\Big[\sum_{N=l^{\beta}(M_1)}^{l^{\beta}(M_2)-1}\mathfrak m_{m_1, M_2}^1{\rm D}_N\big(\eta_{\le -n_{M_1, M_2}^{v_j}(N)}\big)\Big]f\Big|^2\bigg)^{1/2}
\bigg\|_{L^2(\RR)}
\lesssim (\log M_1)^{-\alpha}\|f\|_{L^2(\RR)}.
\end{gathered}
\end{align*}
Thus by the Cauchy--Schwarz inequality, Plancherel's theorem and inequality \eqref{eq:105}  we obtain
\begin{align*}
&\|\sup_{M_2\in\tilde{\SS}_{\tau, M_1}^2(j; s)}|T_{\ZZ}[h_{M_1, M_2, s}^{M_2}-h_{M_1, M_2, s}^{M_1}]f|\|_{\ell^2(\ZZ)}\\
&\hspace{1cm}\lesssim (\log M_1)^{-\alpha}
\Big\|\Big(\sum_{m_1\in\ZZ}\chi_{M_1}(m_1)
\big|T_{\ZZ}\big[\Phi_{\le -n_{M_1, N_s}^{v_j, \beta}(N_s)+1}^{\Sigma_{s}}[G_{m_1}^1, 1]\big]f\big|^2\Big)^{1/2}\Big\|_{\ell^2(\ZZ)}\\
&\hspace{3cm}\lesssim_{\tau} 2^{-\delta s}(\log M_1)^{-\alpha}\|f\|_{\ell^2(\ZZ)}
\end{align*}
with $\alpha$ and $\delta$ as in \eqref{eq:299}, which  yields \eqref{eq:328}. 
\paragraph{\bf Step 4} We now establish \eqref{eq:334}. Using notation from the previous step 
and denoting
\begin{gather*}
J(m_1, M_2):=
T_{\ZZ}\Big[\Phi_{\le -n_{M_1, N_s}^{v_j, \beta}(N_s)+2}^{\Sigma_{\le s}}\big[1, \mathfrak m_{m_1, M_2}^1\eta_{\le n_{M_1, M_2}^{v_j, \beta}(M_2)}\big)\big]\Big],
\end{gather*}
and again using the factorization from \eqref{eq:335} one sees
\begin{align*}
&\|\sup_{M_2\in\tilde{\SS}_{\tau, M_1}^2(j; s)}|T_{\ZZ}[h_{M_1, M_2, s}^{ M_2}]f|\|_{\ell^2(\ZZ)}\\
&\hspace{2cm}\le\sum_{m_1\in\ZZ}\|J(m_1, M_2)\|_{\ell^2(\ZZ)\to\ell^2(\ZZ; \ell^{\infty}_{M_2}(\tilde{\SS}_{\tau, M_1}^2(j; s)))}\chi_{M_1}(m_1)\\
&\hspace{4cm}\times
\Big\|
T_{\ZZ}\big[\Phi_{\le -n_{M_1, N_s}^{v_j, \beta}(N_s)+1}^{\Sigma_{s}}[G_{m_1}^1, 1]\big]f\Big\|_{\ell^2(\ZZ)}.
\end{align*}
Using the Ionescu--Wainger multiplier theory (see Theorem \ref{thm:IW1})  we conclude that
\begin{align*}
\sup_{m_1\in(\tau^{-1}M_1, M_1]\cap\ZZ}\|J(m_1, M_2)\|_{\ell^2(\ZZ)\to\ell^2(\ZZ; \ell^{\infty}_{M_2}(\tilde{\SS}_{\tau, M_1}^2(j; s)))}\lesssim_{\tau} s.
\end{align*}
Then proceeding as in the previous step we obtain  \eqref{eq:334}. This completes the proof of Claim \ref{claim:4}.
\end{proof}

\subsection{Transition estimates}
Our aim will be to understand the final approximation, which will allow us to apply the oscillation Ionescu--Wainger theory (see Theorem \ref{thm:IW2}) from Section \ref{section:6}.
\begin{claim}
\label{claim:7}
For every $1\le j<r$  and for every $M_1\in\SS_{\tau}^1(j)$ one has
\begin{align}
\label{eq:360}
\big\|\sup_{M_2\in\SS_{\tau}^2(j;M_1)}\big|T_{\ZZ}\big[h_{M_1, M_2}^{M_1}-\tilde{h}_{M_1, M_2}^{M_1}\big]f\big|\big\|_{\ell^2(\ZZ)}
\lesssim_{\tau} (\log M_1)^{-\alpha}\|f\|_{\ell^2(\ZZ)}, \qquad f\in\ell^2(\ZZ),
\end{align}
with $\alpha$ as in \eqref{eq:299}, where  $h_{M_1, M_2}^{N}$ was defined in \eqref{eq:342} and
\begin{align}
\label{eq:73}
\tilde{h}_{M_1, M_2}^{N}:=\Phi_{\le -n_{M_1, M_2}^{v_j, \beta}(N)}^{\Sigma_{\le l^{\beta}(N)}}[G, \mathfrak m_{M_1, M_2}], \qquad N\ge1.
\end{align}
The same estimate holds when $j=r=1$, as long as $\log M_1\le \log M_2$.
\end{claim}
The case $j=r\ge2$ requires a minor modification. Keeping in mind that $\log M_2\lesssim \log M_1$,   it suffices to establish
an analogue of \eqref{eq:360}. Namely, one has 
\begin{align}
\label{eq:359}
\big\|\sup_{M_1\in\SS_{\tau}^1(j;M_2)}\big|T_{\ZZ}\big[h_{M_1, M_2}^{M_2}-\tilde{h}_{M_1, M_2}^{M_2}\big]f\big|\big\|_{\ell^2(\ZZ)}
\lesssim_{\tau} (\log M_2)^{-\alpha}\|f\|_{\ell^2(\ZZ)}, \qquad f\in\ell^2(\ZZ),
\end{align}  We only
present the proof of \eqref{eq:360}, inequality \eqref{eq:359} can be
proved in a similar way.

\begin{proof}[Proof of Claim \ref{claim:7}]
The proof will proceed in several steps as before. 
Write
\begin{align*}
h_{M_1, M_2}^{M_1}-\tilde{h}_{M_1, M_2}^{M_1}=\sum_{0\le s\le l^{\beta}(M_1)}h_{M_1, M_2, s}^{M_1}-\tilde{h}_{M_1, M_2, s}^{M_1},
\end{align*}
where $h_{M_1, M_2, s}^{M_1}$ was defined in \eqref{eq:68} and 
\begin{align}
\label{eq:74}
\tilde{h}_{M_1, M_2, s}^{M_1}:=\Phi_{\le -n_{M_1, M_2}^{v_j, \beta}(M_1)}^{\Sigma_{s}}[G, \mathfrak m_{M_1, M_2}].
\end{align}
Then it suffices to show that for sufficiently large $s$ such that $0\le s\le l^{\beta}(M_1)$ we have
\begin{align}
\label{eq:28}
\big\|\sup_{M_2\in\SS_{\tau}^2(j;M_1)}\big|T_{\ZZ}\big[h_{M_1, M_2,s}^{M_1}-\tilde{h}_{M_1, M_2,s}^{M_1}\big]f\big|\big\|_{\ell^2(\ZZ)}
\lesssim_{\tau} s2^{-\delta s}(\log M_1)^{-\alpha}\|f\|_{\ell^2(\ZZ)}, \qquad f\in \ell^2(\ZZ),
\end{align}
with $\alpha$ and $\delta$ as in \eqref{eq:299}, which will clearly imply \eqref{eq:360}.

\paragraph{\bf Step 1}   Using \eqref{eq:56}, in a similar way as in \eqref{eq:335}, we may write   
\begin{align}
\label{eq:71}
\tilde{h}_{M_1, M_2,s}^{M_1}(\xi)=\Phi_{\le -n_{M_1, N_s}^{v_j, \beta}(N_s)+2}^{\Sigma_{\le s}}[1, \widetilde{\mathfrak m}_{M_1, M_2}](\xi)\times
\Phi_{ \le -n_{M_1, N_s}^{v_j, \beta}(N_s)+1}^{\Sigma_{s}}[G, 1](\xi),
\end{align}
where
\begin{align}
\label{eq:14}
\widetilde{\mathfrak m}_{M_1, M_2}(\xi):=\mathfrak m_{M_1, M_2}(\xi)\eta_{\le -n_{M_1, M_2}^{v_j, \beta}(M_1)}(\xi).
\end{align}
By Theorem \ref{thm:IW1}, we may conclude
\begin{align}
\label{eq:139}
\begin{gathered}
\Big\|T_{\ZZ}\Big[\Phi_{\le -n_{M_1, N_s}^{v_j, \beta}(N_s)+2}^{\Sigma_{\le s}}[1, \widetilde{\mathfrak m}_{M_1, M_2}]\Big]\Big\|_{\ell^2(\ZZ)\to \ell^2(\ZZ); \ell^{\infty}_{M_2}(\SS_{\tau}^2(j;M_1))}
\lesssim_{\tau} s,
\end{gathered}
\end{align}
By Plancherel's theorem and inequality \eqref{eq:104} we obtain
\begin{align}
\label{eq:140}
\Big\|T_{\ZZ}\Big[\Phi_{ \le -n_{M_1, N_s}^{v_j, \beta}(N_s)+1}^{\Sigma_{s}}[G, 1]\Big]f\Big\|_{\ell^2(\ZZ)}
\lesssim_{\tau} 2^{-\delta s}\|f\|_{\ell^2(\ZZ)}.
\end{align}
Inequalities \eqref{eq:139} and \eqref{eq:140} and \eqref{eq:71} imply
\begin{align}
\label{eq:28'}
\big\|\sup_{M_2\in\SS_{\tau}^2(j;M_1)}\big|T_{\ZZ}\big[\tilde{h}_{M_1, M_2,s}^{M_1}\big]f\big|\big\|_{\ell^2(\ZZ)}
\lesssim_{\tau} s2^{-\delta s}\|f\|_{\ell^2(\ZZ)}, \qquad f\in \ell^2(\ZZ).
\end{align}

\paragraph{\bf Step 2}
We now establish \eqref{eq:28}. 
For $0\le s\le l^{\beta}(M_1)$, we note that 
\begin{align*}
&h_{M_1, M_2, s}^{M_1}(\xi)=\sum_{a/q\in \Sigma_{s}}\eta_{\le -n_{M_1, M_2}^{v_j, \beta}(M_1)}(\xi-a/q)\sum_{r_1=1}^qG_{r_1}^1(a/q)\\
&\hspace{4cm} \times \sum_{m_1\in\ZZ}
\mathfrak m_{qm_1+r_1, M_2}^1(\xi-a/q)
\chi_{M_1}(qm_1+r_1).
\end{align*}
Introducing $\theta:=\xi-a/q$, 
$U_1:=\frac{\tau^{-1}M_1-r_1}{q}$ and $V_1:=\frac{M_1-r_1}{q}$ one can expand
\begin{align*}
\mathfrak m_{qm_1+r_1, M_2}^1(\theta)=\frac{1}{1-\tau^{-1}}
\int_{\tau^{-1}}^1\ex(P_{\theta}(qm_1+r_1, M_2y_2))dy_2,
\end{align*}
and by the fundamental theorem of calculus, one can write
\begin{align*}
&\sum_{\lfloor U_1\rfloor<m_1\le \lfloor V_1\rfloor}\int_{\tau^{-1}}^1\ex(P_{\theta}(qm_1+r_1, M_2y_2))dy_2-\int_{U_1}^{V_1}\int_{\tau^{-1}}^1\ex(P_{\theta}(qy_1+r_1, M_2y_2))dy_2dy_1\\
&\qquad=\sum_{\lfloor U_1\rfloor<m_1\le \lfloor V_1\rfloor}\int_{m_1-1}^{m_1}\int_{y_1}^{m_1}\int_{\tau^{-1}}^12\pi i q\theta(\partial_1P)(qt+r_1, M_2 y_2)\ex(P_{\theta}(qt+r_1, M_2y_2))dy_2dtdy_1\\
&\qquad\qquad+\Big(\int_{\lfloor U_1\rfloor}^{U_1}-\int_{\lfloor V_1\rfloor}^{V_1}\Big)\int_{\tau^{-1}}^1\ex(P_{\theta}(qy_1+r_1, M_2y_2))dy_2dy_1.
\end{align*}
By the change of variable we have
\begin{align*}
\int_{U_1}^{V_1}\int_{\tau^{-1}}^1\ex(P_{\theta}(qy_1+r_1, M_2y_2))dy_2dy_1=\frac{M_1(1-\tau^{-1})^2}{q}\mathfrak m_{M_1, M_2}(\theta).
\end{align*}
We now define new multipliers 
\begin{gather*}
\mathfrak g_{M_1, M_2}^{r_1, 1}(\theta):=\\
\sum_{\lfloor U_1\rfloor<m_1\le \lfloor V_1\rfloor}\int_{m_1-1}^{m_1}\int_{y_1}^{m_1}\int_{\tau^{-1}}^1\frac{2\pi i q(\log M_1)^{\beta}(\partial_1P)(qt+r_1, M_2y_2)\ex(P_{\theta}(qt+r_1, M_2y_2))}{(1-\tau^{-1})M_1^{v_{j, 1}}M_2^{v_{j,2}}|(\tau^{-1}M_1, M_1]\cap\ZZ|}dy_2dtdy_1,
\end{gather*}
and finally
\begin{align*}
\mathfrak g_{M_1, M_2}^{r_1,2}(\theta):=
\Big(\int_{\lfloor U_1\rfloor}^{U_1}-\int_{\lfloor V_1\rfloor}^{V_1}\Big)\int_{\tau^{-1}}^1\frac{\ex(P_{\theta}(qy_1+r_1, M_2y_2))}{(1-\tau^{-1})|(\tau^{-1}M_1, M_1]\cap\ZZ|}dy_2dy_1.
\end{align*}
Then with these definitions we can write
\begin{align*}
h_{M_1, M_2, s}^{M_1}(\xi)-\tilde{h}_{M_1, M_2,s}^{M_1}(\xi)=
\gamma_{\tau, M_1}\tilde{h}_{M_1, M_2,s}^{M_1}(\xi)
+\sum_{\ell\in[2]}\sum_{a/q\in  \Sigma_{s}}\sum_{r_1=1}^q
G_{r_1}^1(a/q) \mathfrak h_{M_1, M_2}^{r_1, \ell}(\xi-a/q),
\end{align*}
where $\gamma_{\tau, M_1}:=\frac{\{M_1\}-\{\tau^{-1}M_1\}}{|(\tau^{-1}M_1, M_1]\cap\ZZ|}$ and
\begin{align*}
\mathfrak h_{M_1, M_2}^{r_1, 1}(\theta):=
\mathfrak g_{M_1, M_2}^{r_1, 1}(\theta)\varrho_{\le -n_{M_1, M_2}^{v_{j}, \beta}(M_1)}(\theta)
\quad\text{ and } \quad
\mathfrak h_{M_1, M_2}^{r_1, 2}(\theta):=
\mathfrak g_{M_1, M_2}^{r_1, 2}(\theta)
\eta_{\le -n_{M_1, M_2}^{v_{j}, \beta}(M_1)}(\theta)
\end{align*}
and $\varrho_{\le n}(\theta):=(2^{-n}\theta)\eta_{\le n}(\theta)$.
For $\ell\in[2]$, we have 
\begin{align*}
|\mathfrak h_{M_1, M_2}^{r_1, \ell}(\theta)|\lesssim q(\log M_1)^{\beta}M_1^{-1}
\qquad \text{ and } \qquad |\gamma_{\tau, M_1}|\lesssim_{\tau} M_1^{-1}.
\end{align*}
Finally, using Theorem \ref{thm:IW1} for each $\ell\in[2]$ we conclude
\begin{align*}
\Big\|\sup_{M_2\in\SS_{\tau}^2(j;M_1)}\Big|T_{\ZZ}\Big[\sum_{\ell\in[2]}\sum_{a/q\in  \Sigma_{s}}\sum_{r_1=1}^q
G_{r_1}^1(a/q) \mathfrak h_{M_1, M_2}^{r_1, \ell}(\cdot-a/q)\Big]f\Big|\Big\|_{\ell^2(\ZZ)}\lesssim 2^{-\delta s} M_1^{-3/4}\|f\|_{\ell^2(\ZZ)}.
\end{align*}
This in turn, combined with \eqref{eq:28'}, implies \eqref{eq:28} and the proof of Claim \ref{claim:7} is established.
\end{proof}

\subsection{All together: proof of Theorem \ref{thm:maint}} We begin with a useful auxiliary lemma.

\begin{lemma}
\label{lemma:4}
For every $p\in(1, \infty)$ and every $j\in[r]$ there exists a  constant  $\delta_p\in(0, 1)$ such that for every $f\in\ell^p(\ZZ)$ and  $s\in\NN$  one has
\begin{align}
\label{eq:118}
\big\|T_{\ZZ}\big[\Phi_{ \le -n_{N_s, N_s}^{v_j, \beta}(N_s)+1}^{\Sigma_{s}}[G, \Pi_s^{\beta}]\big]f\big\|_{\ell^p(\ZZ)}
\lesssim_{p, \tau} 2^{-\delta_p s}\|f\|_{\ell^p(\ZZ)},
\end{align}
where $N_s:=\tau^{2^{s/(2\beta)}}$ for any $s\in\NN$, and $\Pi_s^{\beta}(\xi):=\prod_{u\in S_P}\eta_{\le -n_{N_s, N_s}^{u, \beta}(N_s)+1}(\xi)$ with $\beta>0$  from \eqref{eq:299}.
\end{lemma}

\begin{proof}
We may assume that $s\ge0$ is large, otherwise there is nothing to prove.
Inequality \eqref{eq:118} for $p=2$ with $\delta_2=\delta$ as in Proposition \ref{prop:32} follows by Plancherel's theorem from inequality \eqref{eq:104}  and the disjointness of supports of $\Pi_s^{\beta}(\xi-a/q)$ whenever $a/q\in \Sigma_{s}$.

We now prove
\eqref{eq:118} for $p\neq2$. We shall proceed in four steps.

\paragraph{\bf Step 1}
Let $M\simeq 2^{10C_{\rho}2^{10\rho s}}$ define
\begin{align*}
{\mathfrak h}_{M}^{s}:=m_{M,M}\Phi_{ \le -n_{N_s, N_s}^{v_j, \beta}(N_s)+1}^{\Sigma_{s}}[1, \Pi_s^{\beta}].
\end{align*}
By the Ionescu--Wainger multiplier theorem (see Theorem \ref{thm:IW}) one has
\begin{align}
\label{eq:123}
\|T_{\ZZ}[{\mathfrak h}_{M}^{s}]f\|_{\ell^u(\ZZ)}
\lesssim_{u, \tau} \|f\|_{\ell^u(\ZZ)},
\end{align}
whenever $u\in\{p_0, p_0'\}$. We will prove 
\begin{align}
\label{eq:126}
\big\|
T_{\ZZ}\big[{\mathfrak h}_{M}^{s}-\Phi_{ \le -n_{N_s, N_s}^{v_j, \beta}(N_s)+1}^{\Sigma_{s}}[G, \mathfrak m_{M, M}\Pi_s^{\beta}]\big]f\big\|_{\ell^p(\ZZ)}
\lesssim_{p, \tau}\|f\|_{\ell^p(\ZZ)},
\end{align}
and
\begin{align}
\label{eq:127}
\big\|
T_{\ZZ}\big[\Phi_{ \le -n_{N_s, N_s}^{v_j, \beta}(N_s)+1}^{\Sigma_{s}}[G, (1-\mathfrak m_{M, M})\Pi_s^{\beta}]\big]f\big\|_{\ell^p(\ZZ)}
\lesssim_{p, \tau}\|f\|_{\ell^p(\ZZ)}.
\end{align}
Assuming momentarily that \eqref{eq:126} and \eqref{eq:127} hold, then \eqref{eq:123} and the triangle inequality yield
\begin{align}
\label{eq:133}
\big\|T_{\ZZ}\big[\Phi_{ \le -n_{N_s, N_s}^{v_j, \beta}(N_s)+1}^{\Sigma_{s}}[G, \Pi_s^{\beta}]\big]f\big\|_{\ell^u(\ZZ)}
\lesssim_{u, \tau}\|f\|_{\ell^u(\ZZ)},
\end{align}
whenever $u\in\{p_0, p_0'\}$. Then interpolation between \eqref{eq:118} for $p=2$ (that we have shown with $\delta_2=\delta$) and \eqref{eq:133} gives \eqref{eq:118} for all $p\in(1, \infty)$.

\paragraph{\bf Step 2}
We now establish \eqref{eq:126}. For $p=2$ it will suffice to show that
\begin{align}
\label{eq:62}
|m_{M,M}(\xi)\Phi_{ \le -n_{N_s, N_s}^{v_j, \beta}(N_s)+1}^{\Sigma_{s}}[1, \Pi_s^{\beta}](\xi)-\Phi_{ \le -n_{N_s, N_s}^{v_j, \beta}(N_s)+1}^{\Sigma_{s}}[G, \mathfrak m_{M, M}\Pi_s^{\beta}](\xi)|\lesssim 2^{-5C_{\rho}2^{5\rho s}}.
\end{align}
Then by \eqref{eq:62} and Plancherel's theorem we obtain for sufficiently large $s\in\NN$ that
\begin{align}
\label{eq:136}
\big\|
T_{\ZZ}\big[{\mathfrak h}_{M}^{s}-\Phi_{ \le -n_{N_s, N_s}^{v_j, \beta}(N_s)+1}^{\Sigma_{s}}[G, \mathfrak m_{M, M}\Pi_s^{\beta}]\big]f\big\|_{\ell^2(\ZZ)}
\lesssim_{\tau} 2^{-5C_{\rho}2^{5\rho s}}\|f\|_{\ell^2(\ZZ)}.
\end{align}
 Moreover, for $u\in\{p_0, p_0'\}$ we have the trivial estimate
\begin{align}
\label{eq:119}
\big\|T_{\ZZ}\big[{\mathfrak h}_{M}^{s}-\Phi_{ \le -n_{N_s, N_s}^{v_j, \beta}(N_s)+1}^{\Sigma_{s}}[G, \mathfrak m_{M, M}\Pi_s^{\beta}]\big]f\big\|_{\ell^u(\ZZ)}
\lesssim_{u, \tau}2^{2C_{\rho}2^{\rho s}}\|f\|_{\ell^u(\ZZ)},
\end{align}
due to \eqref{eq:373}. Interpolating \eqref{eq:136} and \eqref{eq:119} gives \eqref{eq:126}.

\paragraph{\bf Step 3} To prove \eqref{eq:62} we proceed  as in the proof of Lemma \ref{lemma:3} and  show that 
\begin{align}
\label{eq:63}
|m_{M, M}(\xi) -  G(a/q) \mathfrak m_{M, M}(\xi-a/q)|\lesssim q M^{-1},
\end{align}
whenever $a/q\in\Sigma_s$ and $|\xi-{a}/{q}|\le \min_{u\in S_P}\{(\log_{\tau} N_s)^{\beta}N_s^{-u_1}N_s^{-u_2}\}.$ Then \eqref{eq:63} immediately gives \eqref{eq:62}, since $q  \le 2^{C_{\rho}2^{\rho s}}$ if $a/q\in \Sigma_s$. To verify \eqref{eq:63} we use 
Lemma \ref{sum-integral} twice, which can be applied, since the derivatives $\partial_{m_1}f$ and $\partial_{m_2}f$ of $f(m_1, m_2)=P_{\xi-a/q}(qm_1+r_1, qm_2+r_2)$ satisfy
\[
|\partial_{m_{\ell}}f(m_1, m_2)| \lesssim  q |\xi - a/q|\sum_{u\in S_P}M^{u_1+u_2-1} \lesssim
q  (\log_{\tau} N_s)^{\beta}N_s^{-1} < \ 1/2,\qquad \ell\in[2]
\]
for sufficiently large $s\in\NN$, since $M\le N_s^{1/5}$, $q  \le 2^{C_{\rho}2^{\rho s}}$
and $\rho \beta \le 1/10$ by \eqref{eq:301}, and we are done. 

\paragraph{\bf Step 4}
We now establish \eqref{eq:127}. Assume that  $p=2$ and observe that
\begin{align*}
|(1-\mathfrak m_{M, M}(\xi-a/q))\Pi_s^{\beta}(\xi-a/q)|
\lesssim |\xi - a/q|\sum_{u\in S_P}M^{u_1+u_2}
\lesssim N_{s}^{-3/4} \lesssim  2^{-10C_{\rho}2^{5\rho s}}
\end{align*}
for sufficiently large $s\in\NN$, since
$M\simeq 2^{10C_{\rho}2^{10\rho s}}$, and
 $\rho \beta < 1/1000$.  Using
this bound and Plancherel's theorem we see that
\begin{align}
\label{eq:125}
\big\|
T_{\ZZ}\big[\Phi_{ \le -n_{N_s, N_s}^{v_j, \beta}(N_s)+1}^{\Sigma_{s}}[G, (1-\mathfrak m_{M, M})\Pi_s^{\beta}]\big]f\big\|_{\ell^2(\ZZ)}
\lesssim_{\tau}2^{-5C_{\rho}2^{5\rho s}}\|f\|_{\ell^2(\ZZ)}. 
\end{align}
Moreover by \eqref{eq:373}, for $u\in\{p_0, p_0'\}$ we have the trivial estimate
\begin{align}
\label{eq:131}
\big\|
T_{\ZZ}\big[\Phi_{ \le -n_{N_s, N_s}^{v_j, \beta}(N_s)+1}^{\Sigma_{s}}[G, (1-\mathfrak m_{M, M})\Pi_s^{\beta}]\big]f\big\|_{\ell^u(\ZZ)}
\lesssim_{u, \tau}2^{2C_{\rho}2^{\rho s}}\|f\|_{\ell^u(\ZZ)}. 
\end{align}
Interpolation between \eqref{eq:125} and \eqref{eq:131} yields \eqref{eq:127} and the proof of Lemma \ref{lemma:4} is complete. 
\end{proof}

Recalling the definition of $\tilde{h}_{M_1, M_2}^{M_1}$ from \eqref{eq:73}  we now prove the following claim:
\begin{claim}
\label{claim:8}
For every $p\in(1, \infty)$ and every $1\le j<r$ and for every $f\in\ell^p(\ZZ)$ one has
\begin{align}
\label{eq:369}
\sup_{J\in\ZZ_+}\sup_{I\in\mathfrak S_J(\SS_{\tau}(j))}\|O_{I, J}(T_{\ZZ}[\tilde{h}_{M_1, M_2}^{M_1}]f: (M_1, M_2)\in\SS_{\tau}(j))\|_{\ell^p(\ZZ)}\lesssim_{p, \tau}\|f\|_{\ell^p(\ZZ)}.
\end{align}
The same estimate holds when $j=r=1$, as long as $\log M_1\le \log M_2$.
\end{claim}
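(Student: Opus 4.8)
As in the previous claims it suffices to treat $1\le j<r$ (and $j=r=1$ with $\log M_1\le\log M_2$), so that $\log M_1\lesssim\log M_2$ on $\SS_{\tau}(j)$; the case $j=r\ge2$ is handled symmetrically, interchanging the roles of $M_1$ and $M_2$ and using Remark \ref{rem:6}. Throughout, $\tilde h_{M_1,M_2}$ stands for $\tilde h_{M_1,M_2}^{M_1}$ from \eqref{eq:73}. The plan is to decompose the fractions occurring in $\tilde h_{M_1,M_2}$ into dyadic denominator levels and to gain a geometric factor $2^{-\delta_p s}$ on the $s$-th level, at the price of a polynomial-in-$s$ loss. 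With $N_s:=\tau^{2^{s/\beta}}$ and $\tilde h_{M_1,M_2,s}^{M_1}$ as in \eqref{eq:74}, put $\mathfrak a^s_{M_1,M_2}:=T_{\ZZ}[\tilde h_{M_1,M_2,s}^{M_1}]f$ if $s\le l^{\beta}(M_1)$ and $\mathfrak a^s_{M_1,M_2}:=0$ otherwise, so that $T_{\ZZ}[\tilde h_{M_1,M_2}]f=\sum_{s\in\NN}\mathfrak a^s_{M_1,M_2}$. Since $O_{I,J}$ is a semi-norm (Remark \ref{rem:1}), \eqref{eq:369} follows once we prove that for every $p\in(1,\infty)$ there is $\delta_p\in(0,1)$ such that for every $s\in\NN$
\[
\sup_{J\in\ZZ_+}\ \sup_{I\in\mathfrak S_J(\SS_{\tau}(j))}\big\|O_{I,J}\big(\mathfrak a^s_{M_1,M_2}:(M_1,M_2)\in\SS_{\tau}(j)\big)\big\|_{\ell^p(\ZZ)}\lesssim_{p,\tau}(s+1)\,2^{-\delta_p s}\|f\|_{\ell^p(\ZZ)};
\]
summation over $s$ then gives the claim. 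The dependence of $\mathfrak a^s$ on the threshold $\{M_1\ge N_s\}$ is harmless: since the sequences $I\in\mathfrak S_J(\SS_{\tau}(j))$ are strictly increasing, at most one box $\BB[I_i]$ straddles the hyperplane $\{M_1=N_s\}$, and exactly as in the proof of Theorem \ref{thm:main''} the oscillation $O_{I,J}(\mathfrak a^s:(M_1,M_2)\in\SS_{\tau}(j))$ is dominated by the corresponding oscillation over $\SS_{\tau}(j)\cap\{M_1\ge N_s\}$ plus one maximal term, the latter controlled by oscillations of the same type via Proposition \ref{prop:5}. Hence we may assume $(M_1,M_2)\in\SS_{\tau}(j)$ with $M_1\ge N_s$.

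For such $(M_1,M_2)$ we have the pointwise domination $M_1^{\tilde d_1}M_2^{d_2}\le M_1^{v_{j,1}}M_2^{v_{j,2}}$ coming from the geometry of the backwards Newton diagram (cf. \eqref{eq:56}, with $e_1=(\tilde d_1,d_2)\in S_P$), so the frequency cut-off $\eta_{\le -n_{M_1,M_2}^{v_j,\beta}(M_1)}$ carried by $\tilde h_{M_1,M_2,s}^{M_1}$ is no wider than the one at scale $-n_{M_1,N_s}^{e_1,\beta}(N_s)+1$; since moreover $\beta\rho<1/1000$ by \eqref{eq:301} guarantees that the bumps $\eta(\,\cdot-a/q)$, $a/q\in\Sigma_{\le s}$, have pairwise disjoint supports at this scale, the factorization \eqref{eq:56}--\eqref{eq:71} applies and gives
\[
\tilde h_{M_1,M_2,s}^{M_1}(\xi)=\Phi_{\le -n_{M_1,N_s}^{e_1,\beta}(N_s)+2}^{\Sigma_{\le s}}\big[1,\widetilde{\mathfrak m}_{M_1,M_2}\big](\xi)\cdot\Phi_{\le -n_{M_1,N_s}^{e_1,\beta}(N_s)+1}^{\Sigma_{s}}[G,1](\xi),
\]
where $\widetilde{\mathfrak m}_{M_1,M_2}=\mathfrak m_{M_1,M_2}\,\eta_{\le -n_{M_1,M_2}^{v_j,\beta}(M_1)}$ is as in \eqref{eq:14}. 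The role of this factorization is to isolate the genuine two-parameter oscillation in the first factor, while the second factor --- periodized over $\Sigma_s$ and weighted by the complete sums $G(a/q)$ --- contributes only a scalar gain.

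Now I would estimate the two factors separately. The second factor is, up to the mild $M_1$-dependence of its cut-off scale (which may be frozen to a reference scale depending on $s$ alone, again thanks to $\beta\rho<1/1000$), independent of the oscillation parameters; since $|G(a/q)|\le C_Pq^{-\delta}\lesssim 2^{-\delta s}$ for $a/q\in\Sigma_s$ by Proposition \ref{prop:32} and \eqref{eq:371}, the argument of Claim \ref{claim:33} (Step 1) --- Plancherel and disjointness of supports on $\ell^2$, interpolated against the uniform $\ell^{p_0}$, $\ell^{p_0'}$ bounds supplied by Theorem \ref{thm:IW} --- shows it to be an operator of norm $\lesssim_{p,\tau}2^{-\delta_p s}$ on $\ell^p(\ZZ)$. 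For the first factor I would remove the $(M_1,M_2)$-dependence of the cut-off $\eta_{\le -n_{M_1,M_2}^{v_j,\beta}(M_1)}$ inside $\widetilde{\mathfrak m}_{M_1,M_2}$ by the telescoping/square-function device already used in Step 3 of the proofs of Claim \ref{claim:4} and Claim \ref{claim:7} (each annular error handled by Theorem \ref{thm:IW1}), reducing matters to the two-parameter oscillation of $T_{\ZZ}\big[\Theta_{\Sigma_{\le s}}[\mathfrak m_{M_1,M_2}\eta_{\le m}]\big]$ over $(M_1,M_2)\in\SS_{\tau}(j)$ with a fixed $m\le -10C_{\rho}2^{2\rho s}$ (this inequality being guaranteed by $s\le l^{\beta}(M_1)$ and $\beta\rho<1/1000$, and the extra narrow cut-off of the first factor being subsumed since $m$ is the narrower scale); Theorem \ref{thm:IW2} bounds this by $C(s+1)\|f\|_{\ell^p(\ZZ)}$. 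Since on the support of each bump of $\widetilde{\mathfrak m}_{M_1,M_2}$ the wider cut-off of the second factor equals $1$, composing the two bounds gives the displayed per-level estimate with constant $C(s+1)2^{-\delta_p s}$, and summing over $s\in\NN$ establishes \eqref{eq:369}. I expect the main obstacle to be precisely this balancing: arranging the frequency cut-offs so that the factorization is an exact identity, the full two-parameter oscillation sits in the smooth factor $\widetilde{\mathfrak m}_{M_1,M_2}$ (to which Theorem \ref{thm:IW2} applies with only a polynomial-in-$s$ loss), and the complete-sum weights $G(a/q)$ enter only through the decay $2^{-\delta s}$ --- a balancing that is exactly what forces $\alpha>100\,\theta_p^{-1}$, $\beta$ large and $\beta\rho<1/1000$ in \eqref{eq:299}--\eqref{eq:301} and rests on the Newton-diagram domination on the sector $\SS_{\tau}(j)$.
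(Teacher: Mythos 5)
Your proposal is correct and follows essentially the same route as the paper's proof: decompose over denominator levels $s$, restrict to $M_1\ge N_s$, factorize $\tilde h_{M_1,M_2,s}^{M_1}$ at a reference scale depending only on $s$ into the smooth factor $\Phi^{\Sigma_{\le s}}[1,\widetilde{\mathfrak m}_{M_1,M_2}]$ times $\Phi^{\Sigma_s}[G,1]$, gain $2^{-\delta_p s}$ from the complete-sum factor via the Claim \ref{claim:33}-type argument, and bound the oscillation of the smooth factor by Theorem \ref{thm:IW2} after comparing $\widetilde{\mathfrak m}_{M_1,M_2}$ with $\mathfrak m_{M_1,M_2}\eta_{\le m}$ using van der Corput decay. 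The only differences are presentational (the explicit $(s+1)2^{-\delta_p s}$ per-level bookkeeping and the treatment of the box straddling $\{M_1=N_s\}$, both of which are fine).
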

When $j=r\ge2$, in view of \eqref{eq:359}, we will be able to reduce the problem to the following
\begin{align}
\label{eq:370}
\sup_{J\in\ZZ_+}\sup_{I\in\mathfrak S_J(\SS_{\tau}(j))}\|O_{I, J}(T_{\ZZ}[\tilde{h}_{M_1, M_2}^{M_2}]f: (M_1, M_2)\in\SS_{\tau}(j))\|_{\ell^p(\ZZ)}
\lesssim_{p, \tau} \|f\|_{\ell^p(\ZZ)}.
\end{align}
We will only prove \eqref{eq:369} the proof of \eqref{eq:370} will follow in a similar way. We omit details.
\begin{proof}[Proof of Claim \ref{claim:8}]
The proof will consist of two steps to make the argument clear.
\paragraph{\bf Step 1}
Similarly as in Claim \ref{claim:4}  we define  $N_s:=\tau^{2^{s/(2\beta)}}$ for any $s\in\NN$ and introduce
\begin{align*} 
\tilde{\SS}_{\tau}(j, s):=\{(M_1, M_2)\in \SS_{\tau}(j): M_1\ge N_s\}.
\end{align*}
For each  $(M_1, M_2)\in \SS_{\tau}(j)$ we have $M_1^{v_{j, 1}}M_2^{v_{j, 2}}\ge M_1^{u_{1}}M_2^{u_{2}}$ for every $u=(u_1, u_2)\in S_P$. Hence
\begin{align}
\label{eq:39}
\begin{split}
\eta_{\le -n_{M_1, M_2}^{v_{j}, \beta}(M_1)}(\xi)&=\eta_{\le -n_{M_1, M_2}^{v_{j}, \beta}(M_1)}(\xi)\prod_{u\in S_P}\eta_{\le -n_{M_1, M_2}^{u, \beta}(M_1)+1}(\xi)\\
&=\eta_{\le -n_{M_1, M_2}^{v_{j}, \beta}(M_1)}(\xi)\Pi_s^{\beta}(\xi)
\end{split}
\end{align}
holds for sufficiently large $s\in\NN$ so that $0\le s\le l^{\beta}(M_1)$, where $\Pi_s^{\beta}$ was defined in Lemma \ref{lemma:4}.

The proof of \eqref{eq:369} will be completed if we show (with $\tilde{h}_{M_1, M_2, s}^{M_1}$ defined in \eqref{eq:74}) that for every $p\in(1, \infty)$ there is $\delta_p\in(0, 1)$ such that for all $f\in\ell^p(\ZZ)$ we have 
\begin{align}
\label{eq:13}
\sup_{J\in\ZZ_+}\sup_{I\in\mathfrak S_J(\tilde{\SS}_{\tau}(j, s))}\|O_{I, J}(T_{\ZZ}[\tilde{h}_{M_1, M_2, s}^{M_1}]f: (M_1, M_2)\in\tilde{\SS}_{\tau}(j, s))\|_{\ell^p(\ZZ)}
\lesssim_{p, \tau} s2^{-\delta_p s}\|f\|_{\ell^p(\ZZ)}.
\end{align}
Using $\widetilde{\mathfrak m}_{M_1, M_2}$ from \eqref{eq:14} and \eqref{eq:39}  we may write
\begin{align}
\label{eq:40}
\tilde{h}_{M_1, M_2,s}^{M_1}(\xi)=\Phi_{\le -n_{N_s, N_s}^{v_j, \beta}(N_s)+2}^{\Sigma_{\le s}}[1, \widetilde{\mathfrak m}_{M_1, M_2}](\xi)\times
\Phi_{ \le -n_{N_s, N_s}^{v_j, \beta}(N_s)+1}^{\Sigma_{s}}[G, \Pi_s^{\beta}](\xi).
\end{align}
By Lemma \ref{lemma:4}, for sufficiently large $s\in\NN$, we have
\begin{align}
\label{eq:18}
\big\|T_{\ZZ}\big[\Phi_{ \le -n_{N_s, N_s}^{v_j, \beta}(N_s)+1}^{\Sigma_{s}}[G, \Pi_s^{\beta}]\big]f\big\|_{\ell^p(\ZZ)}
\lesssim_{p, \tau} 2^{-\delta_p s}\|f\|_{\ell^p(\ZZ)}.
\end{align}
Using factorization \eqref{eq:40} and \eqref{eq:18} it suffices to prove that
\begin{align*}
\sup_{J\in\ZZ_+}\sup_{I\in\mathfrak S_J(\tilde{\SS}_{\tau}(j,s))}
\|O_{I, J}(T_{\ZZ}\big[\Phi_{\le -n_{N_s, N_s}^{v_j, \beta}(N_s)+2}^{\Sigma_{\le s}}[1, \widetilde{\mathfrak m}_{M_1, M_2}]\big]f: (M_1, M_2)\in\tilde{\SS}_{\tau}(j,s))\|_{\ell^p(\ZZ)}\lesssim_{p, \tau} s\|f\|_{\ell^p(\ZZ)}
\end{align*}
which will readily imply \eqref{eq:13}.
\paragraph{\bf Step 2}
Appealing to the Ionescu--Wainger multiplier theory (see Theorem \ref{thm:IW2}) for oscillation semi-norms developed in the previous section  we see that
\begin{align*}
\sup_{J\in\ZZ_+}\sup_{I\in\mathfrak S_J(\tilde{\SS}_{\tau}(j,s))}\|O_{I, J}(T_{\ZZ}[\Phi_{\le -n_{N_s, N_s}^{v_j, \beta}(N_s)+2}^{\Sigma_{\le s}}[1, \mathfrak m_{M_1, M_2}]]f: (M_1, M_2)\in\tilde{\SS}_{\tau}(j,s))\|_{\ell^p(\ZZ)}\lesssim_{p, \tau} s\|f\|_{\ell^p(\ZZ)}.
\end{align*}
Hence the last inequality from the previous step will be proved if we establish
\begin{align}
\label{eq:20}
\sup_{J\in\ZZ_+}\sup_{I\in\mathfrak S_J(\tilde{\SS}_{\tau}(j,s))}\|O_{I, J}(T_{\ZZ}[\Phi_{\le -n_{N_s, N_s}^{v_j, \beta}(N_s)+2}^{\Sigma_{\le s}}[1, \mathfrak g_{M_1, M_2}]]f: (M_1, M_2)\in\tilde{\SS}_{\tau}(j,s))\|_{\ell^p(\ZZ)}\lesssim_{p, \tau} \|f\|_{\ell^p(\ZZ)},
\end{align}
with $\mathfrak g_{M_1, M_2}=\widetilde{\mathfrak m}_{M_1, M_2}-\mathfrak m_{M_1, M_2}$.
By the van der Corput estimate (Proposition \ref{thm:CCW}) for $\mathfrak m_{M_1, M_2}$ there exists $\delta_0>0$ (in fact $\delta_0\simeq (\deg P)^{-1}$) such that
\begin{align*}
|\mathfrak g_{M_1, M_2}(\xi)|=|\mathfrak m_{M_1, M_2}(\xi)(1-\eta_{\le -n_{M_1, M_2}^{v_j, \beta}(M_1)}(\xi))|
\lesssim \min\{(\log M_1)^{-\delta_0\beta}, (M_1^{v_{j, 1}}M_2^{v_{j, 2}}|\xi|)^{\pm\delta_0}  \}
\end{align*}
for $(M_1, M_2)\in\tilde{\SS}_{\tau}(j)$, since
\begin{align*}
|1-\eta_{\le -n_{M_1, M_2}^{v_j, \beta}(M_1)}(\xi)|\lesssim \min\{1, M_1^{v_{j, 1}}M_2^{v_{j, 2}}|\xi|\}.
\end{align*}
Then by Plancherel's theorem combined with  a simple interpolation and  Theorem \ref{thm:IW} we conclude that for every $p\in(1, \infty)$ there is $\alpha_p>10$ such that for every $f\in \ell^p(\ZZ)$ one has
\begin{align*}
\bigg\|\Big(\sum_{M_2\in \tilde{\SS}_{\tau}^2(j;M_1)}\big|T_{\ZZ}\big[\Phi_{\le -n_{N_s, N_s}^{v_j, \beta}(N_s)+2}^{\Sigma_{\le s}}[1, \mathfrak g_{M_1, M_2}]\big]f\big|^2\Big)^{1/2}\bigg\|_{\ell^p(\ZZ)}
\lesssim_{p, \tau} (\log M_1)^{-\alpha_p}\|f\|_{\ell^p(\ZZ)},
\end{align*}
completing  the proof of \eqref{eq:20}.
\end{proof}

\begin{proof}[Proof of Theorem \ref{thm:maint}]
We fix $1\le j< r$ as before. To prove \eqref{eq:289}, in view of \eqref{eq:369} and \eqref{eq:38}, it suffices to show that
\begin{align}
\label{eq:41}
\sum_{M_1\in \SS_{\tau}^1(j)}\big\|\sup_{M_2\in\SS_{\tau}^2(j;M_1)}|T_{\ZZ}[m_{M_1, M_2}-\tilde{h}_{M_1, M_2}^{M_1}]f|\big\|_{\ell^p(\ZZ)}\lesssim_{p, \tau}\|f\|_{\ell^p(\ZZ)}.
\end{align}
For $u\in\{p_0, p_0'\}$ by the one-parameter theory, which produces bounds independent of the coefficients of the underlying polynomials (see for instance \cite{MSZ3, MSS}), we may conclude
\begin{align}
\label{eq:51}
\sup_{M_1\in\ZZ_+}\big\|\sup_{M_2\in\ZZ_+}|T_{\ZZ}[m_{M_1, M_2}]f|\big\|_{\ell^u(\ZZ)}\lesssim_{u, \tau}\|f\|_{\ell^u(\ZZ)},
\end{align}
and by \eqref{eq:135} combined with \eqref{eq:369} we also have
\begin{align}
\label{eq:55}
\sup_{M_1\in \SS_{\tau}^1(j)}\big\|\sup_{M_2\in\SS_{\tau}^2(j;M_1)}|T_{\ZZ}[\tilde{h}_{M_1, M_2}^{M_1}]f|\big\|_{\ell^u(\ZZ)}\lesssim_{u, \tau}\|f\|_{\ell^u(\ZZ)}.
\end{align}
On the one hand, combining \eqref{eq:51} and \eqref{eq:55} we deduce that
\begin{align}
\label{eq:79}
\big\|\sup_{M_2\in\SS_{\tau}^2(j;M_1)}|T_{\ZZ}[m_{M_1, M_2}-\tilde{h}_{M_1, M_2}^{M_1}]f|\big\|_{\ell^u(\ZZ)}\lesssim_{u, \tau}\|f\|_{\ell^u(\ZZ)}.
\end{align}
On the other hand, inequalities \eqref{eq:54}, \eqref{eq:315} and \eqref{eq:360} imply for every $M_1\in \SS_{\tau}^1(j)$ that
\begin{align}
\label{eq:65}
\big\|\sup_{M_2\in\SS_{\tau}^2(j;M_1)}|T_{\ZZ}[m_{M_1, M_2}-\tilde{h}_{M_1, M_2}^{M_1}]f|\big\|_{\ell^2(\ZZ)}\lesssim_{\tau}(\log M_1)^{-\alpha}\|f\|_{\ell^2(\ZZ)}
\end{align}
with the parameter $\alpha>0$ as in \eqref{eq:299}. Simple interpolation between \eqref{eq:79} and \eqref{eq:65} yields \eqref{eq:41} and this completes the proof of Theorem \ref{thm:maint}.
\end{proof}

\end{document}